\documentclass[11pt]{amsart}

\usepackage{amssymb,amsfonts}
\usepackage{hyperref}
\usepackage{mdwlist}

\usepackage{amssymb,textcomp}
\usepackage{enumerate}

\usepackage[utf8]{inputenc}
\usepackage[T1]{fontenc}

\usepackage[mathscr]{eucal}

\usepackage{hyperref}
 \hypersetup{
     colorlinks=true,
     linktocpage=true,
     linkcolor=red,
     filecolor=blue,
     citecolor = blue,
     urlcolor=cyan,
     }

\usepackage{xcolor}

\usepackage[a4paper, twoside=false, vmargin={2cm,3cm}, includehead]{geometry}

\usepackage{comment}

\newtheorem{lemma}{Lemma}[section]
\newtheorem{theorem}[lemma]{Theorem}
\newtheorem*{theorem*}{Theorem}

\newtheorem{corollary}[lemma]{Corollary}

\newtheorem{proposition}[lemma]{Proposition}
\newtheorem*{proposition*}{Proposition}

\newtheorem{conjecture}[lemma]{Conjecture}
\newtheorem{problem}{Problem}

\newtheorem*{nh}{Nilpotent Heuristic}
\newtheorem*{mproblem}{Main Problem}

\newtheorem*{problem*}{Problem}

\theoremstyle{definition}
\newtheorem*{claim*}{Claim}

\newtheorem{definition}[lemma]{Definition}

\newtheorem{example}[lemma]{Example}

\DeclareMathOperator*{\oE}{\overline{\mathbb{E}}}
\DeclareMathOperator*{\E}{\mathbb{E}}

\newcommand{\C}{{\mathbb C}}

\newcommand{\N}{{\mathbb N}}
\renewcommand{\P}{{\mathbb P}}
\newcommand{\Q}{{\mathbb Q}}
\newcommand{\R}{{\mathbb R}}
\renewcommand{\S}{\mathbb{S}}
\newcommand{\T}{{\mathbb T}}
\newcommand{\Z}{{\mathbb Z}}

\newcommand{\CA}{{\mathcal A}}
\newcommand{\CB}{{\mathcal B}}
\newcommand{\CC}{{\mathcal C}}
\newcommand{\CD}{{\mathcal D}}

\newcommand{\CI}{{\mathcal I}}
\newcommand{\CK}{{\mathcal K}}

\newcommand{\CP}{{\mathcal P}}
\newcommand{\CQ}{{\mathcal Q}}

\newcommand{\CX}{{\mathcal X}}
\newcommand{\CY}{{\mathcal Y}}

\newcommand{\CZ}{{\mathcal Z}}

\newcommand{\FB}{{\mathfrak B}}

\newcommand{\FI}{{\mathfrak I}}

\newcommand{\bn}{{\mathbf{n}}}

\newcommand{\Aut}{{\text{Aut}}}

\newcommand{\veps}{\varepsilon}

\newcommand{\e}{\varepsilon}

\newcommand{\eps}{\epsilon}

\newcommand{\supp}{\textrm{supp}}

\newcommand{\norm}[1]{\left\Vert #1\right\Vert}
\newcommand{\nnorm}[1]{\lvert\!|\!| #1|\!|\!\rvert}

\newcommand{\cube}[1]{{[\![#1]\!]}}

\newcommand{\inv}{^{-1}}

\newcommand{\ab}{{\mathrm{ab}}}
\newcommand{\nab}{{\mathrm{nonab}}}
\newcommand{\nonab}{{\mathrm{nonab}}}

\newcommand{\bi}{{\textbf{i}}}

\DeclareMathOperator{\id}{id}
\DeclareMathOperator{\Id}{id}
\DeclareMathOperator{\codim}{codim}

\newcommand{\Krat}{{\CK_{\text{\rm rat}}}}

\newcommand{\abs}[1]{\mathopen{}\left| #1\mathclose{}\right|}
\newcommand{\bigabs}[1]{\bigl| #1 \bigr|}

\newcommand{\brac}[1]{\mathopen{}\left( #1 \mathclose{}\right)}

\newcommand{\Bigbrac}[1]{\Bigl( #1 \Bigr)}

\newcommand{\sfloor}[1]{{\lfloor #1 \rfloor}}

\newcommand{\rem}[1]{\left \{ #1 \right \}}

\newcommand{\srem}[1]{ \{ #1 \}}

\title[]{Structure of 2-step nilpotent ergodic averages\\
for distinct-degree polynomials}
\author{Andreas Koutsogiannis, Borys Kuca, Wenbo Sun}
\date{}

\address[Andreas Koutsogiannis]{
Department of Mathematics, Aristotle University of Thessaloniki, Thessaloniki 54124, Greece}
\email{akoutsogiannis@math.auth.gr}

\address[Borys Kuca]{Faculty of Mathematics and Computer Science, Jagiellonian University, 30-348 Krak\'ow, Poland}
\email{borys.kuca@uj.edu.pl}

\address[Wenbo Sun]{Department of Mathematics, Virginia Tech, 225 Stanger Street, Blacksburg, VA, 24061, USA}
\email{swenbo@vt.edu}

\thanks{For the first author, the research project is implemented in the framework of H.F.R.I call ``3rd Call for H.F.R.I.’s
Research Projects to Support Faculty Members \& Researchers'' (H.F.R.I. Project Number: 24979).  The second author was supported by the NCN Polonez Bis 3 grant No. 2022/47/P/ST1/00854 (H2020 MSCA GA No. 945339) and the NCN Sonata grant No. 2024/55/D/ST1/00468, and would also like to acknowledge the Foundation of Polish Science (FNP) Start stipend. The third author was supported by the NSF Grant DMS-2247331. For the purpose of Open Access, the authors have applied a CC-BY public copyright license to any Author Accepted Manuscript (AAM) version arising from this submission.}

\subjclass[2020]{Primary: 37A30; Secondary: 11B30, 28D05, 37A44}

\keywords{Ergodic averages, joint ergodicity, Host-Kra seminorms, nilpotent actions.} 

\begin{document}
\begin{abstract}
    We investigate manifestations of the Nilpotent Heuristic, which posits that recurrence and convergence phenomena known for measure-preserving $\mathbb{Z}^D$-systems extend to nilpotent group actions. Our main results establish seminorm estimates and limiting formulas for multiple ergodic averages arising from actions of 2-step nilpotent groups. In particular, if $T_1,\ldots,T_\ell$ are totally ergodic and generate a 2-step nilpotent group, then
\[
\lim_{N\to\infty}\frac{1}{N}\sum_{n=1}^N T_1^n f_1 \cdots T_\ell^{n^\ell}f_\ell
= \prod_{j=1}^{\ell}\int f_j\,d\mu
\]
in the $L^{2}$ norm for all bounded functions $f_{1},\dots,f_{\ell}$; the same holds for any distinct-degree polynomial iterates. We also obtain popular-common-difference versions of the polynomial Szemer\'edi theorem in the same setting.
In a different direction, our approach allows us to completely resolve the joint ergodicity conjecture for multidimensional polynomials and $\mathbb Z^D$-systems; we also present an example showing that, surprisingly enough, the 2-step nilpotent analog fails. 
We conclude with many open problems concerning joint ergodicity, seminorm estimates, and the structure theory of nilpotent systems.
\end{abstract}

\maketitle

\tableofcontents 

\section{Introduction}

\subsection{Motivation: Nilpotent Heuristic}

A substantial part of modern ergodic theory deals with themes of multiple recurrence, the limiting behavior of multiple ergodic averages, and the structure of Host-Kra factors and seminorms. The motivation comes from combinatorics and number theory, where ergodic methods have delivered far-reaching extensions of Szemer\'edi's theorem on arithmetic progressions (e.g. \cite{BL96, Fu77, FuKa78, KMRR26}), partition regularity results (e.g. \cite{FH17, FKM25, Mo17}), and estimates of correlations of multiplicative functions in the spirit of Chowla conjecture \cite{FH18}. These topics are also deeply connected with homogeneous dynamics of nilpotent Lie groups and estimates for multilinear Hilbert transforms in harmonic analysis. 

Starting with the seminal work of Furstenberg in the 1970s \cite{Fu77}, the study of multiple recurrence and multiple ergodic averages has been subject to phenomenal breakthroughs due to Bergelson-Leibman \cite{BL96}, Host-Kra \cite{HK05a}, and others. In turn, these developments inspired radical advances in combinatorics and number theory, exemplified by the celebrated theorems of Green-Tao \cite{GT08b} and Tao-Ziegler \cite{TZ08} on arithmetic configurations in the primes.

While original works focused on recurrence, averages, and factors for measure-preserving $\Z$-actions, later research extended some of these early results all the way to the setting of nilpotent systems. 
\begin{definition}[Nilpotent systems]\label{D: nilpotent systems}
    Let $H$ be a finitely generated $k$-step nilpotent group. We call $(X, \CX, \mu, U)$ a \emph{$k$-step nilpotent system} if $U = (U_h)_{h\in H}$ is a measure-preserving action of $H$ on a standard probability space $(X, \CX, \mu)$. If $U$ is generated by invertible measure-preserving transformations $T_1, \ldots, T_\ell$, we write the system as $(X, \CX, \mu, T_1, \ldots, T_\ell)$. We call a system \emph{nilpotent} if it is $k$-step nilpotent for some $k\in\N$.
\end{definition}

The nilpotent universe is the broadest possible setting in which positive results on multiple recurrence and norm convergence on multiple ergodic averages are known to hold. This is evidenced by the following two theorems due to Leibman and Walsh that provide important context for this work. 
\begin{theorem}[Nilpotent polynomial Szemer\'edi theorem {\cite{L98}}]\label{T: Leibman nilpotent}
    Let $p_1, \ldots, p_\ell\in\Z[n]$ have zero constant terms. For every nilpotent system $(X, \CX, \mu,$\! $T_1, \ldots, T_\ell)$ and every $E\in\CX$ with $\mu(E)>0$, 
    \begin{align*}
        \liminf_{N\to\infty}\E_{n\in[N]}
        \mu(E\cap T_1^{-p_1(n)}E\cap \cdots \cap T_\ell^{-p_\ell(n)}E)>0.
    \end{align*}
\end{theorem}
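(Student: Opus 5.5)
This is Leibman's theorem, which the paper cites as background from \cite{L98}. A self-contained proof would follow Bergelson--Leibman's argument, adapted to nilpotent groups. I would combine Furstenberg's structure theory, extended to actions of the nilpotent group $H=\langle T_1,\ldots,T_\ell\rangle$, with a polynomial van der Waerden/Hales--Jewett type coloring theorem for nilpotent groups.

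\textbf{Reduction to a multiple recurrence statement.} Set $g_j(n)=T_j^{p_j(n)}$. Each $g_j$ is a polynomial mapping $\Z\to H$ with $g_j(0)=\mathrm{id}$. The polynomial maps into a nilpotent group form a group under pointwise multiplication, by Leibman's theory of polynomial sequences. It is also closed under the ``derivative'' operations $g(n)\mapsto g(n)^{-1}g(n+m)$, and these lower a suitable weight or degree vector. The target statement becomes: for any finite family $\mathcal G$ of such maps vanishing at $0$ and any $E$ with $\mu(E)>0$, the averages of $\mu(\bigcap_{g\in\mathcal G} g(n)^{-1}E)$ have positive $\liminf$. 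I would prove this for the stronger notion of \emph{SZ-property} (syndetic/IP-rich return times) so that it passes through extensions.

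\textbf{Structure theory.} Take the Furstenberg--Zimmer tower of $H$-factors: the maximal distal factor is reached by a transfinite chain of compact extensions, and the whole system is a relatively weak mixing extension of it. I would then verify three steps.
\begin{enumerate}
\item The trivial factor satisfies the property.
\item The property passes to relatively weak mixing extensions. This is a polynomial van der Corput (PET induction) argument: the averages of products $\prod g(n)f_g$ with some $f_g$ of zero conditional expectation tend to $0$. The induction runs on the weight vector of the family $\mathcal G$, using that the differenced families $\{g(n)^{-1}g(n+m)\,g(m)^{-1}\}$ have lower weight. In the nilpotent setting one must conjugate so that the differenced family again consists of polynomial maps into $H$. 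Leibman's algebra of polynomial mappings guarantees this.
\item The property passes to compact extensions. Here one needs a nilpotent polynomial van der Waerden theorem, which is Leibman's topological result, itself proved via a coloring PET induction in nilpotent groups. Combined with the usual ``almost periodic functions are approximated by finitely many fiber functions'' argument, it yields many $n$ with large intersection.
\end{enumerate}
Limits of chains preserve the property by a standard approximation argument.

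\textbf{Main obstacle.} The main obstacle is the PET induction in the nilpotent setting, in both the mixing and compact steps. After differencing, products like $g_i(n+m)g_i(m)^{-1}g_j(n)^{-1}$ involve commutators, so one must define a well-ordered complexity on finite families of polynomial maps into $H$ that strictly decreases under differencing. I would try Leibman's filtration-based degree: use the lower central series $H=H_1\supseteq H_2\supseteq\cdots$ and assign degree vectors so that commutator terms land deeper in the series with controlled degree. Finally, positivity of the $\liminf$ of Ces\`aro averages, rather than merely positivity along a syndetic set, follows from the uniform version of the SZ-property obtained by carrying the statement through the tower in the form ``$\liminf>0$''.
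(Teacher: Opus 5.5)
The paper does not prove this statement; it quotes it from \cite{L98}. That argument follows the Furstenberg--Katznelson method \cite{FuKa78}, extended to polynomials by Bergelson--Leibman \cite{BL96}. Your outline has the right ingredients: an SZ-type property carried through factors, PET in relatively weakly mixing extensions using the group of polynomial maps into $H$, and a nilpotent polynomial van der Waerden theorem for compact extensions. However, the structure theory you use is the single-transformation one, and with it your step (ii) is false as stated.

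Relative weak mixing for the whole group $H$ does not make the averages vanish. Take $Y$ trivial and $X=Z\times Z$ with $T_1=S\times\mathrm{id}$ and $T_2=\mathrm{id}\times S$, where $S$ is weakly mixing. Then $H\cong\Z^2$ acts weakly mixingly, since the $H$-invariant functions on $X\times X$ are constant. Yet for $f_1=1\otimes b$ with $\int b=0$, $b\neq 0$, and $f_2=1$, you get $T_1^{n}f_1\cdot T_2^{n^2}f_2=f_1$ for every $n$. Furstenberg's route through the maximal distal factor works for $T,T^2,\ldots,T^k$ because relative weak mixing of $T$ passes to its powers. For several transformations, relative weak mixing of $H$ does not pass to the individual maps $g_j(n)$ or to the differences $g_i(n)g_j(n)^{-1}$.

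The missing idea is the Furstenberg--Katznelson \emph{primitive extension}. Every nontrivial extension contains a nontrivial intermediate extension that is compact with respect to some subgroup $H_c\subseteq H$ and relatively weakly mixing with respect to a complementary part of $H$. The SZ property must then be lifted through such extensions by a single argument that handles both parts of each $g_j(n)$ at once: PET for the part outside $H_c$ and van der Waerden for the $H_c$ part. The induction runs by Zorn's lemma on a maximal SZ factor, not along the distal tower.

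In the nilpotent case this is where the real difficulty lies. $H$ does not split as $H_c\times H_w$, so you need something like $H_c$ normal, so that the maximal $H_c$-compact extension is $H$-invariant. You also have to control the polynomial maps modulo $H_c$. Establishing this nilpotent structure theorem and the mixed lifting step is where the real work in \cite{L98} lies. The filtration-based PET weight you identify as the main obstacle is also needed, but it does not address this issue.
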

Throughout, $[N]:=\{1, \ldots, N\}$ and $\E_{n\in I} := \frac{1}{|I|}\sum_{n\in I}$ for any finite set $I$. Other notation used in the introduction can be found in Section \ref{SS: Notation}.
\begin{theorem}[Nilpotent norm convergence theorem {\cite{W12}}]\label{T: Walsh}
    Let $p_1, \ldots, p_\ell\in\Z[n]$. For every nilpotent system $(X, \CX, \mu,$\! $T_1, \ldots, T_\ell)$ and all $f_1, \ldots, f_\ell\in L^\infty(\mu)$, the average
    \begin{align}\label{E: Walsh's average}
        \E_{n\in[N]}T_1^{p_1(n)}f_1\cdots T_\ell^{p_\ell(n)}f_\ell
    \end{align}
    converges in $L^2(\mu)$.
\end{theorem}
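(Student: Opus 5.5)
Walsh's theorem is cited from \cite{W12}, but here is how I would prove it. The plan follows Walsh's strategy: a \emph{complexity-reduction} argument organized around a well-founded induction, avoiding any structure theory of characteristic factors (unavailable for nilpotent actions). First reformulate. Write $g(n) = T_1^{p_1(n)}$ and so on, viewing $n\mapsto g_j(n)$ as a polynomial sequence in the nilpotent group $G=\langle T_1,\ldots,T_\ell\rangle$ acting unitarily on $L^2(\mu)$. Consider more general averages $\E_{n\in[N]} \prod_{j} g_j(n) f_j$ with $g_j$ polynomial sequences in $G$ in the sense of Leibman. This class is closed under the operations needed below: shifts $n\mapsto n+m$, products, and conjugation. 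Then assign each such tuple a complexity, a vector recording degrees along the lower central series filtration, ordered lexicographically so that it is well-founded. Prove convergence by induction on this complexity.

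The key step is a quantitative, finitary form of the van der Corput trick. Norm convergence is equivalent to the Cauchy property, so I argue by contradiction: suppose there are $\varepsilon>0$ and scales $N_1<M_1<N_2<\cdots$ with $\|A_{N_i}-A_{M_i}\|\ge \varepsilon$. By duality, one finds bounded $h_i$ correlating with the difference. Apply van der Corput, or equivalently expand $\|\E_n \prod g_j(n) f_j\|^2$. Multiply by $g_1(n)^{-1}$, which is allowed because the action is by automorphisms and the operators act multiplicatively on products of functions. This produces averages in $n$ of products of $g_1(n)^{-1}g_j(n+m)$ and $g_1(n)^{-1}g_j(n)$, which are polynomial sequences of strictly smaller complexity in the PET sense, now including a dual-function term. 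Walsh's insight is that one cannot pass to a limit in $m$ directly. Instead, one proves a \emph{metastable} (finitary) statement: for every $\varepsilon$ and every growth function $F:\N\to\N$ there is $M$ such that any interval $[N,F(N)]$ with $N\le M$ contains a subinterval of $\varepsilon$-stability, uniformly over all systems and all 1-bounded functions. By Tao's correspondence, uniform metastability is equivalent to norm convergence for all systems, so the induction hypothesis is stated in this uniform form.

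The induction step then takes the form of an energy-increment / approximation argument. Decompose $f_1$ into a structured part, which is approximately a linear combination of the dual functions arising from lower-complexity averages at finitely many scales, plus a remainder orthogonal to those functions. The remainder contributes little, by the van der Corput estimate. The structured part turns the average into a lower-complexity average, to which metastability applies by induction. Iterate this at most $O(\varepsilon^{-2})$ times, using the bound $\|f_1\|\le1$, and the bounds stay uniform.

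I expect the main obstacle to be bookkeeping the PET complexity in the nonabelian setting. One must show that after differencing and conjugation the new polynomial sequences, including those from the structured dual-function substitution, genuinely lie in a family of lower complexity. This requires Leibman's machinery of polynomial mappings into nilpotent groups, where commutators of polynomial sequences are polynomial of controlled degree. I would first set up a filtration-adapted weight (degree in $G_i/G_{i+1}$ counted with multiplicity) and verify its monotonicity under the van der Corput operation.
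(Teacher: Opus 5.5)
The paper does not prove this statement. It is quoted from Walsh \cite{W12}, so your outline has to stand on its own. Its scaffolding is right: uniform metastability as the induction statement, Leibman's polynomial sequences, a PET-type complexity, van der Corput, and an energy increment. But the step that carries the whole argument is asserted, not proved, and as stated it is false. That step is your claim that once $f_1$ is replaced by its structured part, ``the average becomes a lower-complexity average''. The average is linear in $f_1$. If $F=\E_{m}\Phi(m)$ with $\Phi(m)=\prod_j \tilde g_j(m)u_j$ built from a lower-complexity system, then
\[
A_N(F,f_2,\ldots,f_k)=\E_{m}\,A_N(\Phi(m),f_2,\ldots,f_k).
\]
This is an average over $m$ of averages of the \emph{original} system $(g_1,\ldots,g_k)$, with other bounded functions in the first slot, so it is the same problem over again. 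Viewing $g_1(n)\tilde g_j(m)$ as new maps in $n$ gives only translates of $g_1$, so nothing is removed.

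Coupling the two scales and writing $m=n+r$ does not rescue it either. The van der Corput dual contains the term $g_1(m)^{-1}g_1(m+h)\bar f_1$. After composing with $g_1(n)$ one gets, in the abelian case, $p_1(n)-p_1(n+r)+p_1(n+r+h)$, which has the same leading term as $p_1$. So the class of $p_1$ reappears and the PET weight does not drop.

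Whatever makes the structured part tractable must therefore be a \emph{global} property of $F$, not a property of the individual $\Phi(m)$. An example would be an approximate identity, valid for all $n$ up to the current scale, rewriting $g_1(n)F$ through lower-complexity data, with the scales of the dual functions tied to the metastability windows. Your sketch supplies no such mechanism. Supplying one, or another way to exploit the structured part, is the substance of the proof rather than bookkeeping; as written, the induction does not close.

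Two smaller points. First, orthogonality of the remainder to finitely many dual functions does not by itself make its averages small at every scale of $[N,F(N)]$. You need the dual functions at all those scales to stay close to the ones you projected onto; this is exactly where the uniform induction hypothesis on lower-complexity averages must enter. The energy-increment loop must also replace $F$ by a faster-growing function at each step. Second, your metastable statement is misquantified. It should say that there is $N\le M$ with $\|A_{N'}-A_{N''}\|\le\varepsilon$ for all $N',N''\in[N,F(N)]$. ``Every $[N,F(N)]$ contains a stable subinterval'' is vacuous, since a single point is trivially stable.
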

Other results for nilpotent systems of similar flavor have been proved by Bergelson and Leibman \cite{BL02, BL03}, Zorin-Kranich \cite{ZK14, ZK16}, Ionescu, Magyar, Mirek, and Szarek \cite{IMMS23}, as well as Huang, Shao, and Ye \cite{HSY19}.

In a similar spirit, Candela and Szegedy extended the structure theory of Host-Kra factors and seminorms to ergodic actions by nilpotent groups.
\begin{theorem}[Structure theorem for nilpotent Host-Kra factors {\cite{CS23}}]\label{T: Candela-Szegedy}
    Let $H$ be a finitely generated nilpotent group and $(X, \CX, \mu, U)$ be an ergodic $H$-system. Then for every $s\in\N$, the $H$-system $(X, \CZ_s(H), \mu, U)$ is isomorphic to an $s$-step $H$-pronilsystem.\footnote{See Appendix \ref{A: nilsystems} for all relevant definitions and properties of (pro)nilsystems. {From Definition \ref{D: nilsystems}, an $s$-step $H$-pronilsystem is an $s$-step pronilsystem $Y=G/\Gamma$ on which $H$ acts by nilrotations. We caution the reader that if the group $H$ is $k$-step nilpotent, then the $s$-step $H$-pronilsystem  will be a $k$-step nilpotent system in the sense of Definition \ref{D: nilpotent systems}. Since $H$ can be identified with a subgroup of the group $G$, necessarily $k\leq s$.}}
\end{theorem}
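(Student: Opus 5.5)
The plan is to follow the nilspace-theoretic route, since a direct adaptation of the Host--Kra argument for $\Z$ (which uses the abelian structure to build the group of the nilmanifold out of eigenfunctions and cocycles, inductively in $s$) breaks down when $H$ is non-abelian. First, for the ergodic $H$-system $(X,\CX,\mu,U)$ I will define the Host--Kra cube measures $\mu^{[k]}$ on $X^{\{0,1\}^k}$ by the usual inductive relative-independent-joining construction. I will use the F\o lner-averaged invariant $\sigma$-algebras of the diagonal $H$-action in place of the $T$-invariant ones. I will then check the basic properties: symmetry under the cube automorphism group, the consistency of faces, and the seminorm inequalities. These give that $\CZ_s(H)$ is characteristic for $\nnorm{\cdot}_{s+1}$ and that $\mu^{[s+1]}$ restricted to $\CZ_s(H)$ is determined by any $2^{s+1}-1$ of its coordinates.

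Second, I will pass to a topological model in which the cube measures become cube structures. Concretely, I will take the Stone--\v{C}ech-type (or Gelfand) model of the factor $\CZ_s(H)$ and define $C^n(Y)$ as the support of $\mu^{[n]}$, restricted to an appropriate full-measure set. The goal is to verify that $(Y,(C^n(Y))_n)$ is a compact ergodic $s$-step nilspace. This requires checking three axioms: composition (cubes pulled back by discrete-cube morphisms are cubes), ergodicity ($C^1=Y^2$, which comes from ergodicity of $H$), and corner completion with unique completion at level $s+1$. The uniqueness comes from the determinacy statement in the first step. Corner completion itself is the step I expect to be the main obstacle. Only measure-theoretic (almost everywhere) versions of these axioms come out of the cube measures, so one needs a \emph{cubic coupling} framework: an abstract axiomatization of $\{\mu^{[n]}\}$, with ergodicity, conditional independence, and consistency, from which one proves that the $s$-step factor is measure-isomorphic to a compact nilspace carrying its Haar measures. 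I would first try to establish conditional independence of the faces of $\mu^{[n]}$ directly from the van der Corput structure of the definitions, and then invoke the abstract ``cubic couplings give nilspaces'' reconstruction.

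Third, I will use the structure theory of compact finite-step nilspaces. An ergodic compact $s$-step nilspace is an inverse limit of nilspaces with Lie structure groups, and each such Lie-fibred nilspace is a nilmanifold $G/\Gamma$ with cubes given by Host--Kra cube groups, where $G$ is (a subgroup of) the translation group. This yields $Y\cong \varprojlim G_i/\Gamma_i$. It remains to see that $H$ acts by nilrotations. Each $U_h$ preserves all cube measures, hence acts on $Y$ by cube-preserving maps fixing the Haar measure. Such maps are translations in $\Theta(Y)$, possibly after composing with the fibre identifications. Compatibility with the inverse system then gives homomorphisms $H\to G_i$ commuting with the projections. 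Finally, I will check that the resulting measure isomorphism intertwines $U$ with this nilrotation action, which gives the $s$-step $H$-pronilsystem claimed in Theorem \ref{T: Candela-Szegedy}.
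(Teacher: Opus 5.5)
The paper does not prove this statement. It quotes it from \cite{CS23}, and the only thing checked here (in the nilpotent-case subsection of Section~\ref{S: seminorms and factors}) is that $\mu_{s,H}$, $\nnorm{\cdot}_{s,H}$ and $\CZ_{s-1}(H)$ coincide with the Candela--Szegedy cubic couplings, seminorms and factors. Your outline broadly follows the architecture of \cite{CS23}: cubic coupling, then nilspace factor, then inverse limit, then nilrotations. As written, however, two steps do not go through.

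First, the inference in your third step, that each $U_h$ ``preserves all cube measures'' and is therefore a translation, is invalid. Invariance of $\mu_{n,H}$ under the diagonal maps $U_h^{\cube{n}}$ holds by construction, and it only makes $U_h$ a nilspace automorphism. For instance, $x\mapsto -x$ on $\T$ preserves Haar measure and all cube measures of an irrational rotation, but it is not a translation. Similarly, the affine maps $g\Gamma\mapsto b\sigma(g)\Gamma$ in the proof of Proposition~\ref{P: Bryna} preserve the cubes of $G/\Gamma$ without in general being nilrotations of it. What makes $U_h$ a translation is invariance of $\mu_{n,H}$ under the face transformations $Q^{\cube{n}}(U)$. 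For nonabelian $H$ this is not formal, because the averaging formula of Lemma~\ref{L: cubic measures using averages} involves the ordered products $h_{s,\eps_s}\cdots h_{1,\eps_1}$. The paper proves it separately as Proposition~\ref{P: face transformations invariance}, using the Kronecker-factor argument of Corollary~\ref{C: equal invariant factors 2}. For the same reason, the symmetry of $\mu_{n,H}$ under permutations of cube directions, which you treat as routine in step one, is not a change of variables. The equivalence $\nnorm{f}_{s+1,H}=0\iff \E(f|\CZ_s(H))=0$ is not routine either: only \eqref{E: seminorms vs. factors nonabelian} is elementary, and the converse is again imported from \cite{CS23}.

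Second, and more seriously, you never use that $H$ is finitely generated, and without that hypothesis the conclusion is false. Take $H=\mathbb{F}_p^\omega$ with $p$ odd, acting on $(\prod_{i\geq 1}\mathbb{F}_p)\times\mathbb{F}_p$ by $U_h(z,y)=(z+h,\,y+\sum_i h_iz_i+2^{-1}\sum_i h_i^2)$. A Fourier argument shows that this action is ergodic with Kronecker factor the $z$-coordinate, while $\CZ_2$ is everything. On the other hand, ergodic nilrotation actions of a group of exponent $p$ are finite: on the maximal torus factor of the identity component, the generators act by affine maps of order $p$ with unipotent linear part, hence by $p$-torsion translations. Such actions are therefore rotations of finite abelian groups, and any inverse limit of them is Kronecker. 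In this example your first two steps do go through: the nilspace factor is $\prod_i\mathbb{F}_p$ times $\mathbb{F}_p$ with its degree-$2$ cubes, the inverse limit of the finite-rank quotients $\prod_{i\leq n}\mathbb{F}_p\times\mathbb{F}_p$. What fails is ``compatibility with the inverse system then gives homomorphisms $H\to G_i$'': none of these quotients is invariant under all $U_h$, since $U_{e_i}$ involves the coordinate $z_i$. What is missing is an argument that uses finite generation to produce a cofinal family of finite-rank factors invariant under a finite generating set of $H$. You also need a proof that each such Lie-fibred factor is a nilmanifold on which $H$ acts by nilrotations; this is not automatic for compact nilspaces with Lie structure groups.
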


Theorems \ref{T: Leibman nilpotent}, \ref{T: Walsh}, and \ref{T: Candela-Szegedy} reinforce the following heuristic, previously stated in \cite{Kuc26}. 
\begin{nh}\label{H: nilpotent heuristic}
    All reasonable properties of multiple recurrence, multiple ergodic averages, and Host-Kra factors that hold for abelian systems should extend to nilpotent systems.
\end{nh}

On the other hand, Bergelson and Leibman showed that both Theorems \ref{T: Leibman nilpotent} and \ref{T: Walsh} fail for measure-preserving actions of solvable groups of exponential growth \cite{BL02, BL04}. A similar counterexample can likely be constructed for Theorem~\ref{T: Candela-Szegedy}; see Problem~\ref{Pr: solvable}.

A good heuristic requires a convincing justification: what, then, is so special about nilpotent systems? The answer lies in the very definition of nilpotent groups: their level of noncommutativity, measured by the length of (the nontrivial part of) their lower central series, is bounded. As a result, various inductive arguments performed on nilpotent groups conclude in finite time. One vital consequence is that polynomial sequences in nilpotent groups form a group. This powerful fact was established by Leibman \cite{L02} and is heavily exploited in Section \ref{S: PET}.

Verification of Nilpotent Heuristic remains a formidable challenge in contemporary ergodic theory: the problems it concerns are notoriously difficult to deal with due to the confluence of dynamical, analytic, and group-theoretic issues. One direction in which the heuristic remains almost completely unexplored is the structure of limits of nilpotent ergodic averages. Walsh's convergence theorem (Theorem~\ref{T: Walsh}) is extremely general, but the price to pay is that it says nothing about what the limit is. This brings us to the central problem investigated in this paper.
\begin{mproblem}
    What can we say about the $L^2(\mu)$ limit of \eqref{E: Walsh's average}?
\end{mproblem}
Describing the limit is not only a natural question in itself; it is also a way to obtain further multiple recurrence results, and hence new extensions of Szemer\'edi's theorem. Furthermore, this problem is intimately connected to the study of multilinear operators in harmonic analysis. This includes both the discrete operators naturally arising in the Furstenberg-Bergelson-Leibman conjecture in pointwise ergodic theory (see e.g. \cite{KMPWW24, KMT20, Mirek26}) and variants of the multilinear Hilbert transform  from continuous harmonic analysis (e.g. \cite{CDR21, HL23, LT97, Tao16}).

Up to several years ago, we had limited tools to analyze the limits of \eqref{E: Walsh's average} even for commuting actions. Recently, a number of breakthroughs \cite{DFKS22, DKKST25, DKKST24, DKS22,DKS23,Fr21, FrKu22b, FrKu22a} allowed us to overcome this barrier and develop a potent theory of averages in the commuting setting. This paper takes the next logical step towards Main Problem~and develops machinery for the study of 2-step nilpotent ergodic averages.

\subsection{Limits of 2-step nilpotent ergodic averages}

Our main results concern the limiting behavior of \eqref{E: Walsh's average} whenever the group is 2-step nilpotent and the iterates are nonconstant polynomials of distinct degrees. {We recall that a measure-preserving transformation $T$ is \textit{totally ergodic} if $T, T^2, T^3, \ldots, $ are all ergodic.}
\begin{theorem}\label{T: nilpotent joint ergodicity}
    Let $\ell\in\N$. For all 2-step nilpotent systems $(X, \CX, \mu, T_1, \ldots, T_\ell)$ with totally ergodic $T_1, \ldots, T_\ell$, nonconstant polynomials $p_1, \ldots, p_\ell\in\Z[n]$ of distinct degrees, and functions $f_1, \ldots, f_\ell\in L^\infty(\mu)$, we have
    \begin{align}\label{E: main equality}
    \lim_{N\to\infty}\norm{\E_{n\in[N]}T_1^{p_1(n)}f_1\cdots T_\ell^{p_\ell(n)}f_\ell - \int f_1\; d\mu\cdots \int f_\ell\; d\mu}_{L^2(\mu)} = 0.
\end{align}
\end{theorem}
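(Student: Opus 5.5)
We prove Theorem~\ref{T: nilpotent joint ergodicity} through the joint ergodicity paradigm of Frantzikinakis. The goal is a criterion in the nilpotent setting: the averages \eqref{E: Walsh's average} converge to $\prod_j\int f_j\,d\mu$ provided two conditions hold. The first is a \emph{seminorm control} condition: the average vanishes in $L^2(\mu)$ whenever $\nnorm{f_j}_{s,T_j}=0$ for some $j$, with $s$ depending only on the degrees and on $\ell$. The second is an \emph{equidistribution} condition: the averages are correct when each $f_j$ is an eigenfunction of $T_j$. The equidistribution part is easy in our setting. If $T_jf_j=\lambda_j f_j$ with $T_j$ totally ergodic, then $\lambda_j$ is either $1$ (in which case $f_j$ is constant) or irrational. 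The distinct-degree hypothesis then makes $\E_{n\in[N]}\prod_j\lambda_j^{p_j(n)}\to 0$ by Weyl's equidistribution theorem unless all $\lambda_j=1$. The same argument covers the nonergodic spectral pieces that arise from the decomposition.

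The main work is the seminorm control, which we plan in three steps. First, we run a PET induction adapted to polynomial sequences in 2-step nilpotent groups, using that such sequences form a group (Leibman). Each van der Corput step produces differenced sequences of the form $g(n+h)g(n)^{-1}$, and these have extra commutator terms lying in the center $[H,H]$. Since $H$ is 2-step, these commutator terms are central and polynomial in $(n,h)$. The output is control of the average by some Host--Kra seminorm of one function, but with respect to a mixed group of transformations that includes commutators $[T_i,T_j]$. This yields an estimate of the form $\nnorm{f_\ell}_{T_\ell^{a_1}[\cdot], \ldots}$, i.e.\ a seminorm along a finite collection of elements of $H$ together with central elements.

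Second, we remove the unwanted directions by a seminorm smoothing / degree-lowering argument of the kind in \cite{FrKu22a, FrKu22b}. Because $p_\ell$ has the largest degree, the highest-degree coefficient attaches only to $T_\ell$. This lets us show that the dual-function obstruction must come from $T_\ell$ alone, which upgrades the bound to control by $\nnorm{f_\ell}_{s,T_\ell}$. Total ergodicity, combined with Theorem~\ref{T: Candela-Szegedy} applied to the cyclic group generated by $T_\ell$ (an abelian special case), then lets us replace $f_\ell$ by its projection onto a pronilfactor, which we further reduce to the Kronecker factor in the end.

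Third, we proceed by induction on $\ell$, or on the degrees, treating each $f_j$ in turn in order of descending degree. Once $f_\ell$ is shown to be replaceable by $\int f_\ell\,d\mu$, the average reduces to one with $\ell-1$ transformations. These still generate a 2-step nilpotent group, so the induction closes.

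The main obstacle is the second step. In the commuting setting, degree lowering uses the invariance of $\nnorm{\cdot}_{T_\ell}$ under the other $T_j$. In the nilpotent setting, however, $T_j$ does not preserve the $T_\ell$-invariant factors; it only maps them to factors invariant under $T_j T_\ell T_j^{-1}=[T_j,T_\ell]T_\ell$. To handle this, we plan to use centrality of $[T_j,T_\ell]$ and show that the relevant Host--Kra factors of $T_\ell$ and of $[T_j,T_\ell]T_\ell$ can be compared. If that comparison fails, the fallback is to work throughout with seminorms along the subgroup generated by $T_\ell$ and $[H,H]$. The cost is then an extra equidistribution check for central rotations, which the distinct-degree assumption should again handle.
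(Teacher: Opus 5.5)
Your second step is the gap, and it cannot be patched. Degree lowering and seminorm smoothing both rely on the dual--difference interchange $\Delta'_{S;(h,h')}(T^nf)=T^n(\Delta'_{S;(h,h')}f)$. This identity fails once $T,S$ do not commute, because $T^nS^h$ and $S^hT^n$ differ by $[T,S]^{\pm nh}$. After the Cauchy--Schwarz step you are therefore left with an average carrying new commutator iterates that depend jointly on $n$ and $h$. It is not an instance of the original average applied to $\Delta'$-differenced functions.

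Comparing the Host--Kra factors of $T_\ell$ and $T_jT_\ell T_j^{-1}$ does not fix this. That comparison is in fact true when $T_\ell$ is ergodic (it follows from Lemma~\ref{L: conjugating factors} and Proposition~\ref{P: invariance of Host-Kra factors}). But it only says which factor is invariant, not that differencing commutes with $T^n$. Your fallback along $\langle T_\ell,[H,H]\rangle$ has the same defect, since the interchange still fails against $T_1,\dots,T_{\ell-1}$.

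More decisively, the criterion you are aiming for (seminorm control plus correct eigenfunction averages implies joint ergodicity) is \emph{false} for 2-step nilpotent actions. The example behind Theorem~\ref{T: counterexample to joint ergodicity criteria} uses totally ergodic affine maps of $\T^2$ with iterates $n,n,n^2,n^2$. There, degree-3 seminorm control holds trivially, since each $T_j$ makes $\T^2$ a 2-step nilsystem, and the eigenfunction averages are correct, yet joint ergodicity fails. So no general mechanism can push the characteristic factor down to the Kronecker factor. What gets lost is equidistribution on the whole nilmanifold, and that is where the hypothesis on the polynomials must be used, not only in the Weyl computation for eigenvalues.

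Your first step is also problematic as stated. A seminorm of $f_\ell$ (or of translates like $T_1^af_\ell$) along a mixture of noncommuting directions cannot be converted into a seminorm of $f_\ell$. Such box seminorms are also not known to have the permutation and subgroup properties that any smoothing would need.

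The paper avoids both issues.
\begin{itemize}
\item It runs the nilpotent PET only until $T_1,\dots,T_{\ell-1}$ are eliminated, leaving a commuting average in $T_\ell$ and the central commutators.
\item It turns the iterated limit into a single multiparameter average and applies Theorem~\ref{T: new estimates}.
\item The distinguishability condition (the fourth nice property in Definition~\ref{D: nice families}) ensures that every group $H_{j,j'}$ occurring there contains a nonzero multiple of the $T_\ell$-direction. This gives $\nnorm{f_\ell}_{s,T_\ell}$ control directly, with no smoothing (Theorem~\ref{T: nilpotent seminorm estimates}).
\end{itemize}
Then, rather than descending to the Kronecker factor, it ascends. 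Proposition~\ref{P: comparing HK factors in nilpotent systems} gives $\CZ_s(T_j)\subseteq\CZ_{s'}(H)$ for all $j$. Its proof uses factor invariance under conjugation, Parry's theorem that isomorphisms of nilsystems are affine, and Theorem~\ref{T: Candela-Szegedy}. This puts all the $f_j$ into a single $H$-pronilfactor; without it, your projections onto the different $\CZ_s(T_j)$ would not live on a common nilsystem. The limit is finally computed by Theorem~\ref{T: equidistribution}: equidistribution of $(S_1^{p_1(n)}y,\dots,S_\ell^{p_\ell(n)}y)$ on all of $Y^\ell$ for totally ergodic nilrotations and independent polynomials. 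That theorem is proved via Leibman's theorem and a reduction to unipotent affine maps on a torus.
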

Theorem~\ref{T: nilpotent joint ergodicity} exemplifies the phenomenon of \textit{joint ergodicity}, which has received significant attention in recent years (see \cite{Kuc26} for a survey on these developments). In particular, it resolves Problem~49 and makes substantial progress towards addressing Problems 45 and 46 from the aforementioned survey. 
When $T_1 = \ldots = T_\ell$, Theorem~\ref{T: nilpotent joint ergodicity} was proved by Frantzikinakis and Kra \cite{FrKr05}. In the commuting case, the result follows from a work of Chu, Frantzikinakis, and Host \cite{CFH11} for monomials, and it was extended by Frantzikinakis and the second author \cite{FrKu22a} to all independent\footnote{We call polynomials $p_1, \ldots, p_\ell\in\Z[n]$ \textit{independent} if any nontrivial  linear combination is nonconstant.} polynomials. In the 2-step nilpotent case, the result is completely new even for the relatively simple limit
\begin{align*}
    \lim_{N\to\infty}\E_{n\in[N]}T_1^n f_1\cdot T_2^{n^2}f_2.
\end{align*}

The assumption of total ergodicity prevents local obstructions; without this assumption, \eqref{E: main equality} fails in general even for $\ell=1$ and $p_1(n)=n^2$. In the absence of total ergodicity, we obtain Host-Kra seminorm\footnote{Defined in Definition \ref{D: box seminorms}.} estimates for the same average.

\begin{theorem}\label{T: nilpotent seminorm estimates}
    Let $d,\ell\in\N$. There exists an integer $s=O_{d,\ell}(1)$ such that for all 2-step nilpotent systems $(X, \CX, \mu, T_1, \ldots, T_\ell)$, polynomials $p_1, \ldots, p_\ell\in\Z[n]$ satisfying
    \begin{align*}
        0<\deg p_1 < \cdots < \deg p_\ell \leq d,
    \end{align*}
    and functions $f_1, \ldots, f_\ell\in L^\infty(\mu)$, we have
    \begin{align}
    \lim_{N\to\infty}\norm{\E_{n\in[N]}T_1^{p_1(n)}f_1\cdots T_\ell^{p_\ell(n)}f_\ell}_{L^2(\mu)} = 0
\end{align}
whenever $\nnorm{f_j}_{s,T_j} = 0$ for some $1\leq j\leq \ell$.
\end{theorem}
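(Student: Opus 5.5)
The argument has two stages. The first is a PET induction that bounds the average by a box seminorm. The second is a degree-lowering argument that upgrades that box seminorm to the Host-Kra seminorm $\nnorm{f_j}_{s,T_j}$.

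\textbf{Stage 1: PET.} I would run van der Corput differencing, organised by a PET scheme, inside the group of polynomial sequences in the 2-step nilpotent group $H=\langle T_1,\ldots,T_\ell\rangle$. Leibman's theorem that polynomial sequences form a group lets each differencing step stay in a controlled class.

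\begin{itemize*}
\item Write the average as $\E_{n} \prod_i g_i(n) f_i$ with $g_i(n)=T_i^{p_i(n)}$.
\item Composing with $g_1(n)^{-1}$ and differencing in $n$ produces sequences of the form $g_i(n+m)g_1(n+m)^{-1}\cdots$.
\item Because $H$ is 2-step nilpotent, every such sequence can be written as $T_i^{q(n,m)}$ times a commutator term $\prod_{a<b}[T_a,T_b]^{r_{ab}(n,m)}$. The commutators are central.
\item A suitable weight (PET degree, counting both the $T_i$-parts and the central parts) decreases at each step, so the induction terminates in $O_{d,\ell}(1)$ steps.
\end{itemize*}

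The outcome should be a bound of the form
\[
\limsup_N \norm{\E_{n\in[N]}\prod_i T_i^{p_i(n)}f_i}_{L^2(\mu)}^{2^t} \ll \E_{\underline m}\nnorm{f_\ell}_{\CB(\underline m)},
\]
where the right side is a box seminorm along group elements $h_k(\underline m)$. Each $h_k(\underline m)$ equals $T_\ell^{c_k(\underline m)}$ times central commutator elements, with $c_k$ polynomials that are nondegenerate. Nondegeneracy holds because $\deg p_\ell$ strictly exceeds the other degrees, so the leading coefficient of $T_\ell$ survives every differencing step.

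\textbf{Stage 2: removing the central parts.} A box seminorm along $T_\ell^{c}\cdot z$ with $z$ central is not directly comparable to a $T_\ell$-seminorm. To handle this, I would extend the system: view $f_\ell$ together with the action of the central subgroup $Z=[H,H]$, which commutes with $T_\ell$, giving an abelian $\langle T_\ell,Z\rangle$-action. Averaging over $\underline m$ and using Host-Kra-type concatenation or the seminorm smoothing estimates, a box seminorm averaged over polynomial directions $T_\ell^{c(\underline m)}z(\underline m)$ is controlled by a seminorm along the $T_\ell$ direction together with directions in $Z$. I would then need to show that the $Z$-directions are dispensable.

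I expect this to be the main obstacle. My plan is to apply the Candela-Szegedy structure theorem (Theorem~\ref{T: Candela-Szegedy}) to the ergodic components, so that on the characteristic factor $H$ acts by nilrotations. Then I would argue that the commutator parts, having lower degree in the relevant sense, can be absorbed by raising $s$. If that fails, the fallback is a degree-lowering induction in the style of Frantzikinakis's joint ergodicity method:
\begin{itemize*}
\item Assume the average is controlled by $\nnorm{f_\ell}_{s+1,T_\ell}$, and take a dual-function decomposition.
\item Replace $f_\ell$ by a nilsequence-like function, that is, an eigenfunction-type object for $T_\ell$ modulo the centre.
\item Show this reduces control to $\nnorm{f_\ell}_{s,T_\ell}$.
\item Iterate down to a bounded $s$.
\end{itemize*}

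\textbf{Other indices $j<\ell$.} I would obtain these by induction on $\ell$. Once $f_\ell$ is shown to be controlled by $\nnorm{f_\ell}_{s,T_\ell}$, we may replace $f_\ell$ by its projection onto $\CZ_{s-1}(T_\ell)$, and its dual structure lets us remove the top-degree term. Concretely, approximate $f_\ell$ by $T_\ell$-dual functions, expand, and absorb $T_\ell^{p_\ell(n)}f_\ell$ into a structured weight. This yields an average with $\ell-1$ polynomials of distinct degrees in a 2-step nilpotent system, weighted by a nilsequence-type weight. The inductive hypothesis, stated robustly so that it allows such weights, then gives control by $\nnorm{f_j}_{s',T_j}$. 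All constants depend only on $d$ and $\ell$, which gives $s=O_{d,\ell}(1)$.
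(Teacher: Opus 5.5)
\textbf{There is a genuine gap: Stage 1 cannot produce what you claim, and Stage 2 does not repair it.}

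Once the van der Corput steps have eliminated $T_1,\ldots,T_{\ell-1}$, all surviving terms share the same $n$-coefficient in the $T_\ell$-coordinate. Their $T_\ell$-exponents have the form $t(h)n+b_j(h)$ with a common $t$. Consequently, any estimate that treats $n$ as the only variable, with the differencing parameters fixed, produces seminorms along the $n$-coefficients of the differences $p_j*p_{j'}^{-1}$. These directions have zero $T_\ell$-component, i.e.\ they are pure commutators.

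This already happens for $\E_n T^nf_1\cdot S^{n^2}f_2$. There, single-variable estimates give control only by $\nnorm{f_2}_{(0,2h_1,0)^{\times s},(0,0,-h_2)^{\times s}}$, and $(0,0,-h_2)$ is a power of $[T,S]$ alone. Differencing further down to linear in $n$, as standard PET would, only makes this worse: one more step removes the $n$-dependence of the $T_\ell$-coordinate altogether.

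So your claim that every direction $h_k(\underline m)$ has a nondegenerate $T_\ell$-exponent is false. The inequality $\deg p_\ell>\deg p_j$ keeps a $T_\ell$-component in the pinned term, not in the pairwise differences that enter the seminorms. Your PET weight is also left unspecified, even though commutator exponents can grow in degree under differencing (one gets $(-n,n^2,n^3)$ after a single step in the example above).

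The central directions cannot be discarded afterwards either. Concatenating seminorms along $T_\ell^{c(\underline m)}z(\underline m)$ over $\underline m$ gives at best a seminorm along the group these elements generate. That group contains a power of $T_\ell$ only if $c$ is not a $\Q$-linear combination of the commutator exponents. For instance, $c(\underline m)=m_1$ and $z(\underline m)=[T_a,T_b]^{m_1}$ generate $\langle T_\ell[T_a,T_b]\rangle$, whose seminorms are not controlled by $\nnorm{\cdot}_{s,T_\ell}$. Moreover, the quantitative estimates you could average over $\underline m$ (Theorem~\ref{T: original estimates}) only see leading coefficients, which are exactly the central ones. The estimates that see all coefficients are qualitative, so they cannot be averaged over $h$ from outside.

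The missing ideas are these:
\begin{itemize}
\item a distinguishability property ($\Q$-independence of the $T_\ell$-exponent of each $p_j*p_{j'}^{-1}$ from its commutator exponents, as polynomials in all variables), shown to survive every van der Corput step;
\item merging the averages over $h$ and $n$ into one multiparameter average (symmetric van der Corput, Walsh's theorem, the Bergelson--Leibman Fubini principle), so that $h$ becomes a genuine variable;
\item Theorem~\ref{T: new estimates}, whose groups are generated by all coefficients of the differences, so that by Lemma~\ref{lnh} each group contains a nonzero power of $T_\ell$.
\end{itemize}

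Your fallbacks do not supply any of this. Knowing that a Host--Kra factor of $H$ is characteristic would be weaker than the $T_\ell$-control you need, since the $T_\ell$-factor is the smaller one (cf.\ Proposition~\ref{P: comparing HK factors in nilpotent systems}). Nothing in Stage 1 gives $H$-seminorm control anyway, because the subgroup property is unavailable for nonabelian $H$. Degree lowering presupposes an initial $T_\ell$-seminorm bound, which is the entire difficulty, since any $s=O_{d,\ell}(1)$ suffices. It also breaks down here, because $\Delta'_{S;(h,h')}(T^nf)\neq T^n(\Delta'_{S;(h,h')}f)$.

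Your treatment of the indices $j<\ell$ via dual functions is close in spirit to the paper's, which expands the dual function and reapplies the nice-family estimate to $\{p_j(n+\eps\cdot h)e_j\}$. However, it rests on the $f_\ell$ case.
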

The single-transformation case of Theorem~\ref{T: nilpotent seminorm estimates} is known due to Host and Kra \cite{HK05b} and Leibman \cite{L05a}, whereas the commuting case was handled by Chu, Frantzikinakis, and Host \cite{CFH11} (with later extension by Frantzikinakis and the second author \cite{FrKu22a} to all pairwise independent polynomials).

While we believe the following conjecture to be true, our methods fail beyond the setting of 2-step nilpotent systems and distinct-degree polynomials. The necessity of these assumptions for our techniques is explained in Section \ref{SS: outline}.
\begin{conjecture}\label{C: main conjecture}
     Let $\ell\in\N$. For all nilpotent systems $(X, \CX, \mu, T_1, \ldots, T_\ell)$ with totally ergodic $T_1, \ldots, T_\ell$, independent polynomials $p_1, \ldots, p_\ell\in\Z[n]$, and functions $f_1, \ldots, f_\ell\in L^\infty(\mu)$, we have
    \begin{align}
    \lim_{N\to\infty}\norm{\E_{n\in[N]}T_1^{p_1(n)}f_1\cdots T_\ell^{p_\ell(n)}f_\ell - \int f_1\; d\mu\cdots \int f_\ell\; d\mu}_{L^2(\mu)} = 0.
\end{align}
\end{conjecture}
The only previous results on the structure of properly nilpotent ergodic averages have been established by Bergelson and Leibman \cite{BL02} in a work predating Walsh's. They studied the average
\begin{align}\label{E: BL average}
    \lim_{N\to\infty}\E_{n\in[N]}T^n f_1 \cdot S^n f_2
\end{align}
for a nilpotent group of transformations $\langle T, S\rangle$. This setting is somewhat orthogonal to ours as having only linear iterates comes with a different set of advantages and disadvantages. Among other things, Bergelson and Leibman established norm convergence and identified characteristic factors for such averages. While their remarkable work covers nilpotent groups of arbitrary step, it does not extend to more than two terms.

Arguing as in \cite{FHK10}, we can restrict the averaging in Theorems \ref{T: nilpotent joint ergodicity} and \ref{T: nilpotent seminorm estimates} to the primes.\footnote{The same comment applies applies to other joint ergodicity results (Theorems \ref{T: joint ergodicity conjecture} and \ref{T: nilpotent joint ergodicity}) and seminorm estimates (Theorem~\ref{T: new estimates}). As these modifications are completely standard, we will not provide more details.} Indeed, to see this for Theorem~\ref{T: nilpotent joint ergodicity}, let $(X, \CX, \mu, T_1, \ldots, T_\ell)$ be a 2-step nilpotent system with totally ergodic $T_1, \ldots, T_\ell$, nonconstant polynomials $p_1, \ldots, p_\ell\in\Z[n]$ of distinct degrees, and functions $f_1, \ldots, f_\ell\in L^\infty(\mu)$. Theorem~\ref{T: nilpotent joint ergodicity} implies that
    \begin{align}
    \lim_{N\to\infty}\norm{\E_{n\in[N]}T_1^{p_1(Wn+b)}f_1\cdots T_\ell^{p_\ell(Wn+b)}f_\ell - \int f_1\; d\mu\cdots \int f_\ell\; d\mu}_{L^2(\mu)} = 0
\end{align}
for all $W,b\in\N.$ Following the strategy laid out in \cite[Subsection~1.3]{FHK10}, the transference result \cite[Theorem~1.3]{FHK10} holds more generally for nilpotent actions. Hence
\begin{align}
    \lim_{N\to\infty}\norm{\E_{n\in\P\cap [N]}T_1^{p_1(n)}f_1\cdots T_\ell^{p_\ell(n)}f_\ell - \int f_1\; d\mu\cdots \int f_\ell\; d\mu}_{L^2(\mu)} = 0,
\end{align}
where $\P$ is the set of primes. The result that undepins these manipulations is the Gowers uniformity of the shifted modified von Mangoldt function due to Green and Tao \cite{GT10b, GT12b}.

{Another standard corollary of Theorem \ref{T: nilpotent seminorm estimates} is that for 2-step nilpotent systems $(X, \CX, \mu$, $T_1, \ldots, T_\ell)$, distinct-degree polynomials $p_1, \ldots, p_\ell\in\Z[n]$, and functions $f_1, \ldots, f_\ell\in L^\infty(\mu)$, the multicorrelation sequence 
\begin{align*}
    \psi(n) = \int f_0 \cdot T_1^{p_1(n)}f_1 \cdots T_\ell^{p_\ell(n)}f_\ell\; d\mu
\end{align*}
admits a nil+null decomposition, i.e. it can be written as a sum of a nilsequence (whose degree depends only on the degrees of $p_1, \ldots, p_\ell$) and a null-sequence. The proof goes as in \cite[Section 2.6]{FrKu22a}, which also contains the definitions of nilsequences and nullsequences.} 

\subsection{Nilpotent polynomial Szemer\'edi theorem}

As a corollary of Theorem~\ref{T: nilpotent seminorm estimates}, we obtain Khintchine-type lower bounds in a particular case of Theorem~\ref{T: Leibman nilpotent}.
\begin{theorem}\label{T: Khintchine}
    Let $\ell\in\N$. For all 2-step nilpotent systems $(X, \CX, \mu, T_1, \ldots, T_\ell)$, sets $E\in\CX$ with $\mu(E)>0$, integers $0<d_1< \cdots<d_\ell$, and $\veps>0$, the set
    \begin{align}
    \rem{n\in\Z\colon \mu(E\cap T_1^{-n^{d_1}}E\cap \cdots \cap T_\ell^{-n^{d_\ell}}E)>\mu(E)^{\ell+1}-\veps}
\end{align}
is syndetic.
\end{theorem}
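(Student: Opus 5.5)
The plan is the standard Frantzikinakis--Kra style argument: use Theorem~\ref{T: nilpotent seminorm estimates} to replace each $1_E$ by its projection onto a Host--Kra factor, and then control the resulting averages on those factors, which are nilsystems by Theorem~\ref{T: Candela-Szegedy}.

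\textbf{Reductions.} First reduce to $X$ ergodic for the $H$-action, where $H=\langle T_1,\ldots,T_\ell\rangle$, via ergodic decomposition. The inequality $\int \mu_x(E)^{\ell+1}\,d\mu\ge \mu(E)^{\ell+1}$ lets us do this, provided we show syndeticity of a set of $n$ that works uniformly in a positive-measure family of components. Alternatively, carry out the argument below directly with conditional expectations, which avoids that uniformity issue.

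\textbf{Step 1 (passing to factors).} Let $s$ be given by Theorem~\ref{T: nilpotent seminorm estimates} for $d=d_\ell$. Write $1_E=\tilde f_j+g_j$ with $\tilde f_j=\E(1_E|\CZ_s(T_j))$ and $\nnorm{g_j}_{s,T_j}=0$. Telescoping and Theorem~\ref{T: nilpotent seminorm estimates} show that
\begin{align*}
\E_{n\in[N]} c_n\,\mu(E\cap T_1^{-n^{d_1}}E\cap\cdots\cap T_\ell^{-n^{d_\ell}}E) - \E_{n\in[N]}c_n\int 1_E\prod_j T_j^{n^{d_j}}\tilde f_j\,d\mu\to 0 .
\end{align*}
This holds for bounded weights $c_n$, and in particular uniformly along shifted progressions $n\in W\Z+r$ as long as the polynomials keep distinct degrees, which they do since $(Wn+r)^{d_j}$ has degree $d_j$. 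The same holds for averages over shifted intervals $[M,M+N)$, since the seminorm estimate is uniform over Følner sequences of intervals; I expect the proof of Theorem~\ref{T: nilpotent seminorm estimates} to give this, as PET/van der Corput arguments are uniform.

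\textbf{Step 2 (main obstacle: the lower bound on the factor average).} Here we need to show that for every $\veps>0$ there is $W$ such that
\begin{align*}
\lim_{N}\E_{n\in[N]}\int 1_E\prod_j T_j^{(Wn)^{d_j}}\tilde f_j\,d\mu \ge \int 1_E\prod_j\tilde f_j\,d\mu-\veps \ge \mu(E)^{\ell+1}-\veps .
\end{align*}
The first inequality is the structural step. Approximate each $\tilde f_j$ in $L^2$ by a function on an $s$-step $T_j$-nilsystem; here Theorem~\ref{T: Candela-Szegedy} is used for the single transformation $T_j$. Along multiples of a suitable $W$, the rotations $T_j^{(Wn)^{d_j}}$ then act on $\tilde f_j$ approximately trivially for a positive density of $n$. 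Concretely, by Weyl-type equidistribution of polynomial sequences on nilmanifolds, the sequence $\tilde f_j(T_j^{(Wn)^{d_j}}x)$ is close in $L^2$ to $\tilde f_j$ for all $n$ in a set of positive density, uniformly in $x$. Taking $W$ to kill the finitely many rational (torsion) obstructions, one gets $\norm{T_j^{(Wn)^{d_j}}\tilde f_j-\tilde f_j}_{L^2}<\delta$ simultaneously for all $j$ and for all $n$ in a set $B$ of positive upper Banach density. This follows from a multiple (polynomial) recurrence statement for the finitely many unipotent rotations, or more simply from uniform distribution of the vector sequence of the $n^{d_j}$-th powers of the $T_j$ on the product Kronecker-like nilfactors.

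The second inequality is the standard Hölder/Jensen estimate:
\begin{align*}
\int 1_E\prod_j \E(1_E|\CZ_s(T_j))\,d\mu\ge\mu(E)^{\ell+1}.
\end{align*}
This is the point I expect to be most delicate, because the factors $\CZ_s(T_j)$ are different and noncommuting. I would try the Chu-type argument: iterated conditional expectations onto a common factor, or a lemma of the form $\int f\prod_j\E(f|\CB_j)\ge(\int f)^{\ell+1}$ for nonnegative $f$ and arbitrary $\sigma$-algebras. The latter is false in general for $\ell\ge2$, so this is exactly the hard point. A fallback is to replace $\tilde f_j$ by projections onto an invariant joint factor where the $T_j$-nilsystems interact controllably, using the 2-step structure.

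\textbf{Step 3 (syndeticity).} If the set in Theorem~\ref{T: Khintchine} were not syndetic, its complement would contain arbitrarily long intervals $[M_k,M_k+N_k)$. Averaging over $n\in W\Z$ in these intervals and combining Steps 1 and 2 (uniform versions over shifted intervals) gives an average exceeding $\mu(E)^{\ell+1}-\veps$, which is a contradiction.
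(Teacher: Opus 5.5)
Your Steps 1 and 3 agree with the paper, but Step 2 has a genuine gap, and it is not where you located it.

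\textbf{The first inequality of Step 2 fails.} You claim that along multiples of some $W$ the translates $T_j^{(Wn)^{d_j}}\tilde f_j$ are $L^2$-close to $\tilde f_j$ for a positive-density set of $n$. This is false as soon as $\CZ_s(T_j)$ has genuinely nilpotent structure. Take $\ell=1$, $T(x,y)=(x+\alpha,y+x)$ on $\T^2$ with $\alpha$ irrational, and $E=\T\times[0,1/2)$. This is an ergodic 2-step nilsystem, so $\E(1_E|\CZ_s(T))=1_E$ for $s\geq 2$. Since $T^m(x,y)=(x+m\alpha,\,y+mx+\binom{m}{2}\alpha)$, you get $\mu(E\cap T^{-m}E)=1/4$, i.e. $\norm{T^m1_E-1_E}_{L^2(\mu)}^2=1/2$, for every $m\neq 0$. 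On a non-abelian nilfactor, equidistribution controls averages over $n$; it does not make individual translates close to the identity in $L^2$. Separately, applying Theorem~\ref{T: Candela-Szegedy} to a single $T_j$ requires $T_j$ to be ergodic, and reducing to $H$-ergodicity does not give that.

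Your mechanism does work on the rational Kronecker factor $\Krat(T_j)$, where $T_j^{W}$ acts almost trivially for suitable $W$. But knowing that $\Krat(T_j)$ is characteristic for these averages is exactly Problem~\ref{Pr: Krat}, which the paper leaves open.

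\textbf{What the paper does instead.} After the same seminorm reduction, it follows \cite[Section 7]{CFH11} verbatim. Each slot then involves only one transformation $T_j$ and a $\CZ_s(T_j)$-measurable function, so nothing about the group $\langle T_1,\ldots,T_\ell\rangle$ is used. The averaged correlations are handled by the equidistribution result \cite[Lemma~7.6]{CFH11} for orbits of the monomials $n^{d_j}$ on nilmanifolds. That lemma works only for polynomials with $n^{\deg p_j+1}\mid p_{j+1}(n)$, which is why the theorem is restricted to $n^{d_1},\ldots,n^{d_\ell}$.

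\textbf{The second inequality is true.} You expect it to be the hard point and believe it false for $\ell\geq 2$, but it is a standard lemma valid for arbitrary sub-$\sigma$-algebras $\CB_1,\ldots,\CB_\ell$. Let $f\geq 0$ and $g_j:=\E(f|\CB_j)$. Then $f=0$ a.e. on $\{g_j=0\}$, and $\int f g_j^{-1}1_{\{g_j>0\}}\,d\mu=\mu(g_j>0)\leq 1$. Hölder's inequality for the measure $f\,d\mu$, with $\ell+1$ exponents all equal to $\ell+1$, gives
\begin{align*}
\int f\,d\mu\leq \Big(\int f\prod_{j=1}^{\ell}g_j\,d\mu\Big)^{\frac{1}{\ell+1}}\prod_{j=1}^{\ell}\Big(\int_{\{f>0\}} f g_j^{-1}\,d\mu\Big)^{\frac{1}{\ell+1}}\leq \Big(\int f\prod_{j=1}^{\ell}g_j\,d\mu\Big)^{\frac{1}{\ell+1}}.
\end{align*}
So the bound $\mu(E)^{\ell+1}$ is immediate once the averaged correlation is reduced, up to $\veps$, to the form $\int 1_E\prod_j\E(1_E|\CB_j)\,d\mu$. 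The whole difficulty lies in that reduction, which your argument does not supply.
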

Theorem~\ref{T: Khintchine} extends results of Frantzikinakis-Kra (for $T_1 = \cdots = T_\ell$) \cite{FrKr06}, Chu-Frantzikinakis-Host \cite{CFH11} (for commuting transformations), as well as Frantzikinakis and the second author (for commuting transformations and independent polynomials \cite{FrKu22a}). 
The derivation of Theorem~\ref{T: Khintchine} from Theorem~\ref{T: nilpotent seminorm estimates} follows verbatim the arguments from \cite[Section 7]{CFH11}, where the corresponding lower-bound result \cite[Theorem~1.3]{CFH11} is deduced from seminorm estimates \cite[Theorem~1.2]{CFH11} (the proofs in \cite[Section 7]{CFH11} use nothing about the nature of the group $\langle T_1, \ldots, T_\ell\rangle$).

Combining Theorem~\ref{T: Khintchine} with the Furstenberg correspondence principle for amenable groups (see e.g. \cite[Theorem~2.3]{BFM21}), we obtain a variant of the polynomial Szemer\'edi theorem in nilpotent groups for configurations of the form
\begin{align*}
    x,\; h_1^{n^{d_1}} x,\; \ldots,\; h_\ell^{n^{d_\ell}}x
\end{align*}
for some fixed choice of group elements $h_1, \ldots, h_\ell$ and degrees $d_1, \ldots, d_\ell$.
In what follows, $d^*(A)$ denotes the upper Banach density of a subset $A\subseteq H$ of an amenable group $H$.\footnote{A \textit{left F{\o}lner sequence} on an amenable group $H$ is a sequence $\Phi = (\Phi_N)_{N\in\N}$ of finite subsets of $H$ such that $\frac{|\Phi_N\cap (h\cdot \Phi_N)|}{|\Phi_N|}\to 0$ for all $h\in H$. Then the \textit{upper density} of $A$ \textit{along} $H$ is given by \begin{align*}
        \overline{d}_\Phi(A):=\limsup_{N\to\infty}\frac{|A\cap \Phi_N|}{|\Phi_N|}>0,
    \end{align*}
    and the \textit{upper Banach density} of $A$ is $d^*(A):=\sup_{\Phi}\bar d_\Phi(A)$,  with the supremum taken over all left F{\o}lner sequences.}

\begin{corollary}\label{C: popular common differences}
    Let $H$ be a finitely generated 2-step nilpotent group and $A\subseteq H$ be a set with $d^*(A)>0$.
    Then for every $h_1, \ldots, h_\ell\in H$ and $\veps>0$, the set
        \begin{align}\label{E: popular common difference}
    \rem{n\in\Z\colon d^*(A\cap (h_1^{-n^{d_1}}\cdot A)\cap \cdots \cap (h_\ell^{-n^{d_\ell}}\cdot A))>d^*(A)^{\ell+1}-\veps}
\end{align}
is syndetic.\footnote{The same result works if we replace $d^*$ everywhere by $\bar d_\Phi$ for a fixed choice of a left F{\o}lner sequence $\Phi$.}
\end{corollary}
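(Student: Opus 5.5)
The plan is to deduce Corollary~\ref{C: popular common differences} from Theorem~\ref{T: Khintchine} through the Furstenberg correspondence principle for amenable groups, in the form of \cite[Theorem~2.3]{BFM21}. Fix $H$, $A$ with $d^*(A)>0$, elements $h_1,\ldots,h_\ell\in H$, and $\veps>0$. As in the statement of Theorem~\ref{T: Khintchine}, we assume $0<d_1<\cdots<d_\ell$.

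\textbf{Step 1: correspondence.} Choose a left F{\o}lner sequence $\Phi$ with $\bar d_\Phi(A)=d^*(A)$; such a sequence exists by a diagonal argument. The correspondence principle then gives an ergodic or non-ergodic measure-preserving action $(U_h)_{h\in H}$ on a probability space $(X,\CX,\mu)$ and a set $E\in\CX$ with $\mu(E)=d^*(A)$, such that
\begin{align*}
    d^*\brac{A\cap (g_1^{-1} A)\cap\cdots\cap (g_k^{-1}A)}\geq \mu\brac{E\cap U_{g_1}^{-1}E\cap\cdots\cap U_{g_k}^{-1}E}
\end{align*}
for all $g_1,\ldots,g_k\in H$. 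The precise form of the inverses, meaning left versus right action conventions, has to be matched to the convention in \cite{BFM21}. If necessary, we replace the action by $h\mapsto U_{h^{-1}}$, or $h_i$ by $h_i^{-1}$, so that $U_{h_i^{-n^{d_i}}}^{-1}$ becomes $T_i^{-n^{d_i}}$ with $T_i:=U_{h_i}$. I expect this bookkeeping to be the only delicate point, because $H$ is noncommutative. It is harmless, however: the powers $h_i^{m}$ all lie in the cyclic group $\langle h_i\rangle$, and $U_{h_i^m}=U_{h_i}^{m}$ for any action.

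\textbf{Step 2: apply Theorem~\ref{T: Khintchine}.} The transformations $T_1,\ldots,T_\ell$ generate a quotient of the subgroup $\langle h_1,\ldots,h_\ell\rangle\le H$, so this group is 2-step nilpotent (or abelian, which is a degenerate case of 2-step). Hence $(X,\CX,\mu,T_1,\ldots,T_\ell)$ is a 2-step nilpotent system in the sense of Definition~\ref{D: nilpotent systems}, and we may assume the space is standard. Theorem~\ref{T: Khintchine} then shows that the set of $n$ with
\begin{align*}
    \mu\brac{E\cap T_1^{-n^{d_1}}E\cap\cdots\cap T_\ell^{-n^{d_\ell}}E}>\mu(E)^{\ell+1}-\veps
\end{align*}
is syndetic.

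\textbf{Step 3: transfer back.} The correspondence inequality from Step~1, together with $\mu(E)=d^*(A)$, shows that every such $n$ belongs to the set \eqref{E: popular common difference}. A superset of a syndetic set is syndetic, which proves the corollary.

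For the variant with $\bar d_\Phi$ in place of $d^*$ mentioned in the footnote, the same argument applies using the version of the correspondence principle relative to a fixed F{\o}lner sequence, passing to a subsequence if necessary.
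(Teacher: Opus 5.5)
Your proposal is correct and follows the paper's own route. The paper likewise obtains the corollary by combining Theorem~\ref{T: Khintchine} with the Furstenberg correspondence principle for amenable groups \cite[Theorem~2.3]{BFM21}. Your observation that only powers of a single $h_i$ occur, so that $U_{h_i^m}=U_{h_i}^m$ whichever action convention is used, correctly handles the only point where noncommutativity could cause trouble.
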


The members of the set \eqref{E: popular common difference} are often referred to as \textit{popular common differences}. To the best of our knowledge, Corollary \ref{C: popular common differences} is the first positive result on the existence of popular common differences in the nilpotent setting. On the other hand, Donoso and the last author previously showed that the configurations of the form $x,\; h_1^nx,\; h_2^n x$ generally do \textit{not} admit popular common differences in the 2-step nilpotent setting \cite{DS18}.

We believe that Theorem~\ref{T: Khintchine} and Corollary \ref{C: popular common differences} should hold in the following level of generality.
\begin{conjecture}\label{Conj: popular common differences}
    The conclusions of Theorem~\ref{T: Khintchine} and Corollary \ref{C: popular common differences} hold for a nilpotent system/group of arbitrary step and all independent polynomials $p_1, \ldots, p_\ell\in\Z[n]$ (rather than  just $n^{d_1}, \ldots, n^{d_\ell}$).
\end{conjecture}
However, the methods of \cite{CFH11} that allow us to derive Theorem~\ref{T: Khintchine} and Corollary \ref{C: popular common differences} from Theorem~\ref{T: nilpotent seminorm estimates}, and specifically the equidistribution result \cite[Lemma~7.6]{CFH11}, work solely for polynomials $p_1, \ldots, p_\ell$ satisfying $n^{\deg p_j +1}|p_{j+1}(n)$ for all $1\leq j<\ell$. 

\subsection{Joint ergodicity}

As already remarked in the previous section, Theorem~\ref{T: nilpotent joint ergodicity} can be seen through the lenses of joint ergodicity, a term introduced by Berend and Bergelson \cite{BB84}.
\begin{definition}[Joint ergodicity]
    Let $\ell, L\in\N$. Let $H$ be a countable amenable group, $(X, \CX, \mu, U)$ be an $H$-system, and $a_1, \ldots, a_\ell: \Z^L\to H$ be sequences. We say that $a_1, \ldots, a_\ell$ are \textit{jointly ergodic} for $(X, \CX, \mu, U)$ (or that $(U_{a_1(\bn)}, \ldots, U_{a_\ell(\bn)})_{\bn\in\Z^L}$ is \textit{jointly ergodic}) if for all $f_1, \ldots, f_\ell\in L^\infty(\mu)$,
    \begin{align}\label{E: joint ergodicity}
        \lim_{N\to\infty}\norm{\E_{\bn\in[N]^L}U_{a_1(\bn)}f_1\cdots U_{a_\ell(\bn)}f_\ell - \int f_1\; d\mu\cdots \int f_\ell\;d\mu}_{L^2(\mu)} = 0.
    \end{align}
\end{definition}
Thus, Theorem~\ref{T: nilpotent joint ergodicity} provides \textit{sufficient} conditions for joint ergodicity in 2-step nilpotent systems. The following conjecture, stated e.g. in {\cite[Conjecture 1.3]{DFKS22}, \cite[Conjecture 1.5]{DKS22}, \cite[Problem~18]{Kuc26}}, proposes \textit{sufficient and necessary} conditions for polynomials in abelian systems.
\begin{conjecture}[Joint ergodicity conjecture in the abelian setting]\label{C: joint ergodicity conjecture}
    Let $D,\ell, L\in\N$. Let $(X, \CX, \mu, U)$ be a $\Z^D$-system and $p_1, \ldots, p_\ell\in\Z^L[\bn]$ be polynomials. Then $p_1, \ldots, p_\ell$ are jointly ergodic for $(X, \CX, \mu, U)$ if and only if the following two conditions hold:
    \begin{enumerate}
        \item (Product ergodicity condition) the product action $(U_{p_1(\bn)}\times \cdots\times U_{p_\ell(\bn)})_{\bn\in\Z^L}$ is ergodic\footnote{{For an amenable group $H$, we say that a sequence $(S_h)_{h\in H}$ of measure-preserving transformations on $(X, \CX, \mu)$ is \textit{ergodic} if for all $f\in L^\infty(\mu)$, the uniform limit $\E_{h\in H}S^h f$ exists and equals $\int f\; d\mu$.}} for $(X^\ell, \CX^{\otimes \ell}, \mu^{\otimes \ell})$;
        \item (Difference ergodicity condition) the difference actions $(U_{p_i(\bn)}\circ U\inv_{p_j(\bn)})_{\bn\in\Z^L}$ are ergodic for $(X, \CX, \mu)$ for all $1\leq i < j\leq \ell$.
    \end{enumerate}
\end{conjecture}
Our next result resolves Conjecture \ref{C: joint ergodicity conjecture} in full.
\begin{theorem}\label{T: joint ergodicity conjecture}
    Conjecture \ref{C: joint ergodicity conjecture} holds.
\end{theorem}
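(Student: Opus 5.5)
The plan is to follow the now-standard joint ergodicity framework of Frantzikinakis and of Donoso--Ferré Moragues--Koutsogiannis--Sun, reducing joint ergodicity to two ingredients: seminorm control and a degree-lowering argument. Necessity of conditions (1) and (2) is routine. If \eqref{E: joint ergodicity} holds, then testing it against $f_0\otimes\cdots$ in the product system shows that the product action is ergodic. Testing it with functions of the form $f_i=\bar f_j$ placed in positions $i,j$, with all other functions equal to $1$, yields ergodicity of $U_{p_i}U_{p_j}^{-1}$. So the real content is sufficiency.

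For sufficiency, I would invoke Frantzikinakis' criterion, in its multidimensional version due to Donoso--Koutsogiannis--Kuca--Sun--Tsinas. It states that $p_1,\ldots,p_\ell$ are jointly ergodic provided two things hold. First, the family is \emph{good for seminorm control}: there is $s$ such that the average vanishes whenever $\nnorm{f_j}_{s}=0$ for some $j$, with respect to suitable subgroups (the groups generated by the leading coefficients of the $p_i-p_j$, as in the box-seminorm framework). Second, it is \emph{good for equidistribution}: for all nonergodic eigenfunctions $\chi_j$ of the product system, $\E_{\bn}\prod_j \chi_j(U_{p_j(\bn)}x)\to\prod_j\int\chi_j$. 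The equidistribution condition should follow from the product ergodicity condition (1) by a spectral/Weyl argument, since eigenvalues of products are products of eigenvalues. The real task is seminorm control, and the key is to obtain it in terms of \emph{ergodic} seminorms of the difference actions. Condition (2) then guarantees that the relevant Host--Kra seminorm with respect to $U_{p_i}U_{p_j}^{-1}$ is comparable to an ordinary $\nnorm{\cdot}_{s,U}$ along the full group. This is because ergodicity kills the invariant-factor obstruction at the first level.

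To get seminorm control for arbitrary (possibly dependent, multidimensional) polynomials, I would run PET induction to get control by box seminorms $\nnorm{f_\ell}_{H_1,\ldots,H_s}$ along finite-index-free subgroups generated by coefficients of differences. I would then apply the seminorm smoothing/degree lowering machinery of Frantzikinakis--Kuca, in its multidimensional form, to upgrade these box seminorms to seminorms involving only the groups $\langle$coefficients of $p_i-p_j\rangle$ and $\langle$coefficients of $p_i\rangle$. The ergodicity hypotheses make the corresponding actions ergodic, which lets the box seminorm collapse to a Host--Kra seminorm of the group action.

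The main obstacle is the case of dependent or non-pairwise-independent families, where earlier work required extra hypotheses (e.g.\ ergodicity of each $U_{p_j}$ or restrictions on $L$). There, smoothing does not directly produce the correct subgroups. I would handle it by a dual-function argument: replace $f_\ell$ by a dual function of the average, which has finite seminorm-structure. Then I would induct on the number of terms and the "type" of the family, using condition (2) to show that the contribution of nonergodic components of each difference action vanishes. If this degree lowering fails directly, the fallback is to pass to the Host--Kra factor via Theorem~\ref{T: Candela-Szegedy} (abelian case: a pronilsystem) and verify joint ergodicity by equidistribution of polynomial orbits on the nilmanifold via Leibman's theorem. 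Conditions (1) and (2) then exclude exactly the horizontal-character obstructions.
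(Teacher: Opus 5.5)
Your architecture matches the paper's, and you name the right target. Necessity is by routine testing, and sufficiency comes from goodness for equidistribution (via condition (1)) plus Host--Kra seminorm control, fed into the joint ergodicity criterion for $\Z^D$-actions. The target is control along the groups $H_{j,j'}$ generated by \emph{all} coefficients of $p_j-p_{j'}$ (with $p_0=0$). These groups are ergodic under (1)--(2), so \eqref{E: same invariant factors} collapses the box seminorm to a Host--Kra seminorm of $U$.

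\textbf{The gap.} You never actually produce this control, and it is the entire nontrivial content of the proof. PET only gives Theorem~\ref{T: original estimates}, i.e.\ control along the leading-coefficient groups $\tilde H_{j,j'}$ that appear in your first formulation. Conditions (1)--(2) say nothing about whether these groups are ergodic. Ergodicity of $(U_{q(\bn)})_{\bn}$ forces ergodicity of the group generated by all coefficients of $q$, but not of the group generated by its leading coefficients. For instance, take $U_{(a,b)}=T^aS^b$ with $T=\Id$, $S$ totally ergodic, and $q(n)=(n^2,n)$. Then $U_{q(n)}=S^n$ is ergodic, while $\langle e_1\rangle$ acts trivially, so $\nnorm{f}_{\langle e_1\rangle^{\times s}}=\norm{f}_{L^{2^s}(\mu)}$ and the estimate is empty.

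Passing from $\tilde H_{j,j'}$ to $H_{j,j'}$ is exactly Theorem~\ref{T: new estimates}. The ``multidimensional Frantzikinakis--Kuca smoothing/degree lowering'' you invoke is not an available tool: seminorm smoothing exists only for single-dimensional iterates $v_jq_j(n)$, and its intuitions break down in the multidimensional case. The paper instead does the following:
\begin{enumerate}
\item It sets up a new induction on the complexity $(\ell,\zeta,\omega)$, where $\omega$ counts coincidences among degree-$i$ coefficient groups.
\item It chooses a dominant index $\ell$ and an $m$ with $(\ell,m)$ non-simple.
\item It passes to the family $\CP'$ in which the degree-$d'$ coefficients of $p_\ell$ are replaced by those of $p_m$.
\item It transfers control via Proposition~\ref{P: seminorm transfer}, which uses only degree-$1$ inverse theorems rather than degree lowering.
\item It applies relative concatenation (Theorem~\ref{T: relative concatenation}), so that each copy of $\tilde H_{\ell,m}$ is absorbed through $\tilde H_{\ell,m}+H'_{\ell,j}\supseteq H_{\ell,j}$.
\end{enumerate}
Your sketch (replace $f_\ell$ by a dual function, induct on ``terms and type'', use condition (2) to kill nonergodic components) contains no mechanism for exchanging leading-coefficient groups for full-coefficient ones. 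Note also that in the paper this exchange is purely structural and holds in every $\Z^D$-system; ergodicity enters only afterwards.

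\textbf{The fallback does not help.} It is circular. To pass to $\CZ_s(U)$ and then to a nilsystem via Theorem~\ref{T: Candela-Szegedy} and Leibman's theorem, you must already know that $\CZ_s(U)$ is characteristic for the average. That is precisely the control by $\nnorm{\cdot}_{s+1,U}$ that is missing. Once you have it, the criterion finishes the proof and no nilmanifold analysis is needed.

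As a minor point, your one-line necessity argument for product ergodicity does not work as stated. Joint ergodicity concerns the diagonal average in $L^2(\mu)$, so testing against tensor products does not directly control the product average. The standard route goes through eigenfunctions and the spectral theorem, as in the cited references.
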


Most of the work on Conjecture \ref{C: joint ergodicity conjecture} up to now focused on \textit{single-dimensional} (and single-variable) polynomials, i.e. those that take the form $vq(n)$ for some $v\in \Z^D$ and $q\in\Z[n]$. In this setting, we examine the joint ergodicity of $(T_1^{p_1(n)}, \ldots, T_\ell^{p_\ell(n)})_{n\in\Z}$ for some commuting transformations $T_1, \ldots, T_\ell$ and polynomials $p_1, \ldots, p_\ell\in\Z[n]$. Conjecture \ref{C: joint ergodicity conjecture} for single-dimensional polynomials was tackled by Donoso as well as the first and third authors in the same-iterate case (i.e. $p_1 = \cdots = p_\ell$) \cite{DKS22}; by Frantzikinakis and the second author in the complementary case of pairwise independent polynomials \cite{FrKu22a}; and eventually it was resolved by Frantzikinakis and the second author for all single-dimensional polynomials \cite{FrKu22b}, with an alternative proof supplied later by Donoso, Tsinas and the authors \cite{DKKST25}. In the multidimensional setting, some special cases were previously addressed by Donoso, Ferr{\'e}-Moragues as well as the first and third authors \cite{DFKS22}.

We do emphasize that the proof of Theorem~\ref{T: joint ergodicity conjecture} is quite different from earlier proofs, including those fully resolving the single-dimensional case \cite{DKKST25, FrKu22b}. At its heart lie much more robust seminorm estimates, presented in Theorem~\ref{T: new estimates} below, that also happen to be one of the key new ingredients in the proof of Theorem~\ref{T: nilpotent seminorm estimates}, our seminorm estimates for averages coming from 2-step nilpotent systems.

There is by now a vast body of works examining variants of Conjecture \ref{C: joint ergodicity conjecture} in other contexts \cite{BLS16, BS23, DKKST25, DKKST24, DKS23, DKS24,  KoSu23}. In particular, in \cite{DKKST25}, Donoso, Tsinas, and the authors found a counterexample \cite[Theorem~1.10]{DKKST25} to Conjecture \ref{C: joint ergodicity conjecture} for Hardy sequences. It turns out that the natural generalization of Conjecture \ref{C: joint ergodicity conjecture} also fails for polynomials in the 2-step nilpotent setting.

\begin{theorem}\label{T: counterexample to joint ergodicity conjecture}
    There exists a 2-step affine nilsystem\footnote{In this case (and in Theorem~\ref{T: counterexample to joint ergodicity criteria} below), both the nilsystem is 2-step and the group of transformations is 2-step nilpotent, so the phrase ``2-step affine nilsystem'' causes no ambiguity.}
    $(\T^2, \CB_{\T^2}, m_{\T^2}, T_1, T_2, T_3, T_4)$ and polynomials $p_1, p_2, p_3, p_4\in\Z[n]$ such that:
        \begin{enumerate}
        \item $(T_1^{p_1(n)}\times T_2^{p_2(n)}\times T_3^{p_3(n)} \times T_4^{p_4(n)})_{n\in\Z}$ is ergodic for $(\T^8, \CB_{\T^8}, m_{\T^8})$;
        \item $(T_i^{p_i(n)} T_j^{-p_j(n)})_{n\in\Z}$ is ergodic for $(\T^2, \CB_{\T^2}, m_{\T^2})$ for all $1\leq i < j\leq 4$;
        \item $(T_1^{p_1(n)}, T_2^{p_2(n)}, T_3^{p_3(n)}, T_4^{p_4(n)})_{n\in\Z}$ is \emph{not} jointly ergodic for $(\T^2, \CB_{\T^2}, m_{\T^2})$.
    \end{enumerate}
\end{theorem}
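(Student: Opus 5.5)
We will construct the counterexample explicitly with affine transformations of $\T^2$ of the form
\[
T_i(x,y)=(x+\alpha_i,\; y+a_i x+\beta_i),\qquad a_i\in\Z,\ \alpha_i,\beta_i\in\R.
\]
All such maps generate a 2-step nilpotent group, since commutators are translations in the $y$-coordinate. The polynomial iterates satisfy
\[
T_i^{m}(x,y)=\Bigl(x+m\alpha_i,\; y+m a_i x+\tbinom{m}{2}a_i\alpha_i+m\beta_i\Bigr).
\]
Joint ergodicity fails as soon as some product of characters $\prod_i T_i^{p_i(n)}\chi_i$, with not all $\chi_i$ trivial, has a nonzero $L^2$-limit. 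We choose characters $\chi_i(x,y)=e(k_i x+ y)$, or similar characters with a nontrivial $y$-frequency, so that the $y$-parts multiply to $e(\sum_i(\cdot)y)$.

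The plan is to arrange that the $n$-dependent phases in $x$ cancel identically. The product equals
\[
e\Bigl(\sum_i k_i x+\sum_i c_i y+\sum_i\bigl[k_ip_i(n)\alpha_i+c_ip_i(n)a_ix+c_i\bigl(\tbinom{p_i(n)}{2}a_i\alpha_i+p_i(n)\beta_i\bigr)\bigr]\Bigr).
\]
We want the coefficient of $x$ in $\sum_i c_ia_ip_i(n)$ to vanish as a polynomial in $n$. This is a linear relation among $a_ip_i$, which forces the $p_i$ to be linearly dependent. We also want the constant phase
\[
\sum_i\bigl[k_i\alpha_ip_i(n)+c_i\tbinom{p_i(n)}{2}a_i\alpha_i+c_i\beta_ip_i(n)\bigr]
\]
to vanish identically. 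Then the average equals a fixed nonzero character, so its limit is nonzero while the product of the integrals is zero.

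With four transformations there is enough room to satisfy both cancellations while keeping pairs and single coordinates ergodic. A first attempt is $p_1=n$, $p_2=n^2$, $p_3=n+n^2$, $p_4=$ a suitable combination such as $n^2$ or $n$ with shifts. The relation $p_1+p_2=p_3$, together with quadratic cross terms from $\binom{p_3}{2}$, provides the freedom to cancel the constant phase by tuning the $\beta_i$ and $a_i$.

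It remains to verify conditions (1) and (2) of Theorem~\ref{T: counterexample to joint ergodicity conjecture}, using Weyl-type criteria for affine maps. A polynomial sequence of affine maps of a torus is ergodic if and only if no nontrivial character is eventually invariant. Concretely, for the product action on $\T^8$, we expand $\prod_i T_i^{p_i(n)}\otimes\chi_i$ applied to a character and show the resulting phase is a nonconstant polynomial in $n$ with an irrational coefficient, or that the $x$-frequency shifts nontrivially. We then apply the spectral form of the ergodicity criterion, which reduces to Weyl equidistribution for polynomial phases plus nonvanishing of $\sum c_ia_ip_i(n)$ for each single or paired coordinate. For (2) we compute $T_i^{p_i(n)}T_j^{-p_j(n)}$, again an affine map. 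We check that its $x$-rotation $p_i\alpha_i-p_j\alpha_j$ is a nonconstant polynomial with irrational coefficients, and that the skew coefficient $a_ip_i-a_jp_j$ is nonzero. We choose the $\alpha_i$ rationally independent to guarantee this.

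The main obstacle is the tension between the two requirements. The cancellation behind (3) must use a linear relation among the $a_ip_i$ and the phases. Yet every single coordinate and every pair must remain ergodic, which forbids such relations among two terms, or among the terms at the level of the product action. The product action only sees characters on separate copies, so it does not see the cross-copy relation; this is why the counterexample is possible. The delicate part is choosing the $\alpha_i,\beta_i$ so that the quadratic term $\binom{p_i}{2}a_i\alpha_i$ cancels while pairwise differences stay irrational. I would first solve the linear system for $\beta_i$ given generic $\alpha_i$, and then check irrationality of the pairwise coefficients.
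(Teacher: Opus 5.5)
Your diagnosis of why a counterexample can exist is right: the $x$-dependence $\sum_i c_ia_ip_i(n)x$ can cancel on the diagonal but not on $\T^8$. But the theorem is an existence statement, and you never exhibit parameters for which the cancellation behind (3) coexists with (1) and (2). Worse, the regime you describe provably contains no example.

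Once $\sum_i c_ia_ip_i=0$, the remaining phase is $\phi(n)=\sum_j\alpha_jq_j(n)+\sum_i c_i(\beta_i-\tfrac12a_i\alpha_i)p_i(n)$, where $q_j:=\tfrac12c_ja_jp_j^2+k_jp_j$. A nonzero limit requires every non-constant coefficient of $\phi$ to be rational. Suppose $1,\alpha_1,\dots,\alpha_4$ are rationally independent. Expanding in a Hamel basis and reading off the $\alpha_j$-components forces $q_j\in\mathrm{span}_\Q\{p_i\colon c_i\neq0\}+\Q$ for every $j$. Take $j$ maximizing $\deg p_j$ among $\{i\colon c_i\neq0\}$; then this is impossible if $a_j\neq0$, since $\deg p_j^2=2\deg p_j$. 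If instead all $c_i=0$, the same argument gives all $k_i=0$.

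Hence with all $a_i\neq0$ the system is jointly ergodic for \emph{every} choice of the $\beta_i$. Your verification of (1), via nonvanishing of $c_ia_ip_i$, presupposes exactly this. Solving for the $\beta_i$ cannot help, because they only contribute inside $\mathrm{span}\{p_i\colon c_i\neq0\}$.

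Your first attempt fails for a related reason. With $p_3=n+n^2$ and $p_4\in\{n,n^2\}$, the $n^3$-coefficient of $\phi$ is $c_3a_3\alpha_3$. So $c_3a_3=0$, and $p_1+p_2=p_3$ cannot be used to cancel the $x$-dependence. With $p_4=n^2$ you are then forced to $c_ia_i=0$ for all $i$; otherwise the $n^4$-coefficient gives $\alpha_2-\alpha_4\in\Q$, and $(T_2^{n^2}T_4^{-n^2})_n$ is not ergodic. But if all $c_ia_i=0$, the product action on $\T^8$ produces the same phase $\phi$, contradicting (1).

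What is missing is a resonance among the frequencies, or a pure rotation among the $T_i$. The top term $\tfrac12c_ja_j\alpha_jp_j^2$ coming from $\binom{p_j}{2}$ has to be cancelled by a term of twice the degree in the same frequency class.

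The paper does this in the simplest way. It takes $T_1(x,y)=(x+a,y+2x+a)$ and $T_2(x,y)=(x+b,y+2x+b)$ with $1,a,b$ rationally independent. In your notation this is $a_i=2$, $\beta_i=\alpha_i$, so $T^m(x,y)=(x+m\alpha,\,y+2mx+m^2\alpha)$. It then runs along $T_1^n,T_2^n,T_1^{n^2},T_2^{n^2}$.

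The $y$-coordinates of $T_1^n(x,y)$ and $T_2^n(x,y)$ differ by $(a-b)n^2$. This is exactly the difference of the $x$-coordinates of $T_1^{n^2}(x,y)$ and $T_2^{n^2}(x,y)$. So with $\chi_1=e(y)$, $\chi_2=e(-y)$, $\chi_3=e(-x)$, $\chi_4=e(x)$, the product $T_1^n\chi_1\cdot T_2^n\chi_2\cdot T_1^{n^2}\chi_3\cdot T_2^{n^2}\chi_4$ is identically $1$. Conditions (1) and (2) are then checked by Weyl equidistribution.

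Note also that here $a_1p_1=a_2p_2$. Your sufficient test for (2), a nonzero skew coefficient $a_ip_i-a_jp_j$, would therefore reject this example. The ergodicity of $T_2^nT_1^{-n}(x,y)=(x+(b-a)n,\,y+(b-a)n^2)$ comes from the $y$-phase instead.
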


As a counterpoint to Theorem~\ref{T: counterexample to joint ergodicity conjecture}, Bergelson and Leibman \cite[Theorem~B]{BL02} previously showed that if $T_{1},T_{2}$ generate a nilpotent group of arbitrary step, then $(T_{1}^{n},T_{2}^{n})_{n\in\Z}$ is jointly ergodic if and only if $T_{1}\times T_{2}$ is ergodic for $(X^{2},\mathcal{X}^{2},\mu^{\otimes 2})$ and the group $\langle T_{1}^{-n}T_{2}^{n}\colon n\in\Z\rangle$ acts ergodically on $(X,\mathcal{X},\mu)$.
We add another positive result to this collection, showing that on the whole, the extent to which Conjecture \ref{C: joint ergodicity conjecture} holds in the nilpotent setting remains unclear. 
\begin{theorem}\label{T: joint ergodicity conjecture2}
    Let $\ell\in\N$ and $(X, \CX, \mu, T_{1},\dots,T_{\ell})$ be a measure preserving system with $T_{1},\dots,T_{\ell}$ generating a 2-step nilpotent group. Let $p_1, \ldots, p_\ell\in\Z[n]$ be nonconstant polynomials of distinct degrees. Then $(T^{p_{1}(n)}_{1},\dots,T^{p_{\ell}(n)}_{\ell})_{n\in\Z}$ is jointly ergodic  if and only if  $(T^{p_{1}(n)}_{1}\times\dots\times T^{p_{\ell}(n)}_{\ell})_{n\in\Z}$ is ergodic for $(X^\ell ,\CX^{\otimes \ell}, \mu^{\ell})$.
\end{theorem}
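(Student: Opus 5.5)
The forward implication is soft and needs nothing about nilpotency. Suppose the averages in \eqref{E: joint ergodicity} converge to $\prod_j\int f_j\,d\mu$. For product functions $F=f_1\otimes\cdots\otimes f_\ell$ and $G=g_1\otimes\cdots\otimes g_\ell$ we have
\[
\int G\cdot (T_1^{p_1(n)}\times\cdots\times T_\ell^{p_\ell(n)})F\,d\mu^{\ell}=\prod_{j}\int g_j\, T_j^{p_j(n)}f_j\,d\mu .
\]
Applying joint ergodicity to the functions $g_j\cdot T_j^{p_j(n)} f_j$ does not directly give this. The standard route is to use the known equivalence, valid for any group action, that joint ergodicity implies weak mixing along the sequence. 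Concretely, I would use joint ergodicity along shifted sequences $n\mapsto n+h$ together with van der Corput, or more simply apply joint ergodicity twice, on $X$ and on $X\times X$. A cleaner option is the folklore fact from \cite{BB84}/\cite{FrKu22b}: joint ergodicity of $(T_j^{p_j(n)})$ implies $\E_n \prod_j \int g_j T_j^{p_j(n)}f_j\,d\mu\to\prod_j\int g_j\int f_j$. To prove it, integrate the product $g_1\cdots g_\ell$ against the average; this handles the "diagonal" correlation. For the product system one needs $\E_n\prod_j a_j(n)$, where $a_j(n)=\int g_jT_j^{p_j(n)}f_j$. I would obtain this from the $L^2$ convergence of $\E_n |\int \prod_j \ldots|^2$ by passing to $X\times X$ with the transformations $T_j\times T_j$. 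These generate a 2-step nilpotent group, and their joint ergodicity I would deduce from the main theorem applied below. I expect this direction to be routine and will follow the argument of \cite{FrKu22b} verbatim.

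The substantive direction is the converse: product ergodicity implies joint ergodicity. The plan is to reduce to Theorem~\ref{T: nilpotent seminorm estimates} and then to a rational Kronecker/nilfactor computation.

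Step 1: Theorem~\ref{T: nilpotent seminorm estimates} lets us replace each $f_j$ by its projection onto $\CZ_s(T_j)$, the Host–Kra factor of the single transformation $T_j$. Subtract $\int f_j$ from each function. Telescoping then reduces the claim to showing that the average tends to $0$ whenever some $f_j$ has zero mean.

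Step 2: Upgrade the seminorm control from $\nnorm{\cdot}_{s,T_j}$ to control by a much smaller factor. Product ergodicity of $(T_1^{p_1(n)}\times\cdots\times T_\ell^{p_\ell(n)})$ implies in particular that each $(T_j^{p_j(n)})_n$ is ergodic. For a single transformation this is equivalent to $T_j$ having no eigenvalues $e(\alpha)$ for which $p_j(n)\alpha$ fails to equidistribute, a rational-type obstruction. By the single-transformation Host–Kra/Leibman theory, the $p_j$-characteristic part of $\CZ_s(T_j)$ for the trivial-limit question is governed by these eigenvalues.

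Step 3: Use the degree-lowering/"seminorm smoothing" mechanism, which I take to be developed later in the paper alongside Theorem~\ref{T: new estimates}, to show that the average is controlled by $\nnorm{f_j}_{1,T_j^{\,r}}$-type quantities, that is, by the rational Kronecker factor along $p_j$. This is done by a dual-function argument: if the average is nonzero, $f_j$ correlates with a dual function of the remaining terms. Using the distinct degrees and the 2-step structure, commutators $[T_i,T_j]$ contribute only lower-order polynomial corrections that commute with everything. This lets us inductively drop the seminorm degree to $1$.

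Step 4: With control by eigenfunctions, the average of eigenfunctions $\chi_j$ with $T_j\chi_j=e(\alpha_j)\chi_j$ produces $\E_n e(\sum_j\alpha_jp_j(n))\prod_j\chi_j$. Product ergodicity forbids $\sum_j\alpha_jp_j(n)$ from failing to equidistribute unless every $\chi_j$ is constant. Hence the limit vanishes when some $f_j$ has zero mean.

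The main obstacle is Step 3. The eigenfunctions of $T_j$ are not eigenfunctions of $T_i$ in the nilpotent setting, so the dual-function argument must track how $T_i$ acts on $T_j$-eigenfunctions via the central commutators. I would first try to handle this by working inside the Candela–Szegedy pronilsystem model (Theorem~\ref{T: Candela-Szegedy}) of the $\langle T_1,\dots,T_\ell\rangle$-factor, where the computation becomes explicit equidistribution on a 2-step nilmanifold via Leibman's theorem.
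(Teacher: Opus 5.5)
\textbf{The converse direction fails at Step 3.} You propose to lower the control from $\nnorm{f_j}_{s,T_j}$ to the rational Kronecker factor of $T_j$ using dual functions and degree lowering/seminorm smoothing. That is exactly the mechanism that breaks for noncommuting transformations. The dual--difference interchange needs $\Delta'_{S;(h,h')}(T^nf)=T^n(\Delta'_{S;(h,h')}f)$, which is false when $T,S$ do not commute (see the discussion of the failure of degree lowering in Section~\ref{SS: outline}). The seminorm smoothing behind Theorem~\ref{T: new estimates} lives entirely in the abelian world. In fact, the conclusion of your Step 3 is essentially the content of Problem~\ref{Pr: Krat}, which the paper leaves open precisely because this mechanism fails. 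So Steps 3 and 4 rest on an unproven input.

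Your heuristic that commutators only contribute ``lower-order corrections'' is also backwards. Reordering multiplies exponents: already the first van der Corput step for $T^nf_1\cdot S^{n^2}f_2$ produces $U_{(-n,n^2,n^3)}=T^{-n}S^{n^2}[T,S]^{n^3}$. This higher-degree behaviour is why the paper needs a new PET scheme built around distinguishability.

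\textbf{Your fallback is the right direction but is missing two ingredients.} Working inside the Candela--Szegedy model is sound in principle.
\begin{enumerate}
\item Theorem~\ref{T: nilpotent seminorm estimates} only lets you assume $f_j$ is $\CZ_s(T_j)$-measurable, whereas Theorem~\ref{T: Candela-Szegedy} describes $\CZ_{s'}(H)$ for $H=\langle T_1,\dots,T_\ell\rangle$. The bridge is Proposition~\ref{P: comparing HK factors in nilpotent systems}: $\CZ_s(T_j)\subseteq\CZ_{s'}(H)$ for ergodic $T_j$, and ergodicity of each $T_j$ does follow from product ergodicity. In the nilpotent setting this is a genuine result, resting on Parry's rigidity of isomorphisms between nilsystems and the invariance results of Section~\ref{S: comparison of factors}.
\item The idea specific to this theorem is to convert product ergodicity into total ergodicity, so that the mechanism of Theorem~\ref{T: nilpotent joint ergodicity} applies. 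Once you have reduced to a nilsystem $Y$, Proposition~\ref{P:dcc} shows that for $\ell\ge 2$ an equidistributed product orbit $(S_1^{p_1(n)}y_1,\dots,S_\ell^{p_\ell(n)}y_\ell)$ forces $Y$ to be connected. Otherwise the orbit meets at most $W$ of the $W^\ell$ cells $Y_{a_1}\times\cdots\times Y_{a_\ell}$. Connectedness makes each $S_j$ totally ergodic. Theorem~\ref{T: equidistribution} then gives equidistribution of $(S_1^{p_1(n)}y,\dots,S_\ell^{p_\ell(n)}y)$ for a.e.\ $y$, which is joint ergodicity. No Kronecker-factor control or eigenvalue computation is needed.
\end{enumerate}

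\textbf{The forward-direction sketch does not work as written.} Joint ergodicity of $(T_j\times T_j)^{p_j(n)}$ on $X\times X$ does not follow from the hypothesis: for $\ell=1$ and $p_1(n)=n$ it would amount to weak mixing of $T_1$. Invoking the converse direction to obtain it would require product ergodicity on $(X\times X)^\ell$, which you do not have. What is needed there is the argument of \cite{DFKS22, DKKST24}, which the paper cites.
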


A true game changer in the recent study of joint ergodicity in the abelian setting were new joint ergodicity criteria developed in \cite{BFM22, Fr21, FrKu22a}. Using these criteria, the task of establishing joint ergodicity was reduced to two independent endeavors: obtaining Host-Kra seminorm control and verifying the identity \eqref{E: joint ergodicity} for eigenfunctions (which in practice amounted to applying known exponential sums estimates). Unfortunately, the same criteria fail to hold in the 2-step nilpotent setting.
\begin{theorem}\label{T: counterexample to joint ergodicity criteria}
    There exists a 2-step affine nilsystem  $(\T^2, \CB_{\T^2}, m_{\T^2}, T_1, T_2, T_3, T_4)$ and polynomials $p_1, p_2, p_3, p_4\in\Z[n]$ such that the polynomials are not jointly ergodic for the system, but they are jointly ergodic for the projection of the system onto the Kronecker factor (i.e. onto the first coordinate).\footnote{This last property is known in the joint ergodicity literature as being \textit{good for equidistribution}, as it is equivalent to the equidistribution of $(\alpha_1 p_1(n), \alpha_2 p_2(n), \alpha_3 p_3(n), \alpha_4 p_4(n))_{n\in\Z}$ on $\T^4$ for all nonzero eigenvalues $\alpha_j$ of $T_j$, or the validity of \eqref{E: joint ergodicity} for all eigenfunctions of respective transformations.}
\end{theorem}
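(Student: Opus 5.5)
We will prove the statement by an explicit construction with affine skew-product maps on $\T^2$:
\[
T_j(x,y) = (x+a_j,\; y + b_j x + c_j),\qquad 1\leq j\leq 4.
\]
Every commutator $[T_i,T_j]$ is the vertical translation $(x,y)\mapsto (x,y+a_jb_i-a_ib_j)$, up to sign. These translations commute with all the $T_k$, so $\langle T_1,\dots,T_4\rangle$ is 2-step nilpotent. It is properly 2-step as soon as some $a_jb_i\neq a_ib_j$. The Kronecker factor of this system, in the sense of the statement, is the first coordinate, on which $T_j$ acts as the rotation by $a_j$. By induction one gets the iterate formula
\[
T^m(x,y)=\Bigl(x+ma,\; y+mbx+mc+\tbinom{m}{2}ab\Bigr),
\]
which reduces everything to explicit phases.

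\textbf{Choice of parameters.} Take $p_1(n)=p_2(n)=n$ and $p_3(n)=p_4(n)=n^2$. Set $b_1=b_2=b\neq 0$ and $b_3=b_4=0$, and choose $a_1,\dots,a_4$ so that $1,a_1,\dots,a_4$ are rationally independent. Then $a_3b_1\neq 0$, so the group is properly 2-step nilpotent.

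\textbf{Failure of joint ergodicity.} Test with $f_1=f_3=e(y)$ and $f_2=f_4=e(-y)$; each has integral $0$, so the product of integrals vanishes. By the iterate formula, $\prod_j T_j^{p_j(n)}f_j(x,y)$ is $e(\phi(n))$ times $e$ of a term in $y$ and $x$:
\begin{itemize}
\item the $y$-terms cancel because $1-1+1-1=0$;
\item the $x$-terms are $bnx-bnx=0$.
\end{itemize}
The remaining phase is
\[
\phi(n)=(c_1-c_2)n+\tfrac{b(a_1-a_2)}{2}(n^2-n)+(c_3-c_4)n^2 .
\]
We choose $c_1-c_2=b(a_1-a_2)/2$ and $c_3-c_4=-b(a_1-a_2)/2$, which makes $\phi\equiv 0$. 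Hence the average is identically $1$, not $0$, and joint ergodicity fails.

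\textbf{Joint ergodicity on the Kronecker factor.} On the factor, the statement amounts to equidistribution of $(a_1n,a_2n,a_3n^2,a_4n^2)$ in $\T^4$. By Weyl's criterion it suffices that, for every nonzero $(k_1,\dots,k_4)\in\Z^4$, the polynomial $(k_1a_1+k_2a_2)n+(k_3a_3+k_4a_4)n^2$ has an irrational non-constant coefficient. This holds by the rational independence of $1,a_1,\dots,a_4$. Equivalently, we check \eqref{E: joint ergodicity} for eigenfunctions (characters) in $x$, and pass to general $L^\infty$ functions of $x$ by density of trigonometric polynomials.

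The only delicate point is the bookkeeping of the quadratic correction $\binom{m}{2}ab$ for $m=p_j(n)$. It is the reason we kill the $x$-dependence by taking $b_3=b_4=0$: with $b_3,b_4\neq 0$, a quartic term $b_3(a_3-a_4)n^4/2$ would appear and could not be cancelled by the $c_j$. If the paper's notion of a 2-step affine nilsystem required each $T_j$ to be ergodic, this choice is still consistent. The maps $T_1,T_2$ are ergodic Anzai skew products because $a_j$ is irrational and $b\neq 0$. The maps $T_3,T_4$ are ergodic rotations of $\T^2$ once we take $(a_j,c_j)$ rationally independent with $1$, which the single constraint on $c_3-c_4$ allows.
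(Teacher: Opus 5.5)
Your cancellation computation is correct, although you need $b\in\Z\setminus\{0\}$ for $T_1,T_2$ to be well defined on $\T^2$. The first-coordinate projection is indeed jointly ergodic. The gap is the identification of that projection with the Kronecker factor. Since $b_3=b_4=0$, the maps $T_3,T_4$ are rotations of $\T^2$. Hence every character $e(kx+ly)$ is an eigenfunction of each of them, and $\CK(T_3)=\CK(T_4)$ is the whole Borel $\sigma$-algebra. The first coordinate is the Kronecker factor of the \emph{joint} action. However, the footnote, and the joint ergodicity criteria this theorem is meant to refute, concern eigenfunctions of the \emph{respective} transformations. So you must also verify \eqref{E: joint ergodicity} for $f_1,f_2$ functions of $x$ together with $f_3=e(k_3x+l_3y)$, $f_4=e(k_4x+l_4y)$ and arbitrary $l_3,l_4$. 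Your phrase ``eigenfunctions (characters) in $x$'' misses exactly these. Under your stated constraints this check can fail. Take $c_4=a_3$ and $c_3=a_3-b(a_1-a_2)/2$; this obeys everything you impose, including rational independence of $1,a_j,c_j$ for $j=3,4$. Yet $f_1=f_2=1$, $f_3=e(x)$, $f_4=e(-y)$ give $T_3^{n^2}f_3\cdot T_4^{n^2}f_4=e(x-y)$ for every $n$, while $\prod_j\int f_j\,d\mu=0$. As written, then, your system need not be good for equidistribution, and in that case it does not exhibit the failure of the criteria.

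The repair is to require in addition that $1,a_1,a_2,a_3,a_4,c_4$ be rationally independent, with $c_3=c_4-b(a_1-a_2)/2$. For the characters above, the average is a character in $(x,y)$ times $\E_{n\in\Z}e\bigl((k_1a_1+k_2a_2)n+(k_3a_3+k_4a_4+(l_3+l_4)c_4-l_3b(a_1-a_2)/2)n^2\bigr)$. If both coefficients are rational, then $k_1,k_2,k_3,k_4,l_3,l_4$ all vanish, so Weyl's theorem and density of trigonometric polynomials finish the argument.

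The paper avoids the issue differently. It uses only skew products of nonzero slope: $T_1(x,y)=(x+a,y+2x+a)$, $T_2(x,y)=(x+b,y+2x+b)$, with $T_3=T_1$, $T_4=T_2$ and iterates $n,n,n^2,n^2$. Then the first coordinate genuinely is the Kronecker factor of every $T_j$. It detects the failure with $e(y),e(-y),e(-x),e(x)$. The phase $n^2(a-b)$ that $T_1^n,T_2^n$ produce in the fibre is cancelled by the base coordinates $x+n^2a$ and $x+n^2b$ of $T_1^{n^2},T_2^{n^2}$. Testing the quadratic-time maps in the base rather than the fibre is what removes the quartic term you were worried about, and no tuning of constants is needed.
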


What Theorems \ref{T: counterexample to joint ergodicity conjecture} and \ref{T: counterexample to joint ergodicity criteria} indicate is that outside the realm of independent sequences, joint ergodicity is governed by different principles in the nilpotent vs. abelian settings. Thus, we are getting a class of results contradicting Nilpotent Heuristic. In Section \ref{S: open problems}, we propose conjectures regarding nilpotent joint ergodicity criteria.

\subsection{New seminorm estimates for polynomial averages in $\Z^D$-systems}
A major ingredient in the proofs of Theorems \ref{T: nilpotent seminorm estimates} and \ref{T: joint ergodicity conjecture} are new seminorm estimates for multiple ergodic averages involving {multiparameter} polynomials over $\Z^D$. 
To put them in context, we compare them with existing results. Throughout this subsection, let $(X, \CX, \mu, U)$ be a $\Z^D$-system, $\bn\in\Z^L$ be an indeterminate for some fixed $L\in\N$, and $p_1, \ldots, p_\ell\in \Z^D[\bn]$ be distinct polynomials of degree at most $d$ (which we assume without loss of generality to have zero constant terms).

For $1\leq j\leq \ell$, denote $$p_{j}(\bn)=\sum\limits_{\substack{\bi\in\N_0^{L}\colon\\ 1\leq \vert\bi\vert\leq d}} b_{j,\bi}\bn^\bi,$$
where $\bn^{\bi}:=n_{1}^{i_{1}}\dots n_{L}^{i_{L}}$ and $\vert\bi\vert:=i_{1}+\dots+i_{L}$. Set also $p_0 := 0$.
For distinct $0\leq j,j'\leq \ell$, 
define two groups:
\begin{align*}
    H_{j,j'}&:=\langle b_{j,\bi}-b_{j',\bi}\colon\; 1\leq \vert\bi\vert\leq d \rangle,\\
    \tilde H_{j,j'}&:=\langle b_{j,\bi}-b_{j',\bi}\colon\; \vert\bi\vert= \deg (p_j-p_{j'}) \rangle.
\end{align*}
The group $H_{j,j'}$ is generated by all the coefficients of $p_j-p_{j'}$ while the group $\tilde H_{j,j'}$ is generated only by its leading coefficients.
Clearly, $\tilde H_{j,{j'}}\subseteq H_{j,{j'}}$. Moreover, both types of subgroups are symmetric in that $H_{j,{j'}} = H_{j',j}$ and $\tilde H_{j,{j'}} = \tilde H_{j',j}$ for any distinct $0\leq j,{j'}\leq \ell$. Existing seminorm estimates, proved in \cite[Theorem~2.5]{DFKS22} and quantified in \cite[Theorem~1.6]{KKL24a}  and \cite[Proposition~8.6]{DKKST24}, give us the following (for the definition of box seminorms, see Definition \ref{D: box seminorms}, and for the averaging notation, consult Section \ref{SS: Notation}).
\begin{theorem}\label{T: original estimates}
    There exists a natural number $s=O_{d,\ell,L}(1)$ such that for all  $f_1, \ldots, f_\ell\in L^\infty(\mu)$, we have
    \begin{align*}
        \norm{\E_{\bn\in\Z^{L}} U_{p_1(\bn)}f_1\cdots U_{p_\ell(\bn)}f_\ell}_{L^2(\mu)} = 0
    \end{align*}
    whenever $\nnorm{f_\ell}_{\tilde H_{\ell,0}^{\times s}, \tilde H_{\ell,1}^{\times s}, \ldots, \tilde H_{\ell, \ell-1}^{\times s}} = 0$.
\end{theorem}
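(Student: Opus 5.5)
Theorem~\ref{T: original estimates} is quoted from \cite[Theorem~2.5]{DFKS22} with quantitative refinements in \cite{KKL24a, DKKST24}. My plan is to reprove it by the standard PET induction combined with a dual-function (``seminorm smoothing'' free) concatenation argument. I would run the induction on $\ell$ and on a weight vector of the polynomial family.

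\emph{Step 1: PET reduction to linear averages.} Start with the average $A_N=\E_{\bn\in[N]^L}\prod_j U_{p_j(\bn)}f_j$, where we may assume $\norm{A_N}_{L^2}\ge\delta$ along a subsequence. Apply the van der Corput lemma in $\bn$ to the family $(p_1,\dots,p_\ell)$, taking $p_\ell$ (or the member of maximal degree relative to $p_\ell$) as the reference polynomial, in the usual PET ordering. Each application replaces the family by a family of differenced polynomials $p_j(\bn+\mathbf h)-p_{i}(\bn)$ with the same leading coefficients for the ``gap'' polynomials. After composing the action with $U_{-p_i(\bn)}$, one checks that the leading coefficients of $p_\ell-p_{j}$ are preserved up to multiplication by nonzero integers depending on $\mathbf h$. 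After $O_{d,\ell,L}(1)$ steps we reach, for a positive proportion of parameters $\mathbf h_1,\dots,\mathbf h_t$, a family that is linear in $\bn$. The function attached to $f_\ell$ now has iterates whose differences with the other iterates have coefficients lying in (finite-index subgroups of) $\tilde H_{\ell,j}$, scaled by polynomials in the $\mathbf h$'s.

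\emph{Step 2: the linear case.} For linear families $U_{\mathbf a_1\cdot\bn}g_1\cdots U_{\mathbf a_m\cdot\bn}g_m$, the classical bound via $m$ applications of van der Corput gives control by $\nnorm{g_m}_{\langle \mathbf a_m\rangle,\langle \mathbf a_m-\mathbf a_1\rangle,\dots}$ (box seminorms along the generated groups). Averaging over $\mathbf h$ and using Hölder, I then transfer this to a bound for $f_\ell$ by an average over $\mathbf h$ of box seminorms along groups $\langle c(\mathbf h)\, b\rangle$, where $c(\mathbf h)$ are nonzero integer polynomials in $\mathbf h$ and $b$ ranges over the leading coefficients generating $\tilde H_{\ell,j}$.

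\emph{Step 3: concatenation.} The main obstacle is to turn the averaged family of seminorms $\E_{\mathbf h}\nnorm{f_\ell}_{\{c_i(\mathbf h)b_i\}}$ into a single seminorm along the full groups $\tilde H_{\ell,j}$ (with multiplicity $s$). For this I would use the concatenation theorem for box seminorms (Tao--Ziegler type, as in the $\Z^D$ treatments): averages over $\mathbf h$ of seminorms along $\langle c(\mathbf h) b\rangle$ are controlled by seminorms along the group generated by the union, which is a finite-index subgroup of $\langle b_{\ell,\bi}-b_{j,\bi}\rangle$. A finite-index subgroup only changes the seminorm by a bounded-power factor, since vanishing of the seminorm along a finite-index subgroup of $G$ is equivalent to vanishing along $G$ for the product structure used here. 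Iterating the concatenation $O(1)$ times over all differenced parameters gives $s=O_{d,\ell,L}(1)$ and yields control by $\nnorm{f_\ell}_{\tilde H_{\ell,0}^{\times s},\dots,\tilde H_{\ell,\ell-1}^{\times s}}$. Vanishing of this seminorm then forces $\delta=0$, which proves the statement.
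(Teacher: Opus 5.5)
The paper gives no proof of this statement: it quotes it from \cite[Theorem~2.5]{DFKS22}, with quantitative versions in \cite{KKL24a, DKKST24}. Your architecture (PET down to linear averages, the linear box-seminorm bound, then concatenation over the differencing parameters) is the architecture of those proofs. The gap is in the step that carries the real content, namely your description of the final linear family, which is false as stated.

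It is not true that the differences between the iterate carrying $f_\ell$ and the other iterates have coefficients in (finite-index subgroups of) the groups $\tilde H_{\ell,j}$, scaled by polynomials in $\mathbf h$. Take $L=1$, $p_1(n)=b_2n^2$ and $p_2(n)=b_1n+b_2n^2$, with $b_1,b_2\in\Z^D$ linearly independent. Then $\tilde H_{2,0}=\langle b_2\rangle$ and $\tilde H_{2,1}=\langle b_1\rangle$. One van der Corput step with reference $p_1$, followed by Cauchy--Schwarz, already gives the linear family $\{b_1n,\ 2b_2hn,\ (b_1+2b_2h)n\}$ with $f_2$ sitting at $b_1n$. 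This yields control by
\[
\nnorm{f_2}_{\langle b_1\rangle,\ \langle b_1-2b_2h\rangle,\ \langle 2b_2h\rangle}.
\]
The group $\langle b_1-2b_2h\rangle$ lies in no $\tilde H_{2,j}$. Over all $h$ these vectors generate $\langle b_1,2b_2\rangle$, which is a subgroup of neither $\tilde H_{2,j}$. It does contain $\tilde H_{2,1}$, so the theorem survives in this example, but by containment and not by the mechanism you describe.

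In general, two things break your picture. First, the references subtracted at later PET stages (at different shifts) mix leading coefficients of different pairs into a single difference. Second, for $L\ge 2$, differencing $\sum_{|\bi|=d'}v_\bi\bn^\bi$ produces $\bn$-coefficients of the form $\sum_{\bi}c_{\bi,k}(\mathbf h)v_\bi$, not nonzero integer multiples of individual $v_\bi$. So ``leading coefficients preserved up to multiplication by nonzero integers'' does not describe what PET does.

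Your Step~3 therefore rests on the wrong picture. If the per-$\mathbf h$ groups really were $\langle c(\mathbf h)b\rangle$ with $b$ fixed, no concatenation would be needed: the finite-index subgroup property of box seminorms of degree at least $2$ already handles each $\mathbf h$ separately. Concatenation (qualitatively via Tao--Ziegler \cite{TZ16}, quantitatively via \cite{KKL24a}) is needed precisely because for fixed $\mathbf h$ the groups are generated by mixed vectors $c_i(\mathbf h)$. What it delivers is control by seminorms along $K_i=\langle\text{coefficient vectors of } c_i\rangle$.

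To finish, you must show that every such $K_i$ \emph{contains} a finite-index subgroup of some $\tilde H_{\ell,j}$; monotonicity and the subgroup property then conclude. Showing that $K_i$ is a finite-index subgroup of some $\tilde H_{\ell,j}$ is not what is needed, and it is false, as the example shows. Establishing the containment for every final difference, uniformly through the whole PET scheme, is the coefficient-tracking lemma that constitutes the substance of \cite{DFKS22}. It is typically done by exhibiting $\mathbf h$-monomials whose coefficients are nonzero multiples of exactly the vectors $b_{\ell,\bi}-b_{j,\bi}$ with $|\bi|=\deg(p_\ell-p_j)$, for a single $j$ and uncontaminated by other pairs. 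Your ``one checks'' neither states this lemma correctly nor proves it, so the proposal does not yet establish the theorem.
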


In this paper, we derive alternative seminorm estimates of the following form.
\begin{theorem}\label{T: new estimates}
    There exists a natural number $s=O_{d,\ell,L}(1)$ such that for all  $f_1, \ldots, f_\ell\in L^\infty(\mu)$, we have
    \begin{align*}
        \norm{\E_{\bn\in\Z^{L}} U_{p_1(\bn)}f_1\cdots U_{p_\ell(\bn)}f_\ell}_{L^2(\mu)} = 0
    \end{align*}
    whenever $\nnorm{f_\ell}_{H_{j,{j'}}^{\times s}\colon\; 0\leq j<{j'}\leq \ell} = 0$.
\end{theorem}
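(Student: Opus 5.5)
We will prove Theorem~\ref{T: new estimates} by running the PET induction (van der Corput differencing along the polynomial family) exactly as in the proof of Theorem~\ref{T: original estimates}. The new input is how we record the directions of the seminorm obtained at the end. We bound by hand neither the leading coefficients nor any specific differenced family. Instead we use a soft closure observation: every polynomial produced by differencing has all of its coefficients in subgroups generated by the coefficients of the original differences $p_j-p_{j'}$.

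\emph{Step 1: the group stays fixed under differencing.} Let $\mathcal G$ be the collection of subgroups $H_{j,j'}$, $0\le j<j'\le\ell$. A van der Corput step with shift $\mathbf h$ replaces the family $(p_1,\dots,p_\ell)$ by a family made of the polynomials $p_i(\bn+\mathbf h)-p_{i_0}(\bn)$ and $p_i(\bn)-p_{i_0}(\bn)$. We check that the coefficient group of each pairwise difference of the new family, including differences with $0$, is contained in some $H_{j,j'}$. This holds because
\[
p_i(\bn+\mathbf h)-p_{i'}(\bn)=\bigl(p_i(\bn+\mathbf h)-p_i(\bn)\bigr)+\bigl(p_i(\bn)-p_{i'}(\bn)\bigr).
\]
The coefficients of the first bracket are integer combinations of coefficients of $p_i=p_i-p_0$, and those of the second lie in $H_{i,i'}$. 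The combined group may not be a single $H_{j,j'}$. To handle this, at each PET step we difference so that the relevant pairwise difference in the new family equals a shift-difference of a single $p_i-p_{i'}$, with everything expressed relative to the reference index $i_0$. By induction, every difference polynomial in every family along the PET tree has coefficients in $H_{j,j'}$ for some pair $(j,j')$ of original indices.

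\emph{Step 2: apply the existing estimate at the leaves.} At the end of the PET scheme, Theorem~\ref{T: original estimates} (equivalently, its proof applied to the linear leaves) controls $f_\ell$, after averaging over the shifts, by a box seminorm along the leading-coefficient groups $\tilde H$ of the terminal family. By Step~1 each such group is contained in some $H_{j,j'}$. Box seminorms are monotone under enlarging the groups: $\nnorm{f}_{G_1,\dots}\le\nnorm{f}_{G_1',\dots}$ whenever $G_i\subseteq G_i'$, by a standard Cauchy--Schwarz or ergodic-decomposition argument. The number of terminal groups is $O_{d,\ell,L}(1)$. Each of them lies in one of the $\binom{\ell+1}{2}$ groups $H_{j,j'}$. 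Repeating entries and using monotonicity again, we dominate by $\nnorm{f_\ell}_{H_{j,j'}^{\times s}:\,0\le j<j'\le\ell}$ with $s=O_{d,\ell,L}(1)$.

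\emph{Main obstacle.} The hard part is the last point of Step~1. The leading coefficients of the differenced polynomials with the shifts $\mathbf h$ frozen are polynomial in $\mathbf h$, so the terminal groups depend on $\mathbf h$. We need uniformity: the seminorm along a group generated by $\mathbf h$-dependent vectors must be bounded, on average over $\mathbf h$, by seminorms along the fixed groups $H_{j,j'}$.

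We plan to settle this in one of two ways. The first option is to note that for every $\mathbf h$ these vectors lie in the fixed group $H_{j,j'}$, so monotonicity applies pointwise in $\mathbf h$. The second option is needed if the vectors lie in a sum $H_{i,0}+H_{i,i'}$ rather than a single group. In that case we would use the inequality that splits a seminorm along a sum of groups into seminorms along the summands: $\nnorm{f}_{G+G'}$ is controlled by iterated seminorms $\nnorm{f}_{G,G'}$-type, with more copies. This splitting costs only a bounded increase of $s$.
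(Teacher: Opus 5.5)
\textbf{There is a genuine gap: Step 2 uses the subgroup property of box seminorms in the wrong direction.} You use $\nnorm{f}_{G_1,\dots}\le\nnorm{f}_{G_1',\dots}$ whenever $G_i\subseteq G_i'$. The subgroup property (Section~\ref{SS: commuting}) says the opposite: enlarging the groups \emph{decreases} the seminorm, i.e. $\nnorm{f}_{G_1',\ldots,G_s'}\le\nnorm{f}_{G_1,\ldots,G_s}$ when $G_i\subseteq G_i'$. Already for $s=1$, with $G_1=\{0\}$ and $G_1'$ acting ergodically, your inequality would say $\norm{f}_{L^2(\mu)}\le\abs{\int f\,d\mu}$.

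Consequently, Step 1's containment of every terminal leading-coefficient group in some $H_{j,j'}$ gives the useless comparison. Vanishing of $\nnorm{f_\ell}_{H_{j,j'}^{\times s}\colon\; 0\leq j<j'\leq \ell}$ does not force vanishing of a seminorm along the smaller groups, and the latter is what PET and Theorem~\ref{T: original estimates} require. The same reversal breaks both options in your ``main obstacle'' paragraph. Pointwise monotonicity in $\mathbf h$ goes the wrong way. Your second option still passes from the $\mathbf h$-dependent group to the larger group $H_{i,0}+H_{i,i'}$ by the reversed inequality; the splitting bound $\nnorm{f}_{G+G'}\ll\nnorm{f}_{G,G'}$ is fine, but it only concerns the already-enlarged group.

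A sanity check shows the argument proves too much. If it worked, the theorem would follow in one line from Theorem~\ref{T: original estimates}, because $\tilde H_{\ell,j}\subseteq H_{\ell,j}$. No PET would be needed, and the groups $H_{j,j'}$ with $j,j'<\ell$ would never enter. The entire content of Theorem~\ref{T: new estimates} is the passage from the small groups $\tilde H_{\ell,j}$ to the large groups $H_{j,j'}$.

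The missing idea is a mechanism that \emph{enlarges} the groups along which control holds. The paper obtains this from relative concatenation (Theorem~\ref{T: relative concatenation}): two estimates sharing a common block of groups yield control along sums $G_i+G_{j}'$. The second estimate is supplied by seminorm control transfer (Proposition~\ref{P: seminorm transfer}, Corollary~\ref{C: seminorm transfer}). The proof runs as follows:
\begin{enumerate}
\item It inducts on the complexity $(\ell,\zeta,\omega)$.
\item The type-1 base case uses the spectral theorem, which gives the full coefficient group rather than only the leading one. The type-2 base case uses Theorem~\ref{T: original estimates} plus one control transfer.
\item In the inductive step, with $\ell$ a dominant index, each copy of $\tilde H_{\ell,m}$ with $(\ell,m)$ not simple is removed one at a time. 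An auxiliary estimate is first produced by ping-pong through the lower-complexity families $\CP'$ (the degree-$d'$ coefficients of $p_\ell$ replaced by those of $p_m$) and $\CP''$. It is then concatenated with the assumed estimate, using $H'_{\ell,j}+\tilde H_{\ell,m}\supseteq H_{\ell,j}$.
\end{enumerate}
Without some such enlarging step, your proposal does not get beyond Theorem~\ref{T: original estimates}.
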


In both Theorems \ref{T: original estimates} and \ref{T: new estimates}, the estimates are stated in terms of a seminorm of $f_\ell$, but by symmetry one can easily deduce estimates in terms of other functions. We note that the box seminorms appearing in Theorem~\ref{T: original estimates} depend on which of the functions $f_1, \ldots, f_\ell$ we look at, whereas the estimates in Theorem~\ref{T: new estimates} are the same for all functions.

\begin{example}
    To compare the conclusions of Theorems \ref{T: original estimates} and \ref{T: new estimates}, 
    consider the average
    \begin{align*}
        \E_{n\in\Z}U_{b_{1,1} n+ b_{1,2} n^2}f_1\cdot U_{b_{2,1}n + b_{2,2}n^2}f_2
    \end{align*}
    for distinct and nonzero $b_{j,i}$'s.
    Theorem~\ref{T: original estimates} allows us to control\footnote{Throughout the paper, we say that a seminorm $\nnorm{\cdot}$ on a vector space $V$ \textit{controls} a function $\Lambda\colon V^\ell\to W$ at some fixed index $1\leq j\leq \ell$ if $\nnorm{f_j} = 0$ implies that $\Lambda(f_1, \ldots, f_\ell) = 0$ for all $f_1, \ldots, f_\ell\in V$. Here, $W$ is also a vector space, and it is typically either $V$ or $\C$. More casually, we express the same notion by saying that $\nnorm{f_j}$ controls $\Lambda(f_1, \ldots, f_\ell)$.} this average by the seminorms 
    \begin{align*}
        \nnorm{f_1}_{b_{1,2}^{\times s}, (b_{1,2}-b_{2,2})^{\times s}}\quad \textrm{and}\quad \nnorm{f_2}_{b_{2,2}^{\times s}, (b_{2,2}-b_{1,2})^{\times s}}
    \end{align*}
    for some $s\in\N$, while Theorem~\ref{T: new estimates} delivers control in terms of
    \begin{align*}
        \nnorm{f_j}_{\langle b_{1,1}, b_{1,2}\rangle^{\times s}, \langle b_{2,1}, b_{2,2}\rangle^{\times s}, \langle b_{1,1}-b_{2,1}, b_{1,2}-b_{2,2}\rangle^{\times s}}
    \end{align*}
    for any $j=1,2$ and some $s\in\N$.
\end{example}

Based on this example, it is not objectively clear which of the two outcomes is better. The original estimates from Theorem~\ref{T: original estimates} are strictly better for multiple ergodic averages along pairwise independent single-dimensional polynomials (i.e. taking the form $p_j = v_j q_j$ for some $v_j\in\Z^D$ and $q_j\in\Z[n]$). Indeed, in this case $ H_{j,j'} =\tilde H_{j,j'}$ for all distinct $0\leq j,j'\leq \ell$, and so Theorem~\ref{T: original estimates} involves fewer different subgroups than Theorem~\ref{T: new estimates}. For instance, for
\begin{align*}
    \E_{n\in\Phi}U_{v_1q_1(n)}f_1\cdot U_{v_2q_2(n)}f_2,
\end{align*}
Theorem~\ref{T: original estimates} gives control in terms of a seminorm of $f_2$ involving only
\begin{align*}
    \tilde H_{2,0} = H_{2,0} = \langle v_2\rangle\quad \textrm{and}\quad \tilde H_{2,1} = H_{2,1} =\langle v_2-v_1\rangle
\end{align*}
while Theorem~\ref{T: new estimates} additionally involves the subgroup $H_{1,0} = \langle{v_1}\rangle$.
On the other hand, the estimates provided by Theorem~\ref{T: new estimates} are better suited for applications covered in this paper, in particular for the proof of Theorem~\ref{T: nilpotent seminorm estimates}. We do emphasize, though, that the proof of Theorem~\ref{T: new estimates} is built upon Theorem~\ref{T: original estimates}.

\subsection{Equidistribution on nilsystems}
As a step in the proof of Theorem~\ref{T: nilpotent joint ergodicity}, we derive an equidistribution result on nilsystems that is of independent interest. For definitions and notation regarding nilsystems, we direct the reader to Appendix \ref{A: nilsystems}.
\begin{theorem}\label{T: equidistribution}
    Let $(Y, \CB_Y, m_Y, S_1, \ldots, S_\ell)$ be a nilsystem with $S_1, \ldots, S_\ell$ totally ergodic,\footnote{If the nilmanifold structure of $Y$ comes from the quotient $Y=G/\Gamma$ for an $s$-step nilpotent group $G$, then $\langle S_1, \ldots, S_\ell\rangle$ is isomorphic to a subgroup of $G$, so it is necessarily nilpotent of step at most $s$.} and let $p_1, \ldots, p_\ell\in\Z[n]$ be independent polynomials.
    Then for $m_Y$-a.e. $y\in Y$, the orbit
    \begin{align*}
        (S_1^{p_1(n)}y, \ldots, S_\ell^{p_\ell(n)}y)_{n\in\Z}
    \end{align*}
    is equidistributed in $Y^{\ell}$.
\end{theorem}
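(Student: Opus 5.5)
Write $Y=G/\Gamma$ with $G$ an $s$-step nilpotent Lie group and $S_j$ the translation by $a_j\in G$. The plan is to apply Leibman's equidistribution theorem for polynomial sequences on nilmanifolds to a single polynomial sequence on $Y^\ell$, and then check that its only obstruction, a horizontal character, cannot exist.

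The sequence is
\[
g(n)=(a_1^{p_1(n)},\ldots,a_\ell^{p_\ell(n)})\in G^\ell,
\]
acting on $Y^\ell=G^\ell/\Gamma^\ell$. For a point $\mathbf y=(y,\ldots,y)$ on the diagonal with $y=x\Gamma$, we have $g(n)\mathbf y = x^{\otimes\ell}\,\bigl((x^{-1}a_1x)^{p_1(n)},\ldots\bigr)\Gamma^\ell$. So equidistribution of the orbit of $\mathbf y$ is equivalent to equidistribution of the polynomial sequence $h_x(n)=\bigl((x^{-1}a_1x)^{p_1(n)},\ldots,(x^{-1}a_\ell x)^{p_\ell(n)}\bigr)\Gamma^\ell$ starting from the identity coset.

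By Leibman's theorem, $h_x$ is equidistributed in $Y^\ell$ unless there is a nontrivial horizontal character $\chi=(\chi_1,\ldots,\chi_\ell)$ on $G^\ell$, with $\chi$ trivial on $\Gamma^\ell$ and factoring through $G^\ell/[G^\ell,G^\ell]$, such that $\chi\circ h_x$ is constant mod $1$. The connected-component issue is handled by reducing to the case where $G$ is generated by $G^\circ$ and the $a_j$. Total ergodicity of each $S_j$ on the connected nilmanifold $Y$ keeps us out of the case of $G/G^\circ$ torsion.

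Since $\chi_j$ is a homomorphism, $\chi\circ h_x(n)=\sum_j p_j(n)\,\chi_j(x^{-1}a_jx)$. Conjugation by $x$ does not change the abelianization, so $\chi_j(x^{-1}a_jx)=\chi_j(a_j)=:\alpha_j$ is independent of $x$. This is the key simplification: the obstruction is the same for every starting point, so we get equidistribution for every $y$, not just almost every $y$. (Making the lift $x$ and the torus $\chi_j(\cdot)$ compatible with $\Gamma$ needs some care; I expect this is routine.)

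It remains to rule out $\sum_j \alpha_j p_j(n)\equiv \mathrm{const} \pmod 1$ with some $\chi_j$ nontrivial. Here I would use total ergodicity, via the standard fact that a nilrotation is (totally) ergodic iff its projection to the horizontal torus $G/[G,G]\Gamma$ is. Total ergodicity of $S_j$ then says that $\alpha_j=\chi_j(a_j)$ is irrational whenever $\chi_j$ is a nontrivial horizontal character.

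This is the main obstacle: independence of the $p_j$ only rules out $\mathbb Q$-linear (in fact $\mathbb Z$-linear) relations, whereas the $\alpha_j$ are arbitrary irrationals. Write $p_j=\sum_i c_{j,i}n^i$. Constancy of $\sum_j\alpha_j p_j(n)$ mod $1$ forces every coefficient $\sum_j c_{j,i}\alpha_j$, for $i\ge 1$, to lie in $\mathbb Q$. This is a rational linear system in the unknowns $\alpha_j$. Because the $p_j$ are linearly independent modulo constants, the coefficient matrix $(c_{j,i})$ has full rank $\ell$ over $\mathbb Q$. Hence the system has a unique solution, and that solution lies in $\mathbb Q^\ell$. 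So every $\alpha_j$ is rational, and therefore every $\chi_j$ is trivial, a contradiction.

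The delicate point I would double-check is how the nontrivial $\chi_j$ interact with the nonconnected part of $G$, in particular that rationality of $\chi_j(a_j)$ genuinely contradicts total ergodicity rather than just ergodicity. To handle it, I would pass to $Y$ connected and $G=G^\circ\langle a_1,\ldots,a_\ell\rangle$, where horizontal characters are continuous maps to $\mathbb T$ determined on the torus $G/[G,G]\Gamma$. There Weyl's criterion for the nilrotation $a_j$ gives the needed irrationality.
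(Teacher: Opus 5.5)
There is a genuine gap, and it sits exactly at your ``key simplification''. The form of Leibman's theorem you invoke, in which the only obstructions are horizontal characters factoring through $G^\ell/[G^\ell,G^\ell]\Gamma^\ell$, is valid when $G$ is connected. Here, however, $G$ need not be connected even though $Y$ is. Neither normalization you mention fixes this:
\begin{itemize}
\item assuming $G=G^o\langle a_1,\ldots,a_\ell\rangle$ does not make $G$ connected;
\item writing $Y=G^o/(\Gamma\cap G^o)$ does not help either, because the $a_j$ then act by affine maps rather than translations.
\end{itemize}
For general $G$, Leibman's theorem only reduces equidistribution to the projection onto $G/([G^o,G^o]\Gamma)$. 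On that quotient, elements of $G$ act by \emph{unipotent affine} maps, and it can be strictly larger than $G/[G,G]\Gamma$. The point-dependence of the obstruction lives in the linear parts of those affine maps. Your identity $\chi_j(x^{-1}a_jx)=\chi_j(a_j)$ is precisely where it is discarded.

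This is why your argument yields equidistribution for \emph{every} $y$, which is false. Take $Y=\T^2$ and $S_1=S_2=S$ with $S(x,y)=(x+\alpha,\,y+2x+\alpha)$ and $\alpha\notin\Q$. This is a totally ergodic nilrotation on $G=\R^2\rtimes_\sigma\Z$, where $\sigma(u,v)=(u,v+2u)$. Take $p_1(n)=n$ and $p_2(n)=n^2$. Then $S^n(x,y)=(x+n\alpha,\,y+2nx+n^2\alpha)$. Starting from $(0,y)$, the orbit is $(n\alpha,\,y+n^2\alpha,\,n^2\alpha,\,y+n^4\alpha)$. Its second and third coordinates differ by the constant $y$, so it is not equidistributed in $\T^4$. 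Yet your test is passed here. Indeed, $[G,G]$ is the group of vertical translations, so $G/[G,G]\Gamma$ is just the $x$-circle. The horizontal characters give $k_1\alpha n+k_2\alpha n^2$, which is nonconstant mod $1$ unless $k_1=k_2=0$.

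So the ``almost every'' in the statement is essential, and a correct proof must exploit genericity of $y$. This is what the paper does. After passing via Leibman's theorem to $G/([G^o,G^o]\Gamma)$ (so that $G^o$ is abelian), it shows in Proposition~\ref{P: isom onto unipotent affine} that $Y$ is isomorphic to a torus $\T^m$. On that torus each $S_j$ acts as $y\mapsto (I+A_j+\cdots+A_j^{s-1})y+\alpha_j$, and all the $A_j$ shift one common filtration coming from the lower central series; this common filtration is the compatibility you would otherwise lack. The paper then takes $y$ whose coordinates are $\Q$-independent from the $\alpha_j$ and $1$, which is a full-measure set. A Weyl-sum analysis peels off the contributions of $y_{s-1},y_{s-2},\ldots$ layer by layer, using independence of the $p_j$ to force $k_j\cdot A_j=0$. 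Only after that does it reach the translation parts, where total ergodicity finishes the argument. Your final rational-linear-algebra step is fine, but it addresses only those translation parts. The missing idea is the treatment of the unipotent linear parts, which cannot be done pointwise.
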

For $S_1 = \cdots = S_\ell$, Theorem~\ref{T: equidistribution} has been established by Frantzikinakis and Kra \cite{FrKr05} while in the commuting case, it follows from the joint ergodicity result of Frantzikinakis and the second author \cite{FrKu22a} combined with the pointwise convergence results of Leibman \cite{L05c, L05b}.

\subsection{Notation, conventions, and basic lemmas}\label{SS: Notation}
The symbols $\C, \R, \R_+, \Q, \Z, \N_0, \N, \P$ respectively stand for the sets of complex numbers, reals, positive reals, rationals, integers, nonnegative integers, positive integers, and primes. With $\T$, we denote the one dimensional torus, and we often identify it with $\R/\Z$ or  with $[0,1)$. Likewise, $\S^1$ stands for the unit circle inside $\C$. 
For $N\in\N$, we denote $[N]:=\{1, \ldots, N\}$. 

{In a group $G$, we write $\langle E \rangle$ for the subgroup generated by $E\subseteq G$. We denote the identity element via $e_G$  unless it is clear from the context that the group is abelian - then we write 0 instead.}

We let $\CC z := \overline{z}$ be the complex conjugate of $z\in \C$ and $|\eps|:=\eps_1 + \cdots + \eps_s$ for $\eps=(\eps_1, \ldots, \eps_s)\in\{0,1\}^s.$

Let $(X, \CX, \mu)$ be a probability space. We call a function $f\in L^\infty(\mu)$ \emph{$1$-bounded} if $\norm{f}_{L^\infty(\mu)}\leq 1$.

Unless stated otherwise, all the limits are taken in $L^2(\mu)$.


For a $\sigma$-algebra $\CA\subseteq\CX$ and $f\in L^1(\mu)$, we let $\E(f|\CA)$ be the conditional expectation of $f$ with respect to $\CA$. 

Throughout, all the equalities between functions and sets are understood to hold up to sets of measure 0.

Given two functions $a,b: X\to\C$ on some set $X$, we write
\begin{enumerate}
    \item $a\ll b$ or $a = O(b)$ if there exists $C>0$ 
    such that $|a(x)| \leq C |b(x)|$ for all $x\in X$; 
    \item $a \asymp b$ if $a\ll b$ and $b\ll a$.
\end{enumerate}

For a set $E$, we define its indicator function by $1_E$. 

Given a measure preserving system $(X, \CX, \mu, T)$, we also write $T$ for the corresponding\textit{ Koopman operator}; that is, $(Tf)(x) := f(Tx)$. In expressions like $TSf$, we interpret $TS$ as the Koopman operator corresponding to the measure-preserving transformation $TS$ (rather than as a composition of two Koopman operators), and so
\begin{align}\label{E: composition convention}
    (TSf)(x) := f(TSx) = (S(Tf))(x).
\end{align}

\subsubsection*{Averaging notation}

For a finite set $E,$ we define the average of a sequence $a:E\to \C$ as 
\[\E_{n\in E}a(n) :=\frac{1}{|E|}\sum_{n\in E}a(n).\] 
We extend the averaging notation to limits in amenable groups as follows. If $H$ is a countable amenable group and $\Phi = (\Phi_N)_{N\in\N}$ is a F{\o}lner sequence on $H$, we write
\begin{align*}
    \E_{h\in \Phi}f(h):=\lim_{N\to\infty}\E_{h\in\Phi_N}f(h)
\end{align*}
for $f:H\to\C$ if the limit exists. If the limit exists and is independent on the choice of the F{\o}lner sequence, we write
\begin{align*}
    \E_{h\in H}f(h):=\E_{h\in\Phi}f(h)
\end{align*}
and refer to it as the \textit{uniform limit}. We also write $\E_{h,h'\in H}$ for $\E_{(h,h')\in H^2}$.

If $f$ is $\R$-valued, we also write
\begin{align*}
    \oE_{h\in \Phi}f(h):=\limsup_{N\to\infty}\E_{h\in\Phi_N}f(h)
\end{align*}
and
\begin{align*}
    \oE_{h\in H}f(h):=\sup_{\Phi}\oE_{h\in\Phi}f(h),
\end{align*}
where the supremum is taken over all the F{\o}lner sequence on $H$.

{\subsubsection*{Van der Corput inequalities}
Let $(f_n)_{n\in \Z}$ be a uniformly bounded sequence in some Hilbert space $(\mathfrak{H}, \norm{\cdot})$. Then the \textit{asymmetric} and \textit{symmetric van der Corput inequalities} are given by 
\begin{align*}
    \norm{\oE_{n\in \Z}f_n}^2\leq \oE_{h\in \Z}\abs{\oE_{n\in\Z} \langle f_n, f_{n+h}\rangle} \quad \textrm{and}\quad \norm{\oE_{n\in \Z}f_n}^2\leq \oE_{h,k\in \Z}\abs{\oE_{n\in\Z} \langle f_{n+h}, f_{n+k}\rangle}.
\end{align*}
If the limit $\E_{n\in\Z} \langle f_{n+h}, f_{n+k}\rangle$ exists for every $h,k\in\Z$, we can remove the absolute value in the symmetric van der Corput inequality, a fact that we crucially use in Section \ref{S: PET}.}

\subsection*{Acknowledgments}
We would like to thank Bryna Kra for help with the proof of Proposition~\ref{P: comparing HK factors in nilpotent systems}, Pablo Candela for directing us to Theorem~\ref{T: Candela-Szegedy} and extensive discussions regarding nilmanifolds and nilspace theory, James Leng for feedback on Section \ref{S: open problems}, and Nikos Frantzikinakis for suggesting Theorem~\ref{T: Khintchine}.

\section{Outline of the paper, main challenges, and new ideas}\label{SS: outline}
Having stated our main results, we proceed to outline new ideas and major challenges. Theorem~\ref{T: nilpotent joint ergodicity} is proved by assembling four ingredients whose proofs determine the structure of the paper:
\begin{enumerate}
    \item \textit{2-step nilpotent PET induction scheme}, laid out in Section \ref{S: PET}, with Section \ref{S: PET n, n^2} presenting it for the model average
    \begin{align}\label{E: n, n^2 0}
    \E_{n\in\Z}T^n f_1\cdot S^{n^2}f_2;
\end{align}
    \item new \textit{abelian seminorm estimates} (Theorem~\ref{T: new estimates}), derived in Section \ref{S: seminorm estimates}, with the key ingredient of seminorm control transfer isolated in Section \ref{S: control transfer}; 
    \item \textit{comparison results for Host-Kra factors}, whose proofs occupy Section \ref{S: comparison of factors};
    \item \textit{equidistribution of polynomial orbits on nilsystems} (Theorem~\ref{T: equidistribution}), proved in Section \ref{S: equidistribution}.
\end{enumerate}
When combined, the first two of these ingredients yield Theorem~\ref{T: nilpotent seminorm estimates} while the latter two result in Theorem~\ref{T: nilpotent joint ergodicity}.

In addition, Section \ref{S: seminorms and factors} summarizes the properties of box seminorms and factors used in the paper; Section \ref{SS: counterexamples} provides counterexamples announced in Theorems \ref{T: counterexample to joint ergodicity conjecture} and \ref{T: counterexample to joint ergodicity criteria}; Section \ref{S: proofs of main results} concludes the proofs of joint ergodicity results; and Section \ref{S: open problems} contains a broad array of open problems naturally arising from this work. Three appendices contain all the needed properties of invariant and Kronecker factors (Appendix \ref{A: Kronecker}), nilsystems (Appendix \ref{A: nilsystems}), and cubic structures (Appendix \ref{A: cubes}). 

\subsection{2-step nilpotent PET}
Here is the high-level overview of how the abovementioned four main ingredients come together to give Theorem~\ref{T: nilpotent joint ergodicity}. Like in the abelian setting, the 2-step nilpotent PET induction scheme is a complexity reduction argument that reduces the original average to a simpler one via a sequence of van der Corput operations. The general strategy behind our variant resembles that of Chu, Frantzikinakis and Host \cite{CFH11} in the commuting case. That is, we eliminate  $T_1, \ldots, T_{\ell-1}$ in finitely many van der Corput operations by exploiting the lower degree of their iterates relative to the iterate of $T_\ell$. However, our approach differs at several critical junctures due to challenges brought by noncommutativity.

As we run the nilpotent PET, we inevitably need to swap transformations with each other, which gives rise to commutators, an issue obviously not present in \cite{CFH11}. A major challenge is that while the group operation is linear in $T_j$'s, it is quadratic in commutators. For instance, if our group is 2-step nilpotent and generated by $T, S$, then 
$$(T^{a_1}S^{b_1}[T,S]^{c_1})(T^{a_2}S^{b_2}[T,S]^{c_2}) = T^{a_1 + a_2}S^{b_1 + b_2}[T,S]^{c_1 + c_2 - a_2 b_1},$$
and so the iterates of $T, S$ are linear while those of $[T, S]$ are quadratic in terms of the original ones.\footnote{Here is a technical explanation for why this matters, which the reader is advised to skip on the first read. The iterates of $T_1, \ldots, T_\ell$ change linearly at each step of PET, and since compositions of linear maps are linear, it is possible to track how they change throughout the entire PET. This observation underpins the \textit{coefficient tracking methods} used in dealing with ergodic averages in the abelian setting. By contrast, tracking the iterates of the commutators across successive iterations is rather difficult; the reason is essentially the fact that quadratic maps do not behave nicely under compositions.} 
As a consequence, we can quite well control the iterates of $T_1, \ldots, T_\ell$ in the successive stages of PET (just like in the abelian case), but our grip on the new powers of the commutators is less firm. This is a serious stumbling block, as all recent seminorm estimates obtained in the abelian setting (e.g. \cite{DFKS22, DKKST24, KKL24a, Kuc23}) rely heavily on careful tracking of the iterates from one stage of PET to another.  To overcome this difficulty, we identify  a property of polynomial families called \textit{distinguishability} (see Definition \ref{D: nice families}) that remains preserved throughout the iterations of PET, and which
gives us just enough control over the shape of the intermediate averages to continue with the argument.

In the abelian setting, the PET induction scheme concludes once we arrive at an average with all iterates linear in the variable $n$. This is emphatically \textit{not} what we do in the 2-step nilpotent setting. Here, we stop as soon as we annihilate all instances of $T_1, \ldots, T_{\ell-1}$ and are left only with $T_\ell$ and the commutators - even if the powers of the latter are not linear in $n$. The reason is that, with $T_1, \ldots, T_{\ell-1}$ out of the way, we get an average which involves only \textit{commuting} transformations. This is a key point where we use the 2-step nilpotence - without it, the commutators would not commute with $T_\ell$. Upon reaching this stage, we apply the new seminorm estimates from Theorem~\ref{T: new estimates}. Strikingly, we can ensure that for this new average, the groups of coefficients appearing in Theorem~\ref{T: new estimates} always contain a power of $T_\ell$. This crucial and highly nonobvious observation gives us Theorem~\ref{T: nilpotent seminorm estimates}, our nilpotent seminorm estimates, when combined with standard properties of box seminorms.

{We emphasize that in the proof of Theorem~\ref{T: nilpotent seminorm estimates}, we invoke Theorem~\ref{T: new estimates} for genuinely multidimensional and multiparameter polynomial families. Indeed, in applying Theorem~\ref{T: new estimates}, we treat the variables $h_1, \ldots, h_r$ arising from the van der Corput inequality on a par with the original variable $n$.}

For the illustration of the aforementioned steps on a specific example, we refer the reader to Section \ref{S: PET n, n^2}, which in its entirety is dedicated to the proof of Theorem~\ref{T: new estimates} for the model average \eqref{E: n, n^2 0}.

\subsection{Abelian seminorm estimates}
How, then, do we go about proving Theorem~\ref{T: new estimates}? We broadly follow the seminorm smoothing strategy first laid out in \cite{FrKu22a} and then refined in \cite{DKKST25, DKKST24, FrKu22b, Kuc23}. In all previous papers, this strategy deals with averages of the form
\begin{align}\label{E: average smoothing}
    \E_{n\in\Z}T_1^{p_1(n)}f_1\cdots T_\ell^{p_\ell(n)}f_\ell,
\end{align}
involving \textit{single-dimensional} polynomials $p_j\in \Z[n]$. The game there is about reducing the average \eqref{E: average smoothing} to one that is less complex in the sense of involving fewer transformations. The end goal is to arrive at an average which sees only one of $T_1, \ldots, T_\ell$, use the single-transformation estimates for that average, and lift them all the way back to \eqref{E: average smoothing}. 

In the framework of Theorem~\ref{T: new estimates}, the task is more delicate because we need to consider averages
\begin{align}\label{E: average smoothing 2}
    \E_{\bn\in\Z^{L}}U_{p_1(\bn)}f_1\cdots U_{p_\ell(\bn)}f_\ell
\end{align}
for \textit{multidimensional} and \textit{multiparameter} polynomials $p_1, \ldots, p_\ell\in\Z^D[\bn]$. The key challenges are to identify a correct way of measuring the complexity of averages \eqref{E: average smoothing 2}; a reduction that allows us to reduce a more complex average to one of lower complexity; and the base case in which the estimates of Theorem~\ref{T: new estimates} can be deduced from some known estimates. None of these three challenges suggest a natural answer, as the intuitions developed for single-dimensional polynomials fail quite dramatically in the multidimensional setup. Eventually, we manage to address all three of them, and the known estimates used in two base cases are provided by Theorem~\ref{T: original estimates} and the spectral theorem. A (relatively simple) example of such reduction is given in Section \ref{SS: seminorm example}. 

While the original motivation for Theorem~\ref{T: new estimates} comes from its role in the derivation of Theorem~\ref{T: nilpotent seminorm estimates}, the former is also a key input in the proof of Theorem~\ref{T: joint ergodicity conjecture} presented in Section \ref{S: proofs of main results}.
\subsection{Failure of degree lowering}
In the abelian setting, the modern way to derive a joint ergodicity result like Theorem~\ref{T: nilpotent joint ergodicity} from Host-Kra seminorm estimates like Theorem~\ref{T: nilpotent seminorm estimates} would be to apply joint ergodicity criteria from \cite{BFM22, Fr21, FrKu22a}. The latter are based on the degree lowering argument developed in additive combinatorics by Peluse \cite{Pel19} and Peluse-Prendiville \cite{PP19}. Perhaps the most exasperating aspect of the nilpotent universe is that neither degree lowering nor seminorm smoothing works any longer.\footnote{For clarity: we mentioned above a use of seminorm smoothing, but in the \textit{abelian} setting, as a part of the proof of Theorem~\ref{T: new estimates}.} The place where things break is the dual-difference interchange step; we briefly illustrate this failure now. Suppose that we study \eqref{E: n, n^2 0} for 2-step nilpotent $\langle T, S\rangle$, and for simplicity assume that $S$ is ergodic. Consider the corresponding \textit{dual function}\footnote{Recall \eqref{E: composition convention} for our convention regarding compositions: $(TSf)(x) := f(TSx) = (S(Tf))(x).$}
\begin{align*}
    F := \E_{n\in\Z} S^{-n^2}\bar f_0\cdot T^n S^{-n^2}\bar f_1,
\end{align*}
and suppose that we want to deduce $\nnorm{F}_{2,S}>0$ from $\nnorm{F}_{3, S}>0$ using the degree lowering strategy. On expanding and using the inverse theorem for degree-2 Host-Kra seminorms, we get
\begin{align*}
    \nnorm{F}_{3,S}^8 = \E_{h\in\Z}\nnorm{F\cdot S^h\bar F}_{2,S}^4\leq \oE_{h\in\Z}\int F\cdot S^h\bar F\cdot \chi_h\; d\mu
\end{align*}
for some $S$-eigenfunctions $\chi_h$. Expanding the definition of the dual function and using the assumption $\nnorm{F}_{3, S}>0$, we get
\begin{align*}
    \oE_{h\in\Z}\int F\cdot S^h\brac{\E_{n\in\Z} S^{-n^2}f_0\cdot T^n S^{-n^2}f_1}\cdot \chi_h\; d\mu > 0.
\end{align*}
Now, composing the integral with $S^{n^2}$ gives us
\begin{align*}
    \oE_{h\in\Z} \E_{n\in\Z}\int S^hf_0\cdot T^n S^h f_1 \cdot S^{n^2}(F\cdot \chi_h)\; d\mu > 0.
\end{align*}
By swapping the order of averages,\footnote{This maneuver can be made precise by swapping only finite portions of averages and keeping the limits in their current order. A careful check shows that this is correct.} applying the Cauchy-Schwarz inequality to double $h$, and swapping the order of averages again, we get
\begin{align*}
    \oE_{h,h'\in\Z} \E_{n\in\Z}\int (S^hf_0\cdot S^{h'}\bar f_0)\cdot (T^n S^h f_1\cdot T^n S^{h'} \bar f_1) \cdot S^{n^2}(\chi_h\cdot \bar\chi_{h'})\; d\mu > 0;
\end{align*}
{note that $F$ disappears as it does not depend on $h$.}
This can be rearranged as
\begin{align}\label{E: failure of degree lowering}
    \oE_{h,h'\in\Z} \E_{n\in\Z}\int \Delta'_{S; (h,h')}f_0 \cdot \Delta'_{S; (h,h')}(T^n f_1) \cdot S^{n^2}(\chi_h\cdot \bar\chi_{h'})\; d\mu > 0
\end{align}
on setting $\Delta'_{S; (h,h')}f:=S^h f\cdot S^{h'}\bar f$ (a note of caution: $S^h(T^nf) = T^nS^hf$, not the other way, since $(S^h(T^nf))(x) = (T^n f)(S^hx) = f(T^nS^h x)=(T^nS^hf)(x)$). Unfortunately, 
\begin{align*}
    \Delta'_{S; (h,h')}(T^n f_1) = T^n S^h f_1\cdot T^n S^{h'} \bar f_1
\end{align*}
is \textit{not} the same as
\begin{align*}
    T^n(\Delta'_{S; (h,h')} f_1) = S^h T^n f_1 \cdot S^{h'} T^n \bar f_1
\end{align*}
if $T, S$ fail to commute. Hence, the average \eqref{E: failure of degree lowering} is not of the form
\begin{align*}
    \oE_{h,h'\in\Z} \E_{n\in\Z}\int \Delta'_{S; (h,h')}f_0 \cdot T^n(\Delta'_{S; (h,h')} f_1) \cdot S^{n^2}(\chi_h\cdot \bar\chi_{h'})\; d\mu > 0
\end{align*}
that we would need it to be in order to run degree lowering.

\subsection{Host-Kra factors and nilsystems}
In the absence of degree lowering, we revert to the old-school strategy of reducing joint ergodicity to an equidistribution problem on nilsystems. In the abelian case, this would be pretty straightforward: we would use the ergodicity of $T, S$ to conclude that
\begin{align}\label{E: failing factor identity}
\CZ_s(T)=\CZ_s(S) = \CZ_s(\langle T, S\rangle)    
\end{align}
 for every $s\in\N_0$, apply the structure theorem for the ergodic Host-Kra factor $\CZ_s(\langle T, S\rangle)$, and then use the $L^2(\mu)$ approximation argument to pass to a nilsystem. To our surprise, an identity like \eqref{E: failing factor identity} seems shockingly difficult to establish in the nilpotent setting. 
 To overcome this difficulty,  we prove  the following weaker version of (\ref{E: failing factor identity}) instead:
 \begin{align}\label{E: working factor identity}
 \CZ_s(T), \CZ_s(S)\subseteq \CZ_{2s^2}(\langle T, S\rangle),    
 \end{align}
 which is a special case of a more general Proposition~\ref{P: comparing HK factors in nilpotent systems}. Curiously, we do not know how to extend this identity to higher-step nilpotent systems, nor whether the degree $2s^2$ can be lowered (see Problem~\ref{Pr: comparing factors}). However, the identity \eqref{E: working factor identity} is good enough for our purposes, as it allows us to pass into nilsystems when applied in conjunction with the Candela-Szegedy structure theorem (Theorem~\ref{T: Candela-Szegedy}). The proof of \eqref{E: working factor identity} is substantially more difficult than that of \eqref{E: failing factor identity} in the abelian world. Among other things, it uses an old result of Parry that a measurable isomorphism between two nilsystems is necessarily affine as well as some spectral-theoretic arguments collected in Appendix \ref{A: Kronecker}.

The last ingredient used in the proof of Theorem~\ref{T: nilpotent joint ergodicity} is Theorem~\ref{T: equidistribution}, an equidistribution result of nilsystems. Its proof emulates the strategy employed by Frantzikinakis and Kra \cite{FrKr05} in the single transformation case $T_1 = \cdots = T_\ell$ - but with some interesting twists explained in Section \ref{S: equidistribution}.

    \section{Seminorms and factors for nilpotent systems} \label{S: seminorms and factors}
Throughout this section, we fix a countable amenable group $H$ and an \textit{$H$-system} $(X, \CX, \mu, U)$. That is, $U=(U_h)_{h\in H}$ is a measure-preserving action of $H$ on $(X,\CX,\mu)$. Our goal is to introduce box seminorms and factors along subgroups $H_1, \ldots, H_s\subseteq H$ and summarize their properties in two cases of special interest:
\begin{enumerate}
    \item when $H$ is abelian;
    \item when $H$ is nilpotent and $H_1 = \cdots = H_s$.
\end{enumerate}
While these two settings are all that we require in this paper, the only property of nilpotent groups needed to define box seminorms is that they satisfy the mean ergodic theorem. Since the latter holds for any amenable group, amenability is the right level of generality to present the basic theory of cubic measures as well as box seminorms and factors.\footnote{In the absence of amenability, one can define averaging as in \cite[Section 1.4]{TZ16}. We do not pursue this level of abstraction here.} The assumption of countability is made purely for notational convenience.

The definitions and proofs in this section closely follow those already present in the vast literature on the subject. However, there are certain nuances coming from the nonabelian setting, and we shall take particular care to point those out. In particular, we caution the reader that since $(U_hf)(x):=f(U_h x)$, we have $U_{hh'}f = U_{h'}(U_h f)$ for any $f\in L^2(\mu)$ and $h,h'\in H$ as 
\begin{align}\label{E: composition}
(U_{hh'}f)(x)=f(U_{hh'}x) = f(U_h U_{h'}x) = (U_{h'}(U_{h}f))(x). 
\end{align}

\subsection{General theory}\label{SS: general theory}
\subsubsection{Cubic measures}

Following \cite{H09, HK05a, HK18}, we start by introducing a family of measures on cubic product spaces.
\begin{definition}[Cubic measures]\label{D: cubic measures}
Let $X^{\cube{s}}:=X^{2^s}$. The \textit{cubic measure on $X^{\cube{s}}$ with respect to $H_1, \ldots, H_s$} is the measure $\mu_{H_1, \ldots, H_s}$ constructed iteratively by setting $\mu_{\emptyset}:=\mu$ and defining 
\begin{align*}
    \mu_{H_1, \ldots, H_s} := \mu_{H_1, \ldots, H_{s-1}}\times_{\CI(H_{s})} \mu_{H_1, \ldots, H_{s-1}}
\end{align*}
for $s\geq 1$,
where the induced action of $H_s$ on $X^{\cube{s-1}}$ is the diagonal action $$(U^\cube{s-1})_h:=(U_h)^\cube{s-1}.$$  The invariant factor $\CI(H_s)$ is defined in Definition \ref{D: invariant}.

If $H=H_1 = \cdots = H_s$, we also denote the corresponding measure on $X^{\cube{s}}$ via $\mu_{s,H}$ (with $\mu_{0,H} = \mu$). If instead we want to emphasize the dependence of the cubic measure on the action $U$, we write $\mu_{s,U} = \mu_{s,H}$.
\end{definition}

By the mean ergodic theorem for amenable actions (see e.g. \cite[Theorem~3.33]{Glasner}), for every $f\in L^2(\mu)$ we have
\begin{align*}
    \E_{h\in H}U_h f = \E_{h,h'\in H}U_{h{h'}\inv} f = \E(f|\CI(H))
\end{align*}
{(we recall that all the limits are in $L^2(\mu)$ unless stated otherwise).}
Iterating this result, we get a dynamical description of the cubic measures.
\begin{lemma}\label{L: cubic measures using averages}
Let $f_\eps\in L^\infty(\mu)$ for every $\eps\in{\{0,1\}^s}$. Then 
    \begin{align*}
        \E_{h_s,h_s'\in H_s}\cdots \E_{h_1,h_1'\in H_1}\prod_{\eps\in\{0,1\}^s}U_{h_{s,\eps_s}\cdots h_{1,\eps_1}}f_\eps = \int \bigotimes_{\eps\in\{0,1\}^s}f_\eps\; d\mu_{H_1, \ldots, H_s},
    \end{align*}
    where $h_{i,0} := h_i$, $h_{i,1}:=h_i'$.
\end{lemma}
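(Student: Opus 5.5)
Induction on $s$. The base case $s=0$ is trivial: there are no averages, and $\mu_\emptyset=\mu$. Note also that the left-hand side is a function on $X$ (a limit in $L^2(\mu)$), so I read the identity as an equality of scalars after integrating over $X$. Equivalently, one can phrase the claim as
\begin{align*}
\int \E_{h_s,h_s'}\cdots\E_{h_1,h_1'}\prod_\eps U_{h_{s,\eps_s}\cdots h_{1,\eps_1}}f_\eps\; d\mu = \int \bigotimes_\eps f_\eps\; d\mu_{H_1,\ldots,H_s}.
\end{align*}
I will prove this integrated version, interpreting the iterated limits from the inside out.

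For the inductive step, split $\{0,1\}^s$ according to $\eps_s$. Write $\eps=(\eps',\eps_s)$ and define functions on $X^{\cube{s-1}}$ by $F_0:=\bigotimes_{\eps'}f_{(\eps',0)}$ and $F_1:=\bigotimes_{\eps'}f_{(\eps',1)}$.

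By the convention \eqref{E: composition}, $U_{h_s h}f = U_h(U_{h_s}f)$, where $h=h_{s-1,\eps_{s-1}}\cdots h_{1,\eps_1}$ is the inner word. So the innermost $s-1$ averages act on the functions $U_{h_s}f_{(\eps',0)}$ and $U_{h_s'}f_{(\eps',1)}$.

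Fix $h_s,h_s'$ and apply the induction hypothesis to the family $g_{\eps'}:=U_{h_s}f_{(\eps',0)}$, $g_{\eps'}$ for the second half similarly. To do so, first rewrite the $2^s$ functions as a product of $2^{s-1}$ functions. Pair $(\eps',0)$ with $(\eps',1)$ and set
\begin{align*}
g_{\eps'}:=U_{h_s}f_{(\eps',0)}\cdot U_{h_s'}f_{(\eps',1)}.
\end{align*}
Since $U_{h}$ is multiplicative, the integrand becomes $\prod_{\eps'}U_{h}g_{\eps'}$. The induction hypothesis then gives
\begin{align*}
\int \bigotimes_{\eps'}\bigl(U_{h_s}f_{(\eps',0)}U_{h_s'}f_{(\eps',1)}\bigr)\,d\mu_{H_1,\ldots,H_{s-1}}
= \int (U^{\cube{s-1}})_{h_s}F_0\cdot (U^{\cube{s-1}})_{h_s'}F_1\; d\mu_{H_1,\ldots,H_{s-1}}.
\end{align*}

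Next average over $h_s,h_s'$. Because $\mu_{H_1,\ldots,H_{s-1}}$ is invariant under the diagonal action, this equals
\begin{align*}
\int (U^{\cube{s-1}})_{h_s'h_s^{-1}}\cdots,
\end{align*}
or, more directly, the product of the averages $\E_{h_s}(U^{\cube{s-1}})_{h_s}F_0$ and $\E_{h_s'}(U^{\cube{s-1}})_{h_s'}F_1$. By the mean ergodic theorem for amenable actions, these averages equal $\E(F_i|\CI(H_s))$, with conditional expectation taken in $X^{\cube{s-1}}$. The integral of the product $\E(F_0|\CI)\E(F_1|\CI)$ is by definition the relatively independent product integral $\int F_0\otimes F_1\,d\mu_{H_1,\ldots,H_s}$. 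This closes the induction.

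\textbf{Main obstacle.} The delicate point is justifying the interchange of the limit in $h_s,h_s'$ with the inner limits. One also needs the inner limit to exist and to be continuous, uniformly in $h_s,h_s'$.

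I would handle this as follows. The inner iterated average, viewed as a function of $(h_s,h_s')$, equals $\int (U^{\cube{s-1}})_{h_s}F_0\,(U^{\cube{s-1}})_{h_s'}F_1\,d\mu_{H_1,\ldots,H_{s-1}}$, which is a bounded expression. Its double average factorizes because the two variables are independent: it is $\langle \E_{h}U_hF_0,\overline{\E_{h'}U_{h'}\bar F_1}\rangle$ in $L^2(\mu_{H_1,\ldots,H_{s-1}})$.

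Convergence in $L^2$ of each factor, by the mean ergodic theorem, then gives existence of the double limit independently of the Følner sequence. One must still check that the measure $\mu_{H_1,\ldots,H_{s-1}}$ is $(U^{\cube{s-1}})$-invariant; this follows inductively from the construction.
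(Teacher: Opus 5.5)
Your proof is correct and follows the paper's own argument. You rightly read the identity with an integral over $X$, which the displayed statement omits. You then group the $2^s$ functions by $\eps_s$, use \eqref{E: composition} to absorb $U_{h_s},U_{h_s'}$ into the $g_{\eps'}$, and apply the induction hypothesis to reach $\int (U^{\cube{s-1}})_{h_s}F_0\cdot(U^{\cube{s-1}})_{h_s'}F_1\,d\mu_{H_1,\ldots,H_{s-1}}$. Finally you apply the mean ergodic theorem for the diagonal $H_s$-action and the definition of the relatively independent product, exactly as in the paper's $s=2$ sketch.

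Only cosmetic points remain:
\begin{enumerate}
\item The pairing should read $\langle \E_{h}U_hF_0,\overline{\E_{h'}U_{h'}F_1}\rangle$; you have an extra conjugate.
\item The literal factorization of the double average holds only along product F{\o}lner sets. Your reduction to $(U^{\cube{s-1}})_{h_s'h_s^{-1}}$ handles arbitrary F{\o}lner sequences on $H_s\times H_s$.
\item No interchange of limits is actually needed, since the iterated limit is evaluated from the inside out.
\item The diagonal $H_s$-invariance of $\mu_{H_1,\ldots,H_{s-1}}$ does not follow from the construction in full nonabelian generality. It is the same tacit assumption the paper makes in Definition~\ref{D: cubic measures}, and it holds when $H$ is abelian or $H_1=\cdots=H_s$.
\end{enumerate}
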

\begin{proof}
While the proof is standard, we sketch it nonetheless to emphasize that the order of $h_i$'s in the transformation $U$ is compatible with the order of averages.
    For $s=1$, the mean ergodic theorem implies
    \begin{align*}
        \E_{h_1,h_1'\in H_1} U_{h_1}f_0\cdot  U_{h_1'}f_1 = \int \E(f_0|\CI(H_1))\cdot \E(f_1|\CI(H_1))\; d\mu = \int f_0 \otimes f_1\; d\mu_{H_1}. 
    \end{align*}
    The case $s>1$ follows by induction; to make the notation palatable, we just show the proof for $s=2$. Recasting
    \begin{align*}
        U_{h_{2,\eps_2}h_{1,\eps_1}}f_\eps = U_{h_{1,\eps_1}}(U_{h_{2,\eps_2}}f),
    \end{align*}
    using \eqref{E: composition},
    we apply the inductive hypothesis to get
        \begin{multline*}
        \E_{h_2,h'_2\in H_2}\E_{h_1,h_1'\in H_1} U_{h_2 h_1}f_{00}\cdot U_{h_2 h_1'}f_{10}\cdot U_{h_2' h_1}f_{10}\cdot U_{h_2' h_1'}f_{11}\\ = \E_{h_2,h'_2\in H_2}\E_{h_1,h_1'\in H_1} U_{h_1}(U_{h_2}f_{00}\cdot U_{h_2'}f_{01})\cdot U_{h_1'}(U_{h_2}f_{10}\cdot U_{h_2'}f_{11}).
    \end{multline*}
    By the case $s=1$, this equals
    \begin{multline*}
        \E_{h_2,h'_2\in H_2} \int (U_{h_2}f_{00}\cdot U_{h_2'}f_{01})\otimes (U_{h_2}f_{10}\cdot U_{h_2'}f_{11})\; d\mu_{H_1}\\ = \E_{h_2,h'_2\in H_2}\int (U\times U)_{h_2}(f_{00}\otimes f_{10})\cdot (U\times U)_{h_2'}(f_{01}\otimes f_{11})\; d\mu_{H_1}.
    \end{multline*}
    The claim then follows from one more application of the mean ergodic theorem and the definition of $\mu_{H_1, H_2}$.
\end{proof}

\subsubsection{Box seminorms}
With cubic measures {already defined,} we proceed to define box seminorms. In doing so, we shall need the following standard notion.
\begin{definition}[Symmetric multiplicative derivatives]
    Given $h,h'\in H$ and $f\in L^\infty(\mu)$, the \textit{symmetric multiplicative derivative} of $f$ along $(h,h')$ is $\Delta'_{(h,h')}f:=U_h f\cdot U_{h'}\bar f$.
\end{definition}
We interpret {the} iterated multiplicative derivative as
\begin{align*}
    \Delta'_{(h_1,h_1')}\cdots \Delta'_{(h_s,h'_s)}f:=\Delta'_{(h_1,h_1')}(\Delta'_{(h_2,h'_2)}(\cdots (\Delta_{(h_s,h'_s)}' f)\cdots)),
\end{align*}
so for instance
\begin{align*}
    \Delta'_{(h_1,h_1')}\Delta'_{(h_2,h_2')}f = \Delta'_{(h_1,h_1')}(U_{h_2}f\cdot U_{h'_2}\overline{f}) = U_{h_2 h_1}f\cdot U_{h_2 h_1'}\overline{f}\cdot U_{h_2' h_1}\overline{f}\cdot U_{h_2' h_1'}f.
\end{align*}

Using cubic measures and iterated multiplicative derivatives, we can define the box seminorm on $L^\infty(\mu)$ as follows. The equivalence of both formulas comes from Lemma~\ref{L: cubic measures using averages}.

\begin{definition}[Box and Host-Kra seminorms]\label{D: box seminorms}
    Let $f\in L^\infty(\mu)$. The \textit{box seminorm} of $f$ along $H_1, \ldots, H_s$ is
    \begin{align*}
        \nnorm{f}_{H_1, \ldots, H_s} :=&\int \bigotimes_{\eps\in\{0,1\}^s}\CC^{|\eps|}f\; d\mu_{H_1, \ldots, H_s}\\
    =&\brac{\E_{h_s,h_s'\in H_s}\cdots \E_{h_1,h_1'\in H_1}\int \Delta_{(h_1,h_1')}'\cdots \Delta'_{(h_s,h_s')}f\; d\mu}^{1/2^s}.
    \end{align*}
{If $H:=H_1 = \cdots = H_s$, then we call $\nnorm{\cdot}_{s,H}:=\nnorm{\cdot}_{H_1, \ldots, H_s}$ the \textit{degree-$s$ Host-Kra seminorm} along $H$.} 
\end{definition}

As another indication that the relative order of taking derivatives and averages is correct, we recover the standard inductive formula for box seminorms:
\begin{align}\label{E: inductive formula}
    \nnorm{f}_{H_1, \ldots, H_s}^{2^s} = \E_{h_s,h'_s\in H_s}\cdots \E_{h_{j+1},h'_{j+1}\in H_{j+1}}\nnorm{\Delta'_{(h_{j+1},h_{j+1}')}\cdots \Delta'_{(h_s,h'_s)}f}_{H_1, \ldots, H_j}^{2^j}.
\end{align}

Via standard arguments, our seminorms and measures satisfy the Gowers-Cauchy-Schwarz inequality.
\begin{proposition}[Gowers-Cauchy-Schwarz inequality]
    Let $f_\eps\in L^\infty(\mu)$ for every $\eps\in\{0,1\}^s$. Then
    \begin{align*}
        \abs{\int \bigotimes_{\eps\in\{0,1\}^s}f_\eps\; d\mu_{H_1, \ldots, H_s}}\leq \prod_{\eps\in\{0,1\}^s}\nnorm{f_\eps}_{H_1, \ldots, H_s}.
    \end{align*}
\end{proposition}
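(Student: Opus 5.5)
The plan is induction on $s$, using the recursive definition $\mu_{H_1,\ldots,H_s}=\mu_{H_1,\ldots,H_{s-1}}\times_{\CI(H_s)}\mu_{H_1,\ldots,H_{s-1}}$ and a Cauchy--Schwarz step at each level. Note that the definition of $\nnorm{f}_{H_1,\ldots,H_s}$ should carry the exponent $1/2^s$ on the integral formula as well. The statement only involves cubic measures, in which the group enters solely through the invariant factor $\CI(H_s)$ of the diagonal action. So noncommutativity of $H$ plays no role beyond checking that the diagonal action is measure preserving and that the relatively independent joining is well defined.

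For $s=0$ the claim is trivial. Suppose the inequality holds for $s-1$. Split $\eps=(\eps',\eps_s)$ and set $F_0:=\bigotimes_{\eps'}f_{(\eps',0)}$ and $F_1:=\bigotimes_{\eps'}f_{(\eps',1)}$, both functions on $X^{\cube{s-1}}$. By the definition of the relatively independent product, and writing $\nu:=\mu_{H_1,\ldots,H_{s-1}}$, we have
\begin{align*}
\int \bigotimes_{\eps} f_\eps\, d\mu_{H_1,\ldots,H_s}=\int \E(F_0|\CI(H_s))\,\E(F_1|\CI(H_s))\,d\nu .
\end{align*}
Cauchy--Schwarz in $L^2(\nu)$ then bounds the absolute value by
\begin{align*}
\brac{\int |\E(F_0|\CI(H_s))|^2 d\nu}^{1/2}\brac{\int |\E(F_1|\CI(H_s))|^2 d\nu}^{1/2}.
\end{align*}
Each factor equals $\brac{\int F_i\otimes \overline{F_i}\, d\mu_{H_1,\ldots,H_s}}^{1/2}$, which is again a cubic integral of the same type with the functions on the $\eps_s=1$ face conjugated.

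Iterate this doubling once in each coordinate direction, permuting coordinates so that each direction is in turn the ``last'' one. After $s$ steps one obtains
\begin{align*}
\Bigl|\int \bigotimes f_\eps\,d\mu_{H_1,\ldots,H_s}\Bigr|\le \prod_{\eps}\Bigl(\int\bigotimes_{\eta}\CC^{|\eta|}f_\eps\,d\mu_{H_1,\ldots,H_s}\Bigr)^{1/2^s}=\prod_\eps\nnorm{f_\eps}_{H_1,\ldots,H_s},
\end{align*}
which is exactly the standard Host--Kra argument.

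The main obstacle is the permutation step. One needs the symmetry of $\mu_{H_1,\ldots,H_s}$ under permuting coordinate directions, together with the corresponding groups $H_i$, so that $\mu_{H_1,\ldots,H_s}$ can be written as a relatively independent product over $\CI(H_i)$ for every $i$, not just $i=s$. I would prove this via Lemma~\ref{L: cubic measures using averages}. Write the cubic integral as the iterated average $\E_{h_s,h_s'}\cdots\E_{h_1,h_1'}\int\prod_\eps U_{h_{s,\eps_s}\cdots h_{1,\eps_1}}f_\eps\,d\mu$, compose with $U_{h_i}^{-1}$, and use the mean ergodic theorem to bring the $i$-th average outermost.

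In the nonabelian case reordering the products $h_{s,\eps_s}\cdots h_{1,\eps_1}$ is not free, so I would avoid explicit permutation. Instead, I would apply the Cauchy--Schwarz step to the outermost average only. Then I would use the inductive hypothesis on the inner measure $\mu_{H_1,\ldots,H_{s-1}}$, applied to the tensor functions $U_{h_s}f_{(\eps',0)}\cdot U_{h_s'}f_{(\eps',1)}$ (products formed fiberwise). Finally, a Hölder inequality over the $h_s,h_s'$ average, combined with the inductive formula \eqref{E: inductive formula}, closes the argument. This route requires only the recursive structure and sidesteps the symmetry issue.
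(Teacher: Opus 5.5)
Your final, symmetry-free induction is correct. The paper gives no proof beyond ``standard arguments''. The classical abelian proof applies Cauchy--Schwarz once in each coordinate direction and uses the invariance of the cube measure under coordinate permutations to bring each direction to the last slot. That invariance is exactly what the paper flags in Section~\ref{SS: commuting} as unclear without commutativity.

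Your route avoids it and needs only the recursive definition, Lemma~\ref{L: cubic measures using averages} and \eqref{E: inductive formula}. After Cauchy--Schwarz in the last coordinate, $\int F_i\otimes\overline{F_i}\,d\mu_{H_1,\ldots,H_s}=\E_{h_s,h_s'\in H_s}\int\bigotimes_{\eps'}\Delta'_{(h_s,h_s')}f_{(\eps',i)}\,d\mu_{H_1,\ldots,H_{s-1}}$. The inductive hypothesis, H\"older with $2^{s-1}$ factors, and \eqref{E: inductive formula} with $j=s-1$ then bound this by $\prod_{\eps'}\nnorm{f_{(\eps',i)}}^2_{H_1,\ldots,H_s}$.

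Three small points need fixing.

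First, the functions fed into the inductive hypothesis should be these derivatives, not the mixed products $U_{h_s}f_{(\eps',0)}\cdot U_{h_s'}f_{(\eps',1)}$ you wrote. For the mixed products \eqref{E: inductive formula} does not apply as stated. You would then need the extra Cauchy--Schwarz bound $\E_{h,h'\in H_s}\nnorm{U_hf\cdot U_{h'}g}^{2^{s-1}}_{H_1,\ldots,H_{s-1}}\le\nnorm{f}^{2^{s-1}}_{H_1,\ldots,H_s}\nnorm{g}^{2^{s-1}}_{H_1,\ldots,H_s}$.

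Second, the induction should start at $s=1$, which the Cauchy--Schwarz step settles on its own. The degree-$0$ quantity $\int f\,d\mu$ is not a seminorm, so the H\"older chain does not work from $s=0$.

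Third, you defer checking that the diagonal $H_s$-action preserves $\mu_{H_1,\ldots,H_{s-1}}$, which every mean-ergodic-theorem step uses. This holds when $H$ is abelian or $H_1=\cdots=H_s$, the only cases the paper needs, but not in general. Already for $s=2$ it amounts to $\mu_{H_1}=\mu_{hH_1h^{-1}}$ for all $h\in H_2$. This fails, for instance, for the dihedral group acting on $\Z/4\Z$ with $H_1$ generated by a reflection and $H_2$ by the rotation. This is a standing assumption inherited from the paper's setup (Lemma~\ref{L: cubic measures using averages} relies on it too), not a defect peculiar to your argument.
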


One consequence is the standard monotonicity property of box seminorms.
\begin{corollary}[Monotonicity property]
    Let $f\in L^\infty(\mu)$. Then
    \begin{align*}
        \nnorm{f}_{H_1}\leq \nnorm{f}_{H_1, H_2}\leq \cdots \leq \nnorm{f}_{H_1, \ldots, H_s}.
    \end{align*}
\end{corollary}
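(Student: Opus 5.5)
The monotonicity $\nnorm{f}_{H_1,\ldots,H_{j}}\leq \nnorm{f}_{H_1,\ldots,H_{j+1}}$ will be deduced from the Gowers-Cauchy-Schwarz inequality at level $j+1$. By induction it suffices to prove each single step, and since the first $j$ groups are arbitrary we may take $j+1=s$. The basic device is to realise the level-$(s-1)$ seminorm as a cubic integral at level $s$ in which half of the functions are the constant $1$.

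Concretely, define $f_\eps$ for $\eps\in\{0,1\}^s$ by $f_\eps := \CC^{|\eps|}f$ when $\eps_s=0$ and $f_\eps:=1$ when $\eps_s=1$. By Lemma~\ref{L: cubic measures using averages}, the integral $\int\bigotimes_\eps f_\eps\,d\mu_{H_1,\ldots,H_s}$ equals
\begin{align*}
\E_{h_s,h_s'\in H_s}\int U_{h_s}\Big(\E_{h_{s-1},h_{s-1}'\in H_{s-1}}\cdots\E_{h_1,h_1'\in H_1}\prod_{\eps'\in\{0,1\}^{s-1}}U_{h_{s-1,\eps_{s-1}'}\cdots h_{1,\eps_1'}}\CC^{|\eps'|}f\Big)\,d\mu,
\end{align*}
where we have used \eqref{E: composition} to pull $U_{h_s}$ outside the product, and the factors with $\eps_s=1$ are trivially $1$. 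Since $U_{h_s}$ preserves $\mu$, the inner integral does not depend on $h_s,h_s'$. Applying Lemma~\ref{L: cubic measures using averages} once more at level $s-1$, the whole expression equals $\nnorm{f}_{H_1,\ldots,H_{s-1}}^{2^{s-1}}$. Some care is needed to check that averaging over $(h_s,h_s')$ can be performed at the end, but this is exactly what the lemma permits.

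Now apply the Gowers-Cauchy-Schwarz inequality to this family. We need $\nnorm{1}_{H_1,\ldots,H_s}=1$, which is immediate because every derivative of $1$ is $1$. We also need $\nnorm{\CC f}=\nnorm{f}$, which holds because conjugation simply permutes the vertices of the cube and conjugates the resulting real quantity. The inequality then gives
\begin{align*}
\nnorm{f}_{H_1,\ldots,H_{s-1}}^{2^{s-1}}\le \nnorm{f}_{H_1,\ldots,H_s}^{2^{s-1}}.
\end{align*}
Taking $2^{s-1}$-th roots yields the desired step.

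The only step requiring real care is the noncommutative bookkeeping: verifying that placing $h_s$ on the outermost (leftmost) position in $U_{h_s h_{s-1}\cdots}$ is compatible with \eqref{E: composition}, so that $U_{h_s}$ factors out as a single measure-preserving operator. This is the same check already performed in the proof of Lemma~\ref{L: cubic measures using averages}. Everything else is routine.
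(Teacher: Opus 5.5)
Your plan (Gowers--Cauchy--Schwarz applied to $f_\eps=\CC^{|\eps|}f$ for $\eps_s=0$ and $f_\eps=1$ for $\eps_s=1$) is the standard argument the paper has in mind. However, the step you call routine bookkeeping is backwards.

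By \eqref{E: composition}, $U_{h_sg}f=U_g(U_{h_s}f)$ for $g=h_{s-1,\eps'_{s-1}}\cdots h_{1,\eps'_1}$. So the leftmost element $h_s$ acts \emph{first}, i.e.\ innermost. This is exactly what the proof of Lemma~\ref{L: cubic measures using averages} does when it writes $U_{h_{2,\eps_2}h_{1,\eps_1}}f_\eps=U_{h_{1,\eps_1}}(U_{h_{2,\eps_2}}f)$. Writing the product as $U_{h_s}(\cdots)$ would require $U_{h_sg}=U_{gh_s}$, i.e.\ commutativity. Done correctly, the averaging expression from that lemma gives
\begin{align*}
\E_{h_s\in H_s}\nnorm{U_{h_s}f}_{H_1,\ldots,H_{s-1}}^{2^{s-1}},\qquad \nnorm{U_af}_{H_1,\ldots,H_{s-1}}=\nnorm{f}_{aH_1a\inv,\ldots,aH_{s-1}a\inv}.
\end{align*}
The second identity follows from $U_{ag}f=U_a(U_{aga\inv}f)$ and the $U_a$-invariance of $\mu$. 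Thus invariance of $\mu$ removes $U_{h_s}$ only at the price of conjugating the subgroups. If $H_s$ normalizes $H_1,\ldots,H_{s-1}$ (e.g.\ $H$ abelian, or $H_1=\cdots=H_s$, the only two settings in which the paper actually uses box seminorms), then $h_sH_ih_s\inv=H_i$. The reparametrization $h_i\mapsto h_sh_ih_s\inv$ preserves F{\o}lner sequences and hence uniform averages, so your argument goes through once this conjugation is made explicit.

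For arbitrary subgroups the argument cannot be repaired along these lines. Take $X=\T^2$, $T(x,y)=(x+\beta,y)$ with $\beta$ irrational, and $S(x,y)=(x,y+x)$. Their commutator is the central rotation $(x,y)\mapsto(x,y\pm\beta)$, so $\langle T,S\rangle$ is 2-step nilpotent. Let $H_1=\langle T\rangle$, $H_2=\langle S\rangle$, and $f(x,y)=e(y)$. Then $\nnorm{f}_{H_1}=1$, but $\E(U_{S^n}f|\CI(T))=1_{n=0}\,e(y)$. So the integral $\int\bigotimes_\eps f_\eps\,d\mu_{H_1,H_2}$ with your choice of $f_\eps$, evaluated through Lemma~\ref{L: cubic measures using averages}, equals $0$. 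The averaging formula even gives $\nnorm{f}_{H_1,H_2}=0$. The underlying problem is that $\mu_{H_1}$ is not $(S\times S)$-invariant, so the mean ergodic step in that lemma is unavailable.

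The robust route avoids the averaging formula altogether. Let $\nu:=\mu_{H_1,\ldots,H_{s-1}}$ and $F:=\bigotimes_{\eps'\in\{0,1\}^{s-1}}\CC^{|\eps'|}f$. The first marginal of $\nu\times_{\CI(H_s)}\nu$ is $\nu$, so your integral equals $\int F\,d\nu=\nnorm{f}_{H_1,\ldots,H_{s-1}}^{2^{s-1}}$ with no group bookkeeping. In fact Cauchy--Schwarz alone, without Gowers--Cauchy--Schwarz, already gives
\[
\nnorm{f}_{H_1,\ldots,H_s}^{2^s}=\norm{\E(F|\CI(H_s))}_{L^2(\nu)}^2\geq\abs{\int F\,d\nu}^2=\nnorm{f}_{H_1,\ldots,H_{s-1}}^{2^s}.
\]
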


\subsubsection{Dual functions and structured algebras}
To define factors from the seminorms, we will need the following class of ``structured'' functions.
\begin{definition}[Dual functions]
    Let $\mathfrak{f} = (f_\eps)_\eps\subseteq (L^\infty(\mu))^{\{0,1\}^s_*}$. We define the \textit{dual function} of $\mathfrak{f}$ along $H_1, \ldots, H_s$ via
    \begin{align*}
        \CD_{H_1, \ldots, H_s}\mathfrak{f}:= \E_{h_s,h_s'\in H_s}\cdots \E_{h_1,h_1'\in H_1}U_{h_s\cdots h_1}\inv\brac{\prod_{\eps\in\{0,1\}^s_*}U_{h_{s,\eps_s}\cdots h_{1,\eps_1}}f_\eps}.
    \end{align*}
    If $f_\eps = \CC^{|\eps|}f$ for all $\eps\in \{0,1\}^s_*$, 
    then we write $\CD_{H_1, \ldots, H_s}\mathfrak{f} = \CD_{H_1, \ldots, H_s}f$ whereas if $H:=H_1 = \cdots = H_s$, we write $\CD_{H_1, \ldots, H_s}\mathfrak{f} = \CD_{s, H}\mathfrak f$.
\end{definition}
A priori, it is not clear that the iterated limit in the definition of dual functions exists. This follows from the result below whose proof is identical to the one of \cite[Chapter 8, Proposition~25]{HK18}.
\begin{proposition}
    Let $\mathfrak{f} = (f_\eps)_\eps\subseteq (L^\infty(\mu))^{\{0,1\}^s_*}$. The iterated $L^2(\mu)$-limit in the definition of $\CD_{H_1, \ldots, H_s}\mathfrak{f}$ exists.
\end{proposition}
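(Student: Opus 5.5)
We prove existence of the iterated limit by induction on $s$, in the manner of \cite[Chapter 8, Proposition~25]{HK18}. The plan is to peel off the innermost average over $H_1$. We show that for fixed $h_2,h_2',\ldots,h_s,h_s'$ that limit exists and equals an explicit conditional expectation. The remaining averages then become an average of the same type of length $s-1$, to which the inductive hypothesis applies. Throughout we use \eqref{E: composition} to keep the order of group elements straight.

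\textbf{Case $s=1$.} Here $\{0,1\}^1_*=\{1\}$, so the expression is
\[
\E_{h_1,h_1'\in H_1}U_{h_1}\inv U_{h_1'}f_1 .
\]
By \eqref{E: composition}, $U_{h_1}\inv U_{h_1'} f_1 = U_{h_1' h_1\inv} f_1$. The mean ergodic theorem for amenable actions, in the form
\[
\E_{h,h'\in H}U_{h{h'}\inv}f=\E(f|\CI(H)),
\]
then shows that the limit exists and equals $\E(f_1|\CI(H_1))$.

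\textbf{Inductive step.} Fix $h_j,h_j'$ for $j\ge2$ and set $g=h_s\cdots h_2$. For each $\eta\in\{0,1\}^{s-1}$, write $g_\eta=h_{s,\eta_{s-1}}\cdots h_{2,\eta_1}$, and note $g=g_{0}$. Grouping the vertices $\eps=(\eps_1,\eta)$ by $\eta$, \eqref{E: composition} gives
\[
U_{g h_1}\inv\Big(\prod_{\eps}U_{g_\eta h_{1,\eps_1}}f_\eps\Big)=U_g\inv U_{h_1}\inv\Big(\prod_{\eps} U_{h_{1,\eps_1}}(U_{g_\eta}f_\eps)\Big).
\]
Everything depending on $h_1,h_1'$ then has the shape
\[
A(h_1,h_1')=F_0\cdot U_{h_1}\inv U_{h_1'}F_1,
\]
with
\[
F_0=\prod_{\eta\ne 0}U_{g}\inv\!\!\!\!\quad\text{(no, more carefully)}\quad F_0=U_{h_1}\inv U_{h_1}\prod_{\eta\ne0}U_{g_\eta}f_{(0,\eta)},\qquad F_1=\prod_{\eta}U_{g_\eta}f_{(1,\eta)}.
\]
Stated without the clutter: $F_0$ collects the vertices with $\eps_1=0$, $\eta\ne0$, and $F_1$ those with $\eps_1=1$. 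The $\eps_1=0$ factors carry $U_{h_1}$, which cancels with the outer $U_{h_1}\inv$, so $F_0$ is independent of $h_1$.

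Applying the case $s=1$ to $F_1$, the average over $h_1,h_1'$ converges to
\[
U_g\inv\Big(F_0\cdot\E(F_1|\CI(H_1))\Big).
\]
Since $\E(\cdot|\CI(H_1))$ is a bounded linear operator, this limit is again built from products of translates of bounded functions. The remaining iterated average over $H_2,\ldots,H_s$ therefore has the same structure as the original one, with $s-1$ levels. The inductive hypothesis shows that this average also converges in $L^2(\mu)$.

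The main obstacle is that bookkeeping. In the nonabelian setting, the conditional expectation onto $\CI(H_1)$ sits on a product that involves translates by $g_\eta\in H_2\cdots H_s$, and $\CI(H_1)$ need not be invariant under those elements. The clean way through is to transfer to the product space: rewrite the $H_1$-limit as an integral against the relatively independent joining, i.e.\ a conditional expectation on $X^{\cube{1}}$ with respect to $\mu_{H_1}$. The later averages then act diagonally through $(U^{\cube{1}})_h$, exactly as in the proof of Lemma~\ref{L: cubic measures using averages}. Iterating this, the dual function is identified as a conditional expectation of $\bigotimes_{\eps\neq0}f_\eps$ under the cubic measure $\mu_{H_1,\ldots,H_s}$ with respect to the $0$-th coordinate. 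Each stage is a mean ergodic limit of a diagonal action on $X^{\cube{j}}$, so all the limits exist.
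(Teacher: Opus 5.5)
Your base case and your computation of the innermost limit are correct. The route in your last paragraph is essentially the Host--Kra argument that the paper's one-line proof cites: realize each stage as a mean ergodic limit on a cube space and identify the limit with $\E_{\mu_{H_1,\ldots,H_s}}(\bigotimes_{\eps\neq 0}f_\eps\mid x_0)$.

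The inductive step as written fails, however. The expression $U_g\inv\bigl(F_0\cdot\E(F_1|\CI(H_1))\bigr)$ is not a product of translates of fixed bounded functions, because the conditional expectation of the product $F_1=\prod_\eta U_{g_\eta}f_{(1,\eta)}$ does not factor. This already fails for abelian $H$. So the remaining average over $H_2,\ldots,H_s$ is \emph{not} an $(s-1)$-level dual function on $(X,\mu)$, and the induction hypothesis cannot be applied there.

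The induction has to change the system. Write $F_0\cdot\E(F_1|\CI(H_1))=\E_{\mu_{H_1}}(F_0\otimes F_1\mid x_0)$ and move $U_g\inv$ inside as $(U^{\cube{1}}_g)\inv$. Then
\[
(U^{\cube{1}}_g)\inv(F_0\otimes F_1)=(1\otimes f_{(1,0)})\cdot (U^{\cube{1}}_{g})\inv\prod_{\eta\neq 0}U^{\cube{1}}_{g_\eta}\bigl(f_{(0,\eta)}\otimes f_{(1,\eta)}\bigr),
\]
and its iterated average over $H_2,\ldots,H_s$ is an $(s-1)$-level dual function for the system $(X^{\cube{1}},\mu_{H_1},U^{\cube{1}})$. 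Apply the induction hypothesis to that system, then apply the contraction $\E_{\mu_{H_1}}(\cdot\mid x_0)\colon L^2(\mu_{H_1})\to L^2(\mu)$.

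The second problem is that the product space does not get around the obstacle you identified; it requires exactly that invariance. Two steps need $\mu_{H_1}=\mu\times_{\CI(H_1)}\mu$ to be invariant under $U_h\times U_h$ for $h\in H_2,\ldots,H_s$: commuting $U_g\inv$ past $\E_{\mu_{H_1}}(\cdot\mid x_0)$, and running the mean ergodic theorem for the diagonal action on $(X^{\cube{1}},\mu_{H_1})$. That invariance holds precisely when these $U_h$ preserve $\CI(H_1)$; to see necessity, test against $1_A\otimes 1_{A^c}$ for $A\in\CI(H_1)$. At later stages you likewise need $\mu_{H_1,\ldots,H_j}$ to be invariant under the diagonal actions of $H_{j+1},\ldots,H_s$.

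You should state this invariance explicitly as the hypothesis being used. It is implicit in Definition \ref{D: cubic measures}: the diagonal $H_s$-action on $(X^{\cube{s-1}},\mu_{H_1,\ldots,H_{s-1}})$ must be measure-preserving both for that construction and for Lemma \ref{L: cubic measures using averages}. It is automatic in the two cases the paper actually uses, $H$ abelian or $H_1=\cdots=H_s$. With these two repairs your argument is complete.
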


The next result shows that we have some flexibility in how we take the limit in the definition of dual functions.
\begin{proposition}\label{P: iterated limits}
     Let $\mathfrak{f} = (f_\eps)_\eps\subseteq (L^\infty(\mu))^{\{0,1\}^s_*}$. 
    Then     \begin{align}\label{E: dual as single limit}
        \CD_{H_1, \ldots, H_s}\mathfrak{f}= \E_{\substack{h,h'\in H_1 \times \cdots \times H_s}}U_{h_s\cdots h_1}\inv\brac{\prod_{\eps\in\{0,1\}^s_*}U_{h_{s,\eps_s}\cdots h_{1,\eps_1}}f_\eps}.
    \end{align}
\end{proposition}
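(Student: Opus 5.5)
We have to show that the iterated limit defining $\CD_{H_1, \ldots, H_s}\mathfrak{f}$ equals the single uniform limit over $(h,h')\in (H_1\times\cdots\times H_s)^2$. The plan is to interpret both sides as orthogonal projections. The single limit then becomes an average of a unitary representation of the product group, and we apply the mean ergodic theorem on a product space.

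\emph{Step 1: recast the dual function as a projection.} Write the product on the right of \eqref{E: dual as single limit} as the $\eps=0$-slot-free part of a function on $X^{\cube{s}}$. Composing with $U_{h_s\cdots h_1}^{-1}$ moves everything relative to the $\eps=0$ coordinate. Concretely, for $g\in L^\infty(\mu)$ I will pair both sides with $g$. By Lemma~\ref{L: cubic measures using averages}, the iterated limit gives
\begin{align*}
\int g\cdot \CD_{H_1,\ldots,H_s}\mathfrak{f}\; d\mu = \int g\otimes\bigotimes_{\eps\neq 0}f_\eps\; d\mu_{H_1,\ldots,H_s}.
\end{align*}
Since dual functions are bounded, it suffices to show two things. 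First, the single-limit average converges in $L^2(\mu)$. Second, its weak limit has the same pairings with every $g$.

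\emph{Step 2: convergence of the single limit.} Consider the group $K=(H_1\times\cdots\times H_s)^2$. This is a countable amenable group (a product of subgroups of $H$). Define a measure-preserving action of $K$ on $(X^{\cube{s}},\mu^{\otimes 2^s})$, or better on a relatively independent joining, by sending $(h,h')$ to the map acting on the $\eps$ coordinate by $U_{h_{s,\eps_s}\cdots h_{1,\eps_1}}$. The trouble is that these words do not give a group homomorphism from $K$ in the nonabelian case, because the order of multiplication mixes the coordinates. I would therefore avoid building an action. Instead I would prove convergence directly, by induction on $s$ through the inductive structure. For a product Følner sequence $\Phi_N^{(s)}\times\Phi_N^{(<s)}$, we have $U_{h_{s,\eps_s}\cdots h_{1,\eps_1}}f = U_{h_{1,\eps_1}}\cdots$, following \eqref{E: composition}. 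The averages over $(h_1,h_1')$ are then innermost in the composition, which matches the nesting in Lemma~\ref{L: cubic measures using averages}.

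\emph{Step 3: uniformity (the main obstacle).} The real content is that the joint limit over arbitrary Følner sequences of $K$ agrees with the iterated one. I would use the standard Hilbert-space argument. The operator $F\mapsto \E_{h\in\Phi_N}U_hF$ converges uniformly over $F$ in a compact set to $\E(F|\CI)$, and the family of inner functions is compact. Precisely: the inner expression, after fixing the outer variables $(h_j,h_j')_{j\geq 2}$, is an average of $U_{h_1}(A)\cdot U_{h_1'}(B)$. Here $A,B$ range over products of $U_{h_j}f_\eps$, which are bounded but not compact. I will handle this by first approximating $f_\eps$ in $L^2$ by functions measurable with respect to suitable finite-dimensional data. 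Alternatively, I will use the van der Corput/Cauchy–Schwarz bound: the $L^2$ difference between the joint and the iterated averages is controlled by $\norm{\E_{h_1\in\Phi_N}U_{h_1}A - \E(A|\CI(H_1))}$ averaged over the outer variables. This tends to $0$ by the mean ergodic theorem for each fixed outer tuple, and dominated convergence applies since everything is $1$-bounded. I expect making this dominated convergence step rigorous to be the hardest part, because the Følner sets for different coordinates are not required to be products. I would first reduce to product Følner sequences using the amenability and the tiling/product structure of $K$, and if necessary I would simply define the uniform limit along product Følner sequences, since that is all later arguments use.
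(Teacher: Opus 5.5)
Your proposal has a genuine gap, and it sits exactly at the step you flag in Step~3. For each fixed outer tuple $(h_j,h_j')_{j\geq 2}$, the $s=1$ mean ergodic theorem does make the inner average over $(h_1,h_1')$ converge. However, dominated convergence cannot turn this into convergence of the outer average of the discrepancies. The outer averaging measure is the uniform measure on a set that grows with $N$, not a fixed measure, so pointwise convergence together with $1$-boundedness gives nothing.

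For example, $a(m,n)=1_{\{n\leq m\}}$ satisfies $\E_{n\in[N]}a(m,n)=m/N\to 0$ for every fixed $m$, yet $\E_{(m,n)\in[N]^2}a(m,n)\to 1/2$. This already happens along the product F{\o}lner sequence $[N]^2$, so retreating to product F{\o}lner sequences does not rescue the argument. That retreat would also weaken the statement: $\E_{h,h'\in H_1\times\cdots\times H_s}$ denotes the limit along \emph{every} F{\o}lner sequence of $(H_1\times\cdots\times H_s)^2$. There is no ``tiling'' principle reducing arbitrary F{\o}lner sequences of a product group to product ones.

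What the interchange actually needs is inner convergence that is uniform in the outer variables. The mean ergodic theorem gives no such uniformity. The functions you feed into it, products of $U_{h_{s,\eps_s}\cdots h_{2,\eps_2}}f_\eps$, range over a noncompact family as the outer variables vary, and the rate of convergence depends on the function. Your Step~1 does not bypass this: computing the pairing of the single-limit average with $g$ requires interchanging the same limits at the scalar level. Step~2 is never carried out.

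The paper imports the missing ingredient from outside. Fix $1\leq j\leq s$ and $h_{j+1},h'_{j+1},\ldots,h_s,h'_s$, and use $U_{ab}f=U_b(U_af)$ to split each word. Then the partial iterated limit over the first $j$ levels is $U^{-1}_{h_s\cdots h_{j+1}}$ applied to two factors. The first is a degree-$j$ dual function $\CD_{H_1,\ldots,H_j}$ of the modified functions $f'_{\eps'}=\prod_{\eps\colon \eps_i=\eps'_i,\ i\leq j}U_{h_{s,\eps_s}\cdots h_{j+1,\eps_{j+1}}}f_\eps$. The second is a bounded factor that does not depend on the inner variables. Hence every inner limit is itself a uniform limit. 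The paper then collapses the iterated limits into the joint one by iteratively applying the Fubini-type theorem for uniform averages \cite[Theorem~1.1]{ZK16}.

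Your observation that the words $h_{s,\eps_s}\cdots h_{1,\eps_1}$ do not define an action of $(H_1\times\cdots\times H_s)^2$ is correct. It is precisely why an abstract interchange theorem of this kind is needed, rather than a mean ergodic theorem on a product space. Without such an input, or an independent proof that the joint limit exists, your argument does not go through.
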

\begin{proof}
Observe that for every index $1\leq j\leq s$ and group elements $h_{j+1}, h_{j+1}'\in H_{j+1},$ ..., $h_s,h'_s\in H_s$, the expression
    \begin{multline*}
        \E_{h_j,h_j'\in H_j}\cdots \E_{h_1,h_1'\in H_1}U_{h_s\cdots h_1}\inv\brac{\prod_{\eps\in\{0,1\}^s_*}U_{h_{s,\eps_s}\cdots h_{1,\eps_1}}f_\eps}\\
        = U_{h_s\cdots h_{j+1}}\inv \brac{\E_{h_j,h_j'\in H_j}\cdots \E_{h_1,h_1'\in H_1}U_{h_j\cdots h_1}\inv\brac{\prod_{\eps'\in\{0,1\}^j_*}U_{h_{j,\eps'_j}\cdots h_{1,\eps'_1}}f_{\eps'}'}}
    \end{multline*}
    with 
    \begin{align*}
        f_{\eps'}' := \prod_{\substack{\eps\in \{0,1\}^s_*\colon\\ \eps_1 = \eps_1',\ldots, \eps_j = \eps_j'}}U_{h_{s,\eps_s}\cdots h_{j+1,\eps_{j+1}}}f_\eps
    \end{align*}
    is a well-defined dual function, so in particular it is a uniform limit. We then obtain the formula \eqref{E: dual as single limit} by iteratively applying \cite[Theorem~1.1]{ZK16}, a Fubini-type theorem for averages. 
\end{proof}
\begin{corollary}
    In the definition of the seminorm $\nnorm{\cdot}_{H_1, \ldots, H_s}$ on $L^\infty(\mu)$, we can replace 
    \begin{align}\label{E: iterated limit}
    \E_{h_s,h_s'\in H_s}\cdots \E_{h_1,h_1'\in H_1}
\end{align}
by the single limit
\begin{align}\label{E: single limit}
\E_{h,h'\in H_1 \times \cdots \times H_s}.
\end{align}
\end{corollary}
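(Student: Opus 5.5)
The corollary is a direct consequence of Proposition~\ref{P: iterated limits} applied to the special family $f_\eps=\CC^{|\eps|}f$. The plan is to rewrite the seminorm as the integral of $\bar f$ against a dual function, and then swap in the single-limit description of that dual function. Fix $f\in L^\infty(\mu)$. For every choice of $h=(h_1,\ldots,h_s)$ and $h'=(h_1',\ldots,h_s')$, measure preservation of $U_{h_s\cdots h_1}$ gives
\begin{align*}
\int \Delta'_{(h_1,h_1')}\cdots\Delta'_{(h_s,h_s')}f\; d\mu = \int \bar f\cdot U_{h_s\cdots h_1}\inv\Bigl(\prod_{\eps\in\{0,1\}^s_*}U_{h_{s,\eps_s}\cdots h_{1,\eps_1}}\CC^{|\eps|}f\Bigr)\, d\mu.
\end{align*}
Here the $\eps=0$ term of the iterated derivative is exactly $U_{h_s\cdots h_1}f$, and pulling back by $U_{h_s\cdots h_1}\inv$ turns it into $f$. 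The conjugation convention is harmless, since the seminorm raised to the power $2^s$ is real, so we may take complex conjugates of both sides.

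Next, integration against the fixed function $\bar f\in L^\infty\subseteq L^2$ is a continuous linear functional on $L^2(\mu)$. It therefore commutes with every $L^2(\mu)$-limit, whether iterated or single. Taking the iterated average \eqref{E: iterated limit} of the right-hand side produces $\int \bar f\cdot \CD_{H_1,\ldots,H_s}f\,d\mu$, which equals $\nnorm{f}_{H_1,\ldots,H_s}^{2^s}$ by Definition~\ref{D: box seminorms}. Proposition~\ref{P: iterated limits} says that the same dual function arises as the single uniform limit \eqref{E: single limit} of the integrands, over every F{\o}lner sequence on $(H_1\times\cdots\times H_s)^2$. Pairing with $\bar f$, we conclude that the single average \eqref{E: single limit} of $\int \Delta'_{(h_1,h_1')}\cdots\Delta'_{(h_s,h_s')}f\,d\mu$ exists, is uniform, and equals $\nnorm{f}_{H_1,\ldots,H_s}^{2^s}$.

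The only point requiring care is the identity in the first step. We must check, using \eqref{E: composition}, that under the convention $U_{hh'}f=U_{h'}(U_hf)$ the iterated derivative really is $\prod_{\eps}U_{h_{s,\eps_s}\cdots h_{1,\eps_1}}\CC^{|\eps|}f$, with the products of group elements taken in the same order as in the dual function. This bookkeeping was already verified in the displayed case $s=2$ preceding Definition~\ref{D: box seminorms}, and the general case follows by the same induction on $s$. With that in place, the rest is immediate.
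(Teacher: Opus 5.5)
Your argument is correct and is exactly the intended one: the paper gives no separate proof, treating the corollary as immediate from Proposition~\ref{P: iterated limits} applied to $f_\eps=\CC^{|\eps|}f$ and then integrated against a fixed bounded function, which is a continuous linear functional on $L^2(\mu)$. One small slip, which your remark about complex conjugates does not fix: in your first display the factor in front should be $f$, not $\bar f$. Already for $s=1$ and $f\equiv i$ your two sides are $1$ and $-1$, and $\int \bar f\cdot\CD_{H_1,\ldots,H_s}f\,d\mu$ is not $\nnorm{f}^{2^s}_{H_1,\ldots,H_s}$ in general. Conjugating both sides does not help, because it conjugates the other factors as well. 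Simply integrate against $f$ instead, using $\nnorm{f}^{2^s}_{H_1,\ldots,H_s}=\int f\cdot \CD_{H_1,\ldots,H_s}f\,d\mu$.
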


To box seminorms we can associate factors in the standard way.
\begin{definition}
    For a sub $\sigma$-algebra $\CY$ of $\CX$, let $Z_{H_1, \ldots, H_s}(\CY)$ be the $L^2(\mu)$-closed linear span of functions $\CD_{H_1, \ldots, H_s}\mathfrak f$ for $\CY$-measurable $\mathfrak{f}\subseteq (L^\infty(\mu))^{\{0,1\}^s_*}$.
    
    If $\CY = \CX$, we simply denote $Z_{H_1, \ldots, H_s}(\CX)$ by $Z_{H_1, \ldots, H_s}$ or $Z(H_1, \ldots, H_s)$, and if $H:=H_1 = \cdots = H_s$, we also use the abbreviations $Z_{s-1,H}(\CY):=Z_{H_1, \ldots, H_s}(\CY)$ and 
    $Z_{s-1}(H)=Z_{s-1,H}=Z_{s-1,H}(\CX)$. 

    If we want to specify which $H$-action $U$ we are referring to, we also write $Z_{s-1, U} = \CZ_{s-1}(U)$.
\end{definition}

We recover the usual relationship between the seminorm $\nnorm{\cdot}_{H_1, \ldots, H_s}$ and the space $Z_{H_1, \ldots, H_s}$.
\begin{proposition}\label{P: seminorms vs. algebras}
    Let $f\in L^\infty(\mu)$. Then
    \begin{align*}
        \nnorm{f}_{H_1, \ldots, H_s} = 0\quad \iff \quad {P_{Z(H_1, \ldots, H_s)}f= 0,}
    \end{align*}
    where $P_{Z(H_1, \ldots, H_s)}$ is the orthogonal projection on $Z(H_1, \ldots, H_s).$
\end{proposition}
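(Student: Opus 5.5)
We will prove both implications through the duality identity
\begin{align*}
    \nnorm{f}_{H_1, \ldots, H_s}^{2^s} = \int f\cdot \overline{\CD_{H_1, \ldots, H_s}f}\; d\mu \quad\text{(or its conjugate, depending on conventions)},
\end{align*}
combined with the Gowers-Cauchy-Schwarz inequality. To establish the identity, we start from the averaged formula in Definition \ref{D: box seminorms}, written via Lemma~\ref{L: cubic measures using averages}. For fixed $h,h'$, we compose the integrand with $U_{h_s\cdots h_1}\inv$ (which preserves $\mu$). This turns the factor indexed by $\eps=0$ into $f$ itself, while the remaining $2^s-1$ factors become exactly the product appearing inside the dual function. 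We then interchange the averages with the integral, which is legitimate because the limits are in $L^2(\mu)$ and we are pairing against the fixed bounded function $f$. Using Proposition~\ref{P: iterated limits}, we may write both the seminorm and the dual function as single limits over $H_1\times\cdots\times H_s$, so no issue arises from the order of the iterated limits. The only care needed is the noncommutative ordering in \eqref{E: composition}: the element $h_s\cdots h_1$ must be the one matching the index $\eps=0$, which is why the dual function is defined with $U_{h_s\cdots h_1}\inv$.

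($\Leftarrow$) Suppose $P_{Z(H_1,\ldots,H_s)}f=0$. Then $f$ is orthogonal to every element of $Z(H_1,\ldots,H_s)$, in particular to $\CD_{H_1,\ldots,H_s}f$, which belongs to $Z$ by definition (with $\CY=\CX$ and $f_\eps=\CC^{|\eps|}f$). The identity then gives $\nnorm{f}_{H_1,\ldots,H_s}=0$.

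($\Rightarrow$) Suppose $\nnorm{f}_{H_1,\ldots,H_s}=0$. It suffices to show that $f$ is orthogonal to each generator $\CD_{H_1,\ldots,H_s}\mathfrak f$ with $\mathfrak f\subseteq L^\infty(\mu)$, since $Z$ is the closed span of these. By the same computation as above, run with a general family $\mathfrak f$,
\begin{align*}
    \int f\cdot \overline{\CD_{H_1,\ldots,H_s}\mathfrak f}\; d\mu = \int \bigotimes_{\eps} g_\eps\; d\mu_{H_1,\ldots,H_s},
\end{align*}
where $g_0=f$ and the $g_\eps$ for $\eps\neq 0$ are suitable conjugates of the $f_\eps$. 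The Gowers-Cauchy-Schwarz inequality bounds the right-hand side by $\nnorm{f}_{H_1,\ldots,H_s}\prod_{\eps\ne0}\nnorm{g_\eps}_{H_1,\ldots,H_s}=0$. Hence $f\perp Z$, that is, $P_Z f=0$.

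The main obstacle is the bookkeeping in the change of variables. We must check that composing with $U_{h_s\cdots h_1}\inv$ really produces the products $U_{h_{s,\eps_s}\cdots h_{1,\eps_1}}$ in the correct order for a nonabelian $H$, and that the resulting pairing is exactly the cubic measure of Definition \ref{D: cubic measures}, with the right placement of conjugations. Everything else is formal Hilbert space reasoning.
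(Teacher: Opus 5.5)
Your proposal is correct and is the paper's proof: you reduce $f\perp Z(H_1,\ldots,H_s)$ to orthogonality against the dual functions $\CD_{H_1,\ldots,H_s}\mathfrak f$, getting one direction by correlating $f$ with the dual function of $f$ itself and the other from the Gowers--Cauchy--Schwarz inequality. The conjugation convention you hedge on does not matter. With the paper's definitions the identity reads $\nnorm{f}_{H_1,\ldots,H_s}^{2^s}=\int f\cdot \CD_{H_1,\ldots,H_s}f\,d\mu$, and $\overline{\CD_{H_1,\ldots,H_s}\mathfrak f}=\CD_{H_1,\ldots,H_s}(\bar f_\eps)_\eps$, so $Z(H_1,\ldots,H_s)$ is closed under conjugation and $f$ is orthogonal to $\overline{\CD_{H_1,\ldots,H_s}f}$, which is the function you actually need.
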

\begin{proof}
    By the definition of $Z(H_1, \ldots, H_s)$, the function $f$ is orthogonal to $Z(H_1, \ldots, H_s)$ if and only if it is orthogonal to $\CD_{H_1,\ldots, H_s}\mathfrak{f}$ for all $\mathfrak{f}\subseteq (L^\infty(\mu))^{\{0,1\}^s_*}$.
    The latter assertion is then equivalent to the vanishing of $\nnorm{f}_{H_1, \ldots, H_s}$, where one direction follows from taking correlations with $\CD_{H_1,\ldots, H_s}f$ and the other is a consequence of the Gowers-Cauchy-Schwarz inequality.
\end{proof}

To the space $Z_{H_1, \ldots, H_s}(\CY)$ we can associate a factor.

\begin{definition}[Box and Host-Kra factors]
    For $\CY\subseteq \CX$, let the \textit{box factor} $\CZ_{H_1, \ldots, H_s}(\CY)$ be the smallest $\sigma$-algebra with respect to which the elements of $Z_{H_1, \ldots, H_s}(\CY)$ are measurable. 

    We use the labels $\CZ_{H_1, \ldots, H_s}$, $\CZ(H_1, \ldots, H_s)$, $\CZ_{s-1,H}(\CY)$, $\CZ_{s-1}(H)$, $\CZ_{s-1,H}$, $\CZ_{s-1}(U)$, $\CZ(s-1,U)$  in analogy with the labels for the spaces $Z$. {We also refer to the factor $\CZ_{s-1}(H)$ as the \textit{degree-$(s-1)$ Host-Kra factor} along $H$.}
\end{definition}

Whenever the ambient group $H$ is abelian, $Z_{H_1, \ldots, H_s}(\CY)$ is known to be an algebra, i.e. it is closed under multiplication (see e.g. \cite[Proposition~2.3]{TZ16}). Then
\begin{align}\label{E: algebra vs. factor}
    Z_{H_1, \ldots, H_s}(\CY) = L^2(\CZ_{H_1, \ldots, H_s}(\CY)).
\end{align}
The same is true whenever $H_1 = \cdots = H_s$ is a finitely generated nilpotent group \cite[Lemma~5.8]{CS23}. As a consequence, we obtain a familiar correspondence between factors and seminorms
\begin{align}\label{E: seminorms vs. factors}
     \E(f|\CZ(H_1, \ldots, H_s)) = 0\quad \iff \quad \nnorm{f}_{H_1, \ldots, H_s} = 0.
\end{align}
It is not clear if \eqref{E: algebra vs. factor} and \eqref{E: seminorms vs. factors} hold for nonabelian groups in general; see Problem~\ref{Pr: nonobvious direction}. Outside of these two special cases, all we know is that
\begin{align}\label{E: seminorms vs. factors nonabelian}
        \E(f|\CZ(H_1, \ldots, H_s)) = 0\quad \Longrightarrow \quad \nnorm{f}_{H_1, \ldots, H_s} = 0.
\end{align}
In the abelian context, the missing direction \eqref{E: seminorms vs. factors nonabelian} can be proved either via the identity \eqref{E: algebra vs. factor} or by the invariance of the cubic measure $\mu_{H_1, \ldots, H_s}$ under the group of face transformations or the identity \eqref{E: algebra vs. factor}. Whether any of these two properties hold more generally is unclear.

\subsubsection{Further remarks on notations}
The variety of results and settings considered in this paper forces us to be as flexible as possible with the notations. Thus, for instance, whenever the action $U$ of a group $H$ is generated by transformations $T_1, \ldots, T_\ell$, then we use the labels
\begin{align*}
 \nnorm{f}_{s, T_j},\; \nnorm{f}_{T_j^{\times s}},\; \nnorm{f}_{s, e_j},\; \nnorm{f}_{e_j^{\times s}}   
\end{align*}
to denote the same object, depending on which choice of notation is most handy at a given moment.

\subsection{The commuting case}\label{SS: commuting}

In this subsection, we assume that the ambient group $H$ is abelian. In the commuting case, box seminorms enjoy a number of properties that we shall use profusely throughout the paper. First, we can replace the symmetric multiplicative derivative by its asymmetric version.
\begin{definition}[Asymmetric multiplicative derivatives]
    Given $h\in H$ and $f\in L^\infty(\mu)$, the \textit{multiplicative derivative} of $f$ along $h$ is $\Delta_h f:=f\cdot U_{h}\bar f$.
\end{definition}
Since $H$ is abelian, the order of taking derivatives iteratively does not matter, and so the simpler notation
\begin{align*}
    \Delta_{h_1, \ldots, h_s}f := \Delta_{h_1}\cdots \Delta_{h_s}f
\end{align*}
causes no ambiguity. For subgroups $H_1, \ldots, H_s\subseteq H$, the definition of the box seminorm takes the following simpler form:
    \begin{align*}
        \nnorm{f}_{H_1, \ldots, H_s} =&\brac{\E_{h_s\in H_s}\cdots \E_{h_1\in H_1}\int \Delta_{h_1, \ldots, h_s}f\; d\mu}^{1/2^s}.
    \end{align*}
On top of the properties listed in Section \ref{SS: general theory}, box seminorms enjoy a number of well-established properties in the abelian setting:
\begin{enumerate}
    \item (Permutation invariance) For
    any permutation $\sigma:[s]\to[s]$,  let
    \begin{align*}
        \nnorm{f}_{H_1, \ldots, H_s} = \nnorm{f}_{H_{\sigma(1)},\ldots, H_{\sigma(s)}}.
    \end{align*}
    \item \label{I:scaling} (Subgroup properties) For any subgroups $H_1'\subseteq H_1, \ldots, H_s'\subseteq H_s$,  
    \begin{align*}
        \nnorm{f}_{H_1, \ldots, H_s}\leq \nnorm{f}_{H_1', \ldots, H_s'}.
    \end{align*}
    Conversely, if $s\geq 2$ and $r_i:=|H_i/H'_i|<\infty$, then
    \begin{align*}
        \nnorm{f}_{H_1', \ldots, H_s'}\leq |r_1\cdots r_s|^{1/2^s}\nnorm{f}_{H_1, \ldots, H_s}.
    \end{align*}
    \item If $\CI(H_i') = \CI(H_i)$ for $1\leq i\leq s$, then
    \begin{align}\label{E: same invariant factors}
        \nnorm{f}_{H_1, \ldots, H_s} = \nnorm{f}_{H_1',\ldots, H_s'}.
    \end{align}
\end{enumerate}
 The proof of these properties can be found e.g. in \cite{DKKST24, FrKu22b, H09}. It is unclear whether their variants hold in the noncommutative universe, as the proofs rely crucially on being able to swap the order of taking averages and applying multiplicative derivative. 

Furthermore, the subspace $Z_{H_1, \ldots, H_s}(\CY)$ is an algebra \cite[Proposition~2.3]{TZ16}, and hence \eqref{E: algebra vs. factor} and \eqref{E: seminorms vs. factors} hold.

 We shall also need the following concatenation result proved by the authors jointly with Donoso and Tsinas in \cite{DKKST25} and generalizing an earlier result of Tao and Ziegler \cite{TZ16}.
\begin{theorem}[Relative concatenation, {\cite[Theorem~3.1, Corollary 3.2]{DKKST25}}]\label{T: relative concatenation}
    For any subgroups $H_1, \ldots, H_r, G_1, \ldots, G_s, G_1', \ldots, G_{s'}'\subseteq \Z^D$, we have
\begin{multline*}
    \CZ(H_1, \ldots, H_r, G_1, \ldots, G_s) \cap \CZ(H_1, \ldots, H_r, G_1', \ldots, G_{s'}')\\
    \subseteq \CZ(H_1, \ldots, H_r, G_i + G_j'\colon\; 1\leq i\leq s,\; 1\leq j\leq s').
\end{multline*}
In particular, if
\begin{align*}
    \E(f|\CZ(H_1, \ldots, H_r, G_i + G_j'\colon\; 1\leq i\leq s,\; 1\leq j\leq s')) = 0,
\end{align*}
then we can decompose $f = f_1 + f_2$, where
\begin{align*}
    \E(f_1|\CZ(H_1, \ldots, H_r, G_1, \ldots, G_s)) = \E(f_2|\CZ(H_1, \ldots, H_r, G_1', \ldots, G_{s'}')) = 0.
\end{align*}
\end{theorem}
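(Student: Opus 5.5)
The plan is to prove the inclusion of factors first, and then obtain the exact decomposition from it by a Hilbert-space argument. Throughout I abbreviate
\begin{align*}
\CA:=\CZ(H_1, \ldots, H_r, G_1, \ldots, G_s),\quad \CB:=\CZ(H_1, \ldots, H_r, G_1', \ldots, G_{s'}'),\quad \CC:=\CZ(H_1, \ldots, H_r, G_i + G_j'\colon\; i,j).
\end{align*}
Since $\Z^D$ is abelian, \eqref{E: algebra vs. factor} and \eqref{E: seminorms vs. factors} hold for all three factors, and we may work freely with the spaces $L^2(\CA)$, $L^2(\CB)$ and $L^2(\CC)$.

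\textbf{Step 1: the inclusion.} I would follow the Tao--Ziegler concatenation strategy \cite{TZ16}, relativized to the fixed groups $H_1, \ldots, H_r$, and proceed by induction on $s+s'$.

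The base case is $r=0$ and $s=s'=1$. Here a function measurable with respect to both $\CI(G_1)$ and $\CI(G_1')$ is invariant under $G_1+G_1'$, so the inclusion is immediate.

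For the inductive step, take $F$ measurable with respect to $\CA\cap\CB$. Approximate it by dual functions for the $\CA$-structure, and use the inductive formula \eqref{E: inductive formula} to peel off the last group $G_s$. This expresses the relevant seminorm of $F$ as an average over $g,g'\in G_s$ of seminorms of multiplicative derivatives $\Delta'_{(g,g')}F$ along the shorter tuple. The derivatives $\Delta'_{(g,g')}F$ remain measurable with respect to the intersection of the two shorter factors, so the inductive hypothesis applies to them. A Cauchy--Schwarz argument over $g$ then reassembles the estimate with $G_s$ added to each $G_j'$.

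Concretely, the goal is to show that $\nnorm{f}_{H_1,\ldots,H_r,G_i+G_j'}=0$ forces $\int f\bar F\,d\mu=0$ for every such $F$. Permutation invariance and the subgroup property of abelian box seminorms are used to reorder the groups and to pass between $G_s$, $G_j'$ and $G_s+G_j'$.

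\textbf{Step 2: the exact decomposition.} Suppose $\E(f|\CC)=0$. By Step 1, $L^2(\CA\cap\CB)\subseteq L^2(\CC)$, so $\E(f|\CA\cap\CB)=0$. I then set
\begin{align*}
f_1:=f-\E(f|\CA),\qquad f_2:=\E(f|\CA).
\end{align*}
Clearly $\E(f_1|\CA)=0$, so it remains to show $\E(f_2|\CB)=0$. This follows exactly, with no approximation, once the conditional expectations commute, since then
\begin{align*}
\E(\E(f|\CA)|\CB)=\E(f|\CA\cap\CB)=0.
\end{align*}

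\textbf{Main obstacle: commutation.} So the crux is showing that $\E(\cdot|\CA)$ and $\E(\cdot|\CB)$ commute, equivalently that $\E(g|\CB)$ is $\CA$-measurable whenever $g$ is $\CA$-measurable.

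My first attempt is as follows. It suffices to check this for dual functions $g=\CD\mathfrak f$ of the $\CA$-type. Since the group is abelian, the Koopman operators commute with multiplicative derivatives, and I would show that $\nnorm{\cdot}_{\CB}$-orthogonality is preserved when one passes to the averages defining $\CD\mathfrak f$. This should let me check that $g-\E(g|\CB)$ is still $\CA$-measurable, giving the commutation.

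If that fails, the fallback is to apply Step 1 to the chain of factors produced by alternating projections $(\E(\cdot|\CA)\E(\cdot|\CB))^n$. By von Neumann's theorem these converge strongly to $\E(\cdot|\CA\cap\CB)$. The aim would then be to show that the limit is attained at a finite stage, using the finite-step structure of the box factors.
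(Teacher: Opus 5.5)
\textbf{Gap 1: Step 1 does not establish the inclusion.} The paper does not prove this statement; it imports it from \cite{DKKST25}. So your Step~1 has to carry the whole argument, and as written it does not work.

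The inductive step rests on the claim that if $F$ is $\CZ(H_1,\ldots,H_r,G_1,\ldots,G_s)$-measurable and $g,g'\in G_s$, then $\Delta'_{(g,g')}F$ is measurable with respect to the shorter factor $\CZ(H_1,\ldots,H_r,G_1,\ldots,G_{s-1})$. This is false. Take an irrational rotation $Tx=x+\alpha$ on $\T$ with $r=0$ and $G_1=G_2=\Z$. Then $\CZ(\Z,\Z)$, the Kronecker factor, is the whole Borel $\sigma$-algebra, while $\CZ(\Z)=\CI(T)$ is trivial. Yet $TF\cdot\bar F$ is nonconstant for $F(x)=e(x)+e(2x)$.

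The inductive formula \eqref{E: inductive formula} relates the seminorm of $F$ to those of its derivatives, but it carries no measurability information. In any case a seminorm of $F$ is not what you need: you must show $\int f\bar F\,d\mu=0$ whenever $\nnorm{f}_{H_1,\ldots,H_r,G_i+G'_j}=0$. The ``Cauchy--Schwarz reassembly'' that is supposed to do this is never specified.

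More fundamentally, you never treat the relative feature, which is exactly what this theorem adds to Tao--Ziegler. Your base case has $r=0$, but already the case $r=1$, $s=s'=1$, namely $\CZ(H_1,G)\cap\CZ(H_1,G')\subseteq\CZ(H_1,G+G')$, is nontrivial and does not follow from that base case. Nor can you obtain it by applying Tao--Ziegler to the tuples $(H_1,\ldots,H_r,G_1,\ldots,G_s)$ and $(H_1,\ldots,H_r,G'_1,\ldots,G'_{s'})$. That would also concatenate each $H_i$ with $H_{i'}$, $G_j$ and $G'_j$, and would only give the larger factor along the groups $H_i+H_{i'}$, $H_i+G'_j$, $G_i+H_{i'}$, $G_i+G'_j$. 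An argument that keeps $H_1,\ldots,H_r$ uncoupled is the missing core of the proof.

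\textbf{Gap 2: Step 2 needs the commutation, which you do not prove, but it can be supplied.} Step~2 is sound in outline. However, your first attempt is only a heuristic, and the fallback has no basis: von Neumann's theorem gives strong convergence, not stabilization at a finite stage. Here is a way to supply it. For every $\Z^D$-invariant factor $\CY$ and every box seminorm $\nnorm{\cdot}:=\nnorm{\cdot}_{K_1,\ldots,K_k}$, one has $\nnorm{\E(g|\CY)}\leq\nnorm{g}$. Indeed, set $\phi:=\E(g|\CY)$. The dual function $\CD_{K_1,\ldots,K_k}\phi$ is $\CY$-measurable, so by the Gowers--Cauchy--Schwarz inequality
\[
\nnorm{\phi}^{2^k}=\int\phi\cdot\CD_{K_1,\ldots,K_k}\phi\,d\mu=\int g\cdot\CD_{K_1,\ldots,K_k}\phi\,d\mu\leq\nnorm{g}\,\nnorm{\phi}^{2^k-1}.
\]
Apply this with $\CY=\CB$ (box factors are $\Z^D$-invariant in the abelian case) and with $\nnorm{\cdot}$ the seminorm defining $\CA$. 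Combined with \eqref{E: seminorms vs. factors} and the density of $L^\infty(\mu)\cap L^2(\CA)^\perp$ in $L^2(\CA)^\perp$, this shows that $\E(\cdot|\CB)$ maps $L^2(\CA)^\perp$ into itself. Being self-adjoint, it then commutes with $\E(\cdot|\CA)$, and your choice of $f_1,f_2$ works.

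So the decomposition reduces cleanly to the inclusion. The inclusion itself, which is the actual content of the theorem, remains unproved.
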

To clarify the notation,
\begin{align*}
    \CZ(H, G_i + G_j'\colon\; 1\leq i,j\leq 2) = \CZ(H,G_1+G_1',G_2+G_1',G_1+G'_2,G_2+G'_2),
\end{align*}
and likewise for longer expressions.

We apply Theorem~\ref{T: relative concatenation} in the following fashion.
\begin{corollary}\label{C: relative concatenation}
    Let $(X, \CX, \mu, U)$ be a $\Z^D$-system, $a_1, \ldots, a_\ell:\Z^L\to\Z^D$ be sequences, and $\Phi$ be a F{\o}lner sequence on $\Z^L$. Let $H_1, \ldots, H_r, G_1, \ldots, G_s, G_1', \ldots, G_{s'}'\subseteq \Z^D$ be subgroups. Suppose that for all $f_1, \ldots, f_\ell\in L^\infty(\mu)$,
    \begin{align}\label{E: vanishing average in concatenation}
        \norm{\E_{\bn\in\Phi}U_{a_1(\bn)}f_1\cdots U_{a_\ell(\bn)}f_\ell}_{L^2(\mu)}=0
    \end{align}
    whenever 
    \begin{align}\label{E: original estimate 1 in concatenation}
        \norm{f_j}_{H_1, \ldots, H_r, G_1, \ldots, G_s} = 0
    \end{align}
    or 
    \begin{align}\label{E: original estimate 2 in concatenation}
        \norm{f_j}_{H_1, \ldots, H_r, G_1', \ldots, G_{s'}'} = 0
    \end{align}
    for some fixed $1\leq j\leq \ell$. Then \eqref{E: vanishing average in concatenation} also holds whenever
    \begin{align}\label{E: new estimate in concatenation}
        \norm{f_j}_{H_1, \ldots, H_r, G_i + G_{i'}'\colon\; 1\leq i\leq s,\; 1\leq i'\leq s'} = 0.
    \end{align}
\end{corollary}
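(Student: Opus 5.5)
The plan is to deduce Corollary~\ref{C: relative concatenation} from Theorem~\ref{T: relative concatenation}, using multilinearity of the average in $f_j$ and the factor--seminorm correspondence \eqref{E: seminorms vs. factors}. The latter is available here because the ambient group $\Z^D$ is abelian.

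First I fix $f_1,\ldots,f_\ell\in L^\infty(\mu)$ such that \eqref{E: new estimate in concatenation} holds. By \eqref{E: seminorms vs. factors}, this is equivalent to
\[
\E(f_j\,|\,\CZ(H_1,\ldots,H_r,G_i+G'_{i'}\colon 1\le i\le s,\,1\le i'\le s'))=0.
\]
The second part of Theorem~\ref{T: relative concatenation} then gives a decomposition $f_j=f_{j,1}+f_{j,2}$ with
\[
\E(f_{j,1}|\CZ(H_1,\ldots,H_r,G_1,\ldots,G_s))=\E(f_{j,2}|\CZ(H_1,\ldots,H_r,G'_1,\ldots,G'_{s'}))=0.
\]
We need $f_{j,1}$ and $f_{j,2}$ to lie in $L^\infty(\mu)$, since the hypothesis is stated only for bounded functions.

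This boundedness is the one point requiring care, and I would handle it as follows. Write $\CA,\CB$ for the two factors. A natural explicit choice is $f_{j,1}=f_j-\E(f_j|\CA)$ and $f_{j,2}=\E(f_j|\CA)$. The first is bounded and clearly has zero projection onto $\CA$. For the second we need $\E(\E(f_j|\CA)|\CB)=0$, and this is exactly the content of the proof of the theorem in \cite{DKKST25}, using that $\CA\cap\CB$ lies in the concatenated factor. Conditional expectation preserves $L^\infty$ bounds, so both pieces are bounded. If one only has the abstract statement instead, I would approximate: split $f_j$ into $L^2$ pieces, truncate them, and control the error in the average by the $L^2$ norm, using that the other functions are bounded. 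Passing to the limit would then still give the conclusion.

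Second, the zero projections, via \eqref{E: seminorms vs. factors} applied once more, give $\nnorm{f_{j,1}}_{H_1,\ldots,H_r,G_1,\ldots,G_s}=0$ and $\nnorm{f_{j,2}}_{H_1,\ldots,H_r,G'_1,\ldots,G'_{s'}}=0$. The hypotheses \eqref{E: original estimate 1 in concatenation} and \eqref{E: original estimate 2 in concatenation} therefore make each of the two averages, with $f_j$ replaced by $f_{j,1}$ and by $f_{j,2}$ respectively, vanish in $L^2(\mu)$.

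Finally, the average is linear in $f_j$, so by the triangle inequality
\[
\norm{\E_{\bn\in\Phi}\cdots U_{a_j(\bn)}f_j\cdots}_{L^2(\mu)}\le \norm{\E_{\bn\in\Phi}\cdots U_{a_j(\bn)}f_{j,1}\cdots}_{L^2(\mu)}+\norm{\E_{\bn\in\Phi}\cdots U_{a_j(\bn)}f_{j,2}\cdots}_{L^2(\mu)},
\]
understood as a limsup. Both terms on the right vanish, which yields \eqref{E: vanishing average in concatenation}.
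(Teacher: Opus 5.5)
Your main argument is exactly the paper's proof: take the decomposition from Theorem~\ref{T: relative concatenation}, apply the two hypotheses to the two pieces, and conclude by subadditivity. The paper does not address boundedness at all. The one issue is in your paragraph on boundedness, which is the step you flagged as needing care.

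\textbf{The explicit choice.} Write $\CC:=\CZ(H_1,\ldots,H_r,G_i+G'_{i'}\colon 1\le i\le s,\,1\le i'\le s')$. Taking $f_{j,2}=\E(f_j|\CA)$ requires $\E(\E(f_j|\CA)|\CB)=0$. But $\CA\cap\CB\subseteq\CC$ together with $\E(f_j|\CC)=0$ only gives $\E(f_j|\CA\cap\CB)=0$. These two conditions coincide only when the conditional expectations onto $\CA$ and $\CB$ commute. You have not justified that, and Theorem~\ref{T: relative concatenation} as stated does not assert it; for general $\sigma$-algebras the composition can be nonzero. So attributing it to the proof in \cite{DKKST25} is unsupported.

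\textbf{The fallback.} As written it also does not work. Truncating an $L^2$ piece $g$ with $\E(g|\CA)=0$ destroys the exact vanishing of the seminorm, and the hypotheses are purely qualitative, so approximate orthogonality is useless.

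\textbf{The repair.} Truncate and then re-project. Let $\tilde g_k$ be a truncation of $g$ and set $g_k:=\tilde g_k-\E(\tilde g_k|\CA)$.
\begin{enumerate}
\item $g_k$ is bounded by $2\norm{\tilde g_k}_{L^\infty(\mu)}$.
\item $g_k$ satisfies $\E(g_k|\CA)=0$ exactly, so the hypothesis applies to it.
\item Since $\E(g|\CA)=0$, we have $g-g_k=(g-\tilde g_k)-\E(g-\tilde g_k|\CA)$, whose $L^2(\mu)$ norm is at most $\norm{g-\tilde g_k}_{L^2(\mu)}\to0$.
\end{enumerate}
With the other functions $1$-bounded, replacing $g$ by $g_k$ changes the limsup of the $L^2(\mu)$ norm of the average by at most $\norm{g-g_k}_{L^2(\mu)}$. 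Hence the vanishing for each $g_k$ passes to $g$, and the rest of your argument goes through unchanged.
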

\begin{proof}
    Suppose that \eqref{E: new estimate in concatenation} holds.
    By Theorem~\ref{T: relative concatenation}, we can split $f_j = f_j'+f_j''$, where $f_j',f_j''$ satisfy the estimates \eqref{E: original estimate 1 in concatenation} and \eqref{E: original estimate 2 in concatenation} respectively. By the original seminorm control, the contributions of $f_j',f_j''$ to the average in \eqref{E: vanishing average in concatenation} vanish, and hence \eqref{E: vanishing average in concatenation} holds by subadditivity.
\end{proof}

 \subsection{The nilpotent case}
When $H$ is nilpotent but not abelian, we specialize to the case $H_1 = \cdots = H_s$: partly because this is the only case that we need, and partly because even the most basic properties of box seminorms along distinct groups remain unknown without commutativity. We shall make crucial use of Theorem~\ref{T: Candela-Szegedy}, the structure theorem of Candela and Szegedy \cite[Theorem~5.12]{CS23} that vastly generalizes the classical structure theorem of Host and Kra \cite{HK05a}. This result applies in our context because our definition of the cubic measures $\mu_{s,H}$ for ergodic actions coincides with the Candela-Szegedy definition of \textit{cubic couplings} \cite[Definition 5.4]{CS23}. Hence our definition of the seminorm $\nnorm{\cdot}_{s, H}$ meets the Candela-Szegedy definition of $U^s$ seminorms \cite[Definitions 3.15 and 5.7]{CS23}. Then the proof of \cite[Lemma~5.8]{CS23} indicates that our notion of the Host-Kra factor $\CZ_{s-1}(H)$ agrees with theirs \cite[Definition 5.9]{CS23}.

\section{Seminorm control transfer in the commutative setting}\label{S: control transfer}
Having presented the rudiments of the structure theory, we move on to prove Theorem~\ref{T: new estimates}, the new seminorm estimates for ergodic averages involving multidimensional polynomials. The proof of this result is split into two sections. In the subsequent section, we introduce a robust induction scheme underpinning the argument, and then run the induction to derive the result. A key step in this argument is the transfer of seminorm control, which we present in this section. The methods discussed here generalize the \textit{ping-pong} arguments originating in \cite{FrKu22a} and refined in \cite{DKKST25, DKKST24, FrKu22b, Kuc23}. What distinguishes our presentation from those earlier formulations is that we abandon the \textit{ping-pong} duality underlying the seminorm smoothing in \cite{DKKST25, DKKST24, FrKu22b, FrKu22a, Kuc23}. Rather, we isolate the key analytic maneuver in the form of Proposition~\ref{P: seminorm transfer} below, and then we reduce the \textit{ping-pong} game to a simple iteration of this result. We believe that this formulation provides a useful alternative viewpoint that may lead to further applications.

\begin{proposition}[Single seminorm control transfer]\label{P: seminorm transfer}
    Let $D,\ell,L,s'\in\N$ and $s\in\N_0$. Let $a_1, \ldots, a_\ell, a_\ell':\Z^L\to\Z^D$ be maps, $(X, \CX, \mu, U)$ be a $\Z^D$-system, and $\Phi$ be a F{\o}lner sequence on $\Z^L$. Let $$H_1, \ldots, H_{s+1}, H'_1, \ldots, H'_{s'}\subseteq \Z^D$$ be subgroups with the following properties:
    \begin{enumerate}
        \item $H_{s+1}$ contains the subgroup $\langle a_\ell(\bn)-a'_\ell(\bn)\colon\; \bn\in\Z^L\rangle$;
        \item for all $f_1, \ldots, f_\ell\in L^\infty(\mu)$,
        \begin{align}\label{E: vanishing average 1}
            \norm{\E_{\bn\in\Phi}U_{a_1(\bn)}f_1\cdots U_{a_\ell(\bn)}f_\ell}_{L^2(\mu)} = 0
        \end{align}
        whenever $\nnorm{f_\ell}_{H_1, \ldots, H_{s+1}} = 0$;
        \item for all $f_1, \ldots, f_{\ell}\in L^\infty(\mu)$,
        \begin{align}\label{E: vanishing average 1.1}
            \norm{\E_{\bn\in\Phi}U_{a_1(\bn)}f_1\cdots U_{a_{\ell-1}(\bn)}f_{\ell-1}\cdot U_{a'_\ell(\bn)}f_\ell}_{L^2(\mu)} = 0
        \end{align}
        whenever $\nnorm{f_{\ell-1}}_{H'_1, \ldots, H'_{s'}} = 0$;
    \end{enumerate}
     Then \eqref{E: vanishing average 1} holds whenever $\nnorm{f_{\ell-1}}_{H_1, \ldots, H_{s}, H'_1, \ldots, H'_{s'}} = 0$.
\end{proposition}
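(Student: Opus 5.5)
The plan is to argue by contradiction and run one "ping" of the ping-pong argument, phrased as a direct transfer. Suppose $\nnorm{f_{\ell-1}}_{H_1, \ldots, H_s, H'_1, \ldots, H'_{s'}} = 0$ but $\norm{A}_{L^2(\mu)}>0$, where $A := \E_{\bn\in\Phi}U_{a_1(\bn)}f_1\cdots U_{a_\ell(\bn)}f_\ell$.

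\emph{Step 1 (replace $f_\ell$ by a dual function).} By hypothesis (2) and \eqref{E: seminorms vs. factors}, we may replace $f_\ell$ by $\E(f_\ell|\CZ(H_1, \ldots, H_{s+1}))$. In the abelian setting the space $Z_{H_1, \ldots, H_{s+1}}$ is spanned by dual functions, so by linearity and an $L^2$ approximation we may assume $f_\ell = \CD_{H_1, \ldots, H_{s+1}}\mathfrak f$ for some $1$-bounded tuple $\mathfrak f$. Peeling off the last average, this dual function has the form
\begin{align*}
\CD_{H_1, \ldots, H_{s+1}}\mathfrak f = \E_{h\in H_{s+1}} U_h u_h\cdot \psi_h ,
\end{align*}
where $u_h$ is $1$-bounded and $\psi_h := \CD_{H_1,\ldots,H_s}\mathfrak f_h$ is a dual function of level $s$ along $H_1, \ldots, H_s$, with its tuple depending on $h$. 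It therefore suffices to prove that
\begin{align*}
\E_{h\in H_{s+1}}\E_{\bn\in\Phi}\int g\cdot \prod_{j<\ell}U_{a_j(\bn)}f_j\cdot U_{a_\ell(\bn)}(U_hu_h\cdot\psi_h)\, d\mu
\end{align*}
vanishes for every bounded $g$.

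\emph{Step 2 (move the $H_{s+1}$-part onto the iterate $a'_\ell$).} Write $b(\bn):= a_\ell(\bn)-a'_\ell(\bn)$, which lies in $H_{s+1}$ by hypothesis (1). Then $U_{a_\ell(\bn)}U_h u_h = U_{a'_\ell(\bn)}U_{h+b(\bn)}u_h$. Using the uniform (F{\o}lner-invariant) nature of $\E_{h\in H_{s+1}}$ together with the Fubini-type swap of \cite[Theorem~1.1]{ZK16}, I substitute $h\mapsto h-b(\bn)$, so that the $u$-part sits at iterate $a'_\ell(\bn)$ with a shift independent of $\bn$.

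\emph{Step 3 (dispose of the structured part).} The structured factor $\psi$ is then evaluated along $a_\ell(\bn)$, composed with a shift that does depend on $\bn$. Since $\Z^D$ is abelian and the shift lies in $H_{s+1}$, I expect to absorb it into the dual function: $U_k\CD_{H_1,\ldots,H_s}\mathfrak f' = \CD_{H_1,\ldots,H_s}(U_k\mathfrak f')$. Composing the whole integral with $U_{a_\ell(\bn)}^{-1}$ then exhibits the $h$-th term as $\int \psi'_h\cdot G_h\, d\mu$, where $\psi'_h$ is a level-$s$ dual function along $H_1,\ldots,H_s$ and $G_h$ is the dual of an average of the shape \eqref{E: vanishing average 1.1}. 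By Gowers--Cauchy--Schwarz and the inductive formula \eqref{E: inductive formula}, this term is bounded by $\nnorm{G_h}_{H_1,\ldots,H_s}$. Expanding $\nnorm{G_h}^{2^s}_{H_1,\ldots,H_s}$ and using the standard multiplicative-derivative manipulations (valid because the group is abelian), this bound is in turn controlled by averages over $\underline{m}\in H_1\times\cdots\times H_s$ of expressions
\begin{align*}
\norm{\E_{\bn\in\Phi}\prod_{j<\ell-1}U_{a_j(\bn)}f_{j,\underline m}\cdot U_{a_{\ell-1}(\bn)}\Delta_{\underline m}f_{\ell-1}\cdot U_{a'_\ell(\bn)}u'_{\underline m}}_{L^2(\mu)}
\end{align*}
for suitable $1$-bounded functions $f_{j,\underline m}$ and $u'_{\underline m}$.

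\emph{Step 4 (conclude).} Hypothesis (3) makes each such average vanish whenever $\nnorm{\Delta_{\underline m}f_{\ell-1}}_{H'_1,\ldots,H'_{s'}}=0$. By permutation invariance and \eqref{E: inductive formula},
\begin{align*}
\nnorm{f_{\ell-1}}^{2^{s+s'}}_{H_1,\ldots,H_s,H'_1,\ldots,H'_{s'}} = \E_{\underline m}\nnorm{\Delta_{\underline m}f_{\ell-1}}^{2^{s'}}_{H'_1,\ldots,H'_{s'}},
\end{align*}
so the vanishing assumption gives this for almost every $\underline m$ in density. That yields the contradiction.

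The hard step will be Step 3, because of the bookkeeping of the $\bn$-dependent shift $b(\bn)$ inside the dual part. I will first try to keep it inside the $H_1,\ldots,H_s$-dual via the shift identity above. If that fails, I will instead apply van der Corput/Cauchy--Schwarz in $h$ before changing variables, so that only $U_{a_\ell}U_h$-shifted duals appear. Any leftover $H_{s+1}$-shift can then be removed using the invariance of the cubic measure under translations in $H_{s+1}$.
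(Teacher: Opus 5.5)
There is a genuine gap, and it sits in Step 3, which you already flagged as the hard step. The problem comes from peeling off the \emph{outermost} $H_{s+1}$-average. This gives $\psi_h=\CD_{H_1,\ldots,H_s}\bigl((f_{\eps'0}\cdot U_hf_{\eps'1})_{\eps'\neq 0}\bigr)$, and in each entry only the factor coming from $\eps_{s+1}=1$ carries the shift.

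After your substitution $h\mapsto h-b(\bn)$, there is no $\bn$-independent $\psi'_h$ with $\psi_{h-b(\bn)}=U_{-b(\bn)}\psi'_h$. The identity $U_k\CD(\mathfrak f')=\CD(U_k\mathfrak f')$ shifts every entry, but here the factors $f_{\eps'0}$ are not shifted. Explicitly,
\[
U_{a_\ell(\bn)}\psi_{h-b(\bn)}=\E_{h'\in H_1\times\cdots\times H_s}\prod_{\eps'\neq 0}U_{a_\ell(\bn)+\eps'\cdot h'}f_{\eps'0}\cdot U_{a'_\ell(\bn)+h+\eps'\cdot h'}f_{\eps'1},
\]
so the substitution merely moves the $\eps_{s+1}=1$ half of the cube to the iterate $a'_\ell$, while the $\eps_{s+1}=0$ half stays at $a_\ell(\bn)$. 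Consequently you cannot pull $\psi'_h$ out of the $\bn$-average to write the $h$-th term as $\int\psi'_hG_h\,d\mu$.

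Even granting that step, bounding by $\nnorm{G_h}_{H_1,\ldots,H_s}$ does not help. Expanding it, or running a dual--difference interchange whose degree-one step yields functions invariant under one of $H_1,\ldots,H_s$ rather than under the $b(\bn)$, still leaves the $a_\ell(\bn)$-slot occupied. So nothing in the argument produces the $\ell$-term averages of shape \eqref{E: vanishing average 1.1} to which hypothesis (3) applies, and your fallback does not say how that slot gets emptied.

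The missing idea is to use exact invariance instead of a change of variables, with the $H_{s+1}$-direction innermost. You can reach this from your Step 1. Nonvanishing of the average with $f_\ell=\CD_{H_1,\ldots,H_{s+1}}\mathfrak f$ gives $\int\tilde f_\ell\cdot\CD_{H_1,\ldots,H_{s+1}}\mathfrak f\,d\mu\neq 0$ for a stashed dual $\tilde f_\ell$ of the average. Gowers--Cauchy--Schwarz then gives $\nnorm{\tilde f_\ell}_{H_1,\ldots,H_{s+1}}>0$, which is where the paper starts.

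From there, combine four ingredients: the subgroup property (pass to $G_{s+1}=\langle a_\ell(\bn)-a'_\ell(\bn)\rangle\subseteq H_{s+1}$), permutation invariance, the inductive formula, and $\nnorm{F}_{G_{s+1}}^2=\int F\cdot\overline{\E(F|\CI(G_{s+1}))}\,d\mu$. Together they give $\E_{h_1,\ldots,h_s}\int\Delta_{h_1,\ldots,h_s}\tilde f_\ell\cdot u_h\,d\mu>0$ with each $u_h$ being $G_{s+1}$-invariant. The dual--difference interchange \cite[Proposition~3.3]{FrKu22b} then converts the $H_1,\ldots,H_s$-structure into derivatives $\Delta_{h-h'}f_j$ of $f_1,\ldots,f_{\ell-1}$. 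It leaves only a $G_{s+1}$-invariant $u_{h,h'}$ in the $a_\ell$-slot, for which $U_{a_\ell(\bn)}u_{h,h'}=U_{a'_\ell(\bn)}u_{h,h'}$ holds exactly, with no substitution needed.

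A smaller issue is in Step 4. The vanishing $\E_{\underline m}\nnorm{\Delta_{\underline m}f_{\ell-1}}^{2^{s'}}_{H'_1,\ldots,H'_{s'}}=0$ does not make $\nnorm{\Delta_{\underline m}f_{\ell-1}}_{H'_1,\ldots,H'_{s'}}$ vanish on a density-one set of $\underline m$. Moreover, your other functions vary with $\underline m$. So applying hypothesis (3) qualitatively for each $\underline m$ is not enough; you need its uniform version, \cite[Proposition~A.2]{FrKu22a}.
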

A priori, the new estimate looks less useful and more formidable than the original one, as it involves a seminorm of degree $s+s'$ rather than $s+1$. However, in our applications, the degree of a seminorm is often less important than the groups that make it up. In particular, if the groups $H'_1, \ldots, H'_{s'}$ are somehow ``nicer'' than $H_{s+1}$, then this result enables us to remove the problematic subgroup.
\begin{proof}
Without loss of generality, we assume that all the functions are 1-bounded.
    Suppose that \eqref{E: vanishing average 1} fails. The stashing trick 
    (see e.g. \cite[Lemma~3.2]{FrKu22b}) gives
    \begin{align*}
        \norm{\oE_{\bn\in\Phi}\prod_{j=1}^{\ell-1} U_{a_j(\bn)}f_j\cdot U_{a_\ell(\bn)}\tilde f_\ell}_{L^2(\mu)} > 0
    \end{align*}
    for
    \begin{align*}
        \tilde f_\ell := \lim_{k\to\infty}\E_{\bn\in\Phi_{N_k}}U_{-a_\ell(\bn)}\bar f_{0,k}\cdot U_{a_1(\bn)-a_\ell(\bn)}\bar f_1\cdots U_{a_{\ell-1}(\bn)-a_\ell(\bn)}\bar f_{\ell-1}, 
    \end{align*}
    where the limit is a weak limit, $(N_k)_k$ is an increasing sequence of natural numbers, and $f_{0,k}$'s are 1-bounded for every $k\in\N$.
    Using the first seminorm control and the subgroup property of box seminorms, we infer that 
    \begin{align*}
        \nnorm{\tilde f_\ell}_{H_1, \ldots, H_s, G_{s+1}}\geq \nnorm{\tilde f_\ell}_{H_1, \ldots, H_{s+1}}>0,
    \end{align*}
    where
    \begin{align*}
        G_{s+1}:=\langle a_\ell(\bn)-a'_\ell(\bn)\colon\; \bn\in\Z^L\rangle.
    \end{align*}
    Then the inductive property of box seminorms and the inverse theorem for the degree-1 seminorm give
    \begin{align*}
        \E_{h_1\in H_1}\cdots \E_{h_s\in H_s}\int \Delta_{h_1,\ldots, h_s}\tilde f_\ell \cdot u_h\; d\mu >0
    \end{align*}
    for some 1-bounded, $G_{s+1}$-invariant $u_h\in L^\infty(\mu)$. By the dual-difference interchange (see e.g. \cite[Proposition~3.3]{FrKu22b}),
    \begin{align*}
        \oE_{h_1,h_1'\in H_1} \cdots \oE_{h_s,h'_s\in H_{s}}\norm{\oE_{\bn\in\Phi}\prod_{j=1}^{\ell-1} U_{a_j(\bn)}f_{j,h,h'}\cdot U_{a_\ell(\bn)}u_{h,h'}}_{L^2(\mu)} > 0
    \end{align*}
    for some 1-bounded, $G_{s+1}$-invariant $u_{h,h'}\in L^\infty(\mu)$ and functions
    \begin{align*}
        f_{j,h,h'}:=\Delta_{h_1-h_1', \ldots, h_s-h_s'}f_j.
    \end{align*}
    From the definition of $G_{s+1}$ and the invariance property of $u_{h,h'}$'s, we deduce that $$U_{a_\ell(\bn)}u_{h,h'} = U_{a_\ell'(\bn)}u_{h,h'}$$ for all $h,h'$, and so
    \begin{align*}
       \oE_{h_1,h_1'\in H_1} \cdots \oE_{h_s,h'_s\in H_{s}}\norm{\oE_{\bn\in\Phi}\prod_{j=1}^{\ell-1} U_{a_j(\bn)}f_{j,h,h'}\cdot U_{a_\ell'(\bn)}u_{h,h'}}_{L^2(\mu)} > 0.
    \end{align*}
    
    At this point, we apply the second seminorm control in a uniform version supplied by \cite[Proposition~A.2]{FrKu22a}, obtaining the lower bound
    \begin{align*}
        \oE_{h_1,h_1'\in H_1} \cdots \oE_{h_s,h'_s\in H_{s}}\nnorm{f_{\ell-1,h,h'}}_{H'_1, \ldots, H'_{s'}}>0.
    \end{align*}
    The H\"older inequality and the inductive formula for box seminorms allow us to conclude that 
    \begin{align*}
        \nnorm{f_{\ell-1}}_{H_1, \ldots, H_{s}, H'_1, \ldots, H'_{s'}}>0.
    \end{align*}
    The claim follows by contrapositive.
\end{proof}
Proposition~\ref{P: seminorm transfer} can be applied as it stands, or it can be iterated an arbitrary number of times. In the former case, it gives seminorm control in terms of $f_{\ell-1}$ rather than the original function $f_\ell$. If we want a new estimate involving the same function $f_\ell$ as originally, we can iterate Proposition~\ref{P: seminorm transfer}, transferring control from $f_\ell$ to $f_{\ell-1}$ in the first iteration and back to $f_\ell$ in the second. The following corollary describes this procedure formally. 
Phrased in the language of the seminorm smoothing arguments from \cite{DKKST25, DKKST24, FrKu22b, FrKu22a, Kuc23}, the first and second iteration of Proposition~\ref{P: seminorm transfer} in the proof correspond to the \textit{ping} and \textit{pong} steps. {One notable difference between our argument and those from earlier works is that our ping and pong arguments both rely on the inverse theorems for degree-1 box seminorms, and hence they only use invariant rather than dual functions.} 

\begin{corollary}[Double seminorm control transfer]\label{C: seminorm transfer}
    Let $D,\ell,s''\in\N$ and $s,s'\in\N_0$.
    Let $a_1, \ldots, a_\ell, a_{\ell-1}'', a_\ell':\Z^L\to\Z^D$ be sequences, $(X, \CX, \mu, U)$ be a $\Z^D$-system, and $\Phi$ be a F{\o}lner sequence on $\Z^L$. Let $$H_1, \ldots, H_{s+1}, H'_1, \ldots, H'_{s'+1}, H''_1, \ldots, H''_{s''}\subseteq \Z^D$$ be subgroups with the following properties:
    \begin{enumerate}
        \item $H_{s+1}$ contains the subgroup $\langle a_\ell(\bn)-a'_\ell(\bn)\colon\; \bn\in\Phi\rangle$;
        \item $H'_{s'+1}$ contains the subgroup $\langle a_{\ell-1}(\bn)-a''_{\ell-1}(\bn)\colon\; \bn\in\Phi\rangle$;
        \item for all $f_1, \ldots, f_\ell\in L^\infty(\mu)$, \eqref{E: vanishing average 1} holds
        whenever $\nnorm{f_\ell}_{H_1, \ldots, H_{s+1}} = 0$;
        \item for all $f_1, \ldots, f_{\ell}\in L^\infty(\mu)$, \eqref{E: vanishing average 1.1} holds
        whenever $\nnorm{f_{\ell-1}}_{H'_1, \ldots, H'_{s'+1}} = 0$;
        \item for all $f_1, \ldots, f_\ell\in L^\infty(\mu)$,
\begin{align*}
    \norm{\E_{\bn\in\Phi}U_{a_1(\bn)}f_1\cdots U_{a_{\ell-2}(\bn)}f_{\ell-2}\cdot U_{a''_{\ell-1}(\bn)}f_{\ell-1}\cdot U_{a_\ell(\bn)}f_\ell}_{L^2(\mu)} = 0
\end{align*} holds
whenever $\nnorm{f_\ell}_{H''_1, \ldots, H''_{s}} = 0$.
    \end{enumerate}
     Then \eqref{E: vanishing average 1} holds whenever 
     \begin{align*}
         \nnorm{f_\ell}_{H_1, \ldots, H_{s}, H'_1, \ldots, H'_{s'}, H''_1, \ldots, H''_{s''}} = 0.
     \end{align*}
\end{corollary}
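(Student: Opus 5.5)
The plan is to apply Proposition~\ref{P: seminorm transfer} twice: first to move control from $f_\ell$ to $f_{\ell-1}$ (the \emph{ping} step), then to move it back from $f_{\ell-1}$ to $f_\ell$ (the \emph{pong} step). Since $H$ is abelian here, we may freely reorder the groups in a box seminorm by permutation invariance, and we may relabel the positions $\ell-1,\ell$ in the product because multiplication is commutative.

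\emph{Ping.} Apply Proposition~\ref{P: seminorm transfer} with the data $a_1,\ldots,a_\ell,a'_\ell$, the groups $H_1,\ldots,H_{s+1}$, and, in the role of $H'_1,\ldots,H'_{s'}$ there, the groups $H'_1,\ldots,H'_{s'+1}$. Its three hypotheses are exactly our assumptions (1), (3), (4). We conclude that \eqref{E: vanishing average 1} holds whenever
\begin{align*}
\nnorm{f_{\ell-1}}_{H_1,\ldots,H_s,H'_1,\ldots,H'_{s'},H'_{s'+1}}=0.
\end{align*}

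\emph{Pong.} Now swap the roles of the indices $\ell-1$ and $\ell$, writing the average \eqref{E: vanishing average 1} with $f_{\ell-1}$ placed last. The ping step supplies control of this last function by a seminorm of degree $s+s'+1$. Its final group is $H'_{s'+1}$, which contains $\langle a_{\ell-1}(\bn)-a''_{\ell-1}(\bn)\rangle$ by assumption (2); this is hypothesis (1) of the proposition. The modified average, in which $a_{\ell-1}$ is replaced by $a''_{\ell-1}$, is controlled by $\nnorm{f_\ell}_{H''_1,\ldots,H''_{s''}}$ by assumption (5), reading the index there as $s''$. This is hypothesis (3) of the proposition, with $f_\ell$ now playing the role of the ``penultimate'' function. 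Applying Proposition~\ref{P: seminorm transfer} with ``$s$'' equal to $s+s'$, the groups $H_1,\ldots,H_s,H'_1,\ldots,H'_{s'}$ followed by $H'_{s'+1}$, and the groups $H''_1,\ldots,H''_{s''}$, we obtain that \eqref{E: vanishing average 1} holds whenever
\begin{align*}
\nnorm{f_\ell}_{H_1,\ldots,H_s,H'_1,\ldots,H'_{s'},H''_1,\ldots,H''_{s''}}=0.
\end{align*}
This is the claim.

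The only real obstacle is bookkeeping. We must check that Proposition~\ref{P: seminorm transfer} is symmetric under relabeling which function is ``last'', which holds because $\Z^D$ is abelian and the product is commutative. We must also check that the degree parameters match: the proposition requires $s\in\N_0$ and $s'\in\N$, and these are satisfied in both applications, including the degenerate cases $s=0$ or $s'=0$.
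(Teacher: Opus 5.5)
Your proposal is correct and matches the paper's proof: one application of Proposition~\ref{P: seminorm transfer} moves control from $f_\ell$ to $f_{\ell-1}$ via the seminorm along $H_1,\ldots,H_s,H'_1,\ldots,H'_{s'+1}$. A second application with the roles of $\ell-1$ and $\ell$ switched, using $H'_{s'+1}\supseteq\langle a_{\ell-1}(\bn)-a''_{\ell-1}(\bn)\rangle$ and hypothesis (5), gives the claimed estimate; your reading of the seminorm in (5) as $\nnorm{f_\ell}_{H''_1,\ldots,H''_{s''}}$ is the intended one, since the subscript $s$ there is a typo.
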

Again, the new estimate looks more daunting than the original one, but it is of greater utility if the subgroups $H_i'$ and $H_i''$ are nicer than $H_{s+1}$.
\begin{proof}
    The proof consists of two applications of Proposition~\ref{P: seminorm transfer}. Applying it once, we deduce that
    \eqref{E: vanishing average 1} holds whenever $\nnorm{f_{\ell-1}}_{H_1, \ldots, H_{s}, H'_1, \ldots, H'_{s'+1}} = 0$. Applying it the second time with the role of $\ell-1$ and $\ell$ switched,  we obtain the claimed estimate. 
\end{proof}

\subsection{A simple case of Theorem~\ref{T: new estimates}}\label{SS: seminorm example}
In the proof of Theorem~\ref{T: new estimates}, we will apply Corollary \ref{C: relative concatenation}, Proposition~\ref{P: seminorm transfer}, and Corollary \ref{C: seminorm transfer} many times. 
To illustrate how these results come into play, we sketch the proof of a simple case of Theorem~\ref{T: new estimates}.
    Consider the polynomials
    \begin{align*}
    p_1(n) = b_1 n + b_{2,1}n^2\quad \textrm{and}\quad p_2(n) = b_1 n + b_{2,2}n^2,    
    \end{align*}
    where $b_1, b_{2,1},b_{2,2}\in\Z^D$ are distinct and nonzero (obviously, we could consider the more general case where the linear coefficients differ, but then the argument requires more steps). By Theorem~\ref{T: original estimates}, 
        \begin{align}\label{E: original average in example}
        \E_{n\in\Z}U_{b_1 n+ b_{1,2} n^2}f_1\cdot U_{b_{1}n + b_{2,2}n^2}f_2
    \end{align}
    is controlled by 
    \begin{align}\label{E: original seminorm in example}
     \nnorm{f_2}_{\tilde H_{2,1}^{\times s_1},\tilde H_{2,0}^{\times s_1}}   
    \end{align}
    for some $s_1\in\N$, where
    \begin{align*}
        \tilde H_{2,1} = \langle b_{2,2}-b_{1,2}\rangle\quad \textrm{and}\quad \tilde H_{2,0} = \langle b_{2,2}\rangle.
    \end{align*}
    By contrast, Theorem~\ref{T: new estimates} claims control in terms of a seminorm of $f_2$ (and $f_1$, but we focus on $f_2$ here) along    
    \begin{align}\label{E: subgroups in new estimates example}
        H_{1,0} = \langle b_1, b_{1,2}\rangle,\quad H_{1,2} = H_{2,1} = \langle b_{2,2}-b_{1,2}\rangle,\quad H_{2,0} = \langle b_1, b_{2,2}\rangle. 
    \end{align}
    Note that $\tilde H_{2,1} = H_{1,2} = H_{2,1}$. Hence, due to the subgroup and monotonicity properties of box seminorms, passing from the old to new estimates is equivalent to swapping all instances of $\tilde H_{2,0}$ for (possibly many copies of) $H_{1,0}, H_{2,1}, H_{2,0}$. 

    In order to do so, we apply Corollary \ref{C: seminorm transfer} successively to get rid of each copy of $\tilde H_{2,0}$. While doing so, we use two auxiliary estimates:
    \begin{enumerate}
        \item the control of 
        \begin{align*}
        \E_{n\in\Z}U_{b_1 n+ b_{1,2} n^2}f_1\cdot U_{b_{1}n}f_2
    \end{align*}        
    by $\nnorm{f_1}_{\langle b_{1,2}\rangle^{\times s_2}}$ for some $s_2\in\N$,
    which follows directly from Theorem~\ref{T: original estimates};
        \item the control of
                \begin{align*}
        \E_{n\in\Z}U_{b_{1}n + b_{2,2}n^2}f_2
    \end{align*}
    by $\nnorm{f_2}_{H_{2,0}^{\times 2}}$, which is easy to obtain from the spectral theorem (see Proposition~\ref{P: type 1} below). 
    \end{enumerate}
    By Corollary \ref{C: seminorm transfer} applied with
    \begin{align*}
        a_1(n) = b_1 n+ b_{1,2} n^2,\quad a_2(n) = b_1 n+ b_{2,2} n^2,\quad a_2'(n) = b_1 n, \quad  \textrm{and}\quad a_1''(n) = 0,
    \end{align*}
    the original average \eqref{E: original average in example} is controlled by
    \begin{align}\label{E: new seminorm in example}
             \nnorm{f_2}_{\tilde H_{2,1}^{\times s_1},\tilde H_{2,0}^{\times (s_1-1)}, \langle b_{1,2}\rangle^{\times s_2}, H_{2,0}^{\times 2}}.   
    \end{align}

    At this point, we use Corollary \ref{C: relative concatenation} to merge (``concatenate'') the original estimate \eqref{E: original seminorm in example} with the new one \eqref{E: new seminorm in example}. Note that both estimates ``share'' $s_1$ copies of $\tilde H_{2,1}$ and $s_1-1$ copies of $\tilde H_{2,0}$. So Corollary \ref{C: relative concatenation} concatenates the one copy of $\tilde H_{2,1}$ remaining in \eqref{E: original seminorm in example} with the $s_2$ copies of $\langle b_{1,2}\rangle$ and $2$ copies of $H_{2,0}$ remaining in \eqref{E: new seminorm in example}. Hence we obtain control by
    \begin{align*}
        \nnorm{f_2}_{\tilde H_{2,1}^{\times s_1},\tilde H_{2,0}^{\times (s_1-1)}, \langle \tilde H_{2,0}, b_{1,2}\rangle^{\times s_2}, \langle \tilde H_{2,0}, H_{2,0}\rangle^{\times 2}}.
    \end{align*}
    The crucial observation is that
    \begin{align*}
        \langle \tilde H_{2,0}, b_{1,2}\rangle = \langle b_{1,2}, b_{2,2} \rangle \supseteq  \langle b_{2,2}-b_{1,2}\rangle = H_{2,1}
    \end{align*}
    and $\langle \tilde H_{2,0}, H_{2,0}\rangle = H_{2,0}$, so by the subgroup property of seminorms, we can control the original average \eqref{E: original average in example} by
    \begin{align*}
        \nnorm{f_2}_{\tilde H_{2,1}^{\times s_1},\tilde H_{2,0}^{\times (s_1-1)}, H_{2,1}^{\times s_2}, H_{2,0}^{\times 2}}.
    \end{align*}
    On iterating this argument $s_1-1$ more times, we get ride of all copies of $\tilde H_{2,0}$. Using the identity $\tilde H_{2,1} = H_{2,1}$, we then obtain the claimed new estimates in terms of subgroups \eqref{E: subgroups in new estimates example}.

\section{Proof of Theorem~\ref{T: new estimates}}\label{S: seminorm estimates}
We move on to the main part of the proof of Theorem~\ref{T: new estimates}. Throughout this section, $(X, \CX, \mu, U)$ is a $\Z^D$-system.

\subsection{Induction formalism}
\subsubsection{Reducible and irreducible families}
The argument leading to Theorem~\ref{T: new estimates} relies on a robust induction scheme that we set out to present now.  
We start by introducing a handful of definitions for polynomial families arising in the subsequent discussion. 
{All the polynomial families { $\CP$} considered in this section are assumed to consist of finitely many distinct, nonconstant polynomials with zero constant terms that lie in $\Z^D[\bn]$ (where $\bn\in\Z^L$) for some fixed $D, L\in\N$.}
\begin{definition}
    We call $\CP$:
    \begin{enumerate}
        \item \textit{irreducible of type 1} if $\ell =1$;
        \item \textit{irreducible of type 2} if $\ell = 2$ and, up to permuting polynomials, 
        \begin{align*}
         p_1(\bn) = \sum_{\vert\bi\vert=d} b_{\bi} \bn^\bi+q(\bn) \quad \textrm{and}\quad p_2(\bn) = q(\bn)   
        \end{align*}
        for some $b_{\bi}\in\Z^D$ not all equal to zero and $q\in\Z^{D}[\bn]$ with $\deg q\leq d-1$; 
        \item \textit{reducible} otherwise.
    \end{enumerate}
\end{definition}

Irreducible families form the base case of our induction. For them, Theorem~\ref{T: new estimates} is relatively straightforward, as demonstrated by the following two results.
\begin{proposition}[Theorem~\ref{T: new estimates} for type-1 irreducible families]\label{P: type 1}
    Let $\CP$ be irreducible of type 1, i.e. $\CP = \{p\}$ for $p(\bn) = \sum\limits_{1\leq \vert\bi\vert\leq d} b_{\bi} \bn^\bi$. Then 
    \begin{align*}
        \norm{\E_{\bn\in\Z^{L}}U_{p(\bn)}f}_{L^2(\mu)} = 0
    \end{align*}
    whenever $\nnorm{f}_{H^{\times 2}} = 0$, where $H:=\langle b_\bi\colon 1\leq \vert\bi\vert\leq d\rangle$.
\end{proposition}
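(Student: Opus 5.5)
Since we only need the averages to vanish, I would argue via the spectral theorem. By the spectral theorem for the unitary $\Z^D$-action $U$, there is a finite positive measure $\sigma_f$ on $\T^D$ such that $\langle U_v f, f\rangle = \int e(v\cdot\alpha)\, d\sigma_f(\alpha)$ for all $v\in\Z^D$. Expanding the square,
\begin{align*}
    \norm{\E_{\bn\in\Phi_N}U_{p(\bn)}f}_{L^2(\mu)}^2 = \int \abs{\E_{\bn\in\Phi_N} e(p(\bn)\cdot\alpha)}^2\, d\sigma_f(\alpha)
\end{align*}
for any F{\o}lner sequence $\Phi$ on $\Z^L$.

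The next step is a pointwise analysis of the exponential sums. Write $p(\bn)\cdot\alpha = \sum_{\bi} (b_\bi\cdot\alpha)\bn^\bi$, a real polynomial in $\bn$. If some coefficient $b_\bi\cdot\alpha$ is irrational, then the multiparameter Weyl equidistribution theorem makes the average tend to $0$. This holds along any F{\o}lner sequence, or at least along boxes $[N]^L$. Since the statement uses the uniform limit $\E_{\bn\in\Z^L}$, I would take this convergence from the known existence of uniform limits for polynomial averages. The remaining $\alpha$ are those with all $b_\bi\cdot\alpha\in\Q$. By dominated convergence, the limit is therefore bounded by $\sigma_f(A)$, where
\begin{align*}
    A:=\{\alpha\in\T^D\colon h\cdot\alpha\in\Q/\Z \text{ for all } h\in H\}.
\end{align*}

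It remains to show that $\nnorm{f}_{H^{\times 2}}=0$ forces $\sigma_f(A)=0$. I would compute the seminorm spectrally: $\nnorm{f}_{H,H}^4 = \E_{h\in H}\abs{\E_{h'\in H}\langle U_{h+h'} f, U_{h'} f\rangle}^2$, up to the standard inductive formula. The inner average equals $\int e(h\cdot\alpha)\,1_{H^\perp}(\alpha)\,d\sigma_f$, where $H^\perp=\{\alpha\colon h\cdot\alpha=0 \ \forall h\in H\}$. Hence $\nnorm{f}_{H,H}^4=\sum_{\beta}\sigma_f(\{\beta + H^\perp\})^2$, summing over the atoms of the pushforward of $\sigma_f$ under $\T^D\to \T^D/H^\perp\cong\widehat{H}$. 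The image of $A$ in $\widehat H$ is the set of torsion characters of $H$, which is countable because $H$ is finitely generated. So $\sigma_f(A)>0$ implies that the pushforward has an atom, which gives $\nnorm{f}_{H,H}>0$.

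The main obstacle is verifying this spectral formula for the degree-2 box seminorm with care. An alternative is to use that $\nnorm{f}_{H^{\times 2}}=0$ means $f$ is orthogonal to all $H$-eigenfunctions in the ergodic-decomposition sense. The function $g:=\E(f|\text{rational-spectrum part for }H)$ is then a sum of eigenfunctions, and $P_A f$ lies in the closed span of such eigenfunctions, so $\sigma_f(A)=\norm{P_A f}^2=0$. I would first try the direct computation via Lemma~\ref{L: cubic measures using averages}. The uniformity of the Weyl limit over F{\o}lner sequences is the secondary point to check.
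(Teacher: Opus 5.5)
Your argument follows the same route as the paper's proof. You use the spectral theorem, then Weyl's theorem to discard every $\alpha$ for which some $b_\bi\cdot\alpha$ is irrational, and finally the fact that $\nnorm{f}_{H^{\times 2}}=0$ rules out atoms of the spectral measure on a countable set. Your bookkeeping is more accurate than the paper's remark about $\Q^d$. The exceptional set $A$ is a union of cosets of $H^\perp$ and need not be countable in $\T^D$; what is countable is its image in $\widehat H$, namely the torsion characters.

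One step is wrong as written, though it is easily repaired.
\begin{itemize}
\item The inner average is not what you claim. Since the action is unitary and commutative, $\E_{h'\in H}\langle U_{h+h'}f,U_{h'}f\rangle=\langle U_hf,f\rangle$. The expression $\int e(h\cdot\alpha)1_{H^\perp}(\alpha)\,d\sigma_f(\alpha)$ instead equals $\sigma_f(H^\perp)=\nnorm{f}_H^2$.
\item More importantly, the identity $\nnorm{f}_{H,H}^4=\sum_\beta\sigma_f(\{\beta+H^\perp\})^2$ holds only when $H$ acts ergodically. In general $\nnorm{f}_{H,H}^4=\E_{h\in H}\norm{\E(f\cdot U_h\bar f|\CI(H))}_{L^2(\mu)}^2$, which can be strictly larger.
\item For example, take $T(x,y)=(x,y+x)$ on $\T^2$ and $f(x,y)=e(y)$. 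Then $\langle T^nf,f\rangle=\int e(nx)\,dx$, so $\sigma_f$ is Lebesgue measure. Yet $f\cdot T^n\bar f=e(-nx)$ is $T$-invariant, so $\nnorm{f}_{2,T}=1$.
\end{itemize}

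Your argument only needs one inequality, and that inequality holds in general. Combining $\norm{\E(g|\CI(H))}_{L^2(\mu)}\geq\abs{\int g\,d\mu}$ with Wiener's lemma on $\widehat H$ gives
\[
\nnorm{f}_{H,H}^4\geq\E_{h\in H}\abs{\langle U_hf,f\rangle}^2=\sum_\beta\sigma_f(\{\beta+H^\perp\})^2 ,
\]
so $\nnorm{f}_{H,H}=0$ still forces $\sigma_f(A)=0$.

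The paper sidesteps the issue by first passing to ergodic components of $H$, where your identity is correct. This is legitimate because $p(\bn)\in H$, so both $\norm{\E_{\bn}U_{p(\bn)}f}^2_{L^2(\mu)}$ and $\nnorm{f}^4_{H,H}$ disintegrate over those components. Your fallback argument is also sound: $f$ is orthogonal to all $H$-eigenfunctions, and $P_Af$ lies in their closed span.
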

It is easy to see from the proof that the claimed control by $\CZ(H^{\times 2})$ can be upgraded to control by
\begin{align*}
    \Krat(H) := \langle E\in\CX\colon \exists r\in\N\; \textrm{s.t.}\; U_{b_\bi}^{-r} E =E\; \textrm{for all}\; 1\leq |\bi|\leq d\rangle.
\end{align*}
This is however not needed for our application. 
\begin{proof}
    By passing to ergodic components if needed, we can assume that $H$ acts ergodically. 
    By the spectral theorem, {for every F{\o}lner sequence $\Phi$ on $\Z^D$}  we can find a finite Borel measure $\sigma_f$ on $\T^d$ such that
    \begin{align}\label{E: spectral}
        \norm{\E_{\bn\in\Phi}U_{p(\bn)}f}_{L^2(\mu)}^2 = \int \abs{\E_{\bn\in\Phi}e(t\cdot p(\bn))}^2\; d\sigma_f(t).
    \end{align}
    Whenever at least one coordinate of $t$ is irrational, the exponential sum vanishes. Since $f$ is orthogonal to $\CZ(H^{\times 2})$, and this factor is generated by eigenfunctions of $H$ due to ergodicity, the measure $\sigma_f$ has no point masses. Since $\Q^d$ is countable, its total contribution of $\Q^d$ to the integral on the right-hand side of \eqref{E: spectral} is 0.
\end{proof}

Before proving Theorem~\ref{T: new estimates} for type-2 irreducible families, we recall the definition of various groups of coefficients appearing in the statements of Theorems \ref{T: original estimates} and \ref{T: new estimates}. 
Write
\begin{align}\label{E: p_j}
    p_j(\bn) = \sum\limits_{1\leq\vert\bi\vert\leq d} b_{j,\bi}\bn^\bi
\end{align}
for $1\leq j\leq \ell$ and $p_0 = 0$; then 
\begin{align}\label{E: H}
    H_{j,j'}&:=\langle b_{j,\bi}-b_{j',\bi}\colon\; 1\leq \vert\bi\vert\leq d \rangle\\
    \label{E: tilde H}
    \tilde H_{j,j'}&:=\langle b_{j,\bi}-b_{j',\bi}\colon\; \vert\bi\vert = \deg (p_j-p_{j'}) \rangle.
\end{align}
Equipped with these definitions, we are ready to tackle the other class of irreducible families.
\begin{proposition}[Theorem~\ref{T: new estimates} for type-2 irreducible families]\label{P: type 2}
    Let $\CP$ be irreducible of type 2, i.e. up to permuting indices, $\CP = \{p_1,p_2\}$ for 
    \begin{align*}
    p_1(\bn) = \sum_{\vert\bi\vert=d} b_{\bi} \bn^\bi+q(\bn), \quad p_2(\bn) = q(\bn) 
    \end{align*}
     for some $b_{\bi}\in\Z^D$ not all equal to zero and $q\in\Z^{D}[\bn]$ with $\deg q\leq d-1$. Then there exists a natural number $s=O_{d,L}(1)$ such that
    \begin{align*}
        \norm{\E_{\bn\in\Z^{L}}U_{p_1(\bn)}f_1\cdot U_{p_2(\bn)}f_2}_{L^2(\mu)} = 0
    \end{align*}
    whenever $\nnorm{f_j}_{H_{1,2}^{\times s}, H_{2,0}^{\times s}} = 0$ for some $1\leq j\leq 2$.
\end{proposition}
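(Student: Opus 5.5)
The plan is to treat $f_1$ directly with Theorem~\ref{T: original estimates}, and then move that control over to $f_2$ by rerunning the proof of Proposition~\ref{P: seminorm transfer}, with a spectral argument in place of its third hypothesis. First I record the relevant groups. Since $\deg q\leq d-1$, the difference $p_1-p_2=\sum_{|\bi|=d}b_\bi\bn^\bi$ is homogeneous of degree $d$, and the leading coefficients of $p_1$ are exactly the $b_\bi$. Hence
\begin{align*}
\tilde H_{1,0}=\tilde H_{1,2}=H_{1,2}=\langle b_\bi\colon |\bi|=d\rangle,
\end{align*}
while $H_{2,0}$ is the group generated by the coefficients of $q$.

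\emph{Control of $f_1$.} Theorem~\ref{T: original estimates}, with the indices relabelled so that $f_1$ plays the role of $f_\ell$, shows that the average is controlled by $\nnorm{f_1}_{\tilde H_{1,0}^{\times s_0},\tilde H_{1,2}^{\times s_0}}=\nnorm{f_1}_{H_{1,2}^{\times 2s_0}}$ for some $s_0=O_{d,L}(1)$. Adding further groups to a box seminorm only increases it (monotonicity plus permutation invariance). So vanishing of $\nnorm{f_1}_{H_{1,2}^{\times s},H_{2,0}^{\times s}}$ with $s\geq 2s_0$ forces vanishing of $\nnorm{f_1}_{H_{1,2}^{\times 2s_0}}$, and the claim for $j=1$ follows.

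\emph{Transferring control to $f_2$.} I follow the proof of Proposition~\ref{P: seminorm transfer} with the roles of the two indices swapped. I take $a_1=p_1$, $a_1'=p_2=q$, and for the transferred slot the group $H_{1,2}$, which contains all values $p_1(\bn)-q(\bn)$. The steps are as follows.
\begin{enumerate}
\item Assuming the average does not vanish, stash $f_1$ to get $\nnorm{\tilde f_1}_{H_{1,2}^{\times 2s_0}}>0$.
\item Apply the inverse theorem for the last degree-1 seminorm, then the dual-difference interchange. This replaces $f_1$ by $1$-bounded $H_{1,2}$-invariant functions $u_{h,h'}$ and $f_2$ by $f_{2,h,h'}:=\Delta_{h_1-h_1',\ldots,h_{2s_0-1}-h'_{2s_0-1}}f_2$, with the $h_i$ ranging over $H_{1,2}$.
\item By invariance, $U_{p_1(\bn)}u_{h,h'}=U_{q(\bn)}u_{h,h'}$. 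So on a positive-density set of parameters we face the single-polynomial average $\E_{\bn}U_{q(\bn)}(u_{h,h'}f_{2,h,h'})$.
\end{enumerate}

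The main obstacle is that the third hypothesis of Proposition~\ref{P: seminorm transfer} is stated for arbitrary $f_1$. For arbitrary $f_1$ it is false here, because $\E_\bn U_{q(\bn)}(f_1f_2)$ cannot be controlled by a seminorm of $f_2$ alone. The way around this is that only $H_{1,2}$-invariant replacements $u_{h,h'}$ ever occur. By the spectral argument of Proposition~\ref{P: type 1}, in its upgraded $\Krat$ form, the average $\E_\bn U_{q(\bn)}(u\,g)$ vanishes unless $u\,g$ correlates with some function $\chi$ that is invariant under $rH_{2,0}$ for some $r\in\N$. Then $g$ correlates with $\bar u\chi$, a product of an $H_{1,2}$-invariant function and an $rH_{2,0}$-invariant function. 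The Gowers-Cauchy-Schwarz inequality for $\mu_{H_{1,2},rH_{2,0}}$ then gives $\nnorm{g}_{H_{1,2},rH_{2,0}}>0$. By the finite-index subgroup property this yields $\nnorm{g}_{H_{1,2},H_{2,0}}>0$, and since $u_{h,h'}$ and $\chi$ depend on the parameters, this has to hold uniformly in the spirit of \cite[Proposition~A.2]{FrKu22a}. I expect making this uniformity rigorous to be the delicate step.

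\emph{Conclusion.} The previous step gives $\oE_{h,h'}\nnorm{f_{2,h,h'}}_{H_{1,2},H_{2,0}}>0$. H\"older's inequality and the inductive formula \eqref{E: inductive formula} then give $\nnorm{f_2}_{H_{1,2}^{\times 2s_0},H_{2,0}}>0$. By contrapositive and monotonicity, the average is controlled by $\nnorm{f_2}_{H_{1,2}^{\times s},H_{2,0}^{\times s}}$ for $s:=2s_0=O_{d,L}(1)$, which together with the $f_1$ case proves the proposition.
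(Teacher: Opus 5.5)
Your treatment of $f_1$ matches the paper's. Your third step is also correct for each fixed value of the parameters: $\Krat$-control of $\E_{\bn}U_{q(\bn)}(ug)$, then Gowers--Cauchy--Schwarz for a product of an $H_{1,2}$-invariant and an $rH_{2,0}$-invariant function, then passage from $rH_{2,0}$ to $H_{2,0}$. The gap is exactly the uniformity you flag, and nothing you cite covers it.

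To get from $\oE_{h,h'}\norm{\E_{\bn}U_{q(\bn)}(u_{h,h'}f_{2,h,h'})}_{L^2(\mu)}>0$ to $\oE_{h,h'}\nnorm{f_{2,h,h'}}_{H_{1,2},H_{2,0}}>0$, you need the following. For every $\delta>0$ there must be $\eps>0$ such that $\norm{\E_{\bn}U_{q(\bn)}(ug)}_{L^2(\mu)}\geq\delta$ forces $\nnorm{g}_{H_{1,2},H_{2,0}}\geq\eps$, for all 1-bounded $g$ and all 1-bounded $H_{1,2}$-invariant $u$. In your argument, both the integer $r$ and the size of the correlation with $\chi$ depend on $(u,g)$. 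So the finite-index constant in passing from $rH_{2,0}$ to $H_{2,0}$ is not controlled.

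Proposition~\ref{P: seminorm transfer} obtains its uniformity via \cite[Proposition~A.2]{FrKu22a} only from a control statement valid for \emph{arbitrary} functions in the frozen slot. That is precisely what fails for your choice $a_1'=q$, as you note yourself. You would therefore have to prove the restricted uniform estimate by hand. One way is a quantitative spectral argument in which $r$ is bounded in terms of $\delta$, using decay of the complete sums $\E_{\bn}e(t\cdot q(\bn))$ as the denominator of $t$ grows. As written, this step is missing.

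The simple fix, which is what the paper does, is to send $p_1$ to $0$ instead of to $q$:
\begin{enumerate}
\item By monotonicity, control by $\nnorm{f_1}_{H_{1,2}^{\times 2s_0}}$ gives control by $\nnorm{f_1}_{H_{1,2}^{\times 2s_0},H_{1,0}}$, and $H_{1,0}$ contains $\langle p_1(\bn)\colon \bn\in\Z^L\rangle$.
\item Apply Proposition~\ref{P: seminorm transfer} with $a_1=p_2$, $a_2=p_1$, $a_2'=0$. The invariant functions drop out completely, since $U_{p_1(\bn)}u=u$.
\item The auxiliary average is then $f_1\cdot\E_{\bn}U_{q(\bn)}f_2$. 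Proposition~\ref{P: type 1} controls it by $\nnorm{f_2}_{H_{2,0}^{\times 2}}$ for \emph{every} $f_1$, so hypothesis (iii) holds verbatim.
\item The transfer gives control by $\nnorm{f_2}_{H_{1,2}^{\times 2s_0},H_{2,0}^{\times 2}}$, hence the claim with $s=\max(2,2s_0)$.
\end{enumerate}
Your route would save one copy of $H_{2,0}$. The statement does not need that, and the saving costs you the unproved uniform estimate.
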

The seminorm control provided by Proposition~\ref{P: type 2} looks stronger than one claimed by Theorem~\ref{T: new estimates}. However, since the missing group $H_{1,0}$ contains both $H_{1,2}$ and $H_{2,0}$, the two are equivalent as long as we do not care about the value of $s$. 
\begin{proof}
    For $j=1$, Theorem~\ref{T: original estimates} gives control by $\nnorm{f_1}_{\tilde H_{1,0}^{\times s'}, \tilde H_{1,2}^{\times s'}}$ for some natural number $s'=O_{d,L}(1)$. Since 
    \begin{align}\label{E: equality of subgroups H and tilde H}
        \tilde H_{1,0} = \tilde H_{1,2} = H_{1,2} = \langle b_{\bi}\colon \vert\bi\vert=d\rangle, 
    \end{align}
    this gives us control by $\nnorm{f_1}_{H_{1,2}^{\times 2s'}}$, implying the claimed control in terms of $f_1$ for any $s\geq 2s'$. 

    For $j=2$, we first observe that the previous case and the monotonicity property of box seminorms give control by $\nnorm{f_1}_{H_{1,2}^{\times 2s'}, H_{1,0}}$.
    We combine this seminorm control with Propositions \ref{P: seminorm transfer} 
    (applied with $a_1 = p_2$, $a_2 = p_1$, $a_2'=0$) and \ref{P: type 1}  to deduce control by $\nnorm{f_2}_{H_{1,2}^{\times 2s'}, H_{2,0}^{\times 2}}$, which implies the claim for $s:= \max(2,2s')$. 
\end{proof}

\subsubsection{Complexity}\label{SSS: complexity}
Our goal now is to prove Theorem~\ref{T: new estimates} for a reducible family. The proof will follow a variant of the seminorm smoothing argument during which we will induct on the complexity of $\CP$. This section presents the relevant notion of complexity.

{Take $\CP = \{p_1, \ldots, p_\ell\}\subseteq\Z^D[\bn]$ with $p_j$'s as in \eqref{E: p_j}.}
For $0\leq j,j'\leq \ell, j\neq j'$ and $i\in\N$, let 
$$H_{j,j'}^{i}:=\langle b_{j,\bi}-b_{j',\bi}\colon \vert\bi\vert=i\rangle$$
be the group generated by the degree-$i$ coefficients of $p_j-p_j'$.
We abbreviate the group $H_{0,j}^{i}$ of the degree-$i$ coefficients of $p_j$ as $H_{j}^{i}$.

For $i\in\N$, we let $$\FB_{i} := \{H_{0,j}^{i}\colon 1\leq j\leq \ell\; \textrm{ and }\; H_{0,j}^{i}\neq \{0\}\}.$$
For all $B\in \FB_{i}$,  we define $$\tau_{i}(B) := \abs{\{1\leq j\leq \ell\colon H_{j}^{i}=B\}}.$$ We also set $\tau_i(\{0\})=\infty$ for later convenience.

We describe the \textit{complexity} of the family $\CP$ by the tuple $(\ell, \zeta, \omega)$, where: 
\begin{enumerate}
    \item $\ell$ denotes its \textit{length}, i.e. the number of polynomials in $\mathcal{P}$.
    \item $\zeta = (\zeta_1, \ldots, \zeta_d)$ denotes its \textit{degree}, and it is given by 
    $$\zeta_i :=|\{1\leq j\leq \ell:\; \deg p_j = i\}|$$ 
    for $1\leq i\leq d$. We order degrees colexicographically, i.e. $\zeta>\zeta'$ if $\zeta_{d'}>\zeta'_{d'}$ for some $1\leq d'\leq d$ and $\zeta_i = \zeta'_i$ for all $d'<i\leq d$. 
    \item $\omega = (\omega_1, \ldots, \omega_d)$ denotes its \textit{type}, and it is given by 
    \begin{align*}
    \omega_i:=\sum_{B\in \FB_i} \tau_{i}(B)^2 = \sum_{B\in \FB_i}\abs{\{1\leq j\leq \ell\colon H_{j}^{i}=B\}}^2
\end{align*}
for $1\leq i\leq d$.
We order types by $\omega>\omega'$ if $\omega_{d'}<\omega'_{d'}$ for some $1\leq d'\leq d$ and $\omega_i = \omega'_i$ for all $d'<i\leq d$.
\end{enumerate}
If needed, we denote the dependence of $\FB_i, \tau, \ell, \zeta, \omega$, etc. on $\CP$ in the subscript. 

We order the tuples $(\ell, \zeta, \omega)$ lexicographically. That is, $(\ell, \zeta, \omega)>(\ell', \zeta', \omega')$ if:
\begin{enumerate}
    \item $\ell > \ell'$;
    \item $\ell = \ell'$ and $\zeta > \zeta'$;
    \item $\ell = \ell'$, $\zeta = \zeta'$, and $\omega > \omega'$.
\end{enumerate}
The intuition behind this ordering is as follows. A longer polynomial family is more complex than a shorter one. For families of the same length, the one with more high-degree polynomials is more complex. If the degrees of the polynomials in both families are identically distributed, then the family with a larger variety of high-degree coefficients, in the way measured by type, will be more complex.

The inductive step of Theorem~\ref{T: new estimates} takes the following form.
\begin{proposition}\label{P: estimates for reducible families}
    Let $(X, \CX, \mu, U)$ be a $\Z^D$-system and $\CP\subseteq\Z^D[\bn]$ be a reducible family of complexity $(\ell, \zeta, \omega)$. If Theorem~\ref{T: new estimates} holds for all families of complexity less than $(\ell, \zeta, \omega)$, then it also holds for $\CP$.
\end{proposition}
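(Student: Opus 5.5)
We will run the scheme illustrated in Subsection \ref{SS: seminorm example}. Fix a reducible $\CP=\{p_1,\ldots,p_\ell\}$ of complexity $(\ell,\zeta,\omega)$. By symmetry it suffices to prove control by $\nnorm{f_\ell}_{H_{j,j'}^{\times s}\colon 0\le j<j'\le\ell}$.

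The starting point is Theorem~\ref{T: original estimates}. It gives control by $\nnorm{f_\ell}_{\tilde H_{\ell,0}^{\times s_1},\ldots,\tilde H_{\ell,\ell-1}^{\times s_1}}$. Every $\tilde H_{\ell,j}$ is contained in $H_{\ell,j}$, so by the subgroup property we may replace each $\tilde H_{\ell,j}$ by $H_{\ell,j}$ whenever this does not weaken the seminorm. The trouble is the groups $\tilde H_{\ell,j}$ that are strictly smaller than $H_{\ell,j}$ or $H_{j,j'}$. We plan to remove these one copy at a time.

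Each removal uses Corollary~\ref{C: seminorm transfer}, with $\Phi$ a F{\o}lner sequence on $\Z^L$, $a_i=p_i$, and a modified polynomial $a_\ell'$ chosen so that $a_\ell-a_\ell'$ generates a group inside the offending subgroup $\tilde H_{\ell,j}$. The natural choice removes the top-degree part, $a_\ell'=p_\ell-(\text{leading terms of } p_\ell-p_j)$. When $j=0$, this strips the leading homogeneous part of $p_\ell$. A symmetric modification $a''_{\ell-1}$ is made for the pong step after relabelling so that index $\ell-1$ is the relevant one.

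The key check is that the families $\{p_1,\ldots,p_{\ell-1},a_\ell'\}$ and $\{p_1,\ldots,a''_{\ell-1},p_\ell\}$ have strictly smaller complexity. Either one polynomial drops in degree, which lowers $\zeta$ or, if $a_\ell'$ coincides with another polynomial or vanishes, lowers $\ell$ after merging the two terms. Or the degree stays the same while the top-degree coefficient group of the modified polynomial coincides with that of another polynomial, which increases $\omega_d$ (the types are ordered inversely). To get this, we choose $j$ among the polynomials of maximal degree so that the type strictly increases. The weights $\tau^2$ in $\omega$ are designed for exactly this.

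By the induction hypothesis, the modified families then satisfy Theorem~\ref{T: new estimates}. Their groups $H'_{j,j'}$ are generated by coefficients of differences of the modified polynomials, so each lies inside some combination of the groups $H_{j,j'}$ of $\CP$ together with the removed subgroup. When the modified family is irreducible, we instead invoke Propositions~\ref{P: type 1} and~\ref{P: type 2}.

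Corollary~\ref{C: seminorm transfer} returns control in which one copy of $\tilde H_{\ell,j}$ has been exchanged for groups of the form $H'_{i,i'}$. We then concatenate this with the original estimate using Corollary~\ref{C: relative concatenation}. Because the two estimates share all but one copy of $\tilde H_{\ell,j}$, the concatenation produces sums $\tilde H_{\ell,j}+H'_{i,i'}$.

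The crux is to verify that every such sum contains some $H_{k,k'}$ for $\CP$. Adding back the stripped leading coefficients should restore a full coefficient group, as in the identity $\langle b_{2,2},b_{1,2}\rangle\supseteq H_{2,1}$ from the example. The subgroup property then gives control by the new groups, and iterating $s_1$ times eliminates every copy of $\tilde H_{\ell,j}$. This must be done for each $j$ with $\tilde H_{\ell,j}\ne H_{\ell,j}$, and the ping-pong through $f_{\ell-1}$ may require relabelling so that the partner index carries the auxiliary estimate. All values of $s$ remain $O_{d,\ell,L}(1)$ since the induction has bounded depth.

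I expect the main obstacle to be this combinatorial bookkeeping: choosing the modifications so that complexity genuinely drops in all cases, including when several polynomials share leading coefficient groups, and checking the containments after concatenation.
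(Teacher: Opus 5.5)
Your overall architecture matches the paper: start from Theorem~\ref{T: original estimates}, remove one copy of a non-simple $\tilde H_{\ell,m}$ at a time via Corollary~\ref{C: seminorm transfer} with $a'_\ell=p_\ell-(\text{degree-}d'\text{ part of }p_\ell-p_m)$, then concatenate via Corollary~\ref{C: relative concatenation}. The gap is your opening reduction ``by symmetry it suffices to control $f_\ell$'', which breaks the complexity reduction.

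The ping step must modify the polynomial of the function whose estimate you are improving. It must also be run for \emph{every} non-simple partner $m$, so you cannot choose $m$ to make the type drop. For a general index this fails. Take the reducible family $\{vn^2+wn,\ un\}$ with $u,v,w\in\Z^3$ linearly independent. Controlling $f_2$ forces you to remove $\tilde H_{2,1}=\langle v\rangle$, and your $a_2'=vn^2+un$ raises $\zeta_2$ from $1$ to $2$. In fact no $a_2'$ with $p_2-a_2'$ valued in $\langle v\rangle$ lowers the complexity.

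The missing idea is the dominant index. Compare indices from the top degree down by how many polynomials share their degree-$i$ coefficients (fewer is better, vanishing coefficients counting as infinitely many), and take $k$ maximal. Then $\deg p_k$ is maximal. For every non-simple $(k,m)$, replacing the degree-$d'$ coefficients of $p_k$ by those of $p_m$ either lowers $\deg p_k$ or moves $p_k$ into a level-$d'$ class at least as large as the one it leaves. So the length, $\zeta$, or $\omega$ strictly drops (Lemma~\ref{L: P'}(i)). It is this choice of $k$, not a choice of $m$, that the weights $\tau^2$ serve.

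A caution on the bookkeeping: if classes are defined via the generated groups $H^i_j$, a pair such as $bn^2+cn$, $-bn^2+en$ already lies in one class, and the replacement leaves $\omega$ unchanged. It is safer to count identical coefficient tuples.

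You then prove the estimate only for $f_k$ and get it for every other $f_j$ from three ingredients:
\begin{enumerate}
\item one application of Proposition~\ref{P: seminorm transfer} with $a'_k=0$, admissible since $\langle p_k(\bn)\rangle\subseteq H_{k,0}$;
\item the induction hypothesis for $\CP\setminus\{p_k\}$;
\item the fact that the target groups $H_{j,j'}$ are the same for all functions.
\end{enumerate}

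The pong step is also underspecified, exactly where reducibility enters; your proposal never uses that $\CP$ is reducible. For the induction hypothesis on the ping family $\CP'$ to control a \emph{single} function, the partner $t\neq k$ must satisfy $p'_t\neq p'_k$. Otherwise $U_{p'_t}f_t\cdot U_{p'_k}f_k=U_{p'_t}(f_tf_k)$ and you only control a product. Such $t$ exists by Lemma~\ref{L: P'}(ii), whose proof shows that its absence forces $\CP$ to be irreducible of type~2.

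Moreover, the pong family should be $\CP\setminus\{p_t\}$, i.e.\ $a''_t=0$. This is admissible because $\langle p_t(\bn)\rangle\subseteq H_{t,0}=H'_{t,0}\in\mathbf{H'}$. Deletion lowers the length trivially and produces only groups already in $\mathbf{H}$. A ``symmetric modification'' of $p_t$ would instead need its own complexity drop, which fails for non-dominant $t$ as above, and would introduce new groups to absorb.

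With these fixes your concatenation check goes through as you anticipated. We have $H'_{j,j'}=H_{j,j'}$ if $k\notin\{j,j'\}$, and $H'_{k,j}+\tilde H_{k,m}\supseteq H_{k,j}$ since $b_{k,\bi}-b_{j,\bi}=(b_{k,\bi}-b_{m,\bi})+(b_{m,\bi}-b_{j,\bi})$.
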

\begin{proof}[Proof of Theorem~\ref{T: new estimates} assuming Proposition~\ref{P: estimates for reducible families}]
    If $\CP$ is irreducible, then Theorem~\ref{T: new estimates} holds thanks to Propositions \ref{P: type 1} and \ref{P: type 2}. Let $\CP$ then be reducible of complexity $(\ell, \zeta, \omega)$.
    We claim that there exist $s=O_{d,\ell,L}(1)$ complexities
    \begin{align*}
        (\ell, \zeta, \omega) := (\ell^{(1)}, \zeta^{(1)}, \omega^{(1)})>(\ell^{(2)}, \zeta^{(2)}, \omega^{(2)}) > \cdots > (\ell^{(s)}, \zeta^{(s)}, \omega^{(s)})
    \end{align*}
    such that $\ell^{(s)} = 1$, in which case we have an irreducible family of type 1. Indeed, there are only $\ell-1$ possible lengths $\ell'<\ell$; $O_{d,\ell}(1)$ possible degrees $\zeta'<\zeta$; and $O_{d,\ell,L}(1)$ possible types $\omega'<\omega$. Hence within $O_{d,\ell,L}(1)$ steps, we reduce the reducible tuple of complexity $(\ell, \zeta, \omega)$ to an irreducible one. Thus, Theorem~\ref{T: new estimates} for types $(\ell, \zeta, \omega)$ follows by induction from Proposition~\ref{P: estimates for reducible families}.
\end{proof}

\subsubsection{Choosing a dominant index}\label{SS: dominant index}

{For the remainder of Section \ref{S: seminorm estimates}, fix  $\CP = \{p_1, \ldots, p_\ell\}\subseteq\Z^D[\bn]$ with $p_j$'s as in \eqref{E: p_j}, and let $H_{j,j'}, \tilde H_{j,j'}$ be the groups defined in \eqref{E: H} and \eqref{E: tilde H} for that family.}

In deriving Proposition~\ref{P: estimates for reducible families} for a reducible family $\CP$, the main challenge is to achieve \textit{complexity reduction}, i.e. to reduce $\CP$ to a family of lower complexity and then lift the estimates for the lower-complexity family to $\CP$. In fact, we will reduce to two different lower-complexity families: $\CP'$ in the \textit{ping} step and $\CP''$ in the \textit{pong} step. While showing that $\CP''$ has lower complexity and satisfies all the desired properties is not a real issue, choosing the correct reduction in the \textit{ping} step is highly nontrivial.

Let $\CP$ be given as in Section \ref{SSS: complexity}. We will construct the family $\CP'$ by appropriately choosing indices $1\leq k\leq \ell$ and $1\leq i\leq d$, and then changing the degree-$i$ coefficients of $p_k$. 
In order to find such an index $k$, 
we impose a weak total order on the indices $1\leq j\leq \ell$, meaning that we divide them into equivalence classes and then compare every two equivalence classes. 
Given two distinct $1\leq j,j'\leq \ell$, we examine their degree-$i$ coefficients for $1\leq i\leq d$, starting with $i=d$. The relative position of $j,j'$ in this order depends on the frequency with which their degree-$i$ coefficients arise among $H_{1}^{i}, \ldots, H_{\ell}^{i}$. Suppose that up to level $i+1$, the relation between $j$ and $j'$ has not yet been determined. We compare their degree-$i$ coefficients as follows: 
\begin{enumerate}
    \item if $\tau_i(H_{j}^{i})<\tau_{i}(H_{j'}^{i})$, we set $j\succ_\CP j'$ (note that this also includes the case when ${H_{j}^{i}} \neq \{0\}$ while ${H_{j'}^{i}} = \{0\}$, and that the relative order on $j,j'$ is opposite to the order on $\tau_i(H_{j}^{i}),\tau_{i}(H_{j'}^{i})$);
    \item if  $\tau_i(H_{j}^{i})=\tau_{i}(H_{j'}^{i})$, then we move down to degree-$(i-1)$ coefficients and compare $\tau_{i-1}(H_{j}^{i-1}), \tau_{i-1}(H_{j'}^{i-1})$.
\end{enumerate}
If $\tau_i(H_{j}^{i}) = \tau_i(H_{j'}^{i})$ for all $1\leq i\leq d$, then $j\sim_\CP j'$. Otherwise either $j\succ_\CP j'$ or $j\prec_\CP j'$, depending on which one of $\tau_i(H_{j}^{i}),\tau_i(H_{j'}^{i})$ is smaller at the highest index $i$ at which these two quantities differ. It is clear that this relation is transitive.

We call an index $1\leq k\leq \ell$ \textit{dominant} if $k\succeq_\CP j$ (i.e. $k\succ_\CP j$ or $k\sim_\CP j$) for all $1\leq j\leq \ell.$ Such an index exists since our relation defines a weak total order. We also call a pair $(k,m)$ \emph{simple (with respect to $\CP$)} if for all $i\in\N$, either $H_{k,m}^{i}$ equals $H_{k,m}$ or is $\{0\}$. In other words, $p_k,p_m$ only differ at the level of degree-$i$ coefficients for a single value $i\in\N$.
Then the gist of complexity reduction is contained in the statement below. In what follows, we set $p_0 := 0$.

\begin{lemma}\label{L: P'}
    Let $1\leq k\leq \ell$ be a dominant index for the reducible family $\CP$, and suppose that there exists $0\leq m\leq \ell$ distinct from $k$ such that $(k,m)$ is not simple. Let $d' := \deg(p_k-p_m)$. Define 
\begin{align*}
    p'_j(\bn) := \begin{cases} \sum\limits_{\vert \bi\vert\neq d'} b_{k, \bi}\bn^\bi +  \sum\limits_{\vert \bi\vert=d'} b_{m,\bi}\bn^{\bi},\; &j= k,\\
    p_j(\bn),\; &j\neq k,
\end{cases}  
\end{align*}
and take $\CP'$ to be $\{p'_1, \ldots, p'_\ell\}$ if $p'_k$ is nonconstant and distinct from $p'_j$ for all $j\neq k$, and $\{p'_1, \ldots, p'_{k-1},p'_{k+1},\ldots, p'_\ell\}$ otherwise. Then the following hold:
\begin{enumerate}
    \item the family $\CP'$ has lower complexity than $\CP$;
    \item there exists $1\leq t\leq \ell$ distinct from $k$ such that $p'_t \neq p'_j$ for all $0\leq j\leq \ell$ different from $t$.
\end{enumerate}
\end{lemma}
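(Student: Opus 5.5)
The plan is to prove the two claims separately, both by direct comparison of coefficient data. Throughout I use that $p'_j=p_j$ for $j\neq k$. I also use that $p'_k$ agrees with $p_k$ at every level $\vert\bi\vert\neq d'$ and with $p_m$ at level $d'$.

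\emph{Part (2).} For $j\neq k$ the polynomials $p'_j=p_j$ are nonzero and pairwise distinct. So it suffices to find $t\neq k$ with $p_t\neq p'_k$. Since $\CP$ is reducible, $\ell\geq 2$.
\begin{itemize}
\item If $\ell\geq 3$, at most one index $j\neq k$ can satisfy $p_j=p'_k$, so a suitable $t$ exists.
\item If $\ell=2$, let $t$ be the other index. If $m=t$, then $p'_k-p_m$ consists of the coefficients of $p_k-p_m$ of degree $\neq d'$. Because $(k,m)$ is not simple, at least one of these is nonzero, so $p_t\neq p'_k$.
\item If $\ell=2$ and $m=0$, then $d'=\deg p_k$. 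Suppose $p_t=p'_k$. Then $p_t$ equals $p_k$ with its top-degree part deleted. In that case $p_k-p_t$ is homogeneous of degree $d'$ and $\deg p_t<d'$, so $\CP$ is irreducible of type 2, which is a contradiction.
\end{itemize}

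\emph{Part (1).} If $p'_k$ is constant or coincides with some $p'_j$, the length drops and we are done. Otherwise $\ell$ is unchanged and I compare degrees.
\begin{itemize}
\item The coefficients of $p_k$ and $p_m$ above level $d'$ agree, so $\deg p'_k\leq \deg p_k$.
\item If $\deg p_k=d'$ and $\deg p_m<d'$, then $\deg p'_k<d'$. Hence $\zeta_{d'}$ drops while all higher $\zeta_i$ are unchanged, so $\zeta'<\zeta$ colexicographically.
\item In all remaining cases $\zeta'=\zeta$, and I pass to the type.
\end{itemize}
For $i>d'$ the groups $H^i_k$ are unchanged, so $\omega_i$ is unchanged. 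Hence everything is decided at level $d'$. There the class of $k$ moves from $B=H_k^{d'}$ to $C=H_m^{d'}$ (with $H_0^{d'}=\{0\}$). Write $a=\tau_{d'}(B)$ and $b=\tau_{d'}(C)$, counted in $\CP$, with $C\neq B$ and both nonzero. Then
\[
\omega'_{d'}-\omega_{d'} = (a-1)^2+(b+1)^2-a^2-b^2 = 2(b-a)+2>0,
\]
provided $b\geq a$. Since larger $\omega_{d'}$ means lower complexity, this gives the reduction.

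The inequality $b\geq a$ is where dominance enters. The coefficients of $p_k$ and $p_m$ agree above level $d'$, so the groups $H^i_k=H^i_m$ and hence the $\tau_i$ values coincide at all levels $i>d'$. Thus the comparison between $k$ and $m$ is decided at level $d'$ or below. Then $k\succeq_\CP m$ forces $\tau_{d'}(B)\leq\tau_{d'}(C)$, which is exactly $a\leq b$. This inequality can only be used when $m\geq 1$, since only then is $m$ an index that dominance is compared against.

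I expect the main obstacle to be the degenerate configurations at level $d'$, where this computation does not go through directly.
\begin{itemize}
\item \emph{$C=\{0\}$ while $\deg p_k>d'$.} Here $k$ simply leaves the class $B$ and $\omega_{d'}$ changes by $-(2a-1)$, which by itself is a complexity increase. So the convention $\tau_i(\{0\})=\infty$ must be used to rule this out or to reroute it.
\item \emph{$B=C$ as groups.} Here $\omega_{d'}$ is unchanged. One must then descend to the lower levels, where $p'_k$ agrees with $p_k$, and check that the tuple cannot stay fixed.
\end{itemize}
For both configurations I would first re-examine how dominance and non-simplicity restrict them. In particular I would use the freedom to choose $m$, taking it with $d'$ as large as possible, and aim to show that these configurations either cannot occur for a dominant $k$ or fall into the length-drop or degree-drop cases. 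This is the step I would check most carefully.
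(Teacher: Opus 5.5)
Your part (2) and the generic case of part (1) are correct and essentially follow the paper's proof: length first, then degree, then the level-$d'$ count $\omega'_{d'}-\omega_{d'}=2(b-a)+2$, with $a\le b$ coming from $k\succeq_\CP m$. The gap is the two configurations you leave open, and the plan you sketch for them cannot succeed, for two reasons.

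First, $m$ is not yours to choose. The lemma is asserted for any $m$ with $(k,m)$ non-simple, and Proposition~\ref{P: smoothing} needs it for each such $m$ in order to remove the corresponding $\tilde H_{\ell,m}$. So you cannot pick $m$ to make $d'$ as large as possible.

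Second, both configurations occur for a dominant $k$ in a reducible family, and in both claim (1) is false.
\begin{enumerate}
\item \emph{Case $H^{d'}_m=\{0\}$ with $\deg p_k>d'$.} Take $D=4$, $L=1$, $p_1(n)=e_1n^3+e_4n$ and $p_2(n)=e_1n^3+e_2n^2+e_3n$. The family is reducible. The index $k=2$ is dominant: the two polynomials tie at level $3$, and at level $2$ we have $\tau_2(\langle e_2\rangle)=1<\infty=\tau_2(\{0\})$. The pair $(2,1)$ is not simple, since the difference lives at levels $2$ and $1$. Here $d'=2$ and $p'_2=e_1n^3+e_3n$. Both families have $\ell=2$, $\zeta=(0,0,2)$, $\omega_3=4$ and $\omega_1=2$. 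However $\omega_2=1$ while $\omega'_2=0$, so $\CP'$ has strictly \emph{larger} complexity.
\item \emph{Case $H^{d'}_k=H^{d'}_m$.} Take $L=1$, $p_1(n)=-e_1n^2$, $p_2(n)=e_1n^2+e_2n$, $k=2$ and $m=1$. Then $p'_2=-e_1n^2+e_2n$, and both families have complexity $(2,(0,2),(1,4))$, so there is no strict decrease.
\end{enumerate}

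So these cases cannot be argued away; the statement, or the notion of complexity, has to be modified. The paper's own proof does not handle them either.
\begin{itemize}
\item It asserts that $\zeta'_{d'}\le\zeta_{d'}$, with equality if and only if $H^{d'}_m\neq\{0\}$. This holds only when $\deg p_k=d'$. If $\deg p_k>d'$, then $\zeta'=\zeta$ regardless, so restricting the type comparison to $H^{d'}_m\neq\{0\}$ is unjustified.
\item Its formula for $\omega'_{d'}-\omega_{d'}$ tacitly assumes $H^{d'}_k\neq H^{d'}_m$.
\end{itemize}
You have isolated exactly the cases that the paper's argument skips.
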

\begin{proof}
    Let $(\ell, \zeta, \omega)$ and $(\ell', \zeta', \omega')$ be the complexities of $\CP$ and $\CP'$ respectively. Clearly, $\ell'\leq \ell$. If this is strict, we are done, so we assume that $\ell'=\ell$, i.e. $p'_k$ is nonconstant and distinct from $p'_j$ for all $j\neq k$. The next stage in comparing complexities is to look at $\zeta'$ and $\zeta$. Since $k$ is dominant, we must have $\deg p_k\geq \deg p_m$. Hence $H_{k}^{d'}\neq \{0\}$, and so $\zeta'_{d'} \leq \zeta_{d'}$ (with equality if and only if $H_{m}^{d'}\neq \{0\}$) and $\zeta'_i = \zeta_i$ for $d'<i\leq d$. This implies that $\zeta'\leq \zeta$, and this is not strict only if $H_{m}^{d'}\neq\{0\}$. We therefore compare the types $\omega'$ and $\omega$ under the assumption $H_{m}^{d'}\neq \{0\}$. The assumption $k\succeq_\CP m$ plus the fact that $H_{k}^{d'},H_{m}^{d'}\neq \{0\}$ jointly imply that $\tau_{d'}(H_{k}^{d'})\leq \tau_{d'}(H_{m}^{d'})$ (here and below, $\tau_{d'}:=\tau_{d',\CP}$). Hence 
\begin{align*}
    \omega'_{d'}-\omega_{d'} 
    &= \brac{(\tau_{d'}(H_{m}^{d'})+1)^2 + (\tau_{d'}(H_{k}^{d'})-1)^2}-\brac{\tau_{d'}(H_{m}^{d'})^2 + \tau_{d'}(H_{k}^{d'})^2}\\
    &= 2(\tau_{d'}(H_{m}^{d'})- \tau_{d'}(H_{k}^{d'})+1)\geq 2.
\end{align*}
    Since $\omega_i = \omega'_i$ for all $d'<i\leq d$, we have $\omega'<\omega$. This completes the proof that $\CP'$ has lower complexity than $\CP$.

We move on to the second claim. This is where we crucially use the reducibility of $\CP$. Suppose the claim fails, i.e. for every $1\leq j\leq \ell$ distinct from $k$ there exists $0\leq j'\leq \ell$ distinct from $j$ such that $p'_j = p'_{j'}$. 
Since $p'_j = p_j$ for $j\neq k$, and elements of $\CP$ are distinct and nonconstant, necessarily $\ell =2$ and $j'=k$. (If $\ell \geq 3$, and $k=3$ for simplicity, then $p_3'$ can equal $p_1=p_1'$ or $p_2=p_2'$, but not both.) To simplify notation, suppose that $k = 2$; then $p_1 = p'_1 = p'_2$. Because of the way $p'_2$ is constructed, this can only happen if $p_1, p_2$  have different degree-$d'$ coefficients for a single value $1\leq d'\leq d$. In this case, {$(1,2)$ is simple with respect to $\CP$}. Since the index $m$ used to construct $p'_2$ from $p_2$ is chosen so that $(m,2)$ is not simple, we must have $m=0$. From this follow two important observations. First, the degree-$d'$ coefficients of $p_1, p_2'$ are 0. Second, $d' = \deg p_2$. Hence $$p_2(\bn) = p_1(\bn) + \sum_{\vert\bi\vert=d'}b_{2,\bi}\bn^{\bi}$$ for at least one nonzero $b_{2,\bi}$ and $\deg p_1 < d'$. The assumption that $(m,2)$ is not simple with respect to $\CP$ further gives $\deg p_1 > 0$. Hence $\CP$ is an irreducible family of type 2, yielding a contradiction.
\end{proof}

\subsection{Reduction to seminorm estimates in terms of dominant-index function}
We derive the general case of Proposition~\ref{P: estimates for reducible families} from the following special case, which gives the desired seminorm estimates for a function with dominant index. From now on, we shall always assume that $\ell$ is a dominant index in order to simplify the notation.
\begin{proposition}\label{P: estimates for reducible families and dominant index}
    Let $(X, \CX, \mu, U)$ be a $\Z^D$-system and $\CP$ be reducible of complexity $(\ell, \zeta, \omega)$. 
    Suppose that $\ell$ is a dominant index, and that Theorem~\ref{T: new estimates} holds for all families of complexity less than $(\ell, \zeta, \omega)$. Then there exists a natural number $s=O_{d,\ell}(1)$ such that for all  $f_1, \ldots, f_\ell\in L^\infty(\mu)$,
    \begin{align}\label{E: vanishing average 2}
        \norm{\E_{\bn\in\Z^{L}} U_{p_1(\bn)}f_1\cdots U_{p_\ell(\bn)}f_\ell}_{L^2(\mu)} = 0
    \end{align}
    whenever $\nnorm{f_\ell}_{H_{j,{j'}}^{\times s}\colon\; 0\leq j<{j'}\leq \ell} = 0$.
\end{proposition}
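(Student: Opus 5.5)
Our plan is to start from the control supplied by Theorem~\ref{T: original estimates}, namely by $\nnorm{f_\ell}_{\tilde H_{\ell,0}^{\times s_1},\ldots,\tilde H_{\ell,\ell-1}^{\times s_1}}$, and then remove the "bad" leading-coefficient groups one copy at a time using Corollary~\ref{C: seminorm transfer} followed by Corollary~\ref{C: relative concatenation}, exactly as in the example of Section~\ref{SS: seminorm example}. A group $\tilde H_{\ell,m}$ is already harmless when $\tilde H_{\ell,m}=H_{\ell,m}$, which is the case when $(\ell,m)$ is simple. So the work concerns the indices $0\le m<\ell$ for which $(\ell,m)$ is not simple. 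If there are none, the claim follows at once from Theorem~\ref{T: original estimates} and the subgroup property. Otherwise, fix such an $m$ and set $d'=\deg(p_\ell-p_m)$, so that $\tilde H_{\ell,m}=H^{d'}_{\ell,m}$.

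For the ping step, we apply Lemma~\ref{L: P'} with $k=\ell$ and this $m$. This gives a family $\CP'$ of lower complexity, obtained by replacing $p_\ell$ with $a_\ell'=p_\ell'$. Its degree-$d'$ coefficients agree with those of $p_m$, so $p_\ell-p_\ell'$ has coefficients in $\tilde H_{\ell,m}$, and $\langle a_\ell(\bn)-a_\ell'(\bn)\rangle\subseteq \tilde H_{\ell,m}$. The lemma also provides an index $t\neq\ell$ with $p_t'$ distinct from all other $p'_j$ and from $0$; we relabel it as $\ell-1$. The induction hypothesis (Theorem~\ref{T: new estimates} for $\CP'$) then controls the $\CP'$-average by $\nnorm{f_{\ell-1}}$ along the groups $H'_{j,j'}$ of $\CP'$. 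If $p'_\ell$ was dropped from $\CP'$ because it became constant or coincided with another polynomial, we absorb $f_\ell$ into the remaining functions, or treat it as invariant, before applying the hypothesis.

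For the pong step, we take $a''_{\ell-1}=0$, which deletes $p_{\ell-1}$. The resulting family $\CP''$ is shorter, so the induction hypothesis controls it by $\nnorm{f_\ell}$ along the groups $H''_{j,j'}$ of $\CP''$. For the hypothesis on $H'_{s'+1}$ in Corollary~\ref{C: seminorm transfer}, note that the coefficients of $p_{\ell-1}$ generate $H_{\ell-1,0}$, and we may take $H'_{s'+1}=H_{\ell-1,0}$ by monotonicity. Corollary~\ref{C: seminorm transfer} then yields control of the original average by
\[
\nnorm{f_\ell}_{\tilde H_{\ell,0}^{\times s_1},\ldots,\tilde H_{\ell,m}^{\times(s_1-1)},\ldots,\; H'^{\times s}_{\bullet},\; H''^{\times s}_{\bullet}}.
\]
We then concatenate this with the previous estimate via Corollary~\ref{C: relative concatenation}. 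The single removed copy of $\tilde H_{\ell,m}$ is replaced by the groups $\tilde H_{\ell,m}+H'_{j,j'}$ and $\tilde H_{\ell,m}+H''_{j,j'}$. Iterating $s_1$ times for each non-simple $m$ removes every copy of the bad groups, with final $s=O_{d,\ell}(1)$.

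The main obstacle is the group bookkeeping: we must show that each sum $\tilde H_{\ell,m}+H'_{j,j'}$ (and likewise with $H''_{j,j'}$) contains some $H_{a,b}$ of the original family $\CP$, so that the subgroup property finishes the argument. The groups of $\CP''$ are literally groups $H_{j,j'}$ of $\CP$, which is easy. For $\CP'$, the only new groups are $H'_{\ell,j}$, generated by the coefficients of $p_\ell'-p_j$. Since $p_\ell-p'_\ell$ has coefficients in $\tilde H_{\ell,m}$, we get $\tilde H_{\ell,m}+H'_{\ell,j}=\tilde H_{\ell,m}+H_{\ell,j}\supseteq H_{\ell,j}$, which is the identity that makes the example of Section~\ref{SS: seminorm example} work. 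The care needed is in checking this uniformly, including when $\CP'$ loses $p'_\ell$, and in making sure that concatenation is applied only to the shared part of the two estimates, so that each iteration strictly decreases the number of copies of bad groups.
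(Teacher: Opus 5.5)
Your proposal is correct and is essentially the paper's argument. The paper isolates your single-copy removal step as Proposition~\ref{P: smoothing}: the ping step uses Lemma~\ref{L: P'} with $a_\ell'=p_\ell'$, the pong step deletes $p_t$, then Corollary~\ref{C: seminorm transfer} and Corollary~\ref{C: relative concatenation} are applied with the same bookkeeping $\tilde H_{\ell,m}+H'_{\ell,j}\supseteq H_{\ell,j}$, with your remaining bad copies playing the role of the auxiliary groups $G_1,\ldots,G_{s_2}$; it then iterates this at most $\ell s$ times starting from Theorem~\ref{T: original estimates}, exactly as you do.
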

\begin{proof}[Proof of Proposition~\ref{P: estimates for reducible families} assuming Proposition~\ref{P: estimates for reducible families and dominant index}]
Proposition~\ref{P: estimates for reducible families and dominant index} yields the claimed seminorm control in terms of $f_\ell,$ and so our goal is to extend it to $f_1, \ldots, f_{\ell-1}$. One fact that we shall implicitly take advantage of is that the subgroups appearing in the estimate given by Theorem~\ref{T: new estimates} are the same regardless of which function we use to control the average.

Consider the family $\CP'=\{p_1, \ldots, p_{\ell-1}\}$; it clearly has complexity less than $(\ell, \zeta, \omega)$. By the inductive hypothesis, we have
\begin{align*}
        \norm{\E_{\bn\in\Z^{L}} U_{p_1(\bn)}f_1\cdots U_{p_{\ell-1}(\bn)}f_{\ell-1}}_{L^2(\mu)} = 0
    \end{align*}
    whenever $\nnorm{f_{\ell-1}}_{H_{j,{j'}}^{\times s'}\colon\; 0\leq j<{j'}\leq \ell-1} = 0$ for some natural number $s'=O_{d,\ell,L}(1)$.
Combining this control with Propositions \ref{P: seminorm transfer} (applied with $a_1 = p_1, \ldots, a_\ell = p_\ell$ and $a'_\ell = 0$) and \ref{P: estimates for reducible families and dominant index} as well as the monotonicity property of box seminorms, we deduce that \eqref{E: vanishing average 2} holds whenever
\begin{align*}
    \nnorm{f_{\ell-1}}_{H_{j,{j'}}^{\times (s+s')}\colon\; 0\leq j<{j'}\leq \ell} = 0
\end{align*}
for the natural number $s=O_{d,\ell}(1)$ provided by Proposition~\ref{P: estimates for reducible families and dominant index}. The choice of index $\ell-1$ was arbitrary, and we can get analogous control for any of $f_1, \ldots, f_{\ell-2}$ in place of $f_{\ell-1}$. The claim follows with $s+s'$ playing the role of $s$ in the statement of Proposition~\ref{P: estimates for reducible families}.
\end{proof}

\subsection{Reduction to seminorm smoothing}

In the rest of this section, we set
\begin{align}\label{E: bolded G}
    \mathbf{H}:=\{H_{j,j'}\colon 0\leq j<j'\leq \ell\},
\end{align}
where the groups $H_{j,j'}$ are defined as in \eqref{E: H} for the family $\CP$. Proposition~\ref{P: estimates for reducible families and dominant index} will follow by iterating on the following result.
\begin{proposition}\label{P: smoothing}
    Let $(X, \CX, \mu, U)$ be a $\Z^D$-system and $\CP$ be reducible of complexity $(\ell, \zeta, \omega)$. Suppose that Theorem~\ref{T: new estimates} holds for all families of complexity less than $(\ell, \zeta, \omega)$. Suppose that $\ell$ is a dominant index, and that there exists an index $0\leq m < \ell$  for which {$(\ell,m)$ is not simple}. Lastly, suppose that there exist natural numbers $s_1, s_2=O_{d,\ell,L}(1)$ and subgroups $G_1, \ldots, G_{s_2}\subseteq \Z^D$ such that for all  $f_1, \ldots, f_\ell\in L^\infty(\mu)$, \eqref{E: vanishing average 2} holds whenever $$\nnorm{f_\ell}_{\mathbf{H}^{\times s_1}, G_1, \ldots, G_{s_2}, \tilde H_{\ell,m}} = 0.$$
    Then there exists a natural number $s_3=O_{d,\ell,L}(1)$ such that \eqref{E: vanishing average 2} also holds whenever
    $$\nnorm{f_\ell}_{\mathbf{H}^{\times s_3}, G_1, \ldots, G_{s_2}} = 0.$$
\end{proposition}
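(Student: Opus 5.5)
The plan is to remove the single copy of $\tilde H_{\ell,m}$ through one application of the double seminorm control transfer (Corollary~\ref{C: seminorm transfer}), and then merge the resulting estimate with the original one by relative concatenation (Corollary~\ref{C: relative concatenation}). This mirrors the worked example of Section~\ref{SS: seminorm example}. Set $d' := \deg(p_\ell-p_m)$ and let $p'_\ell$ be the polynomial from Lemma~\ref{L: P'}, obtained by replacing the degree-$d'$ coefficients of $p_\ell$ with those of $p_m$. The values $p_\ell(\bn)-p'_\ell(\bn)$ generate a subgroup of $\tilde H_{\ell,m}$, so condition (1) of Corollary~\ref{C: seminorm transfer} holds with $H_{s+1}:=\tilde H_{\ell,m}$. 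The ping step uses the family $\CP'$, which by Lemma~\ref{L: P'} has lower complexity; the inductive hypothesis (Theorem~\ref{T: new estimates} for $\CP'$) gives control of the $\CP'$-average in terms of any function. If $p'_\ell$ coincides with some $p_j$ or is constant, we first merge or drop terms, which only shrinks the family.

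For the transfer we use the index $t$ from Lemma~\ref{L: P'}(2) in place of $\ell-1$. Since $p'_t$ is distinct from all other members and from $0$, the function $f_t$ genuinely appears in the $\CP'$-average, and the inductive hypothesis controls it by $\nnorm{f_t}_{\mathbf{H}'^{\times s'}}$, where $\mathbf{H}'$ are the coefficient groups of $\CP'$. For the pong step, choose $a''_t$ with $a_t-a''_t$ generating a subgroup of a group in $\mathbf{H}'$; the simplest choice is $a''_t:=0$, which removes $p_t$. The family $\CP''=\CP\setminus\{p_t\}$ is shorter, so by induction it controls $f_\ell$ via its groups $H_{j,j'}$ with $j,j'\neq t$, all of which belong to $\mathbf{H}$. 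Applying Corollary~\ref{C: seminorm transfer} with these three estimates then controls the original average by
\begin{align*}
\nnorm{f_\ell}_{\mathbf{H}^{\times s_1}, G_1,\ldots,G_{s_2}, \mathbf{H}'^{\times s'}, \mathbf{H}^{\times s''}},
\end{align*}
with the last group $H'_{s'+1}$ of the $\CP'$-estimate chosen to contain $\langle a_t(\bn)\rangle = H_{t,0}$.

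The main obstacle is that $\mathbf{H}'$ is not contained in $\mathbf{H}$: the groups $H'_{\ell,j}$ involve differences of the modified coefficients of $p'_\ell$. To handle this we concatenate. Both the original estimate and the new one share the block $\mathbf{H}^{\times s_1}, G_1,\ldots,G_{s_2}$. Corollary~\ref{C: relative concatenation} therefore gives control by that block together with the groups $\tilde H_{\ell,m}+K$ for each $K$ in $\mathbf{H}'\cup\mathbf{H}$. The key check is that $\tilde H_{\ell,m}+H'_{j,j'}$ contains some member of $\mathbf{H}$, for every pair $j,j'$:
\begin{enumerate}
\item If the pair does not involve $\ell$, then $H'_{j,j'}=H_{j,j'}$ and there is nothing to check.
\item If the pair is $(\ell,j)$, then $p'_\ell-p_j$ and $p_\ell-p_j$ differ only by degree-$d'$ terms, whose coefficients lie in $\tilde H_{\ell,m}$. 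Hence $\tilde H_{\ell,m}+H'_{\ell,j}=\tilde H_{\ell,m}+H_{\ell,j}\supseteq H_{\ell,j}\in\mathbf{H}$.
\end{enumerate}
By the subgroup property of box seminorms, each concatenated group may then be replaced by a member of $\mathbf{H}$. This yields control by $\nnorm{f_\ell}_{\mathbf{H}^{\times s_3},G_1,\ldots,G_{s_2}}$ with $s_3=s_1+O(s'+s'')=O_{d,\ell,L}(1)$, using permutation invariance to collect the copies of the groups in $\mathbf{H}$. The remaining care goes into verifying the subgroup-containment hypotheses of Corollary~\ref{C: seminorm transfer} precisely, possibly by shifting one copy of a group of $\mathbf{H}'$ into the role of $H'_{s'+1}$. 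Non-simplicity of $(\ell,m)$ enters only through Lemma~\ref{L: P'}, which guarantees that the complexity drops.
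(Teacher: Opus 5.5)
Your proposal is correct and follows the paper's proof essentially step for step: you apply Corollary~\ref{C: seminorm transfer} once with $a'_\ell=p'_\ell$, ping at the index $t$ from Lemma~\ref{L: P'} via $\CP'$, pong via $\CP''=\CP\setminus\{p_t\}$ (i.e.\ $a''_t=0$), and then concatenate $\tilde H_{\ell,m}$ against the groups of $\mathbf{H}'$ and $\mathbf{H}$ using exactly the containment $\tilde H_{\ell,m}+H'_{\ell,j}\supseteq H_{\ell,j}$. The only slip is writing $\langle a_t(\bn)\rangle=H_{t,0}$: only the inclusion $\langle a_t(\bn)\rangle\subseteq H_{t,0}=H'_{t,0}$ holds in general, and that inclusion is all the argument needs.
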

\begin{proof}[Proof of Proposition~\ref{P: estimates for reducible families and dominant index} assuming Proposition~\ref{P: smoothing}]
    Recall the definition of the groups $\tilde H_{\ell,j}$ from \eqref{E: tilde H}.
    By Theorem~\ref{T: original estimates}, we know that \eqref{E: vanishing average 2} holds whenever
    \begin{align*}
        \nnorm{f_\ell}_{\tilde H_{\ell,0}^{\times s}, \tilde H_{\ell, 1}^{\times s}, \ldots, \tilde H_{\ell, \ell-1}^{\times s}} = 0
    \end{align*}
    for some natural number $s=O_{d,\ell,L}(1)$. Now, if $(\ell,j)$ is simple for some $0\leq j<\ell$, then $\tilde H_{\ell, j} = H_{\ell,j}$, and so we can subsume all such groups $\tilde H_{\ell, j}$ into $\mathbf{H}$. Using this and the monotonicity property of box seminorms, we deduce that \eqref{E: vanishing average 2} holds whenever
    \begin{align*}
        \nnorm{f_\ell}_{\mathbf{H}^{\times \ell s}, \{\tilde H_{\ell, j}\colon\; 0\leq j <\ell,\; \text{$(\ell,j)$ is not simple}\}^{\times s}} = 0.
    \end{align*}
    Applying Proposition~\ref{P: smoothing} $$s\cdot \bigabs{\srem{\tilde H_{\ell, j}\colon\; 0\leq j <\ell,\; \text{$(\ell,j)$ is not simple}}}\leq \ell s$$ times, we remove each such subgroup one by one at the cost of increasing the number of subgroups from $\mathbf{H}$. 
\end{proof}
\subsection{Proof of Proposition~\ref{P: smoothing}}
Let $\CP'$ and $1\leq t< \ell$ be as in Lemma~\ref{L: P'}, and define $\CP''=\{p_j\colon\; 1\leq j\leq \ell,\; j\neq t\}$. Then both $\CP', \CP''$ have lower complexity than $\CP$; this is obvious for $\CP''$, and it follows from Lemma~\ref{L: P'} for $\CP'$. Let $\mathbf{H}$ be as in \eqref{E: bolded G}  and $$\mathbf{H'}:=\rem{H_{j,j'}'\colon\; 0\leq j<j'\leq \ell},$$ where the groups $H_{j,{j'}}'$ are as in \eqref{E: H} but defined for the family $\CP'$.

By the induction hypothesis applied to $\CP, \CP'$ and the fact that $p'_l\neq p'_j$ for any $j\neq l$, we have the following seminorm estimates for some natural numbers $s',s''=O_{d,\ell,L}(1)$:
\begin{gather*}
    \norm{\E_{\bn\in\Z^\ell}\prod_{j=1}^\ell U_{p'_j(\bn)}f_j}_{L^2(\mu)} = 0\quad \textrm{whenever}\quad \nnorm{f_t}_{\mathbf{H'}^{\times s'}} = 0\\
    \norm{\E_{\bn\in\Z^{L}}\prod_{j=1}^\ell U_{p_j(\bn)}f_j}_{L^2(\mu)} = 0\quad \textrm{whenever}\quad \nnorm{f_\ell}_{\mathbf{H}^{\times s''}} = 0 \quad\textrm{and}\quad f_t =1.
\end{gather*}
At the same time, our assumption gives that
\begin{align}\label{E: assumed estimate}
    \norm{\E_{\bn\in\Z^L}\prod_{j=1}^\ell U_{p_j(\bn)}f_j}_{L^2(\mu)} = 0\quad \textrm{whenever}\quad \nnorm{f_\ell}_{\mathbf{H}^{\times s_1}, \mathbf{G}, \tilde H_{\ell,m}} =0,
\end{align}
where 
\begin{align*}
    \mathbf{G} := \rem{G_1, \ldots, G_{s_2}}.
\end{align*}

Since $(\ell,m)$ is simple, we have $\tilde H_{\ell, m} = H_{\ell,m}$.
Applying Corollary \ref{C: seminorm transfer} to all these estimates with $a_1 = p_1, \ldots, a_\ell = p_\ell, a_\ell' = p'_\ell$, and $p_l'$ playing the role of $a''_{\ell-1}$, we deduce that
\begin{align}\label{E: pong}
    \norm{\E_{\bn\in\Z^L}\prod_{j=1}^\ell U_{p_j(\bn)}f_j}_{L^2(\mu)} = 0\quad \textrm{whenever}\quad \nnorm{f_\ell}_{\mathbf{H}^{\times (s_1+s'')}, \mathbf{H'}^{\times s'}, \mathbf{G}}=0.
\end{align}

We want to combine estimates \eqref{E: assumed estimate} and \eqref{E: pong} using Corollary \ref{C: relative concatenation}, the relative concatenation result. To this end, we concatenate $\tilde H_{\ell,m}$ with groups $H_{j,j'}$ appearing in $\mathbf{H}$ and $H_{j,j'}'$ appearing in $\mathbf{H'}$. 
For the groups $H_{j,j'}$, we simply observe that $H_{j,j'}+ \tilde H_{\ell,m} \supseteq H_{j,j'}$. For the groups $H_{j,j'}'$, we split into two cases. If $j,j'\neq \ell$, then the group $H'_{j,j'}$ equals $H_{j,j'}$, and we land in the previous case. Otherwise $$H'_{\ell,j} = \langle b_{m,\bi}-b_{j,\bi},\; b_{\ell,\bi'}-b_{j,\bi'}\colon\; \vert \bi\vert=d',\; \vert \bi'\vert\neq d'\rangle,$$ 
in which case $H_{\ell,j}'+ \tilde H_{\ell,m} \supseteq H_{\ell,j}$ since for all $\vert\bi\vert=d'$, the terms $b_{\ell, \bi}-b_{j,\bi}$, the only elements of $H_{\ell, j}$ potentially missing from $H'_{\ell,j}$, satisfy
$$b_{\ell, \bi}-b_{j,\bi} = (b_{\ell, \bi}-b_{m,\bi})+ (b_{m,\bi}-b_{j,\bi})\in \tilde H_{\ell,m} + H_{\ell,j}'.$$
Hence by Corollary \ref{C: relative concatenation} and the subgroup property of box seminorms, we deduce that \eqref{E: vanishing average 2} holds whenever
\begin{align*}
    \nnorm{f_\ell}_{\mathbf{H}^{\times (s_1+s'+s'')}, \mathbf{G}}=0.
\end{align*}
This completes the proof. \hfill $\Box$

\section{2-step nilpotent PET for $(T^n, S^{n^2})$}\label{S: PET n, n^2}
We depart from the abelian universe, plunging into the brave new world of 2-step nilpotent systems. Sections \ref{S: PET n, n^2} and \ref{S: PET} are dedicated to the proof of Theorem~\ref{T: nilpotent seminorm estimates}. While the next section covers the general case, this section illustrates the main ideas by proving the following special case.
\begin{theorem}\label{T: nilpotent joint ergodicity n n^2}
    There exists $s\in\N$ such that for all 2-step nilpotent systems $(X, \CX, \mu, T, S)$ and functions $f_1, f_2\in L^\infty(\mu)$, we have
    \begin{align}\label{E: n, n^2}
    \norm{\E_{n\in\Z}T^{n}f_1\cdot S^{n^2}f_2}_{L^2(\mu)} = 0
\end{align}
whenever $\nnorm{f_2}_{s,S} = 0$.
\end{theorem}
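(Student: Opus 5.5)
The plan is to run two van der Corput steps that remove $T$ entirely. What remains is an average involving only $S$ and the commutator $C:=[T,S]$. Since $\langle T,S\rangle$ is 2-step nilpotent, $C$ is central, so $S$ and $C$ commute. We then apply Theorem~\ref{T: new estimates} to the resulting multiparameter average in the variables $(n,h,\dots)$, for the $\Z^2$-action $U_{(a,b)}:=S^aC^b$. Using this $\Z^2$-action avoids worrying about whether $\langle S,C\rangle$ is free. Throughout, we use the rule that reordering $T^aS^b$ costs only a factor $C^{\pm ab}$, which comes from the displayed multiplication formula in Section~\ref{SS: outline}.

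\textbf{Step 1 (first van der Corput).} Assume $f_1,f_2$ are 1-bounded. Applying the van der Corput inequality in $n$ gives
\begin{align*}
\norm{\E_{n}T^nf_1\cdot S^{n^2}f_2}^2\leq \oE_{h}\abs{\oE_{n}\int T^nf_1\cdot S^{n^2}f_2\cdot T^{n+h}\bar f_1\cdot S^{(n+h)^2}\bar f_2\; d\mu}.
\end{align*}
Composing with the inverse of $T^n$ (with care about the convention \eqref{E: composition convention}) makes the $f_1$-part independent of $n$: it becomes $f_1\cdot T^h\bar f_1$. The $f_2$-parts become $U_{a_n}f_2\cdot U_{b_{n,h}}\bar f_2$, where $a_n$ and $b_{n,h}$ are words of the form $T^{\mp n}S^{\text{quadratic}}C^{\text{cubic}}$. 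By Cauchy--Schwarz we discard the $n$-independent factor $f_1\cdot T^h\bar f_1$. This bounds the original quantity by $\oE_h\norm{\E_n U_{a_n}f_2\cdot U_{b_{n,h}}\bar f_2}$.

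\textbf{Step 2 (second van der Corput).} Now apply van der Corput in $n$ once more, with a new variable $k$, and compose with the inverse of $a_n$. In every surviving word the $T$-exponents cancel, because they are linear in $n$ with the same coefficient. After reordering, each word lies in $\langle S,C\rangle$. We arrive at an expression
\begin{align*}
\oE_{h,k}\abs{\oE_n\int f_2\cdot \prod_{j=1}^{3}U_{p_j(n,h,k)}\CC^{\eps_j}f_2\; d\mu},
\end{align*}
where each $p_j\in\Z^2[n,h,k]$ is a polynomial with $S$-coordinate of degree $\leq 2$ and $C$-coordinate of degree $\leq 3$. Typical $S$-coordinates are $2hn+h^2$, $2kn+k^2$, and $2(h+k)n+\dots$.

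\textbf{Step 3 (apply Theorem~\ref{T: new estimates}).} Average over $(n,h,k)$ jointly; Proposition~\ref{P: iterated limits}/Fubini-type flexibility justifies this. Theorem~\ref{T: new estimates}, applied to $f_2$ in the $\Z^2$-system, gives control by $\nnorm{f_2}_{H_{j,j'}^{\times s}\colon 0\le j<j'\le 3}$. The crucial check is that every group $H_{j,j'}$, which is generated by all coefficients of $p_j-p_{j'}$, contains a vector $(r,*)$ with $r\neq0$. In other words, each pairwise difference must have some monomial, such as $hn$, $kn$ or $h^2$, whose $S$-coefficient is nonzero. This is where distinctness of the $S$-parts of the iterates matters.

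\textbf{Step 4 (reduce to a seminorm along $S$).} Let $H'_{j,j'}:=\langle(r_{j,j'},c_{j,j'})\rangle\subseteq H_{j,j'}$ be the cyclic subgroup generated by one such vector with $r_{j,j'}\neq0$. The subgroup property of box seminorms in the abelian setting gives
\begin{align*}
\nnorm{f_2}_{H_{j,j'}^{\times s}}\leq \nnorm{f_2}_{{H'_{j,j'}}^{\times s}}.
\end{align*}
We then need to compare the box seminorm along $\langle S^rC^c\rangle$ with the one along $\langle S^r\rangle$. This is the one genuinely nilpotent point, since $C$ is not a power of $S$. We handle it via $\CI$-factor comparisons, using \eqref{E: same invariant factors} after restricting to the abelian group $\langle S, C\rangle$, or by applying a further van der Corput step in the $C$-direction so that the chosen coefficient is a pure power of $S$. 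After that, the finite-index and monotonicity properties give domination by $\nnorm{f_2}_{s',S}$ for some $s'$.

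The main obstacle is the bookkeeping in Steps 2--3. We must verify that after reordering, all pairwise differences of the iterates retain a nonzero $S$-component. We must also ensure that a pure power of $S$, rather than just some $S^rC^c$, sits in each coefficient group; I would aim to extract $S^2$ directly from the $hn$ coefficient.
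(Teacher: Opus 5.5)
Your Step~2 is false, and it is the crux of the theorem. The $n$-dependence of the $T$-exponents cancels, but the van der Corput shift does not. In the coordinates $U_{(a,b,c)}=T^aS^bC^c$ of Section~\ref{S: PET n, n^2}, with $a_n=(-n,n^2,n^3)$ one gets $a_{n+k}*a_n^{-1}=(-k,\,2kn+k^2,\,k(n+k)^2)$. Likewise $b_{n+k,h}*a_n^{-1}$ has $T$-exponent $-k$, and only $b_{n,h}*a_n^{-1}=(0,\,2hn+h^2,\,0)$ is $T$-free. So two of your three iterates have the form $T^{-k}S^{\cdots}C^{\cdots}$ and do not lie in $\langle S,C\rangle$.

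Moving $T^{-k}$ onto the function replaces $f_2$ by $T^{-k}f_2$, which depends on the averaging variable $k$. The joint $(n,h,k)$-average is then not a $\Z^2$-average of fixed functions, so Theorem~\ref{T: new estimates} does not apply. Applying it instead for each fixed $(h,k)$ gives only qualitative control along $(h,k)$-dependent subgroups, which cannot be averaged over $(h,k)$; the paper points out exactly this obstruction. The missing step is to keep running PET. One more van der Corput step, composing with the $T$-free term, makes every surviving $T$-coordinate equal to $-k$, and a further step makes it identically zero.

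Once $T$ is gone, your Steps~3--4 still need repairs that the paper supplies.

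\emph{Merging the averages.} You cannot simply drop the absolute value in $\oE_{h,k}|\oE_n\cdots|$; Proposition~\ref{P: iterated limits} concerns dual functions and does not do this. The paper instead uses Cauchy--Schwarz, then the symmetric van der Corput inequality (whose absolute value can be removed because the inner limits exist by Walsh's theorem), then the Bergelson--Leibman Fubini principle.

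\emph{Terms with $n$-independent $S$-part.} The symmetric step doubles every iterate into $q(n+u_0,h)$ and $q(n+u_1,h)$. Some iterates produced by the extra steps have $n$-independent $S$-part but $n$-dependent $C$-part, e.g.\ $(0,\,2(h_2-h_1)h_4,\,2h_2h_4(n+h_1)+h_2h_4^2)$ in the paper's notation. For these, the difference of the two copies has zero $S$-component, so its coefficient group lies in the $C$-direction. The paper spends three further van der Corput steps removing such terms before symmetrizing.

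\emph{Step~4.} As written it does not work. A cyclic group $\langle S^rC^c\rangle$ with $c\neq 0$ need not contain any power of $S$, and $\CI(S^rC^c)\neq\CI(S^r)$ in general. So neither the subgroup property nor \eqref{E: same invariant factors} applies. The right mechanism is homogeneity. With $T$ eliminated, every $S$-coordinate is homogeneous of degree $2$ and every $C$-coordinate homogeneous of degree $3$ in all variables $(n,h,u)$. Hence each monomial's coefficient vector is either pure-$S$ or pure-$C$. Any nonzero $S$-part of a difference therefore puts a nonzero multiple of the $S$-direction into the full group $H_{j,j'}$; in the paper, the $h_1u_0$-coefficient is $2e_2$. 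This is what your final sentence is reaching for.
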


\begin{proof}
We assume that $f_1, f_2$ are 1-bounded.
Let $R:=[T,S]$, and set
\begin{align*}
    U_{(a,b,c)}:=T^{a}S^{b}R^{c}.
\end{align*}
On endowing $\Z^3$ with the operation
\begin{align*}
    (a,b,c)*(x,y,z) = (a + x, b + y, c + z - bx),
\end{align*}
the map $h\mapsto U_h$ defines an isomorphism between $(\Z^3, *)$ and the group $H=\langle T, S\rangle$. Note also that
\begin{align*}
    (x,y,z)\inv &= (-x,-y,-z - xy),\\
    (a,b,c)*(x,y,z)\inv &= (a - x, b - y, c -z +(b-y)x);
\end{align*}
we will use these identities freely in the ensuing argument.

In this notation, the average in \eqref{E: n, n^2} can be recast as
\begin{align}\label{E: n, n^2 recast}
    \E_{n\in\Z} U_{(n,0,0)}f_1\cdot U_{(0,n^2,0)}f_2.
\end{align}
The overall strategy is to apply the van der Corput inequality
to \eqref{E: n, n^2 recast} until we arrive at an average that meets two criteria:
\begin{enumerate}
    \item it has no contribution from $T$;
    \item each term has a nontrivial contribution from $S$ which in some sense is \textit{distinguishable} from the contribution of $R$.
\end{enumerate}
The first condition allows us to apply Theorem~\ref{T: new estimates} since the average involves only the commuting transformations $S, R$. The second one ensures that each group of coefficients appearing in the conclusion of Theorem~\ref{T: new estimates} contains a nonzero multiple of~$S$. The desired conclusion follows on invoking the subgroup property of box seminorms.

Let
\begin{align*}
    \CP_0 = \{(n,0,0),\; (0, n^2,0)\}
\end{align*}
be the original polynomial family arising in \eqref{E: n, n^2 recast}. Applying the asymmetric van der Corput inequality to \eqref{E: n, n^2 recast}, we get
\begin{multline*}
    \norm{\E_{n\in\Z} U_{(n,0,0)}f_1\cdot U_{(0,n^2,0)}f_2}_{L^2(\mu)}^2\\
    \leq \oE_{h_1\in\Z}\abs{\E_{n\in \Z}U_{(n,0,0)}f_1\cdot U_{(n+h_1,0,0)}\bar f_1\cdot U_{(0,n^2,0)}f_2\cdot U_{(0,(n+h_1)^2,0)}\bar f_2}.
\end{multline*}
After composing the average with $U_{(n,0,0)}\inv = U_{(-n,0,0)}$ and using
\begin{align*}
    U_{(0,(n+\eps_1 h_1)^2,0)}(U_{(n,0,0)}\inv f) = U_{(-n, (n+\eps_1 h_1)^2, n(n+\eps_1 h_1)^2)}f
\end{align*}
for $\eps_1\in\{0,1\}$,
the right-hand side of the above equals
\begin{align*}
    \oE_{h_1\in\Z}\abs{\E_{n\in \Z}f_1\cdot U_{(h_1,0,0)}\bar f_1\cdot U_{(-n,n^2,n^3)}f_2\cdot U_{(-n,(n+h_1)^2,n(n+h_1)^2)}\bar f_2}.
\end{align*}
By the Cauchy-Schwarz inequality, this is at most
\begin{align*}
    \oE_{h_1\in\Z}\norm{\E_{n\in \Z} U_{(-n,n^2,n^3)}f_2\cdot U_{(-n,(n+h_1)^2,n(n+h_1)^2)}\bar f_2}_{L^2(\mu)},
\end{align*}
and the new average corresponds to the family
\begin{align*}
        \CP_1 = \{(-n, n^2, n^3),\; (-n, (n+h_1)^2, n(n+h_1)^2)\}.
    \end{align*}
    We repeat this argument, applying the van der Corput inequality, composing the new average with $U_{(-n,n^2,n^3)}\inv$, and applying the Cauchy-Schwarz inequality. This gives the average
    \begin{multline}\label{E: average over h_1, h_2 0}
        \oE_{h_1,h_2\in\Z}\left|\!\left|\E_{n\in\Z}U_{(0, 2h_1 n +h_1^2,0)}\bar f_2\cdot U_{(-h_2,2h_2 n +h_2^2,h_2(n+h_2)^2)}\bar f_2\right.\right.\\
    \left.\left. U_{(-h_2, 2(h_1+h_2) n +(h_1+h_2)^2, h_2(n+h_1+h_2)^2)}f_2\vphantom{\E_{n\in[N]}}\right|\!\right|_{L^2(\mu)}
    \end{multline}
    involving the family
    \begin{multline*}
        \CP_2 = \{(0, 2h_1 n +h_1^2,0),\; (-h_2,2h_2 n +h_2^2,h_2(n+h_2)^2),\\ (-h_2, 2(h_1+h_2) n +(h_1+h_2)^2, h_2(n+h_1+h_2)^2)\}.
    \end{multline*}

    In the abelian case, i.e. when $R=\Id_X$, the family would reduce to
    \begin{align*}
        \{(0, 2h_1 n +h_1^2),\; (-h_2,2h_2 n +h_2^2),\; (-h_2, 2(h_1+h_2) n +(h_1+h_2)^2)\},
    \end{align*}
    and so for every fixed pair $(h_1,h_2)\in\Z^2$ with $h_1, h_2$ distinct and nonzero, we could control the corresponding average
    \begin{align*}
        \E_{n\in\Z}S^{2h_1 n +h_1^2}\bar f_2\cdot T^{-h_2}S^{2h_2 n +h_2^2}\bar f_2\cdot  T^{-h_2}S^{ 2(h_1+h_2) n +(h_1+h_2)^2}f_2\vphantom{\E_{n\in[N]}}
    \end{align*}
    by $\nnorm{f_2}_{3,S}$ using known Host-Kra seminorm estimates for the single-transformation case (just like it was done by Chu, Frantzikinakis, and Host in \cite{CFH11}).
    However, in the 2-step nilpotent setting, the presence of commutators complicates things in two ways. First, a seminorm estimate in terms of $T^{-h_2}f_2$ does not translate into a seminorm estimate in terms of $f_2$ due to the noncommutativity of $T,S$. Second, and more importantly, the presence of commutators $R$ means that we can only control each average 
    \begin{multline}\label{E: average over h_1, h_2}
        \E_{n\in\Z}U_{(0, 2h_1 n +h_1^2,0)}\bar f_2\cdot U_{(-h_2,2h_2 n +h_2^2,h_2(n+h_2)^2)}\bar f_2\\
        U_{(-h_2, 2(h_1+h_2) n +(h_1+h_2)^2, h_2(n+h_1+h_2)^2)}f_2
    \end{multline}
    by a box seminorm involving some powers of $R, S$ depending on $h_1, h_2$.

      To see how these complications play out, let us see what estimates Theorems \ref{T: original estimates} and \ref{T: new estimates} provide. To this end, rewrite (\ref{E: average over h_1, h_2}) as
     \begin{multline}\nonumber
        \E_{n\in\Z}U_{(0, 2h_1 n,0)}(U_{(0,h_1^2,0)}\bar f_2)\cdot U_{(0,2h_2 n,h_2(n+h_2)^2-h_{2}^{3})}(U_{(-h_{2},h_{2}^{2},h_{2}^{3})}\bar f_2)\\
        U_{(0, 2(h_1+h_2) n, h_2(n+h_1+h_2)^2-h_{2}(h_{1}+h_{2})^{2})}(U_{(-h_{2},(h_{1}+h_{2})^{2},h_{2}(h_{1}+h_{2})^{2})}f_2).
    \end{multline}
    If we apply the existing estimates in the form of Theorem~\ref{T: original estimates} to the first term (the only one with no contribution from $T$), we obtain control in terms of
    \begin{align}\label{E: naive control}
     \nnorm{U_{(0,h_1^2,0)} f_2}_{(0,2h_1,0)^{\times s}, (0,0,-h_{2})^{\times s}}=\nnorm{f_2}_{(0,2h_1,0)^{\times s}, (0,0,-h_{2})^{\times s}}
    \end{align}
     for some $s\in\N$. This does not seem particularly useful, as we know no way of connecting a seminorm in terms of $R$ to a seminorm in terms of $S$. Applying Theorem~\ref{T: original estimates} to the second term (and similarly to the third term) is not helpful, either, since we will get a control by
      $\nnorm{U_{(-h_{2},h_2^2,h_{2}^{3})} f_2}_{(0,0,h_2)^{\times s}, (0,-2h_{1},-2h_{1}h_{2})^{\times s}}$, which may not be the same as $\nnorm{f_2}_{(0,0,h_2)^{\times s}, (0,-2h_{1},-2h_{1}h_{2})^{\times s}}$ since $T$ and $S$ are not commuting.

      To overcome this difficulty, instead we use Theorem~\ref{T: new estimates}, the new seminorm estimate of the paper.  Theorem~\ref{T: new estimates} provides control in terms of
    \begin{align}\label{E: less naive control}
        \nnorm{f_2}_{\substack{(0,2h_1,0)^{\times s}, \langle (0,0,h_2), (0,2h_2,2h_2^2)\rangle^{\times s}, \langle (0,0,h_2), (0,2(h_{1}+h_{2}),2h_{2}(h_{1}+h_{2}))\rangle^{\times s},\\
        \langle (0,0,h_2), (0,{2(h_2-h_1)},2h_2^2)\rangle^{\times s}, \langle (0,0,h_2), (0,2h_{2},2h_2(h_1+h_2))\rangle^{\times s}, (0,2h_1,2h_1h_2)^{\times s}}}.
    \end{align}
    A priori, this looks rather formidable. However, every group appearing in the expression above has a generator of the form $(0,a(h),b(h))$, where $h=(h_{1},h_{2})$ and $a(h)$ is nonzero and is not a $\Q$-multiple of $b(h)$. Now, if we had a quantitative relationship between \eqref{E: average over h_1, h_2} and \eqref{E: less naive control}, it would imply that
    \begin{align}\label{E: naive control averaged}
\oE_{h_1, h_2\in \Z}\nnorm{f_2}_{\substack{(0,2h_1,0)^{\times s}, \langle (0,0,h_2), (0,2h_2,2h_2^2)\rangle^{\times s}, \langle (0,0,h_2), (0,2(h_{1}+h_{2}),2h_{2}(h_{1}+h_{2}))\rangle^{\times s},\\
        \langle (0,0,h_2), (0,{2(h_2-h_1)},2h_2^2)\rangle^{\times s}, \langle (0,0,h_2), (0,2h_{2},2h_2(h_1+h_2))\rangle^{\times s}, (0,2h_2,{0})^{\times s}}}
    \end{align}
    controls \eqref{E: average over h_1, h_2 0}. By concatenating \eqref{E: naive control averaged} as in e.g. \cite{DFKS22, DKKST24} and using the aforementioned property of the generators $(0,a(h),b(h))$ we could control \eqref{E: naive control averaged} in terms of $\nnorm{f_2}_{s, S}$ for some $s\in\N$. 

    Unfortunately, our Theorem~\ref{T: new estimates} provides no quantitative relationship between \eqref{E: less naive control} and \eqref{E: average over h_1, h_2}; hence the concatenation-based strategy above cannot be implemented. Instead, we proceed as follows. First, we apply the van der Corput inequality two more times to \eqref{E: average over h_1, h_2 0} to kill the contributions of $T$ altogether. Second, we repeat the van der Corput operation three more times, this time to kill terms in which the second coordinate is constant in $n$ while the third coordinate depends nontrivially on $n$. Third, we run one more van der Corput, this time to turn the average into one where the parameters $n, h_1, h_2,\ldots$ are on equal footing (it will become clear what this means as soon as we reach there). Lastly, we apply Theorem~\ref{T: new estimates} to the resulting multiparameter polynomial family. 
    
    Let us start with the first of these four steps. The application of the van der Corput inequality to \eqref{E: average over h_1, h_2 0}, composing with $U_{(0, 2h_1n+h_1^2, 0)}\inv$, and applying the Cauchy-Schwarz inequality brings us to the family
    \begin{align*}
        \CP_3 = \{&(-h_2, 2(h_2-h_1)n+h_2^2-h_1^2,h_2(n+h_2)^2),\\ 
        &(-h_2, 2(h_2-h_1)n+2h_2h_3+h_2^2-h_1^2, h_2(n+h_2+h_3)^2),\\ 
        &(-h_2, 2h_2n+2h_1h_2+h_2^2, h_2(n+h_1+h_2)^2),\\ 
        &(-h_2, 2h_2n+2(h_1+h_2)h_3+2h_1h_2+h_2^2, h_2(n+h_1+h_2+h_3)^2)\}.
    \end{align*}
    Yet another repetition of the same procedure yields
    \begin{align*}
        \CP_4 = \{&({0,2(h_2-h_1)h_4, 2h_2h_4(n+h_1) + h_2h_4^2),}\\
        &{(0, 2h_2h_3, 2h_2h_3n + h_2 h_3^2),}\\
        &{(0, 2(h_2-h_1)h_4 + 2h_2h_3, 2h_2(h_3+h_4)n + h_2 (h_3+h_4)^2+2h_1h_2h_4),}\\
        &(0, 2h_1n+2h_1h_2+h_1^2,0),\\ 
        &(0,2h_1n+2h_2h_4+2h_1h_2+h_1^2, 2h_2h_4(n+h_1)+h_2h_4^2),\\ 
        &(0, 2h_1n+2h_1h_3+2h_2h_3+2h_1h_2+h_1^2,2h_2h_3n + h_2h_3^2),\\ 
        &(0, 2h_1n+2h_1h_3+2h_2h_3+2h_1h_2+2h_2h_4+h_1^2,\\
        &\qquad\qquad 2h_2(h_3+h_4)n+h_2h_4(2h_1+2h_3+h_4)+h_2h_3^2)\}.
    \end{align*}
    The family $\CP_4$ does not depend on $T$, so we are now in the abelian setting. However, since we ultimately care about an estimate in terms of $S$, we need to deal with the issue that the first three members of $\CP_4$ have their second coordinates independent of $n$. 
    
    To fix this, we apply three more van der Corputs to the effect of killing the first three members of $\CP_4$. Name the terms of $\CP_4$ as $$(0,t_j(h)n + u_j(h),v_j(h)n+w_j(h))$$ for $1\leq j\leq 7$. Then the combined effect of these van der Corputs is the family
    \begin{multline*}
        \CQ := \{(0, t_j(h)(n +\eps_5h_5 + \eps_6 h_6 +\eps_7h_7) + u_j(h)-u_3(h),\\
        (v_j(h)-v_3(h))n + v_j(h)\eps_5h_5 + (v_j(h)-v_1(h))\eps_6 h_6 +(v_j(h)-v_2(h))\eps_7h_7\\ + w_j(h)-w_3(h))\colon\;
        4\leq j\leq 7,\; \eps_5,\eps_6,\eps_7\in\{0,1\}\}.
    \end{multline*}
    Since $t_j(h) = 2h_1$ for $4\leq j\leq 7$, we simply denote this quantity by $t(h)$. We can then recast $\CQ$ in the more compact form
    \begin{align*}
        \CQ = \{(0, t(h)n + b_j(h), c_j(n,h))\colon 1\leq j\leq 32\}, 
    \end{align*}
    where now $h = (h_1, \ldots, h_7)$. Hence the 128-th power of the $L^2(\mu)$ norm of \eqref{E: n, n^2 recast} is bounded by
    \begin{align}\label{E: average example 1}
        \oE_{h_1,\dots, h_7\in\Z}\left|\!\left|\E_{n\in\Z}\prod_{j=1}^{32} U_{(0, t(h)n + b_j(h),c_j(n,h))} f_2\vphantom{\E_{n\in[N]}}\right|\!\right|_{L^2(\mu)}
    \end{align}
    (with complex conjugates over some $f_2$'s that we choose to ignore for the rest of the proof). 

    It is rather inconvenient that the average over $n$ is inside the norm while the averages over $h$'s are outside, and that the limits are iterated. We want to convert \eqref{E: average example 1} into a single average over all variables so that we can think of elements of $\CQ$ as polynomials in $n, h_1, \ldots, h_7$ rather than polynomials in $n$ whose coefficients depend on $h_1, \ldots, h_7$.
    This can be done as follows. First, we apply the symmetric van der Corput inequality, 
     bounding the square of \eqref{E: average example 1} by
    \begin{multline}\label{E: average example 2}
        \oE_{h_1,\ldots, h_7,u_0,u_1\in\Z}\E_{n\in\Z}\\
        \int\prod_{j=1}^{32} \left(U_{(0, t(h)(n+u_0) + b_j(h),c_j(n+u_0,h))} f_2\cdot 
        U_{(0, t(h)(n+u_1) + b_j(h),c_j(n+u_1,h))}  f_2\right)\; d\mu.
    \end{multline}
    Using the uniform version of Walsh's convergence theorem \cite{W12} jointly with the Fubini-type principle of Bergelson and Leibman \cite[Lemma~1.1]{BL15}, we can replace the iterated limits by a single one, so that \eqref{E: average example 2} equals
    \begin{multline}\label{E: average example 3}
        \E_{(n,h_1, \ldots, h_7,u_0,u_1)\in\Z^{10}}\int\prod_{j=1}^{32} \left(U_{(0, t(h)(n+u_0) + b_j(h),c_j(n+u_0,h))} f_2\right.\\ 
        \left. \cdot U_{(0, t(h)(n+u_1) + b_j(h),c_j(n+u_1,h))}  f_2\right)\; d\mu.
    \end{multline}
    
    This is a properly multiparameter average that can be handled using Theorem~\ref{T: new estimates}. The key observation is that the groups
    \begin{multline}\label{E: example - first subgroup}
        \langle (0, t(h)(n+u_0) + b_j(h),c_j(n+u_0,h))\\ *(0, t(h)(n+u_1) + b_{j'}(h),c_{j'}(n+u_1,h))\inv\colon\;
        (n,h,u_0,u_1)\in\N^{10}\rangle 
    \end{multline}
    (for $1\leq j,j'\leq \ell$) and
    \begin{multline}\label{E: example - second subgroup}
        \langle (0, t(h)(n+u) + b_j(h),c_j(n+u,h))\\ * (0, t(h)(n+u) + b_{j'}(h),c_{j'}(n+u,h))\inv\colon\; (n,h,u)\in\N^9\rangle 
    \end{multline}
    (for $0\leq j<j'\leq \ell$) arising in the conclusion of Theorem~\ref{T: new estimates} both contain a nonzero multiple of $e_2 = (0,1,0)$. We refer to this crucial property as \textit{distinguishability}. 
    For the first subgroup \eqref{E: example - first subgroup}, this follows since the coordinate of the monomial $h_1u_0$ in 
    \begin{align*}
        (0, 2h_1(n+u_0) + b_j(h),c_j(n+u_0,h))* (0, 2h_1(n+u_1) + b_{j'}(h),c_{j'}(n+u_1,h))\inv
    \end{align*}
    is $2e_2$ (recall that $t(h) = 2h_1$). To see this, observe that all the monomials in the third coordinate  have total degree 3. 
    For the second subgroup \eqref{E: example - second subgroup}, this follows again from the fact that the second coordinate of 
    \begin{align*}
        (0, 2h_1(n+u) + b_j(h),c_j(n+u,h))* (0, 2h_1(n+u) + b_{j'}(h),c_{j'}(n+u,h))\inv
    \end{align*}
    is a nonzero linear combination of degree-2 monomials (this time, they do not involve $n$ or $u$) while the third coordinate is a linear combination of degree-3 monomials. Hence by linear algebra,\footnote{We are using here the basic fact that if $a_1, \ldots, a_\kappa\in\Z[\bn]$ are distinct monomials in an indeterminate $\bn\in \Z^L$, and $v_1, \ldots, v_\kappa\in\Z^D$, then the group
    \begin{align*}
        \langle v_1 a_1(\bn) + \cdots + v_\kappa a_\kappa(\bn)\colon\; \bn\in\Z^L\rangle
    \end{align*}
    contains some multiples of $v_1, \ldots, v_\kappa$.
    } some multiple of $e_2$ must lie in \eqref{E: example - second subgroup}. Using this observation, the subgroup property of box seminorms,\footnote{Since each of \eqref{E: example - first subgroup} and \eqref{E: example - second subgroup} contains a multiple of $e_2$, the seminorm along (many copies of) the subgroups \eqref{E: example - first subgroup} and \eqref{E: example - second subgroup} is bounded by the seminorm along (many copies of) $e_2$.} as well as Theorem~\ref{T: new estimates}, we obtain the claimed estimate.
\end{proof}

\section{2-step nilpotent PET: the general case}\label{S: PET}

We now present the 2-step nilpotent PET induction scheme that proves Theorem~\ref{T: nilpotent seminorm estimates}. To understand its goals and mechanics, we strongly advertise the reader to first familiarize themselves with the example in the previous section as well as the high-level explanation of the argument given in Section \ref{SS: outline}.

\subsection{Setup and local conventions}
Throughout Section \ref{S: PET}, 
 we fix a 2-step nilpotent system $(X, \CX, \mu, U)$ generated by the group $H = \langle T_1, \ldots, T_D\rangle$. 
For $1\leq k<l\leq D$, set $R_{k,l} := [T_k,T_l]$, and let $$D' :=D + \binom{D}{2}=\frac{D(D+1)}{2}$$ be (an upper bound on) the dimension of $H$. We then identify $H$ with $\Z^{D'}$ by defining the 2-step nilpotent group operation on $\Z^{D'}$ as follows. Take $h=(h_\ab,h_\nab)$ and $h'=(h'_\ab,h'_\nab)$, where
\begin{gather*}
 h_\ab = (h_1, \ldots, h_D),\quad h_\ab'= (h'_1, \ldots, h'_D),\\
 h_\nab = (h_{k,l}\colon 1\leq k<l\leq D),\quad h_\nab' = (h'_{k,l}\colon 1\leq k<l\leq D),   
\end{gather*}
and each coordinate is in $\Z$. 
Then define $h*h'$ coordinatewise by 
\begin{align*}
    (h*h)_k &:= h_k + h'_k\quad \textrm{for}\quad 1\leq k\leq D,\\
    (h*h')_{k,l} &:= h_{k,l}+h'_{k,l} -h'_k h_l \quad \textrm{for}\quad 1\leq k<l\leq D.
\end{align*}
By reordering $T_1, \ldots, T_D$ if needed, we can assume that $U$ acts on $(X, \CX, \mu)$ via
\begin{align}\label{E: group representation}
    U_h := \prod_{1\leq j\leq D} T_j^{h_{j}} \circ \prod_{1\leq k<l\leq D} R_{k,l}^{h_{k,l}}
\end{align}
for $h\in\Z^{D'}$.

    Whenever the order of the nonabelian part of $\Z^{D'}$ does not matter, we write elements of $\Z^{D'}$ or $\Z^{D'}[n,h]$ as $x = (x_1, \ldots, x_{D'})$, where $x_1, \ldots, x_D$ are the coordinates corresponding to $T_1, \ldots, T_D$ and $x_{D+1}, \ldots, x_{D'}$ are the coordinates corresponding to $R_{k,l}$ ordered in some arbitrary fashion. If we want to specify a particular coordinate, we use $x_{k,l}$ to denote the coordinate of $x$ corresponding to $R_{k,l}.$

Our goal now is to study subfamilies of $\Z^{D'}[n,h]$ 
that arise in the PET induction scheme for polynomial families appearing in the statement of Theorem~\ref{T: nilpotent seminorm estimates}. In what follows, $h$ is always an indeterminate taking values in $\Z^{r}$ for some $r\in\N_0$. 
\begin{definition}[Basic definitions for polynomials]\label{d61}
Write $p\in\Z^{D'}[n,h]$ as 
\begin{align*}
    p(n,h) = \sum_{j=0}^\infty b_j(h)n^j \quad \textrm{for some {finitely many nonzero}}\quad b_j\in \Z^{D'}[h].
\end{align*}
We define the \textit{degree} of $p$ to be the maximum $j\in\N_0$ for which $b_j$ is a nonzero polynomial. If no such $j$ exists, i.e. $p$ is identically 0, we say that $\deg p  = - \infty$. If $p$ has degree $d\geq 0$, then we call $b_d$ the \textit{leading coefficient} of $p$. We call $p$ \textit{constant} if it is constant in $n$, i.e. its degree is 0 or $-\infty$. 
We call two polynomials $p,q\in\Z^{D'}[n,h]$ \textit{essentially distinct} if $p*q\inv$ is nonconstant (i.e. it depends nontrivially on $n$). Since $(p*q\inv)\inv = q*p\inv$, this is equivalent to saying that $q*p\inv$ is nonconstant, or that $p(n,h) \neq  r(h)*q(n,h)$ for any $r\in \Z^{D'}[h]$. 
We call $p,q$ \textit{distinct} if $p*q\inv\neq 0$, otherwise we call them \textit{identical}.
    
\end{definition}

\begin{definition}[Abelian and nonabelian parts of a polynomial]
    A polynomial $p\in\Z^{D'}[n,h]$ can be written as $p=(p_\ab, p_\nonab)$, where we call 
    \begin{align*}
     p_\ab = (p_1, \ldots, p_D)\quad \textrm{and} \quad p_\nonab = (p_{k,l}\colon\; 1\leq k<l\leq D)   
    \end{align*}
    the \textit{abelian} and \textit{nonabelian parts} of $p$ respectively.
\end{definition}

Given a polynomial family $\CP = \{p_1, \ldots, p_\ell\}\subseteq \Z^{D'}[n,h]$,
write $p_{j}=(p_{j,1},\dots,p_{j,D'})$ 
and let $$d := \max_{1\leq j\leq \ell}\max_{1\leq i\leq D'}\deg p_{j,i}$$ be the maximum degree of polynomials appearing in $\CP$.

While discussing the complexity of the family $\CP$, we use the following definitions.
\begin{definition}[Degree classes]\label{D: degree classes}
    Let $\CP = \{p_1, \ldots, p_\ell\}\subseteq\Z^{D'}[n,h]$.
    For $1\leq k\leq D$, set
    $$\FI_k:=\rem{1\leq j\leq \ell\colon\; {p_{j,k}\; \textrm{is nonconstant while }} p_{j,i}\; \textrm{constant for}\; k<i\leq D}.$$
    Define also
    $$\FI_\nab :=\{1\leq j\leq \ell\colon\; {p_{j,\nonab}\; \textrm{is nonconstant while }} p_{j,\ab}\; \textrm{is constant}\}.$$
\end{definition}
Thus, $\FI_k$ captures those polynomials that witness {a contribution from $T_k$ but} no contribution from $T_{k+1}, \ldots, T_D$ 
while $\FI_\nab$ takes care of those that only see the commutators. 

\begin{example}
    Let $\CP = \{p_1, \ldots, p_{D'}\}$, where $p_j = q_je_j$ for some  nonconstant $q_1, \ldots, q_{D'}\in\Z[n,h]$ and $e_1, \ldots, e_{D'}$ are the coordinate vectors. Then 
    $\FI_k = \{k\}$ for all $1\leq k\leq D$ 
    and $\FI_\nab = \{D+1, \ldots, D'\}$.
\end{example}
\begin{definition}[Abelian type]
    Let $\CP = \{p_1, \ldots, p_\ell\}\subseteq\Z^{D'}[n,h]$.
    For $1\leq k\leq D$, let $\omega_{k} := (\omega_{k, 1}, \ldots, \omega_{k, d})$, where for each $1\leq i\leq d$, the quantity $\omega_{k,i}$ denotes the number of distinct leading coefficients of degree $i$ among $\{p_{j,k}\colon\; j\in\FI_k\}$. 
    We then let $\omega_\ab := (\omega_{1}, \ldots, \omega_{D})$ denote the \textit{abelian type} of $\CP$.
\end{definition}

\begin{definition}[Nonabelian type]
     Let $\CP = \{p_1, \ldots, p_\ell\}\subseteq\Z^{D'}[n,h]$. For $1\leq i\leq d$, let $\omega_{\nab, i}$ be the number of distinct leading coefficients of degree $i$ among $$\{p_{j,\nab}\colon\; j\in\FI_\nab\}.$$ Then $\omega_{\nab} := (\omega_{\nab, 1}, \ldots, \omega_{\nab, d})$ denotes the \textit{nonabelian type} of $\CP$.
\end{definition}

An attentive reader may note that the definitions above describe the abelian parts of polynomial with significantly greater precision than their nonabelian parts. For instance, while the abelian type has a separate entry for each coordinate $1\leq k\leq D$, this is not the case for the nonabelian type. This is because the PET argument below does not require us to keep track of the nonabelian part of the polynomials to the same extent as it does for the abelian part.

\begin{definition}[Type]
    Let $\CP = \{p_1, \ldots, p_\ell\}\subseteq\Z^{D'}[n,h]$. Then we call $\omega := (\omega_\ab, \omega_\nab)$ its \textit{type}. 
    We call a type \textit{nondegenerate} if $\omega_\ab \neq 0$.  We call a type \textit{basic} if
    \begin{align*}
        \omega_\nonab = \omega_{1}=\dots=\omega_{D-1}={0}\quad \textrm{while}\quad \omega_{D}=(1,0,\dots,0).
    \end{align*} 
\end{definition}

We emphasize here that the notion of types in this section has nothing to do with the corresponding notion from Section \ref{S: seminorm estimates}. 

We order types as follows:
\begin{enumerate}
    \item we let $\omega_{k}>\omega_{k}'$ if there exists $1\leq d'\leq d$ such that $\omega_{k,d'}>\omega_{k,d'}'$ while $\omega_{k,i} = \omega_{k,i}'$ for $d'<i\leq d$; 
        \item we let $\omega_\ab>\omega'_\ab$ if there exists $1\leq k\leq D$ such that $\omega_{k}>\omega_{k}'$ while $\omega_{i} =\omega_{i}'$ for all $k<i\leq D$;
    \item we let $\omega_{\nab}>\omega_{\nab}'$ if there exists $1\leq d'\leq d$ such that $\omega_{\nab,d'}>\omega_{\nab,d'}'$ while $\omega_{\nab,i} = \omega_{\nab,i}'$ for $d'<i\leq d$;
    \item we let $\omega > \omega'$ if either $\omega_\ab > \omega'_\ab$, or $\omega_\ab = \omega_\ab'$ and $\omega_\nab > \omega_\nab'$.
\end{enumerate}
 The ordering on types reveals that the abelian part takes precedence over the nonabelian part in determining the type. The abelian type is ordered collexicographically: within it, the contribution of $T_D$ weighs more than the contribution of $T_{D-1}$, which in turn is more important than the contribution of $T_{D-2}$, etc. For fixed $1\leq k\leq D$, higher degree terms matter more than the lower degree terms. The nonabelian type is likewise ordered collexicographically.

\begin{example}\label{Ex: nilpotent PET type}
    To illustrate the definitions above, we present the simplest nontrivial example of a polynomial family covered by our setup. Take $D=2$ and consider 
    the average
    \begin{align*}
        \E_{n\in\Z}T^n f_1\cdot S^{n^2}f_2
    \end{align*}
    studied in Theorem~\ref{T: nilpotent joint ergodicity n n^2}. For $0\leq i\leq 4$, let $\omega^{(i)} = (\omega^{(i)}_\ab, \omega^{(i)}_\nonab)$ be the type of the families $\CP_i$ arising in the proof of Theorem~\ref{T: nilpotent joint ergodicity n n^2}. Then their types are as follows:
    \begin{enumerate}
        \item $\omega^{(0)}_\ab = ((1,0), (0,1))$ and $\omega^{(0)}_\nab = 0$; here, $(1,0)$ corresponds to $T^n$ and $(0,1)$ refers to $S^{n^2}$;
        \item $\omega^{(1)}_\ab = ((0,0), (0,1))$ and $\omega^{(1)}_\nab = 0$; the change in type compared to $\CP_0$ corresponds to $e_2 n^2$ now being the leading term of the abelian parts of both terms;
        \item $\omega^{(2)}_\ab = ((0,0), (3,0))$ and $\omega^{(2)}_\nab = 0$; now, the leading terms of the abelian parts are $2e_2 h_1 n$, $2e_2 h_2 n$, $2e_2 (h_1+h_2)n$;
        \item $\omega^{(3)}_\ab = ((0,0), (2,0))$ and $\omega^{(3)}_\nab = 0$; we have reduced to only two distinct linear terms of the abelian parts, $2e_2(h_2-h_1)n$ and $2e_2h_2n$;
        \item $\omega^{(4)}_\ab = ((0,0), (1,0))$ and $\omega^{(4)}_\nab = 3$; the only nonconstant leading term of the abelian part is $2e_2 h_1 n$, but now we have three terms with constant abelian parts and nonconstant nonabelian parts with three different leading terms: $2e_3h_2h_4n$, $2e_3h_2h_3n$, and $2e_3h_2(h_3+h_4)n$.
    \end{enumerate}
    The family $\CQ$ obtained after three more van der Corput operations has basic type; each of these operations lowers the nonabelian type by exactly one. 
\end{example}

We aim at seminorm estimates in terms of the transformation $T_D$, hence in polynomial families under consideration, we need to isolate this coordinate from the other ones. The precise property needed is captured in the following definition; its utility will become evident later on.
\begin{definition}[Distinguishable polynomials]\label{D: distinguishable}
    Let $p=(p_1, \ldots, p_{D'})\in\Z^{D'}[n,h]$ be a polynomial such that $\deg p_D = d'$. We say that $p$ is \emph{$D$-distinguishable} if 
    $p_D\neq  0$ and the polynomial\footnote{If $\deg p_D = 0$, i.e. $p_D(n,h) = q(h)$ for a nonzero $q\in\Z^{D'}[h]$, then we interpret $\Delta_{m_{d'}}\dots\Delta_{m_{1}}p$ as $p$.} $(\Delta_{m_{d'}}\dots\Delta_{m_{1}}p)_{D}\in\Z[n,h,m_{1},\dots,m_{d'}]$ is not a $\Q$-linear combination of 
    \begin{align*}
        \rem{(\Delta_{m_{d'}}\dots\Delta_{m_{1}}p){_{k,l}\colon\; 1\leq k<l\leq D}}.
    \end{align*}
    Here $\Delta_{m}p(n,h):=(\sigma_{m}p\ast p^{-1})(n,h)$ { and $\sigma_{m}p(n,h):=p(n+m,h)$.}
\end{definition}

For a polynomial $p\in\Z^{D'}[n,h]$, define
\begin{align}\label{E: tilde G}
    {G}(p):=\langle p(n,h)\colon\; (n,h)\in \Z^{r+1}\rangle. 
\end{align}

\begin{lemma}\label{lnh}
If $p$ is a $D$-distinguishable polynomial whose first $D-1$ coordinates are 0, then {some nonzero multiple of} $e_{D}$ is contained in ${G}(p)$.
\end{lemma}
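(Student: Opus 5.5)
Since the first $D-1$ coordinates of $p$ vanish, every value $p(n,h)$ lies in the subgroup $\{x\in\Z^{D'}\colon x_1=\cdots=x_{D-1}=0\}$. The first step is to check that this subgroup is abelian. In the product formula, $(x*y)_{k,l}=x_{k,l}+y_{k,l}-y_kx_l$ with $k<l\leq D$, so the correction term is nonzero only when both $k$ and $l$ are at least $D$, which cannot happen for $k<l$. Hence on this subgroup $*$ is ordinary addition. Consequently ${G}(p)$ is simply the additive subgroup of $\Z^{D'}$ generated by the vectors $p(n,h)$, and the operator $\Delta_m p=\sigma_mp*p\inv$ is the usual finite difference $p(n+m,h)-p(n,h)$ acting coordinatewise.

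Next, set $q:=\Delta_{m_{d'}}\cdots\Delta_{m_1}p\in\Z^{D'}[n,h,m_1,\dots,m_{d'}]$, where $d'=\deg p_D$. Every value of $q$ is an integer combination of values of $p$ (iterated differences), so $G(q)\subseteq G(p)$, and it suffices to find a nonzero multiple of $e_D$ in $G(q)$. Write each coordinate of $q$ in the monomial basis, $q=\sum_{\alpha} v_\alpha \mathbf{x}^\alpha$ with distinct monomials $\mathbf{x}^\alpha$ in $(n,h,m)$ and vectors $v_\alpha\in\Z^{D'}$ supported on coordinate $D$ and the nonabelian coordinates. By the linear-algebra fact quoted in the footnote of Section \ref{S: PET n, n^2} (distinct monomials are linearly independent as functions on $\Z^{r+1+d'}$, so a finite-difference or Vandermonde argument over integer points isolates each monomial up to a nonzero integer factor), $G(q)$ contains $c_\alpha v_\alpha$ for some nonzero integers $c_\alpha$. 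Therefore $G(q)\otimes\Q$ equals the $\Q$-span $V$ of the coefficient vectors $v_\alpha$.

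The final step is linear algebra. Consider the linear map $\phi$ from $\Q^{\{D\}\cup\{(k,l)\}}$ that sends a row vector $c$ to $\sum_\alpha c\cdot v_\alpha\, \mathbf{x}^\alpha$, that is, to the polynomial $c_Dq_D+\sum c_{k,l}q_{k,l}$. The vector $e_D$ lies in $V$ exactly when no functional vanishing on $V$ is nonzero at $e_D$. Equivalently, $e_D\notin V$ would mean there is some $c$ with $c_D\ne0$ and $c\cdot v_\alpha=0$ for all $\alpha$, i.e. $c_Dq_D+\sum c_{k,l}q_{k,l}=0$. This would exhibit $q_D$ as a $\Q$-combination of the $q_{k,l}$, contradicting $D$-distinguishability. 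So $e_D\in V$, and clearing denominators gives a nonzero integer multiple of $e_D$ in $G(q)\subseteq G(p)$. The case $d'=0$ is identical with $q=p$.

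The main obstacle is justifying that $G(q)$ contains nonzero multiples of each monomial coefficient vector, i.e. passing from the span over $\Q$ of polynomial values to integer multiples. I would handle this by applying iterated finite differences in each variable to extract top monomials with factorial multipliers, then inducting downward on the monomial order.
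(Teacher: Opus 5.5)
Your argument is correct and is essentially the paper's. Both proofs rest on the observation that $*$ restricts to ordinary addition on the subgroup where the first $D-1$ coordinates vanish, so $\Delta_m$ becomes the usual difference operator, and on the duality between a rational relation among the coordinates and the generated group failing to contain a multiple of $e_D$. The only difference is direction: the paper pulls the absence of a relation back from $\Delta_{m_{d'}}\cdots\Delta_{m_1}p$ to $p$, whereas you push the conclusion forward via $G(q)\subseteq G(p)$.

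The step you flag as the main obstacle is in fact unnecessary. A functional $c$ vanishing on all values of $q$ already makes $c_Dq_D+\sum c_{k,l}q_{k,l}$ a polynomial vanishing on all of $\Z^{r+1+d'}$, hence identically zero, so no monomial extraction is needed.
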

 \begin{proof} 
 For $q\in\Z[n,h]$, set $\partial_mq(n,h):=q(n+m,h)-q(n,h)$.
 The key observation is that since the first $D-1$ coordinates of $p$ are 0, we are in the commuting case, i.e.
 \begin{align}\label{E: iterated derivatives of i,D}
     (\Delta_{m_{d}}\dots\Delta_{m_{1}}p)_{k,l} = \partial_{m_{d}}\dots\partial_{m_{1}}p_{k,l}
 \end{align}
 for any $1\leq k<l\leq D$.
 For instance, 
 \begin{align*}
     (\Delta_m p)_{k,l}(n,h) &= (p(n+m,h)*p(n,h)\inv)_{k,l}\\
     &= p_{k,l}(n+m,h)-p_{k,l}(n,h)\\
     &= (\partial_m p_{k,l})(n,h),
 \end{align*} 
 i.e. there are no extra terms coming from the commutators. It is then easy to see by induction that if 
 \begin{align*}
     p_D = \sum_{1\leq k < l\leq D}c_{k,l} \cdot p_{k,l}
 \end{align*}
 for some $c_{k,l}\in\Q$, then
 \begin{align*}
     \partial_m p_D &= \sum_{1\leq k < l\leq D}c_{k,l} \cdot \partial_m p_{k,l},\\
     \partial_{m_2}\partial_{m_1} p_D &= \sum_{1\leq k < l\leq D}c_{k,l} \cdot \partial_{m_2}\partial_{m_1} p_{k,l},\\
     &\vdots\\
     \partial_{m_{d'}}\dots\partial_{m_{1}}p_D &=\sum_{1\leq k < l\leq D}c_{k,l} \cdot \partial_{m_{d'}}\dots\partial_{m_{1}} p_{k,l},
 \end{align*}
 which by the identity \eqref{E: iterated derivatives of i,D} contradicts the $D$-distinguishability of $p$.
Hence $p_D$ is $\Q$-linearly independent from $\{p_{k,l}\colon 1\leq k<l\leq D\}$, and so ${G}(p)$ must contain {some nonzero multiple of} $e_{D}$.
 \end{proof}

Our seminorm estimates in this section will work for families satisfying the following condition, which is inspired by - but goes significantly beyond - a similar definition in \cite[Section 5.2.1]{CFH11}. 
\begin{definition}[Nice families]\label{D: nice families}
Denote $p_{0}:=0$.
    We call the polynomial family $\CP= \{p_1, \ldots, p_\ell\}\subseteq\Z^{D'}[n, h]$ \textit{nice} if it satisfies the following conditions: 
    \begin{enumerate}
        \item $\FI_D$ is nonempty (so in particular $\CP$ is nondegenerate);
        \item $\deg (p_{j,D}-p_{j',D}) > \deg (p_{j,k}-p_{j',k})$ for all $j\in\FI_D$, ${0}\leq j'\leq \ell$ distinct from $j$, and $1\leq k <D$;
        {\item $\deg (p_{j,D}-p_{j',D})\geq \max(\deg p_{j,D}, \deg p_{j',D})-1$ for all distinct $j,j'\in\FI_D$;}
        \item $p_{j}\ast p_{j'}^{-1}$ is $D$-distinguishable for all $j\in \FI_D$ and $0\leq j'\leq \ell$ distinct from $j$. 
    \end{enumerate}
    We refer to these conditions as the \textit{first, second, third}, and \textit{fourth nice property}.
\end{definition}
We remark that members of nice families need not be essentially distinct in the sense considered in this paper, i.e. up to $\Z^{D'}[h]$ terms. This is one of many notable differences between the abelian and nilpotent PETs.

The conditions above play the following role:
\begin{enumerate}
    \item The first nice property ensures that some $p_j$ witnesses contribution from $T_D$.
    \item The second nice property ensures that as we run the van der Corput operations defined below, the terms with a nontrivial contribution from $T_D$ remain of higher degree than the terms with contributions from $T_1, \ldots, T_{D-1}$ but not $T_D$. In the long run, this allows us to annihilate the contributions of $T_1, \ldots, T_{D-1}$ while maintaining a contribution from $T_D$. This condition is modeled on the analogous properties constituting the definition of niceness in \cite[Section 5.2.1]{CFH11}. One divergence from the latter work is that over there, it sufficed to choose a single $j\in\FI_D$ in the definition of niceness. We need to modify this condition because we will invoke Theorem~\ref{T: new estimates} for an average with $\FI_D = \{1, \ldots, \ell\}$, and these seminorms require us to compare all polynomials with each other.
    \item {The last two conditions are specific to the nilpotent setting and have no analogs in \cite{CFH11} or any other abelian PET arguments. They guarantee that the contribution of $T_D$ in the polynomials in $\FI_D$ can always be disentangled from the contributions of the commutators, in that the $D$-th coordinate of $p_j*p_{j'}\inv$ is $\Q$-independent from the coordinates coming from the commutators.\footnote{We do not know whether this exact property remains preserved by the van der Corput operation, so instead we impose the third and fourth nice properties, both of which pass from one stage of PET to another. {These two conditions are stronger than they should be; for instance, we believe that the average \begin{align*}
        \E_{n\in\Z} T_1^{n}f_1\cdot T_2^{n^4 + n^3}f_2\cdot T_2^{n^4 + n^3 + n^2}f_3
    \end{align*}
    also admits Host-Kra seminorm estimates even though it does not satisfy the third nice property. However, for our applications, the third and fourth nice properties are good enough.}
    } 
    This ensures that our final seminorm estimate can be framed purely in terms of $T_D$ rather than a mixture of $T_D$ and commutators.} 
\end{enumerate}

     As we run the PET argument, we want to make sure that an initial function associated with the polynomial of highest-degree $D$-th coordinate survives. The next definition allows us to keep track of this function throughout the whole process.
     \begin{definition}[Pinned polynomial family]
     A \emph{pinned (polynomial) family} is a pair $(\mathcal{P},p_{j})$ where $\mathcal{P}=\{p_{1},\dots,p_{\ell}\}\subseteq\Z^{D'}[n,h]$ is a family of distinct polynomials and $j\in \{1,\dots,\ell\}$. We say that $(\mathcal{P},p_{j_0})$ is \emph{nice} if the following two conditions hold:
     \begin{enumerate}
         \item {$\CP$ is nice;}
         \item $\deg p_{j_0,D}\geq \deg p_{j,D}$ for all $1\leq j\leq \ell$.    
     \end{enumerate}
          We say that $(\CP, p_{j_0})$ is \textit{nondegenerate/basic} if $\CP$ is, and we let the (abelian/nonabelian) type of $(\CP, p_{j_0})$ be the (abelian/nonabelian) type of $\CP$.
     \end{definition}

The next lemma ensures the niceness of the families of interest to us.
\begin{lemma}\label{L: niceness}
    Let $q_1, \ldots, q_D\in\Z[n]$ be nonconstant polynomials of increasing degrees. Set $q_{j,\eps}(n,h):=q_j(n+\eps\cdot h)$ and $p_{j,\eps}:=q_{j,\eps}e_j$ for $\eps\in\{0,1\}^{r}$ and $1\leq j \leq D$, and define
    \begin{align*}
        \CP := \{p_{j,\eps}\colon\; 1\leq j\leq D,\; \eps\in\{0,1\}^{r}\}.
    \end{align*}
     {Then $(\CP, p_{D,\eps})$ is nice for every $\eps\in\{0,1\}^{r}$.}
\end{lemma}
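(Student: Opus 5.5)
The plan is to verify the four nice properties directly, plus the pinning condition, using the fact that every member of $\CP$ is supported on a single abelian coordinate. Write $d_j:=\deg q_j$, so $0<d_1<\cdots<d_D$. The polynomial $p_{j,\eps}$ has $j$-th coordinate $q_j(n+\eps\cdot h)$, of degree $d_j$ in $n$, and all other coordinates (abelian and nonabelian) vanish.

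Hence $\FI_k=\{(k,\eps)\colon \eps\in\{0,1\}^r\}$ for each $k$. In particular $\FI_D\neq\emptyset$, which is the first nice property. Every $p_{D,\eps'}$ has $D$-th coordinate of degree $d_D$, the maximum over the family, so the pinning condition $\deg p_{D,\eps,D}\ge\deg p_{j,D}$ holds for every $\eps$.

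For the second property, take $j=(D,\eps)$ and $j'$ any other index, including $0$.
\begin{itemize}
\item If $j'=(D,\eps')$ with $\eps'\neq\eps$, then $q_D(n+\eps\cdot h)-q_D(n+\eps'\cdot h)$ has degree exactly $d_D-1\ge 0$. Its leading term is $d_D c\,((\eps-\eps')\cdot h)\,n^{d_D-1}$, where $c$ is the leading coefficient of $q_D$, and this is nonzero in $\Z[n,h]$. The $k$-th coordinates for $k<D$ are both zero, so their difference has degree $-\infty$.
\item If $j'=(k',\eps')$ with $k'<D$, or $j'=0$, then the $D$-th difference has degree $d_D$. The $k$-th difference has degree at most $d_{D-1}<d_D$.
\end{itemize}
The third property follows from the first computation: for distinct $j,j'\in\FI_D$ the difference has degree $d_D-1=\max-1$.

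The fourth property, $D$-distinguishability of $p_j*p_{j'}\inv$, is the step needing the most care, because of commutator terms. I would first compute $p_j*p_{j'}\inv$ via the group law. Using $x\inv=(-x_\ab,\,-x_\nab-(\text{quadratic in }x_\ab))$ and the product formula, its nonabelian part is a sum of products of the coordinate polynomials of $p_j$ and $p_{j'}$. Each $p$ has only one nonzero abelian coordinate, so each commutator coordinate $(k,l)$ picks up at most products such as $q_{k}(\cdot)q_{l}(\cdot)$, and only when $\{k,l\}$ is the pair of supports. The term $x_kx_l$ inside the inverse vanishes, since $x$ has a single nonzero abelian coordinate.

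Concretely:
\begin{itemize}
\item If $j'=(D,\eps')$ or $j'=0$, both factors are supported on coordinate $D$ only. The nonabelian part then vanishes identically, so the element is $(q_D(n+\eps h)-q_D(n+\eps' h))e_D$ (or $q_D(n+\eps h)e_D$).
\item If $j'=(k',\eps')$ with $k'<D$, the only possibly nonzero commutator coordinate is $(k',D)$. It equals $\pm q_{k'}(n+\eps'\cdot h)\,q_D(n+\eps\cdot h)$, up to the sign and ordering convention.
\end{itemize}

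In the first subcase the commutator coordinates are zero. Take $d'=\deg$ of the $D$-th coordinate and apply the $d'$ iterated derivatives; they act coordinatewise, as in the proof of Lemma~\ref{lnh}, since the nonabelian parts stay zero. They produce the nonzero polynomial $d'!\cdot(\text{leading coefficient})\,m_1\cdots m_{d'}$, and a nonzero polynomial is never a $\Q$-combination of zero polynomials. When $d'=0$ the convention in Definition~\ref{D: distinguishable} gives the same conclusion.

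In the second subcase the first $D-1$ coordinates are not all zero, so the coordinatewise identity fails. Here I would argue by degrees in $n$ rather than compute $\Delta$ exactly. Each $\Delta_m$ is $\sigma_m p*p\inv$; its abelian part is $\partial_m$ applied coordinatewise. Its $(k',D)$ part is $\partial_m$ of the $(k',D)$ coordinate plus a correction term built from abelian parts, of the form $(\partial_m p_{k'})\cdot p_D$ (or similar).

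Tracking total degree in $(n,m_1,\ldots)$ after $d_D$ derivatives:
\begin{itemize}
\item The $D$-th coordinate becomes the constant-in-$n$ nonzero polynomial $d_D!\,c\,m_1\cdots m_{d_D}$, multilinear of degree $d_D$ in the $m$'s.
\item The commutator coordinate is a polynomial in which every monomial has total degree at least $d_{k'}+d_D$ in $(n,h,m)$, because each summand is a product of pieces of degree $d_{k'}$ and $d_D$ after differencing.
\end{itemize}
More carefully, one should check that the $(k',D)$ coordinate either has no monomial equal to $m_1\cdots m_{d_D}$, or is a multiple of it only trivially. The fallback is to compare degrees in $n$ and $h$ jointly: homogeneous parts of top degree cannot match, since $d_{k'}\ge1$ pushes the commutator coordinate's degree above $d_D$.

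I expect this bookkeeping of commutator contributions under iterated $\Delta$ to be the main obstacle. My first attempt would be a clean induction showing that after $t$ derivatives the $(k',D)$ coordinate has every monomial of total degree at least $d_{k'}+d_D$, while the $D$-th coordinate is homogeneous of degree $d_D$. That shows the $D$-th coordinate cannot be a $\Q$-multiple of the commutator coordinate, which gives $D$-distinguishability.
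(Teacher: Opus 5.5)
Your verification of the first three nice properties, the pinning condition, and the commuting subcase of the fourth property ($j'=(D,\eps')$ or $j'=0$) is correct and agrees with the paper. The gap is in the remaining subcase $j'=(k',\eps')$ with $k'<D$.

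You plan to prove by induction that after $t$ differencings every monomial of the $(k',D)$-coordinate has total degree at least $d_{k'}+d_D$. That invariant is false whenever $q_{k'}$ or $q_D$ is not a monomial. For example, take $r=0$, $D=2$, $q_1(n)=n$, $q_2(n)=n^2+n$. The group law gives $p=p_2*p_1^{-1}=(-n,\,n^2+n,\,n^3+n^2)$, and
\begin{align*}
(\Delta_{m_2}\Delta_{m_1}p)_{1,2}=2nm_1m_2+m_1m_2^2+m_1m_2,
\end{align*}
which contains the degree-$2$ monomial $m_1m_2$. For monomial $q_j$ everything is homogeneous in $(n,h,m)$ and your invariant does hold, but the lemma is needed for arbitrary nonconstant $q_j$. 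Your fallback, comparing top homogeneous parts, presupposes that the degree-$(d_{k'}+d_D)$ part survives all $d_D$ differencings. That is precisely the point to be proved, and it is not automatic: already at the first step $\partial_{m}(AB)-A\,\partial_m B=(\partial_m A)\,\sigma_m B$, so the two contributions partially cancel.

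A clean fix is to track the degree in $n$ only. For $x$ supported on $e_{k'}$, $e_D$ and the $(k',D)$-coordinate, the group law gives
\begin{align*}
(\Delta_m x)_{k'}=\partial_m x_{k'},\qquad (\Delta_m x)_D=\partial_m x_D,\qquad (\Delta_m x)_{k',D}=\partial_m x_{k',D}+x_{k'}\,\partial_m x_D,
\end{align*}
and no other commutator coordinate ever appears.

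Set $A:=q_{k'}(n+\eps'\cdot h)$ and $B:=q_D(n+\eps\cdot h)$, and let $c_{k'},c_D$ be the leading coefficients of $q_{k'},q_D$. In these coordinates $p=(-A,\,B,\,AB)$. So $(\Delta_{m_1}p)_{k',D}=(\partial_{m_1}A)(\sigma_{m_1}B)$, which has $n$-degree $d_{k'}+d_D-1$ and leading coefficient $d_{k'}c_{k'}c_D\,m_1\neq 0$.

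For $t\geq 1$, the next differencing lowers this $n$-degree by exactly one. The reason is that the correction term $-(\partial_{m_t}\cdots\partial_{m_1}A)(\partial_{m_{t+1}}\cdots\partial_{m_1}B)$ has $n$-degree at most $d_{k'}+d_D-2t-1$, strictly below the main term.

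After $d_D$ steps the commutator coordinate therefore has $n$-degree exactly $d_{k'}\geq 1$. The $D$-th coordinate is the nonzero, $n$-free polynomial $d_D!\,c_D\,m_1\cdots m_{d_D}$, so it cannot be a $\Q$-multiple of the commutator coordinate.

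You were right to insist on the differenced polynomials here. The paper's proof only notes that $\pm q_{D,\eps}q_{k',\eps'}$ has larger degree than $q_{D,\eps}$, and the $n$-degree count above is what makes that comparison valid after differencing.
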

\begin{proof}
    Observe first that $\FI_k = \{(k,\eps)\colon\; \eps\in\{0,1\}^{r}\}$ for every $1\leq k\leq D$. Clearly, $\FI_D$ is nonempty and
    \begin{align}\label{E: fourth}
        \deg (q_{D,\eps} - q_{D,\eps'}) = \deg q_{D} - 1
    \end{align}
    for all distinct $\eps\neq \eps'$. This implies the first and third nice properties.
    The second nice property follows from \eqref{E: fourth} and the fact that $q_j$'s have increasing degrees.
     For the fourth nice property, we observe that $p_{D,\eps}$, $p_{D,\eps'}$ commute for any two distinct $\eps,\eps'\in\{0,1\}^{r}$, so the nonabelian part of their differences is trivial. If by contrast we look at $p_{D,\eps}*p_{j,\eps'}\inv$ for some $1\leq j<D$ and (not necessarily distinct) $\eps,\eps'\in\{0,1\}^{r}$, then it has a nontrivial commutator only at the index $(j,D)$. That coordinate equals $-q_{D,\eps}q_{j,\eps'}$, so it is a polynomial of higher degree than $q_{D,\eps}$ and hence the two cannot be scalar multiples of each other.

     Thus, the family $\CP$ is nice. The second condition from the definition of nice pinned families clearly follows from the assumption on the polynomials, yielding the niceness of $(\CP, p_{D,\eps})$ for every $\eps\in\{0,1\}^{r}$.
\end{proof}

Our main theorem in this section takes the following form.
\begin{theorem}[Host-Kra seminorm control for nice families]\label{T: nilpotent seminorm estimates for nice families}
    Let $d, D,\ell\in\N$ and $r\in\N_0$. There exists a natural number $s = O_{d,\ell, r}(1)$ such that for any 2-step nilpotent system $(X, \CX, \mu, U)$ given by \eqref{E: group representation}, {nice pinned family $(\CP, p_{\ell})$ of degree at most $d$} (where $\CP := \{p_1, \ldots, p_\ell\}\subseteq\Z^{D'}[n,h]$), and functions $f_1, \ldots, f_\ell\in L^\infty(\mu)$, 
    \begin{align*}
        \oE_{h\in \Z^{r}}\norm{\E_{n\in\Z}U_{p_1(n,h)}f_1\cdots U_{p_\ell(n,h)}f_\ell}_{L^2(\mu)} = 0
    \end{align*}
    whenever $\nnorm{f_\ell}_{s,T_D} = 0$. 
\end{theorem}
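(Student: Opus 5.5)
We argue by induction on the type $\omega$ of the nice pinned family $(\CP,p_\ell)$, following the template of Section~\ref{S: PET n, n^2}. There are two base cases. The first is when $\omega$ is basic, or more generally when $\omega_1=\cdots=\omega_{D-1}=0$, $\omega_\nab=0$ and all abelian parts are multiples of $e_D$. The second is when $T_1,\ldots,T_{D-1}$ have already been annihilated, so that only $T_D$ and the commutators $R_{k,l}$ appear. Because the group is 2-step nilpotent, $T_D$ and the $R_{k,l}$ commute, so the average lives in a $\Z^{D'}$-system once the relevant coordinates are restricted.

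In this commuting situation we turn the iterated average into a single multiparameter one. We apply the symmetric van der Corput inequality once, then combine Walsh's uniform convergence theorem with the Bergelson--Leibman Fubini principle, exactly as in \eqref{E: average example 2}--\eqref{E: average example 3}. This yields an average over $(n,h,u_0,u_1)\in\Z^{r+3}$ of the polynomials $p_j(n+u_\eps,h)$. We then apply Theorem~\ref{T: new estimates} to it. The groups that arise are of the form $G(p_j(n+u_0,h)*p_{j'}(n+u_1,h)\inv)$ and $G(p_j*p_{j'}\inv)$. The fourth nice property makes $p_j*p_{j'}\inv$ $D$-distinguishable, and an analogous statement holds after shifting by $u_0,u_1$. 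Lemma~\ref{lnh} therefore shows that each such group contains a nonzero multiple of $e_D$. For indices not in $\FI_D$, where the difference with $p_0=0$ is involved, the second nice property guarantees a nonconstant $D$-th coordinate. The subgroup property of box seminorms, together with the index bound for finite-index subgroups $\langle ce_D\rangle\subseteq\langle e_D\rangle$, then bounds everything by $\nnorm{f_\ell}_{s,T_D}$.

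For the inductive step, suppose $(\CP,p_\ell)$ is nice but not yet in the base case. We choose a polynomial $p_{j_1}$ of minimal complexity: one whose $D$-th coordinate is absent or of lowest degree, taking first the $\FI_k$ with $k<D$ of lowest degree, and otherwise an element of $\FI_\nab$ or a lowest-degree element of $\FI_D$. We then apply the van der Corput inequality in $n$ with a new parameter $h_{r+1}$, compose with $U_{p_{j_1}(n,h)}\inv$, and use Cauchy--Schwarz to discard the terms that are constant in $n$. This produces the family
\[
\CP'=\{\sigma_{h_{r+1}}^{\eps}p_j * p_{j_1}\inv\},
\]
pinned at $p_\ell * p_{j_1}\inv$. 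We must check three things: the type of $\CP'$ is strictly smaller, in the colexicographic order with the abelian part taking precedence; $\CP'$ is again nice; and the pinned polynomial still has maximal $D$-degree and survives. The second nice property ensures that the $D$-th coordinates keep strictly higher degree than the $T_k$-coordinates, so $\FI_D$ stays nonempty and the pinned term is not killed. The third property passes to $\CP'$ because right multiplication by $p_{j_1}\inv$ cancels, since $(p_j*p_{j_1}\inv)*(p_{j'}*p_{j_1}\inv)\inv=p_j*p_{j'}\inv$ up to conjugation. The type counts are preserved or drop as in the abelian PET of \cite{CFH11}. Since the number of types reachable below a given one is $O_{d,\ell,r}(1)$, the procedure terminates within boundedly many steps, which gives $s=O_{d,\ell,r}(1)$.

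The main obstacle is showing that $D$-distinguishability (the fourth nice property) is preserved under $p\mapsto \sigma_m p*p_{j_1}\inv$ and its pairwise differences. In a 2-step group, conjugation $q\mapsto p_{j_1}*q*p_{j_1}\inv$ changes only the commutator coordinates, and it does so by bilinear terms $q_kp_{j_1,l}-q_lp_{j_1,k}$. My first attempt is to differentiate $d'=\deg(\cdot)_D$ times as in Definition~\ref{D: distinguishable}. The third and second nice properties control the degree of these bilinear terms relative to the $D$-coordinate, and I would show that after the iterated $\Delta_{m_i}$ the top-degree part of the new $D$-coordinate is unchanged while any new commutator contributions are either lower degree or involve the fresh variable $h_{r+1}$ in a way that cannot cancel the $D$-coordinate. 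This should follow from a top-degree monomial comparison, and the resulting bookkeeping is what I expect to be the most delicate part of the argument.
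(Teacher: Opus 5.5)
Your outline follows the paper's architecture: van der Corput PET on nice pinned families, then symmetric van der Corput with Walsh and Bergelson--Leibman, then Theorem~\ref{T: new estimates} and Lemma~\ref{lnh}. However, the stopping point is wrong. Stopping once ``only $T_D$ and the commutators appear'' still allows members of $\FI_\nab$, i.e.\ polynomials whose abelian part is constant in $n$ but whose nonabelian part is not. The first three members of $\CP_4$ in Section~\ref{S: PET n, n^2} are exactly of this kind. For such $p_j$, the symmetric van der Corput step creates $p_j(n+u_0,h)*p_j(n+u_1,h)^{-1}$, whose abelian part vanishes identically. So the corresponding group in Theorem~\ref{T: new estimates} lies in the commutator subgroup and contains no nonzero multiple of $e_D$. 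You are left with a seminorm having a slot along powers of the $R_{k,l}$ alone, which gives no control by $\nnorm{f_\ell}_{s,T_D}$; this is precisely the problem with \eqref{E: naive control}.

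Your appeal to the second nice property does not help. It only constrains pairs with one index in $\FI_D$. Moreover, a nonconstant $D$-th coordinate would not suffice anyway: Lemma~\ref{lnh} needs $D$-distinguishability, which the fourth nice property supplies only for $j\in\FI_D$.

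The paper keeps running van der Corput until the family is \emph{basic}: $\FI_\nab=\emptyset$ and all $D$-coordinates have the form $t(h)n+b_j(h)$ with a common $t$. Reaching this needs two things:
\begin{itemize}
\item $\omega_\nab$ must be part of the type;
\item $m$ must be taken in $\FI_\nab$ \emph{before} any $\FI_k$.
\end{itemize}
With your order ($\FI_k$ for $k<D$ first), every member of $\FI_\nab$ moves into $\FI_k$ with leading coefficient $-c_m$. This new class replaces the class of $c_m$, so $\omega_k$ does not decrease and your type reduction fails.

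Basicness does two further jobs. Via the second nice property, it forces all $T_1,\dots,T_{D-1}$-coordinates to equal a single $a(h)$, which is then removed by translating by $(a(h),0,0)^{-1}$; your word ``annihilated'' glosses over this step. It is also what lets $q'_j(n,h)*q'_j(0,h)^{-1}$ inherit $D$-distinguishability from $q'_j$ in the self-pair case.

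Second, your pinning can lose $f_\ell$. If $j_1=\ell$ (which ``a lowest-degree element of $\FI_D$'' permits), then $p_\ell*p_{j_1}^{-1}=0$ and Cauchy--Schwarz discards $f_\ell$. If $p_{\ell,D}$ shares its leading coefficient with $p_{j_1,D}$ while some other $p_{j,D}$ does not, the pinned term loses maximal $D$-degree and can later be removed. The paper instead pins at $\sigma_{h_{r+1}}p_\ell*p_m^{-1}$. When all $D$-coordinates have equal degree, it picks $m$ whose leading coefficient differs from that of the pinned polynomial.

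Finally, the step you single out as the main obstacle is routine. Every term is multiplied on the right by the same $p_m^{-1}$, so exactly $(p_j*p_m^{-1})*(p_{j'}*p_m^{-1})^{-1}=p_j*p_{j'}^{-1}$, with no conjugation. The same holds for the $\sigma_{h_{r+1}}$-shifted versions. Also $\sigma_{h_{r+1}}p_j*p_j^{-1}=\Delta_{h_{r+1}}p_j$, whose $D$-distinguishability is equivalent to that of $p_j$. The mixed pair $\sigma_{h_{r+1}}p_j*p_{j'}^{-1}$ reduces to $p_j*p_{j'}^{-1}$ by setting $h_{r+1}=0$, with the third nice property guaranteeing that the $D$-degrees agree.
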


\subsection{Complexity reduction via the van der Corput operation}

All PET arguments rely on complexity reduction, accomplished via applying the van der Corput inequality to every intermediate average. In our setting, this induces the following operation on polynomial families under consideration.
\begin{definition}[Van der Corput operation]\label{D: vdC operation}
Given $\CP= \{p_1, \ldots, p_\ell\}\subseteq\Z^{D'}[n, h]$ and an index $1\leq m\leq \ell$, we define the new polynomial family
      \begin{align*}
            \partial_m\CP := &\{\sigma_{h_{r+1}}{p}_1 *{p}_m\inv, \ldots, \sigma_{h_{r+1}}{p}_\ell * {p}_m\inv, {p}_1*{p}_m\inv, \ldots,  {p}_\ell*{p}_m\inv\}^\sharp,
      \end{align*}
      where
           \begin{align*}
          \sigma_{h_{r+1}}p(n, h) := p(n+h_{r+1}, h),
      \end{align*} while the $^\sharp$ operation removes the polynomials independent of $n$.
      However, unlike in the commutative setting, we do not group polynomials that are not essentially distinct.
     We refer to $\CP\mapsto \partial_m \CP$ as a \textit{van der Corput operation}.  

     If $(\CP, p_{j_0})$ is a pinned polynomial family, we define the van der Corput operation via 
     $$\partial_{m}(\mathcal{P},p_{j_0}):=(\partial_{m}\mathcal{P},\sigma_{h_{r+1}}p_{j_0}\ast p_{m}^{-1})$$ as long as $\sigma_{h_{r+1}}p_{j_0}\ast p_{m}^{-1}$ is not constant.
     \end{definition}

We multiply by $p_m\inv$ on the right because on composing an integral under consideration with $U_{p_m}\inv$, we act on every term via
\begin{align*}
    U_{p_m}\inv (U_p f) 
    = U_{p*p_m\inv}f.
\end{align*}

       We choose the index $1\leq m\leq \ell$ in the van der Corput operation according to the algorithm provided by the following definition.

       \begin{definition}[Good index for the van der Corput operation]
       Let $\CP= \{p_1, \ldots, p_\ell\}\subseteq \Z^{D'}[n, h]$, and suppose that $(\CP,p_{j_{0}})$ is a nice pinned polynomial family. We say that $1\leq m\leq \ell$ is \textit{good for the van der Corput operation} if it is chosen as follows:
        \begin{enumerate}
            \item if $\omega_\nab \neq 0$, or equivalently $\FI_\nab$ is nonempty, then $m\in\FI_\nab$ and moreover $\deg p_{m,\nab}$ has the least degree among all possible such choices;
            \item if $\omega_\nab = 0$, i.e. $p_{j,\ab}$ is nonconstant for every $1\leq j\leq \ell$, then $m\in \FI_k$ for the smallest $1\leq k\leq D$ for which $\omega_k \neq 0$ (such $k$ exists since $\CP$ is nondegenerate), and moreover $m$ is picked so that $p_{m,k}$ has the smallest degree among $\{p_{j,k}\colon\; j\in\FI_k\}$;
            \item in the second case, if additionally $k=D$ and all $p_{1,D},\dots,p_{\ell,D}$ have the same degree, then we pick $m$ for which $p_{m,D}$ has a different leading coefficient from $p_{j_{0},D}$ if such a term exists (otherwise we take any $m$). 
        \end{enumerate}
       \end{definition}

        By taking $m$ good for the van der Corput operation, we can lower the type of the polynomial family.
       \begin{lemma}\label{L: type reduction}
         Let $\CP= \{p_1, \ldots, p_\ell\}\subseteq\Z^{D'}[n, h]$ be a polynomial family, and take $1\leq m\leq \ell$ \textit{good for the van der Corput operation}. Then $\partial_m\CP$ has smaller type than $\CP$.   
       \end{lemma}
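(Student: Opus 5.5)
The plan is a case analysis following the three clauses in the definition of a good index. The principle is the usual one for PET: right-multiplying by $p_m\inv$ and shifting $n\mapsto n+h_{r+1}$ leaves untouched every leading coefficient of degree above that of the subtracted term. It also removes the class of leading coefficients shared with $p_m$ at its own degree. The noncommutative correction terms will only affect coordinates that are lower in the ordering on types.

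First I record two facts.

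1. By the group law on $\Z^{D'}$, the abelian part of $p*p_m\inv$ is $p_\ab-p_{m,\ab}$.
2. The commutator coordinate is $(p*p_m\inv)_{k,l}=p_{k,l}-p_{m,k,l}+(\text{terms of the form } p_{m,k}\,p_l \text{ up to constants in } n)$, using the formula for $(x,y,z)\inv$ in coordinates.

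Also, $\sigma_{h_{r+1}}$ preserves the degree in $n$ and the leading coefficient in $\Z[h]$ of every coordinate. Finally, the $^\sharp$ operation only deletes polynomials, so it can only lower each entry of the type.

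Case $\omega_\nab\neq 0$, so $m\in\FI_\nab$ with $\deg p_{m,\nab}$ minimal. Here $p_{m,\ab}$ depends only on $h$.

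1. Abelian part. Each $(p_j*p_m\inv)_\ab$ and $(\sigma_{h_{r+1}}p_j*p_m\inv)_\ab$ differs from $p_{j,\ab}$ by an $n$-independent term plus a shift. So the classes $\FI_k$ and all abelian leading coefficients are unchanged, and $\omega_\ab$ is preserved.
2. Commutator correction. The correction term in the commutator coordinates has the form $(\text{constant in } n)\cdot p_{j,l}$. For $j\in\FI_\nab$ this is constant in $n$. Hence for $j\in\FI_\nab$ the nonabelian part becomes $p_{j,\nab}-p_{m,\nab}$ up to $n$-constant terms, and no $j\notin\FI_\nab$ enters the new $\FI_\nab$, since its abelian part stays nonconstant.
3. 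Nonabelian type. This is now the abelian PET count for vector polynomials. Let $d_m:=\deg p_{m,\nab}$. Leading coefficients in degrees $>d_m$ are unchanged. Every term whose degree-$d_m$ leading coefficient equals that of $p_m$ drops in degree. The remaining degree-$d_m$ terms keep their leading coefficients, and no new degree-$d_m$ leading coefficient can be created, because $d_m$ was minimal.
4. Conclusion. $\omega_{\nab,d_m}$ drops by one and higher entries are fixed, so $\omega_\nab$ decreases while $\omega_\ab$ is unchanged.

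Case $\omega_\nab=0$, so $m\in\FI_k$ with $k$ minimal such that $\omega_k\neq0$. By minimality every index lies in some $\FI_{k'}$ with $k'\ge k$.

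1. Coordinates above $k$. For $k'>k$, $p_{m,k'}$ is constant in $n$. So $(p_j*p_m\inv)_{k'}=p_{j,k'}-p_{m,k'}$ has the same degree and leading coefficient as $p_{j,k'}$, membership in $\FI_{k'}$ is preserved, and $\omega_{k'}$ is unchanged for all $k'>k$.
2. Coordinate $k$. On coordinate $k$ for $j\in\FI_k$, we are again in the abelian scalar PET situation with $p_{m,k}$ of minimal degree $d_m$. The class of the leading coefficient of $p_{m,k}$ disappears at degree $d_m$, all other classes survive, and nothing new appears at degrees $\ge d_m$. Hence $\omega_k$ strictly decreases.
3. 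Lower coordinates. Some $j$ may leave $\FI_k$ and fall into $\FI_{k''}$ for some $k''<k$ or into $\FI_\nab$. Likewise $\omega_{k''}$ and $\omega_\nab$ may increase. This is harmless, because the order is colexicographic with $\omega_D$ most significant and $\omega_\ab$ before $\omega_\nab$.

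Clause (3) of the definition only restricts which admissible $m$ is chosen, so the same argument applies to it.

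The step needing the most care is the bookkeeping in the coordinate being reduced. One must make sure that the commutator correction terms never change the abelian coordinates; they do not, by fact 1. One must also make sure that the shifted copies $\sigma_{h_{r+1}}p_j*p_m\inv$ contribute no leading coefficients other than those of $p_j*p_m\inv$. Since leading coefficients are counted as elements of $\Z[h]$ and the shift in $n$ preserves them, this holds. With these two points checked, the lemma follows.
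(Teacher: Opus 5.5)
Your proposal is correct and follows the same route as the paper. In both, the argument is a case split on the clauses defining a good index. In the first case, $\omega_\ab$ is unchanged while the $(\deg p_{m,\nab})$-entry of $\omega_\nab$ drops. In the second, $\omega_{k'}$ is unchanged for $k'>k$ while $\omega_k$ drops at degree $\deg p_{m,k}$. Your version supplies the coordinate bookkeeping that the paper's short proof leaves implicit.

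One wording fix. A term of degree exactly $d_m$ whose leading coefficient differs from that of $p_m$ does not keep its leading coefficient; it is translated by minus the leading coefficient of $p_m$. Since this translation is injective, the number of distinct degree-$d_m$ classes other than that of $p_m$ is still preserved, so your count stands.
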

       \begin{proof}
        Let $\omega = (\omega_\ab, \omega_\nab)$ be the type of $\CP$. In the first scenario, i.e. when $m\in\FI_\nab$, the choice of $m$ ensures that the new family has type $(\omega_\ab, \omega_\nab')$ for some $\omega_\nab'<\omega_\nab$, as it lowers the $(\deg p_{m,\nab})$-coordinate of $\omega_\nab$ while keeping all the higher ones intact. In the second and third case, i.e. when $m\in \FI_k$ for the smallest $1\leq k\leq D$ for which $\omega_k \neq 0$, the new family has type $(\omega_\ab', \omega_\nab')$ where $\omega'_{k} < \omega_{k}$ (because $\omega'_{k,d'}<\omega_{k,d'}$ for $d' = \deg p_{m,k}$ and $\omega'_{k,i}=\omega_{k,i}$ for $d'<i\leq d$) and $\omega'_{i} = \omega_{i}$ for $k<i\leq D$; hence $\omega'_\ab <\omega_\ab$. 
        \end{proof}

To see how Lemma~\ref{L: type reduction} works in practice, consult Example \ref{Ex: nilpotent PET type}.

\begin{proposition}\label{P: niceness preserved}
    {Let $(\CP,p_{j_{0}})$ be a nice pinned polynomial family, suppose that $\CP$ is not basic, and let $m$ be a good index for the van der Corput operation. Then $\partial_{m}(\CP,p_{j_{0}})$ is also a nice pinned polynomial family.}
\end{proposition}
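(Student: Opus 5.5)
We have to check that the new family $\CP' := \partial_m\CP$, with pinned element $p'_{j_0} := \sigma_{h_{r+1}}p_{j_0}*p_m\inv$, satisfies the two conditions for nice pinned families. That is, $\CP'$ has the four nice properties, and $\deg p'_{j_0,D}$ is maximal among the $D$-th coordinates. The work rests on one basic identity, which follows from the group law $(h*h')_{k,l}=h_{k,l}+h'_{k,l}-h'_kh_l$:
\[
(p*q\inv)_{\ab}=p_\ab-q_\ab.
\]
The nonabelian coordinates of $p*q\inv$ differ from $p_\nab-q_\nab$ only by products of abelian coordinates of $p$ and $q$. Consequently, every quantity in the first three nice properties and in the pinning condition depends only on abelian parts, and these transform exactly as in the commutative PET of \cite{CFH11}. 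The elements of $\CP'$ are $\sigma p_j*p_m\inv$ and $p_j*p_m\inv$. Their pairwise "differences" have the forms $(\sigma p_j*p_m\inv)*(p_{j'}*p_m\inv)\inv=\sigma p_j*p_{j'}\inv$, or $\sigma p_j*\sigma p_{j'}\inv$, or $p_j*p_{j'}\inv$, and differences with $p'_0=0$ have the form $\sigma p_j*p_m\inv$ or $p_j*p_m\inv$. So all comparisons reduce to comparisons between elements of $\CP\cup\{0\}$, possibly shifted in $n$ by $h_{r+1}$.

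First, I would identify $\FI_D(\CP')$. Using the choice of good index $m$, I would show that it contains both $\sigma p_{j_0}*p_m\inv$ and $p_{j_0}*p_m\inv$ (at least the shifted one) in all three cases.
\begin{itemize}
\item If $m\in\FI_\nab$ or $m\in\FI_k$ with $k<D$, then the $D$-th coordinate of $p_m$ has lower degree than $p_{j_0,D}$, by the second nice property applied with $j'=m$.
\item If $k=D$, then either the degrees of the $D$-th coordinates differ, or we are in case (3), where $p_{m,D}$ has a different leading coefficient from $p_{j_0,D}$; the remaining subcase, where all the leading coefficients agree, is handled by the third nice property.
\end{itemize}
This gives the first nice property and the pinning condition. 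For the latter, $\deg(\sigma p_{j_0,D}-p_{m,D})=\deg p_{j_0,D}$ in the good cases, while every other element has $D$-coordinate degree at most $\deg p_{j_0,D}$.

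The second and third nice properties I would then verify coordinatewise on abelian parts. The second one follows from the second property for $\CP$ applied to the pair $(j,j')$, together with the facts that shifting $n$ by $h_{r+1}$ preserves the degrees and leading coefficients of differences, and that $\deg(\sigma q-q)=\deg q-1$. The degree gap for the $D$-th coordinate persists because, for $j\in\FI_D(\CP')$, the index $j$ lies in $\FI_D(\CP)$. One also has to check that polynomials in $\FI_D(\CP')$ come from $\FI_D(\CP)$, and this uses the minimality of $\deg p_{m,k}$ and the second nice property. The third nice property again follows from the third property for $\CP$, with one new case: a comparison between $\sigma p_j$ and $p_j$, where the difference has degree exactly one less.

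The main obstacle is the fourth nice property, since $D$-distinguishability involves the nonabelian coordinates, which change quadratically. I would handle it through the derivative formulation in Definition \ref{D: distinguishable}. Write $q=\sigma p_j*p_{j'}\inv$; the other forms are treated the same way. After $d'=\deg q_D$ iterated derivatives $\Delta_{m_i}$, the $D$-coordinate becomes a polynomial independent of $n$, namely $d'!$ times the leading coefficient of $q_D$, times $m_1\cdots m_{d'}$. The nonabelian coordinates of $\Delta_{m_{d'}}\cdots\Delta_{m_1}q$ differ from those of the corresponding derivative of $p_j*p_{j'}\inv$ only by terms built from the shift. Since $\sigma$ is a substitution $n\mapsto n+h_{r+1}$, it commutes with the $\Delta_{m_i}$ and with taking $\Q$-linear combinations; any linear dependence for $q$ would therefore become, after substituting $h_{r+1}=0$ (or comparing top-degree terms in $n$), a linear dependence for $p_j*p_{j'}\inv$, contradicting niceness of $\CP$. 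When $j'$ is replaced by $0$ through $p_m$, I would instead exploit the degree gap from the second nice property: the commutator coordinates pick up $p_{m,k}$-factors of too low degree to cancel the leading monomial in $(m_1,\dots,m_{d'})$. I expect this bookkeeping — showing that no $\Q$-combination of the commutator coordinates can reproduce the pure monomial $m_1\cdots m_{d'}$ times the leading coefficient — to be the delicate step.
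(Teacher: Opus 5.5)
Your outline of the first three nice properties follows the paper's case analysis, but the fourth property, which you rightly call the delicate step, has a genuine gap.

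First, specializing $h_{r+1}=0$ turns a $\Q$-linear dependence for $q=\sigma_{h_{r+1}}p_j*p_{j'}\inv$ into one for $p_j*p_{j'}\inv$ only if the two $D$-coordinates have the same degree, because the number of derivatives in Definition~\ref{D: distinguishable} equals that degree. In general $\deg(\sigma_{h_{r+1}}p_{j,D}-p_{j',D})=\max(\deg p_{j,D}-1,\deg(p_{j,D}-p_{j',D}))$, and this can be strictly larger than $\deg(p_{j,D}-p_{j',D})$ (e.g.\ $p_{j,D}=n^3+n$, $p_{j',D}=n^3$). In that case the specialization kills the $D$-coordinate of the $d'$-fold derivative and yields only the vacuous relation $0=\sum c_{k,l}(\cdots)$. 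The paper rules this out with the third nice property for $\CP$, which forces the two degrees to coincide; this is why that property is imposed at all, and your sketch never invokes it here.

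Second, the pair $(\sigma_{h_{r+1}}p_j*p_m\inv,\,p_j*p_m\inv)$ gives $\sigma_{h_{r+1}}p_j*p_j\inv=\Delta_{h_{r+1}}p_j$. This becomes trivial at $h_{r+1}=0$, so it is not ``treated the same way''. The missing observation is that $\Delta_{m_{d'-1}}\cdots\Delta_{m_1}\Delta_{h_{r+1}}p_j$ is, after renaming variables, the $(\deg p_{j,D})$-fold derivative of $p_j$. Hence $\Delta_{h_{r+1}}p_j$ is $D$-distinguishable if and only if $p_j=p_j*p_0\inv$ is, which is the $j'=0$ instance of the fourth property for $\CP$.

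Your substitute for the comparisons with $0$ cannot work. You propose a degree-gap argument showing that the commutator coordinates cannot reproduce the monomial $m_1\cdots m_{d'}$. But those coordinates contain $\sigma_{h_{r+1}}p_{j,\nab}-p_{m,\nab}$, which for nice families arising mid-PET are essentially arbitrary polynomials. They are constrained only by the fourth property itself, so no degree consideration excludes a rational dependence. The correct reduction is different: for $j\neq m$ these comparisons are just the mixed or unshifted cases with $j'=m$, and for $j=m$ they are the $\Delta_{h_{r+1}}p_m$ case above.

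Two smaller points.

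You never use that $\CP$ is not basic, yet the conclusion genuinely fails for basic families. If all $p_{j,D}$ share degree and leading coefficient, then $\deg(\sigma_{h_{r+1}}p_{j_0,D}-p_{m,D})=\deg p_{j_0,D}-1$, which is $0$ when the common degree is $1$. That configuration (with $k=D$ and $\omega_\nab=0$) is exactly a basic family. Only non-basicness guarantees that $\FI_D(\partial_m\CP)\neq\emptyset$ and that the pinned $D$-coordinate stays nonconstant; the third nice property is not what handles that subcase.

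Also, the new case of the third nice property, the pair $(\sigma_{h_{r+1}}p_j*p_m\inv,\,p_j*p_m\inv)$, needs $\deg(p_{j,D}-p_{m,D})\le\deg p_{j,D}$. This comes from the minimality of $\deg p_{m,D}$ in the choice of a good index, not from the third property for $\CP$ alone.
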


\begin{proof}
    By construction, the members of $\partial_m \CP$ are nonconstant, so we need to establish the nice properties from Definition \ref{D: nice families} for $\partial_m \CP$ and then show that $\sigma_{h_{r+1}}p_{j_0}*p_m\inv$ is nonconstant and has the highest-degree $D$-th coordinate among the members of $\partial_{m}\CP$.

        We start with the first nice property. That is, we want to show that at least one polynomial from $\partial_m\CP$ has a nonconstant contribution at coordinate $D$. The only way this can fail is if all the polynomials 
        $p_{j,D}$ are linear with the same leading coefficient, i.e. $\omega_{D}=(1,0,\dots,0)$.
        By the second nice property, we must have $\omega_{1}=\dots=\omega_{D-1}={0}$.
        This is precluded by the assumption that $\CP$ is not basic.

    We move on to prove the second nice property. Notice that for any $1\leq j,j'\leq \ell$ and $1\leq k\leq D$,
    \begin{gather}\label{E: composition 1}
        (p_{j,k}-p_{m,k})-(p_{j',k}-p_{m,k}) = p_{j,k}-p_{j',k},\\
        \label{E: composition 2}
        (\sigma_{h_{r+1}}p_{j,k}-p_{m,k})-(\sigma_{h_{r+1}}p_{j',k}-p_{m,k}) = \sigma_{h_{r+1}}(p_{j,k}-p_{j',k}),
    \end{gather}
    both of which have the same degree. Hence in these two cases, we immediately deduce the second nice property for $\partial_m\CP$ by invoking it for $\CP$.
    
    Suppose now that we are in the mixed case of comparing the coordinates of $\sigma_{h_{r+1}}p_{j}-p_{m}$ and $p_{j'}-p_m$ for some $1\leq j,j'\leq \ell$. In what follows, we say that two polynomials $p,p'\in\Z[n,h]$ satisfy $p\sim p'$ if they have the same leading coefficient. Now, if $p_{j,D}\not\sim p_{j',D}$, then
    \begin{align*}
        \deg \brac{(\sigma_{h_{r+1}}p_{j,D}-p_{m,D}) - (p_{j',D}-p_{m,D})} &= \deg (\sigma_{h_{r+1}}p_{j,D}-p_{j',D})\\
        &= \max(\deg p_{j,D}, \deg p_{j',D}).
    \end{align*}
    If one of $j,j'$ is in $\FI_D$, then for any  $1\leq k<D$, the second nice property for $\CP$ gives
    \begin{align*}
        \max(\deg p_{j,D},\deg p_{j',D}) &>\max(\deg p_{j,k}, \deg p_{j',k}) \geq \deg ({\sigma_{h_{r+1}}}p_{j,k}- p_{j',k})\\
        &={\deg\brac{(\sigma_{h_{r+1}}p_{j,k}-p_{m,k}) - (p_{j',k}-p_{m,k})}}.
    \end{align*}
    
     On the other hand, if $p_{j,D}\sim p_{j',D}$, then $j,j'\in\FI_D$ and
    \begin{align}\label{E: niceness problematic0}
        \deg \brac{(\sigma_{h_{r+1}}p_{j,D}-p_{m,D}) - (p_{j',D}-p_{m,D})} &= \deg (\sigma_{h_{r+1}}p_{j,D}-p_{j',D})= \deg p_{j,D}-1\\\nonumber
        &= \deg (p_{j,D}-p_{j',D}),
    \end{align} 
    where the last equality follows from the third nice property.
    Fix $1\leq k<D$. Now, two things may happen. If 
    \begin{align}\label{E: niceness problematic}
        \deg \brac{(\sigma_{h_{r+1}}p_{j,k}-p_{m,k}) - (p_{j',k}-p_{m,k})} \leq \deg (p_{j,k} -p_{j',k}),
    \end{align}
    then we have
     \begin{align}\label{E: niceness problematic2}
         \deg \brac{(\sigma_{h_{r+1}}p_{j,D}-p_{m,D}) - (p_{j',D}-p_{m,D})}>\deg \brac{(\sigma_{h_{r+1}}p_{j,k}-p_{m,k}) - (p_{j',k}-p_{m,k})}
    \end{align}
    by
     combining (\ref{E: niceness problematic0}) and (\ref{E: niceness problematic}) and invoking the second nice property for $\CP$. Otherwise, the failure of \eqref{E: niceness problematic} forces $p_{j,k}\sim p_{j',k}$, in which case
    \begin{align*}
        \deg \brac{(\sigma_{h_{r+1}}p_{j,k}-p_{m,k}) - (p_{j',k}-p_{m,k})}  &= \deg p_{j,k} - 1 < \deg p_{j,D} - 1\\
        &= \deg \brac{(\sigma_{h_{r+1}}p_{j,D}-p_{m,D}) - (p_{j',D}-p_{m,D})}.
    \end{align*}
    Hence the second nice property follows for this case. 

{The last remaining case of the second nice property is when $j' = 0$, i.e. we look at the coordinates of $p_j - p_m$ (for $j\neq m$) as well as $\sigma_{h_{r+1}}p_j-p_m$. If $p_{j,D}\nsim p_{m,D}$, then it follows from the second nice property for $\CP$ that
    \begin{multline*}
        \min\{\deg(p_{j,D}-p_{m,D}),\deg(\partial_{h_{r+1}}p_{j,D}-p_{m,D})\}=\max(\deg p_{j,D}, \deg p_{m,D})
        \\>\max\{\deg p_{j,k},\deg p_{m,k}\}\geq \max\{\deg(p_{j,k}-p_{m,k}),\deg(\partial_{h_{r+1}}p_{j,k}-p_{m,k})\}.
    \end{multline*}

    On the other hand, suppose that $p_{j,D}\sim p_{m,D}$. If $j\neq m$, then $\deg (p_{j,D}-p_{m,D})>\deg(p_{j,k}-p_{m,k})$ by the second nice property for $\CP$.
    The argument is more tricky for $\sigma_{h_{r+1}}p_j-p_m$. Under the assumption $p_{j,D}\sim p_{m,D}$, we have
    \begin{align*}
        \deg (\sigma_{h_{r+1}}p_{j,D} - p_{m,D}) &= \deg p_{j,D}-1 \geq \max(\deg p_{j,k},\deg p_{m,k})\\
        &\geq \deg(\sigma_{h_{r+1}}p_{j,k}-p_{m,k}).
    \end{align*}
    The only possibility in which these inequalities are not strict is if $p_{j,k}\not\sim p_{m,k}$ and $$\deg p_{j,D}-1 = \max(\deg p_{j,k},\deg p_{m,k}).$$
    Then, however,
    \begin{align*}
        \deg (p_{j,D} - p_{m,D}) &\leq \deg p_{j,D}-1 = \deg(p_{j,k}-p_{m,k}),
    \end{align*}
    contradicting the second nice property for $\CP$.} This proves the second nice property for $\partial_{m}\CP$.

A similar case-by-case analysis gives the third nice property. For the pairs $p_{j,D}-p_{m,D}$, $p_{j',D}-p_{m,D}$ as well as $\sigma_{h_{r+1}}p_{j,D}-p_{m,D}$, $\sigma_{h_{r+1}}p_{j',D}-p_{m,D}$ (both under the assumption $j\neq j'$), the third nice property follows from the corresponding property for $\CP$ as well as 
the identities \eqref{E: composition 1} and \eqref{E: composition 2}. 
For the pair $\sigma_{h_{r+1}}p_{j,D}{-p_{m,D}}$, $p_{j,D}{-p_{m,D}}$, it is clear that their difference has degree {$\deg p_{j,D}-1\geq \deg (p_{j,D}-p_{m,D})-1$, where the inequality follows from the assumption that $m$ is a good index}. 
Lastly, for the pair $\sigma_{h_{r+1}}p_{j,D}{-p_{m,D}}$, $p_{j',D}{-p_{m,D}}$ with $j\neq j'$, a direct computation shows that the third nice property can only fail if it fails for $p_{j,D}, p_{j',D}$, contradicting the third nice property for $\CP$.

     We conclude with the proof of the fourth nice property. Let $j\in \FI_D$ and $0\leq j'\leq \ell$ be distinct from $j$. Using the identities
     \begin{align*}
 (p_{j}\ast p_m^{-1})*(p_{j'}\ast p_m^{-1})^{-1}=p_{j}\ast p_{j'}^{-1} 
\end{align*}
and 
\begin{align*}
 (\sigma_{h_{r+1}} p_{j}\ast p_m^{-1})*(\sigma_{h_{r+1}} p_{j'}\ast p_m^{-1})\inv = \sigma_{h_{r+1}} (p_{j}\ast p_{j'}^{-1}),    
\end{align*}
the $D$-distinguishability of both polynomials follows from the fourth nice property for $\CP$. 

Likewise, we observe that 
\begin{align*}
    (\sigma_{h_{r+1}}p_{j}\ast p_m^{-1})*(p_{j'}\ast p_m^{-1})^{-1}=\sigma_{h_{r+1}}p_{j}\ast p_{j'}^{-1}.
\end{align*}
Moreover, it follows from the third nice property for $\CP$ that the $D$-th coordinates of the polynomials $\sigma_{h_{r+1}}p_{j}\ast p_{j'}^{-1}$ and $p_{j}\ast p_{j'}^{-1}$ have the same degree (which equals $\max (\deg p_{j,D}, \deg p_{j',D})$ if $p_{j,D}\not\sim p_{j',D}$ and $\deg p_{j,D}-1$ otherwise). This is important because $D$-distinguishability is defined in terms of the degree of the $D$-th coordinates. Using these two facts, it is then easy to see on setting $h_{r+1}=0$ that if $\sigma_{h_{r+1}}p_{j}\ast p_{j'}^{-1}$ fails to be $D$-distinguishable, then so does $p_{j}\ast p_{j'}^{-1}$,  contradicting the fourth nice property for $\CP$.

All that remains to be seen is that 
\begin{align*}
    (\sigma_{h_{r+1}}p_{j}\ast p_m^{-1})*(p_{j}*p_m\inv)^{-1}=\sigma_{h_{r+1}}p_{j}\ast p_{j}^{-1}
\end{align*}
is $D$-distinguishable for every $j\in \FI_D.$ Recall the definitions of $\Delta$ and $\partial$ from Definition \ref{D: distinguishable} and Lemma~\ref{lnh} respectively.
Observe that $\sigma_{h_{r+1}}p_{j}\ast p_{j}^{-1} = \Delta_{h_{r+1}}p_j,$
and hence
\begin{align}\label{E: derivative identity for fourth nice property}
    \Delta_{m_{d'-1}}\cdots \Delta_{m_{1}}(\sigma_{h_{r+1}}p_{j}\ast p_{j}^{-1}) = \Delta_{m_{d'-1}}\cdots \Delta_{m_{1}}\Delta_{h_{r+1}}p_j
\end{align}
 for any $d'\in\N$. By the assumption $j\in \FI_D$, the polynomial $p_{j,D}$ has some positive degree $d'$,\footnote{In fact, all that we need for the argument of this paragraph to work is that $p_{j,D}$ is a nonzero polynomial.} and hence $\partial_{h_{r+1}}p_{j,D}$ has degree $d'-1$. It follows from this and the identity \eqref{E: derivative identity for fourth nice property} that $\sigma_{h_{r+1}}p_{j}\ast p_{j}^{-1}$ is $D$-distinguishable if and only if $p_j$ is. The latter holds by the fourth nice property for $\CP$. This case completes the proof of the fourth nice property for $\partial_m \CP$.

{We have thus established that $\partial_m \CP$ is nice; it remains to be seen that $\partial_m (\CP, p_{j_0})$ is nice, i.e. that $\sigma_{h_{r+1}}p_{j_0}*p_m\inv$ is nonconstant and has the highest-degree $D$-th coordinate among the members of $\partial_{m}\CP$. In the case when $\omega_\nab = 0$, $k=D$ and all of $p_{1,D},\dots,p_{\ell,D}$ have the same degree and leading coefficients, we have $\deg((\sigma_{h}p_{j_{0}}\ast p_{m}^{-1})_{D})=\deg p_{j_{0},D}-1$, and the degrees of the $D$-th coordinates of the other polynomials in  $\partial_{m}\CP$ are also at most $\deg p_{j_{0},D}-1$. In all other cases, $\deg((\sigma_{h}p_{j_{0}}\ast p_{m}^{-1})_{D})=\deg p_{j_{0},D}$. So in all cases, $\sigma_{h}p_{j_{0}}\ast p_{m}^{-1}$ has the highest-degree $D$-th coordinate among all the polynomials in $\partial_{m}\CP$.

Finally, since $\CP$ is not basic, the family $\partial_{m}\CP$ is {nondegenerate, i.e. it contains at least one polynomial with a nonconstant abelian part. By the conclusion of the previous paragraph, $\sigma_{h}p_{j_{0}}\ast p_{m}^{-1}$ is such a polynomial}, and thus it is not removed using the $^\sharp$ operation. }
\end{proof}

The next result summarizes the arguments in this section and shows that after finitely many van der Corput operations, every nice family turns into one amenable to Theorem~\ref{T: new estimates}. Recall that $r$ is the dimension of the group over which the indeterminate $h$ runs. 

\begin{proposition}\label{P: types stabilize}
       Let $\CP= \{p_1, \ldots, p_\ell\}\subseteq \Z^{D'}[n, h]$, and suppose that $(\CP,p_{\ell})$ is a nice pinned polynomial family of degree at most $d$.
       Then there exists a nonnegative integer $s = O_{d,\ell,r}(1)$ and a sequence of indices $m_1, \ldots, m_s\in\N$ such that the families
       \begin{align*}
           (\CP_i,q_{i}) := \begin{cases}(\CP,p_{\ell}),\; &i=0\\
           \sigma_{m_i}\cdots \sigma_{m_1}(\CP,p_{\ell}),\; &i>0
           \end{cases}
       \end{align*}
       are nice and have decreasing types while $(\CP_s,q_s)$ is additionally basic. 

       Moreover, for any 1-bounded functions $f_1, \ldots, f_\ell\in L^\infty(\mu)$
       \begin{align}\label{E: vdc in PET}
           \brac{\oE_{h\in\Z^r}\norm{\E_{n\in\Z}U_{p_1(n,h)}f_1\cdots U_{p_{\ell}(n,h)}f_\ell}_{L^2(\mu)}}^{2^s}\leq \oE_{h\in\Z^{r+s}}\norm{\E_{n\in\Z}\prod_{p\in \CP_s}U_{p(n,h)}g_p}_{L^2(\mu)},
       \end{align}
       where $g_{q_{s}}=f_{\ell}$.
\end{proposition}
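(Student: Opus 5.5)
We argue by iterating the van der Corput operation with good indices, using Lemma~\ref{L: type reduction} and Proposition~\ref{P: niceness preserved} as the engine. Set $(\CP_0,q_0):=(\CP,p_\ell)$. If $(\CP_i,q_i)$ is nice but not basic, choose $m_{i+1}$ good for the van der Corput operation and set $(\CP_{i+1},q_{i+1}):=\partial_{m_{i+1}}(\CP_i,q_i)$. Proposition~\ref{P: niceness preserved} shows that $(\CP_{i+1},q_{i+1})$ is again a nice pinned family; in particular $q_{i+1}=\sigma_{h_{r+i+1}}q_i*p_{m_{i+1}}\inv$ is nonconstant and is not deleted by $^\sharp$. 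Lemma~\ref{L: type reduction} shows that the type strictly decreases. Since every nice family is nondegenerate, it remains to see that a strictly decreasing chain of types of nice families must terminate at a basic type. The number of steps is then bounded by the number of types lying below the initial one.

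The type-counting step needs care, because the length of $\partial_m\CP$ may be as large as $2\ell$. I would bound the entries of types along the chain explicitly. After $i$ steps, the family has at most $2^i\ell$ members. Each abelian entry $\omega_{k,i'}$ and nonabelian entry $\omega_{\nab,i'}$ is at most this length. The degree in $n$ never increases beyond $d$ for the abelian parts, and the nonabelian parts have degree $O(d)$ since they are products of abelian coordinates.

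Termination then follows by a nested induction along the colexicographic order. A step either lowers some abelian entry $\omega_{k,d'}$ while leaving the higher-priority entries unchanged, or it keeps $\omega_\ab$ fixed and lowers $\omega_\nab$. Lower-priority entries may grow during a step, but only to at most twice the current length. An inductive bound therefore gives a bound on the chain length that depends only on $d,\ell,D$. The dependence on $D$ is a concern, since the statement asks for $O_{d,\ell,r}(1)$. Only coordinates $k$ with $\FI_k\neq\emptyset$ matter, and there are at most $\ell$ of them at the start; I would argue that these sets can only shrink to indices that were already present, so $D$ can effectively be replaced by $\ell$. I expect this bookkeeping, making sure the number of iterations is uniform in the ambient dimension and in $h$, to be the main obstacle.

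The inequality \eqref{E: vdc in PET} follows by induction on $s$, using one analytic step per operation. For fixed $h$, apply the asymmetric van der Corput inequality to
\[
\E_{n\in\Z}\prod_{p\in\CP_i}U_{p(n,h)}g_p,
\]
with a new shift $h_{r+i+1}$. Inside the integral, compose with $U_{p_{m}(n,h)}\inv$, using $U_{p_m}\inv(U_pf)=U_{p*p_m\inv}f$. Terms independent of $n$ are then pulled out, bounded by $1$ via the Cauchy--Schwarz inequality, and removed; this is the $^\sharp$ step. The pinned term $\sigma_{h_{r+i+1}}q_i*p_m\inv$ is nonconstant by Proposition~\ref{P: niceness preserved}, so it survives and still carries $f_\ell$ (up to conjugation). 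Applying H\"older to pass the average over $h$ through the square root doubles the exponent at each step, which gives the $2^s$ power.
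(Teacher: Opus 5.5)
The one genuine gap is the step you flag yourself. Your claim that the indices $k$ with $\FI_k\neq\emptyset$ can only disappear, never appear, is false.

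Take $D=3$, $r=0$, and $\CP=\{(n,n^2,0),(0,n^2,0),(0,0,n^4)\}$ with trivial commutator coordinates, pinned at the third member. This family is nice:
\begin{enumerate}
\item the first three nice properties are immediate;
\item for the fourth, every coordinate of $p_3*p_{j'}^{-1}$ is a monomial in $n$, and the degree of the $(k,l)$ coordinate is the sum of the degrees of coordinates $k$ and $l$;
\item $\Delta_m$ preserves this grading, so after $\Delta_{m_4}\cdots\Delta_{m_1}$ the third coordinate $24m_1m_2m_3m_4$ is homogeneous of degree $4$, while the commutator coordinates are homogeneous of degrees $3,5,6$.
\end{enumerate}
Here $\FI_1=\emptyset$ and $\omega_\nab=0$, and the smallest $k$ with $\omega_k\neq0$ is $k=2$, so a good index is $m\in\{1,2\}$. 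But then $\partial_m\CP$ contains $p_1*p_2^{-1}$ or $p_2*p_1^{-1}$, whose abelian part is $\pm(n,0,0)$. So $\FI_1$ becomes nonempty, and as written your nested induction only gives $s=O_{d,\ell,D}(1)$.

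The repair is a span argument; this is presumably what the paper's remark about the dimension of the subgroup generated by $p_1,\ldots,p_\ell$ refers to.
\begin{enumerate}
\item Let $V$ be the span over $\Q(h_1,h_2,\ldots)$ of the $n^a$-coefficients, $a\geq 1$, of $p_{1,\ab},\ldots,p_{\ell,\ab}$. Then $\dim V\leq \ell d$.
\item The abelian parts of members of every $\CP_i$ are signed sums of $p_{j,\ab}(n+L(h),h)$ with $L$ linear. Hence their $n^b$-coefficients for $b\geq1$ lie in $V$.
\item Therefore the top nonconstant coordinate of any member is one of the at most $\dim V$ pivot indices of $V$.
\end{enumerate}
Only those blocks $\omega_k$ can ever be nonzero. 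Your nested induction then runs over $O(\ell d^2)$ abelian entries and $O(d)$ nonabelian ones (commutator degrees stay at most $2d$). This gives a bound depending only on $d$ and $\ell$, in particular $O_{d,\ell,r}(1)$.

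Everything else matches the paper:
\begin{enumerate}
\item the same good-index iteration driven by Lemma~\ref{L: type reduction} and Proposition~\ref{P: niceness preserved};
\item the same observation that the pinned polynomial survives the ${}^\sharp$ operation and carries $f_\ell$ up to conjugation;
\item the same van der Corput/Cauchy--Schwarz derivation of \eqref{E: vdc in PET}.
\end{enumerate}
Your termination argument is organized differently. The paper counts steps in stages: it clears $\omega_\nab$ at fixed abelian type, mimics \cite[Lemma~4.5]{CFH11} to clear $\omega_1,\ldots,\omega_{D-1}$, and then lowers $\omega_D$ to $(1,0,\ldots,0)$, with a separate check that the basic type is not skipped. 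You instead combine three facts: the colexicographic order is a well-order, the length bound $2^i\ell$ holds, and Proposition~\ref{P: niceness preserved} lets you continue from every nice non-basic family. This makes the no-skipping check automatic and is logically cleaner. The cost is an Ackermann-type bound on $s$, which is irrelevant for an $O(1)$ statement.
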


In this and other results in this section, the quantitative dependence on $r$ can be negotiated. Rather, what $s$ depends on is $d, \ell$, and the dimension of the subgroup generated by $p_1, \ldots, p_\ell$. The latter quantity affects the number of commutators that appear in the intermediate steps of PET, and it can be bounded either as a function of $d,\ell, r$ or $d,D,\ell$. Since in our main application (Theorem~\ref{T: nilpotent seminorm estimates}) we have $r = 0$, we opted to go for the former. 
\begin{proof}
We first reduce $(\CP,p_\ell)$ to a nice and basic family.
    If $(\CP,p_\ell)$ is already nice and basic, there is nothing to do. Otherwise we inductively construct the families $(\CP_1, q_1), (\CP_2, q_2), \ldots$ by finding indices $m_1, m_2, \ldots$ such that $m_{i+1}$ is good for the van der Corput operation for $(\CP_i,q_i)$. We terminate the algorithm once we reach a nice and basic family $(\CP_s,q_s)$. By Lemma~\ref{L: type reduction}, each subsequent family has lower type than the previous one, and by Proposition~\ref{P: niceness preserved}, they are all nice. 
    
     The point is to prove that this process terminates at some $s=O_{d,\ell,r}(1)$ steps. First, for each fixed type $(\omega_\ab,\omega_\nab)$ arising on the way, only $O_{d, \ell,r}(1)$ steps are required to reach a family with type $(\omega_\ab,0)$. 
    Second, by invoking the nondegeneracy of $\CP$ and mimicking the proof of \cite[Lemma~4.5]{CFH11}, we deduce that only $O_{d, \ell,r}(1)$ van der Corput operations as above are required to bring the abelian type $\omega_\ab$ of $\CP$ to $\omega'_\ab$ with $\omega'_1 = \cdots = \omega'_{D-1} = 0$ and $\omega'_D\neq 0$. Third, further $O_{d, \ell,r}(1)$ van der Corput operations as above bring $\omega'_\ab$ to $\omega_\ab''$ in which $\omega''_1 = \cdots = \omega''_{D-1} = 0$ while $\omega''_{D}=(1,0,\ldots, 0)$. That we do not ``skip over'' such a family while lowering the abelian type can be justified as follows. Suppose first that $\CP_i$ contains a polynomial whose $D$-th coordinate has degree at least 2. Since
    \begin{align*}
     \deg (\sigma_{h_{r+1}} p_D-q_D)\geq \max(\deg p_D,\deg q_D)-1   
    \end{align*}
    for any $p,q\in\Z^{D'}[n,h]$, the family $\CP_{i+1}$ has a polynomial whose $D$-th coordinate has degree at least 1. If instead all elements of $\CP_i$ have $D$-th coordinates of degree 0 or 1 (at least one of which is 0), and the nonabelian type of $\CP_i$ is 0, then the successive van der Corput operation lowers the number of distinct degree-1 $D$-th coordinates by exactly one.

    It remains to show \eqref{E: vdc in PET}. Let $\CP_i= \{p_{l,1}, \ldots, p_{i,\ell_i}\},$ and let $\FI_{i,k}, \FI_{i,\nonab}$ be the sets from Definition \ref{D: degree classes} for the family $\CP_i.$ By applying the van der Corput inequality every time, for every $0\leq i\leq s$ we get
    \begin{multline*}
    \brac{\oE_{h\in\Z^{r}}\norm{\E_{n\in\Z}U_{p_1(n,h)}f_1\cdots U_{p_\ell(n,h)}f_\ell}_{L^2(\mu)}}^{2^i}\\
    \leq \oE_{h\in \Z^{r+i}}\norm{\E_{n\in\Z}U_{p_{i,1}(n,h)}g_{i,1}\cdots U_{p_{i,\ell_i}(n,h)}g_{i,\ell_i}}_{L^2(\mu)}
    \end{multline*}
    for some 1-bounded functions $g_{i,j}\in L^\infty(\mu)$. 
    Moreover, from the definition of the van der Corput operation and the fact that the pinned polynomial $q_{i}$ is never removed by the $^\sharp$ operation, it is not hard to see that $g_{i,q_{i}}\in\{f_{\ell},\bar f_{\ell}\}$ for all $i$. By conjugating the expression inside the $L^2(\mu)$ norm, we can assume that $g_{q_i} = f_\ell$.
\end{proof}

\subsection{Proof of Theorem~\ref{T: nilpotent seminorm estimates for nice families}}
We finally prove Theorem~\ref{T: nilpotent seminorm estimates for nice families}. 
   Fix 1-bounded $f_1, \ldots, f_\ell\in L^\infty(\mu)$. By Proposition~\ref{P: types stabilize}, there exists a nonnegative integer ${s_1}=O_{d,\ell,r}(1)$, natural numbers $m_1, \ldots, m_{s_1}$, a nice basic {pinned family $(\CQ, q_\kappa)$}, where $$\CQ:=\partial_{m_{s_1}}\cdots\partial_{m_1}\CP = \{q_1, \ldots, q_{\kappa}\},$$ and functions $g_1, \ldots, g_\kappa\in L^\infty(\mu)$ with {$g_\kappa = f_\ell$}, such that

   \begin{multline}\label{E: vdC}
 \brac{\oE_{h\in\Z^{r}}\norm{\E_{n\in\Z}U_{p_1(n,h)}f_1\cdots U_{p_\ell(n,h)}f_\ell}_{L^2(\mu)}}^{2^{s_1}}\\
    \leq \oE_{h\in \Z^{r+{s_1}}}\norm{\E_{n\in\Z}U_{q_1(n,h)}g_{1}\cdots U_{q_\kappa(n,h)}g_\kappa}_{L^2(\mu)}.
   \end{multline}    
    Since $\CQ$ is basic, we may write its terms as 
    \begin{align*}
        q_j(n,h) = (a_j(h), t(h)n+b_j(h), c_j(n,h))
    \end{align*}
    for some $a_j\in\Z^{D-1}[h]$, $t,b_j\in\Z[h]$ (where $t$ is nonzero), and $c_j\in\Z^{D'-D}[n,h]$. 
The second nice property ensures that $b_{j}\neq b_{j'}$ for all $1\leq j<j'\leq \kappa$ and that all $a_{j}$'s equal the same $a$.

Write
$$q'_{j}(n,h):=q_{j}(n,h)\ast (a_j(h),0,0)^{-1}=(0,t(h)n+b_{j}(h),c'_{j}(n,h))$$
for some $c_j'\in \Z^{D'-D}[n,h]$.
It is not hard to see that 
$$\CQ':=\{q'_{j}\colon\; 1\leq j\leq \kappa\}$$
is also a nice and basic family.
  Translating the right hand side of \eqref{E: vdC} by $(a(h),0,0)\inv$, we get
        \begin{multline}\label{E: vdC 2}
     \brac{\oE_{h\in\Z^{r}}\norm{\E_{n\in\Z}U_{p_1(n,h)}f_1\cdots U_{p_\ell(n,h)}f_\ell}_{L^2(\mu)}}^{2^{s_1}}\\
    \leq \oE_{h\in \Z^{r+{s_1}}}\norm{\E_{n\in\Z}U_{q'_1(n,h)}g_{1}\cdots U_{q'_\kappa(n,h)}g_\kappa}_{L^2(\mu)}.
    \end{multline}
 
 It is somewhat inconvenient that the average over $n$ is inside the norm while the average over $h$ is outside. To amend this, we apply the symmetric van der Corput inequality.
 It gives that
    the square of the right-hand side of \eqref{E: vdC 2}   is bounded by
    \begin{align}\label{E: PET iterated limit}
        \oE_{(h,u_0,u_1)\in\Z^{r+{s_1}+2}}\E_{n\in\Z}\int \prod_{j=1}^{\kappa}\brac{U_{q'_{j}(n+u_{0},h)}g_{j}\cdot U_{q'_{j}(n+u_{1},h)}\overline{g_{j}}}\; d\mu.
    \end{align}
    By Walsh's convergence theorem \cite{W12} as well as the Fubini-type principle for averages \cite[Lemma~1.1]{BL15}, the iterated limit \eqref{E: PET iterated limit} equals the single limit
    \begin{align}\label{E: PET iterated limit 2}
      \E_{(h,u_{0},u_{1},n)\in\Z^{r+{s_1}+3}}\int \prod_{j=1}^{\kappa}\brac{U_{q'_{j}(n+u_{0},h)}g_{j}\cdot U_{q'_{j}(n+u_{1},h)}\overline{g_j}}\; d\mu.
    \end{align}

For distinct pairs $(j,\eps), (j',\eps')\in\{1,\dots,\kappa\}\times \{0,1\}$, define
$$q_{j,\eps,j',\eps'}(n,h,u_{0},u_{1}):=q'_{j}(n+u_{\eps},h)\ast q'_{j'}(n+u_{\eps'},h)^{-1}$$
and let $G(q_{j,\eps,j',\eps'})$ be as in \eqref{E: tilde G}. By Theorem~\ref{T: new estimates} and the fact that {$g_\kappa = f_\ell$},
there exists a natural number $s_2=O_{d,\ell,r}(1)$ such that \eqref{E: PET iterated limit 2} vanishes whenever
\begin{align*}
    \nnorm{f_\ell}_{G(q_{j,\eps,j',\eps'})^{\times s_2}\colon\; \textrm{distinct}\; (j,\eps), (j',\eps')\in\{1,\dots,\kappa\}\times \{0,1\}} = 0.
\end{align*}
The claimed result will then follow from the subgroup property of seminorms once we show that each group $G(q_{j,\eps,j',\eps'})$ contains {some nonzero multiple of} $e_D$. A key tool in that will be the fourth nice property.

Observe that all the $q'_{j}$'s are supported on the abelian subgroup $\{0\}^{D-1}\times \Z^{D'-(D-1)}$.
Whenever $j\neq j'$, the group ${G}(q_{j,\eps,j',\eps'})$ contains ${G}(q'_{j}\ast {q'_{j'}}^{-1})$ (on setting $u_0=u_1 = 0$), which contains $e_{D}$ by the fourth nice property and Lemma~\ref{lnh}. If instead  $j=j'$ and $\eps\neq \eps'$,  then ${G}(q_{j,\eps,j',\eps'})$ contains ${G}(q'_{j}\ast q'_{j}(0,\cdot)^{-1})$ on taking $n=u_1 = 0$.
Note that 
$$q'_{j}(n,h)\ast q'_{j}(0,h)^{-1}=(0,t(h)n,c_{j}(n,h)-c_{j}(0,h)).$$
For this polynomial and $q'_{j}$, their $D$-th coordinates are linear in $n$ with the same linear coefficient $t(h)$. Hence the $D$-distinguishability of $q'_j$ implies that $t$ is not a $\Q$-linear combination of the coordinates of $\Delta_{m}c_{j}$, 
which are the same as the coordinates of $\Delta_{m}(c_{j}-c_{j}(0,\cdot))$.
Once again, ${G}(q'_{j}\ast q'_{j}(0,\cdot)^{-1})$ (and hence also ${G}(q_{j,\eps,j',\eps'})$) contains {some nonzero multiple of} $e_{D}$ by Lemma~\ref{lnh}. \hfill $\Box$

\subsection{Proof of Theorem~\ref{T: nilpotent seminorm estimates}}
Let $p_1, \ldots, p_\ell\in\Z[n]$ be nonconstant and have increasing degrees, and let $\CP = \{p_1 e_1, \ldots, p_\ell e_\ell\}$.  
As shown in Lemma~\ref{L: niceness}, the family $(\CP, p_\ell e_\ell)$ is nice. Hence, by Theorem~\ref{T: nilpotent seminorm estimates for nice families}, there exists $s_1=O_{d,\ell}(1)$ such that 
  \begin{align}\label{E: original average in PET}
    \norm{\E_{n\in\Z}T_1^{p_1(n)}f_1\cdots T_\ell^{p_\ell(n)}f_\ell}_{L^2(\mu)} = 0
\end{align}
whenever $\nnorm{f_\ell}_{s_1+1,T_\ell} = 0$. We want to derive a similar conclusion in terms of $f_1, \ldots, f_{\ell-1}$. Using the newly established control, we can replace $f_\ell$ by $\E(f_\ell|\CZ_{s_1}(T_\ell))$, and then approximate by it a linear combination of dual functions $\CD_{s_1,T_\ell}\mathfrak{f}$ with $\mathfrak{f}\subseteq (L^\infty(\mu))^{\{0,1\}^{s_1}_*}$. In particular, if \eqref{E: original average in PET} fails, then it also fails with $f_\ell$ replaced by $\CD_{s_1,T_\ell}\mathfrak{f}$ for some $\mathfrak{f}\subseteq (L^\infty(\mu))^{\{0,1\}^{s_1}_*}$. By \cite[Proposition~6.1]{Fr12}, there exists a natural number $r=O_{d,\ell}(1)$ such that
\begin{align*}
    \oE_{h\in\Z^r}\norm{\E_{n\in\Z}\prod_{\eps\in\{0,1\}^r}T_1^{p_1(n+\eps\cdot h)}f_1\cdots T_{\ell-1}^{p_{\ell-1}(n+\eps\cdot h)}f_{\ell-1}}_{L^2(\mu)}>0.
\end{align*}
Let 
\begin{align*}
    \CQ := \{p_{j,\eps}\colon\; 1\leq j < \ell,\; \eps\in\{0,1\}^r\}
\end{align*}
be the new family, where $p_{j,\eps}(n,h):=p_j(n+\eps\cdot h)$.
Again using Lemma~\ref{L: niceness}, this time applied to the reduced system $(X, \CX, \mu, T_1, \ldots, T_{\ell-1})$, we deduce that $(\CQ, p_{\ell, \eps})$  is nice for every $\eps\in\{0,1\}^r$. Hence by Theorem~\ref{T: nilpotent seminorm estimates for nice families} again, we can find $s_2 = O_{d,\ell}(1)$ such that
$\nnorm{f_{\ell-1}}_{s_2,T_\ell} > 0$. This implies the desired estimate in terms of $f_{\ell-1}$ by contrapositive. By iterating this argument, replacing the functions $f_{\ell-2}, \ldots, f_2$ by dual functions one-by-one, we get the claimed result.  \hfill $\Box$

\section{Relationships between Host-Kra factors of nilpotent actions}\label{S: comparison of factors}
This section investigates the relationship between the factors $\CZ_{s,H}$ and $\CZ_{s',H'}$ whenever $H'\leq H$ are finitely generated and nilpotent. In the commuting case, this relationship is well-understood; see Section \ref{SS: commuting}. In the nilpotent case, it seems far more tricky to compare these two factors to each other. The main result of this section is summarized below.
\begin{proposition}\label{P: comparing HK factors in nilpotent systems}
Let $(X, \CX, \mu, T_1, \ldots, T_\ell)$ be a 2-step nilpotent system with $T_1, \ldots, T_{\ell}$ ergodic. Then for all $s\in\N$ there exists an natural number $s'=O_{\ell,s}(1)$ such that
  $\CZ_{s,T_j}\subseteq\CZ_{s',H}$ for all $1\leq j\leq \ell$, where $H := \langle T_1, \ldots, T_\ell\rangle$.
\end{proposition}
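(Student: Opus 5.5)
We argue by contrapositive on functions: it suffices to show that every $f\in L^\infty(\mu)$ with $\nnorm{f}_{s'+1,H}=0$ satisfies $\nnorm{f}_{s+1,T_j}=0$, since then $f\perp \CZ_{s',H}$ implies $f\perp \CZ_{s,T_j}$ by \eqref{E: seminorms vs. factors} (valid both for the single transformation $T_j$ and for the nilpotent group $H$ with equal subgroups). By ergodic decomposition of the $H$-action we would like to assume $H$ acts ergodically; since each $T_j$ is ergodic, $H$ is ergodic already, so no decomposition is needed. Then Theorem~\ref{T: Candela-Szegedy} identifies $(X,\CZ_{s'}(H),\mu,U)$ with an $s'$-step $H$-pronilsystem $Y$, and by the Host--Kra structure theorem $(X,\CZ_s(T_j),\mu,T_j)$ is an inverse limit of $s$-step nilsystems for $T_j$.

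\textbf{Step 1 (reduce to a nilsystem factor for $T_j$).} Take a $T_j$-nilsystem factor $\pi\colon X\to Z=G/\Gamma$ of $\CZ_s(T_j)$, with $T_j$ acting by an ergodic nilrotation $\tau$. It suffices to show that $\pi$ is measurable with respect to $\CZ_{s'}(H)$ for $s'=O_{\ell,s}(1)$. Because $H$ is 2-step nilpotent, each $T_i$ satisfies $T_iT_jT_i^{-1}=T_j[T_i,T_j]^{\pm1}$, where the commutator $R_{i,j}$ commutes with $T_j$. So $\pi\circ T_i$ is again a factor map onto a nilsystem for the transformation $T_jR_{i,j}^{\pm1}$, and $\pi\circ R_{i,j}$ is a $T_j$-equivariant factor map. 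Consider the $\sigma$-algebra $\CA$ generated by $\pi\circ U_h$ for all $h\in H$. Since $R_{i,j}$ commutes with $T_j$, the maps $\pi\circ R^{k}$ lie in $\CZ_s(T_j)$ and, by Parry's theorem (a measurable isomorphism, or conjugacy, of ergodic nilsystems is affine), act on $Z$ by affine maps commuting with $\tau$.

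\textbf{Step 2 (build an $H$-invariant nil-structure).} Using the affine rigidity of Parry's theorem, together with the spectral facts from Appendix~\ref{A: Kronecker} that control eigenvalues of $T_j$ under conjugation by $T_i$, we show that $\CA$ is spanned by finitely many such affine copies. The resulting factor $X\to Z^{\ell}$ (roughly $x\mapsto(\pi(x),\pi(T_1x),\dots)$ modulo commutators) carries an $H$-action by affine transformations of an $s$-step nilmanifold whose linear parts are unipotent of bounded step. An affine action of a 2-step nilpotent group on an $s$-step nilmanifold is a nilrotation on a larger nilmanifold, namely the semidirect product with the unipotent automorphisms, whose nilpotency class is at most about $2s^2$, in line with \eqref{E: working factor identity}. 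Since any $H$-nilsystem factor of step $s'$ is contained in $\CZ_{s'}(H)$ (its functions have vanishing $U^{s'+1}(H)$-orthogonal complement, which follows from the cube structure in Appendix~\ref{A: cubes}), we get $\pi\in\CZ_{s'}(H)$.

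\textbf{Main obstacle.} Step 2 is the hard part: proving that the orbit of $\pi$ under $H$ generates only a finite-dimensional, affinely structured family, and bounding the step of the resulting group. The delicate point is the unipotence of the linear parts of the affine maps induced by the $T_i$. I would first try the argument at the Kronecker level, showing that eigenfunctions of $T_j$ are mapped by $T_i$ to eigenfunctions of $T_jR_{i,j}$ with the same eigenvalue. I would then induct on $s$ along the tower $\CZ_1(T_j)\subseteq\cdots\subseteq\CZ_s(T_j)$ of compact abelian group extensions, tracking how the step increases at each level to obtain the quadratic bound.
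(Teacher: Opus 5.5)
There is a genuine gap, and it sits where you put the ``main obstacle''. In Step~1 you apply Parry's theorem to the maps $\pi\circ R^k$ as though $\pi\circ R^k=\phi_k\circ\pi$ for some measurable $\phi_k\colon Z\to Z$. Nothing you have established gives this. The fact that $R$ commutes with $T_j$ only shows that $R$ preserves $\CZ_s(T_j)$ and that $\pi\circ R$ is \emph{another} factor map onto $(Z,\tau)$. It does not show that $\pi\circ R$ is measurable with respect to $\pi^{-1}(\CB_Z)$, i.e.\ that $R$ (let alone $T_i$) preserves the nil-factor you chose. Parry's theorem concerns a measurable isomorphism between two nilsystems, not two factor maps out of $X$, so without such invariance there is no self-map of $Z$ for it to act on.

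Likewise, the claim in Step~2 that $\CA$ is generated by finitely many affine copies of $\pi$ (``modulo commutators'') is asserted, not proved. A priori the translates $\pi\circ R^k$, $k\in\Z$, could generate an infinite joining of copies of $Z$, and ruling this out is essentially the whole content of the proposition. Your closing plan (an eigenvalue argument at the Kronecker level, then induction along the tower of abelian extensions) does not yet address it.

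The missing idea is to prove invariance of the entire Host--Kra factor under the other generators \emph{before} passing to nilsystems. The paper does this with cubic measures. For $V=(T^{-1}U_hT)_h$ with $\langle U,T\rangle$ 2-step nilpotent and $I(U)=I(V)$, Proposition~\ref{P: invariance of Host-Kra factors} shows $\mu_{s,U}=\mu_{s,V}$ by induction on $s$. The inductive step (Corollary~\ref{C: equal invariant factors 3}) rests on one observation: on ergodic components, a $U$-eigenfunction $f$ is also a $V$-eigenfunction, because $V_{h'}f\cdot\bar f$ is $U$-invariant when $[U_{h'},T]$ is central. Combined with $TZ_s(U)=Z_s(V)$ (Lemma~\ref{L: conjugating factors}), this gives Proposition~\ref{P: Z_s(T_1) is T_2-invariant}. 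Applied to the single ergodic transformation $T_j$, it already tells you that $\CZ_s(T_j)$ is $T_i$-invariant, which your Step~1 tacitly needs.

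Only after this does the paper use Parry, and it applies it to $T_m$ itself. There $T_m$ is viewed as a measurable isomorphism of an $H_{m-1}$-nilsystem factor, conjugating the ergodic nilrotations $T_1,\dots,T_{m-1}$ to their conjugates. Unipotence of the resulting automorphism is checked layer by layer on $G_i/G_{i+1}\Gamma_i$, and $G\rtimes_\sigma\Z$ then gives a nilsystem of step $2s^2$ (Proposition~\ref{P: Bryna}). The paper then inducts on the number of generators, $\CZ_s(T_j)\subseteq\CZ_{s_{m-1}}(H_{m-1})\subseteq\CZ_{s_m}(H_m)$ with $s_m=2s_{m-1}^2$, using Candela--Szegedy for $H_{m-1}$ and Proposition~\ref{P: nilsystems are systems of order k}. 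So $s'$ grows with $\ell$ rather than being a single $2s^2$.

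Your endgame (Parry, unipotence, semidirect product, nilsystems being systems of order $s'$) is the right one. What is missing is the factor-invariance step that makes it applicable.
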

In the commuting case, one can simply take $s'=s$ by combining the properties of box seminorms listed in Section \ref{SS: commuting}. It is unclear what is the optimal value of $s'$ is in the 2-step nilpotent setting, nor whether the result extends to higher-step nilpotent systems (see Problem~\ref{Pr: comparing factors}). 

\subsection{Nilpotent actions on nilsystems}
Our first results asserts that if we have a nilpotent action on a nilsystem, and some generators act by nilrotations, then the entire action is by nilrotation, possibly at the cost of recasting the underlying nilmanifold as one of higher step. We are grateful to Bryna Kra for help with its proof. All the nilsystem-related definitions can be found in Appendix \ref{A: nilsystems}.
\begin{proposition}\label{P: Bryna}
Let $Y$ be an $s$-step nilmanifold. Suppose that $(Y, \CB_Y, m_Y, S_1, \ldots, S_{\ell+1})$ is a system with the following properties:
\begin{enumerate}
    \item each $S_1, \ldots, S_\ell$ acts ergodically on $Y$ via nilrotations;
    \item the group $\langle S_1, \ldots, S_{\ell+1}\rangle$ is $k$-step nilpotent.
\end{enumerate}
Then $(Y, \CB_Y, m_Y, S_1, \ldots, S_{\ell+1})$ is isomorphic to a $ks^2$-step nilsystem on which (the image of) $\langle S_1, \ldots, S_{\ell+1}\rangle$ acts via nilrotations.
\end{proposition}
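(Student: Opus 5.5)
The strategy is to show that $S_{\ell+1}$ is an affine transformation of $Y=G/\Gamma$, and then enlarge $G$ by adjoining the automorphisms involved so that every generator becomes a nilrotation. Write $S_1$ as translation by some $a_1\in G$, acting ergodically. For each $g\in\langle S_1,\ldots,S_{\ell+1}\rangle$ the conjugate $S_{\ell+1}S_1S_{\ell+1}\inv$ is a measure-preserving transformation lying in a $k$-step nilpotent group with $S_1$. Since $S_{\ell+1}$ conjugates the ergodic nilrotation $S_1$ to the transformation $S_{\ell+1}S_1S_{\ell+1}\inv$, we would like to apply Parry's theorem (a measurable isomorphism between nilsystems is affine). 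To do this, I would first show that $S_{\ell+1}S_1S_{\ell+1}\inv$ is itself a nilrotation on $Y$, or at least a nilsystem. Nilpotency gives $S_{\ell+1}S_1S_{\ell+1}\inv = S_1 c$ with $c$ in the derived group, which is a product of iterated commutators of depth at most $k$. Inducting downward along the lower central series of $\langle S_1,\ldots,S_{\ell+1}\rangle$ (the last nontrivial term commutes with everything, hence with $S_1$), I would show that elements commuting with the ergodic nilrotation $S_1$ act as translations by elements of the centralizer, via the standard fact that measurable maps commuting with an ergodic nilrotation are affine. Climbing up the series, one obtains that $S_{\ell+1}$ is an isomorphism from $(Y,S_1)$ to the nilsystem $(Y, S_1c)$.

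Parry's theorem then gives $S_{\ell+1}(x\Gamma)=b\,\phi(x)\Gamma$ for some $b\in G$ and some automorphism $\phi$ of $G$ preserving $\Gamma$. Next, form the group $\tilde G$ generated by $G$ (acting by left translation) and $\phi$, i.e. $\tilde G = G\rtimes\langle\phi\rangle$, with lattice $\tilde\Gamma=\Gamma\rtimes\langle\phi\rangle$, so that $Y\cong \tilde G/\tilde\Gamma$ and all $S_i$ act by left multiplication. The step-bound is the point to check. The action of $\phi$ on $G$ is unipotent modulo the relevant subgroups because $\langle S_1,S_{\ell+1}\rangle$ is $k$-step nilpotent: iterated commutators $[\phi,[\phi,\ldots,a_1]]$ vanish after $k$ steps on the $G_{\ell}$-graded pieces. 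On each of the $s$ filtration layers $G_i/G_{i+1}$, $\phi-\mathrm{id}$ is then nilpotent of order at most $ks$. A careful count of commutators in $\tilde G$, using weights $i$ on $G_i$ and weight $1$ on $\phi$, gives a filtration of length at most $ks^2$. I would replace $\tilde G$ by the closed subgroup generated by $G^\circ$, $\phi$, and the $S_i$ in order to keep it Lie and nilpotent, as in Appendix~A.

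The main obstacle is the first step: proving that $S_{\ell+1}$ is an isomorphism between nilsystems (i.e. that the conjugated action is again a nilrotation), so that Parry's rigidity applies. Ergodicity of $S_1$ and the spectral/Kronecker facts from Appendix~\ref{A: Kronecker} should be used here. If the direct induction fails, I would instead first handle the Kronecker factor, where everything is abelian and affine, and then lift through the tower of toral extensions.
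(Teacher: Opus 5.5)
Your outer architecture matches the paper's. Parry's rigidity gives $S_{\ell+1}(g\Gamma)=b\sigma(g)\Gamma$ with $\sigma\in\mathrm{Aut}(G)$ and $\sigma(\Gamma)=\Gamma$. The $k$-step nilpotence makes $\sigma$ unipotent. Finally, $g\Gamma\mapsto(g,0)\tilde\Gamma$ identifies $Y$ with $\tilde G/\tilde\Gamma$, where $\tilde G=G\rtimes_\sigma\Z$ and $\tilde\Gamma=\Gamma\rtimes_\sigma\Z$, so that $S_j$ and $S_{\ell+1}$ become left multiplication by $(a_j,0)$ and $(b,1)$. However, of the two steps you actually flesh out, the first rests on a false lemma and the second omits what makes it work.

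\textbf{The Parry step.} The paper does not try to show that $\hat S_1:=S_{\ell+1}S_1S_{\ell+1}^{-1}$ is a nilrotation. It views $S_{\ell+1}$ as an isomorphism $(Y,S_1,\ldots,S_\ell)\to(Y,\hat S_1,\ldots,\hat S_\ell)$ and invokes Parry \cite{Parry71} directly. Your detour uses the claim that a measurable map commuting with an ergodic nilrotation is a translation. Parry only gives that such a map is affine, and it need not be a translation. Here is a counterexample.
\begin{itemize}
\item Take $G=H_3(\R)\times\R$ with $(x,y,z,t)(x',y',z',t')=(x+x',y+y',z+z'+xy',t+t')$, and let $\Gamma$ be the integer points.
\item Take $a=(\alpha,\beta,0,\tau)$ with $1,\alpha,\beta,\tau$ rationally independent, $\sigma(x,y,z,t)=(x,y,z+t,t)$, and $b=(0,\tau/\alpha,0,0)$.
\item Then $b^{-1}ab=\sigma(a)$, so $u(g\Gamma):=b\sigma(g)\Gamma$ commutes with the ergodic nilrotation $L_a$.
\item If $u=L_c$, comparing at $g=e$ and at the central elements $g=(0,0,0,t)$ forces $(0,0,t,0)\in\Gamma$ for all $t$. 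So $u$ is not a translation.
\end{itemize}
With $S_1=L_a$ and $S_2=u$, this satisfies the proposition's hypotheses with $k=1$. There the last nontrivial term of the lower central series is the whole group, and your lemma would wrongly make $S_2$ a translation.

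Your hedge ``or at least a nilsystem'' does not help. $\hat S_1$ is always isomorphic to the nilsystem $(Y,S_1)$ via $S_{\ell+1}$. What Parry needs is that $\hat S_1$ be an ergodic unipotent affine map of the given $G/\Gamma$. To make this step self-contained, your climbing induction must carry ``unipotent affine'' rather than ``translation''. That means using Parry's theorem for ergodic unipotent affines at each level, together with unipotence of the linear parts. Once $S_{\ell+1}$ is known to be affine, $\hat S_1$ is automatically the nilrotation by $b\sigma(a_1)b^{-1}$.

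\textbf{The unipotence step.} On $Y_1=G/G_2\Gamma$, let $\sigma_1$ be the induced automorphism and $\sigma_1'(z)=\sigma_1(z)-z$. The $k$-step nilpotence only says that the $k$-fold iterated commutator of $S_{\ell+1}$ with $S_j$ is trivial. That commutator acts on $Y_1$ as rotation by $\pm(\sigma_1')^{(k)}(\alpha_j)$. So nilpotence alone only tells you the continuous homomorphism $(\sigma_1')^{(k)}$ vanishes at the single point $\alpha_j$.

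The paper upgrades this to $(\sigma_1')^{(k)}\equiv0$ because $\alpha_j$ generates a dense subgroup of $Y_1$; this is where ergodicity of $S_j$ enters a second time. It then propagates unipotence to the layers $G_i/G_{i+1}\Gamma_i$ through the commutator maps, which $\sigma$ intertwines. The step bound for $G\rtimes_\sigma\Z$ comes from \cite[Chapter 2, Lemma~6]{HK18}.

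Your alternative count gives $\sigma$ weight $1$ and $G_i$ weight $i$. That requires $\sigma(g)g^{-1}\in G_{i+1}$ for $g\in G_i$. This already fails for $\sigma(x,y)=(x,x+y)$ on $\R^2/\Z^2$, which arises with $k=2$, $s=1$ and an ergodic rotation $S_1$. So your $ks^2$ bound is unsupported as stated.
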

Unless stated otherwise, an isomorphism between two measure-preserving systems always refers to a measure-preserving isomorphism.

This result could likely be extended to the case when the group $\langle S_1, \ldots, S_{\ell}\rangle$ acts ergodically on $Y$ via nilrotations (which is a weaker assumption than the ergodicity of \textit{each} $S_j$), but that would require extending a result of Parry used in the proof below from ergodic $\Z$-nilsystems to ergodic $H$-nilsystems for a general finitely generated group $H$.

\begin{proof}
Let $Y=G/\Gamma$, and for $1\leq j\leq \ell,$ define $\hat S_j:=S_{\ell+1} S_j S_{\ell+1}\inv$.
Observe that the systems $(Y, \CB_Y, m_Y, S_1, \ldots, S_\ell)$ and $(Y, \CB_Y, m_Y, \hat S_1, \ldots, \hat S_\ell)$ are isomorphic via the factor map $y\mapsto S_\ell y$. By the ergodicity of $S_j$'s and a result of Parry \cite{Parry71} (see also \cite[Theorem~1.4]{Parry73}), the map $S_\ell$ agrees $m_Y$-a.e. with an affine map. This means that after modifying $S_{\ell+1}$ on a null set, there exist $b\in G$ and $\sigma\in\Aut(G)$ with $\sigma(\Gamma) = \Gamma$ such that  
$$S_{\ell+1}(g\Gamma) = b\sigma(g)\Gamma$$ 
for all $g\in G$. Our goal is to show that 
the automorphism $\sigma$ is \textit{$ks$-step unipotent}, meaning that the map $\sigma'(g):=\sigma(g)g\inv$ satisfies $(\sigma')^{(ks)}(G) \subseteq \Gamma$. 
We will then show like in
\cite[Chapter 11, Proposition~19]{HK18} that 
the affine system $(Y, \CB_Y, m_Y, S_1, \ldots, S_{\ell+1})$ is a $ks^2$-step nilsystem. 

Let $G = G_1 \supseteq G_2 \supseteq \cdots$ be the lower central series of $G$. For $1\leq i\leq s$, let $Y_i := G_i/G_{i+1}\Gamma_i$, where $\Gamma_i:=G_i\cap \Gamma$.
Notice that $\sigma([g_1,g_2])=[\sigma(g_1),\sigma(g_2)]$ for any $g_1,g_2\in G$, and so $\sigma(G_i) = G_i$ for every $1\leq i\leq s$. It follows that the induced map $\sigma_i:Y_i \to Y_i$ given by 
$$\sigma_i(gG_{i+1}\Gamma_i) := \sigma(g)G_{i+1}\Gamma_i$$ 
is a well-defined automorphism of $Y_i$. We will show by induction on $i$ that $\sigma_i$ is \textit{$k$-step unipotent}, i.e. the related map\footnote{Notice that while $\sigma'$ is not a group homomorphism in general, $\sigma'_i$ is because $Y_i$ is an abelian group.} $\sigma'_i:Y_i \to Y_i$ defined by $$\sigma_i'(gG_{i+1}\Gamma_i):= \sigma_i(gG_{i+1}\Gamma_i)(gG_{i+1}\Gamma_i)\inv =\sigma(g)g\inv G_{i+1}\Gamma_i$$ satisfies $(\sigma_i')^{(k)}(gG_{i+1}\Gamma_i) = G_{i+1}\Gamma_i$ for every $g\in G_i$. Note that the $k$-step unipotence of $\sigma_i$ implies that $$(\sigma')^{(k)}(G_i)\subseteq G_{i+1}\Gamma_i.$$ Hence if $\sigma_1,\ldots, \sigma_s$ are all $k$-step unipotent, then $\sigma$ is $ks$-step unipotent as claimed because\footnote{We use here the fact that $\sigma'(gh) = \sigma'(g)\sigma'(h)[\sigma'(h),g\inv]$, and hence $\sigma'(G_i\Gamma)\subseteq \sigma'(G_i) G_{i+1}\Gamma$. Iterating, we get 
    $\sigma'^{(k)}(G_i\Gamma)\subseteq G_{i+1}\Gamma$ for any $i$.}
\begin{align*}
    {\sigma'}^{(ks)}(G)\subseteq {\sigma'}^{((k-1)s)}(G_2\Gamma)\subseteq \cdots \subseteq\sigma'^{(k)}(G_s\Gamma)\subseteq \Gamma.
\end{align*}

To prove the $k$-step unipotence of $\sigma_i$'s, we start with $i=1$. Take $S_j$ for any $1\leq j\leq \ell$. Since $Y_1$ is a compact abelian group, $S_j$ induces the rotation 
${S'_j} z := \alpha_j z$ on $Y_1$. 
As $S_j$ is assumed to be ergodic, the orbit of $\alpha_j$ is dense in ${Y_1}$. 
The image of $S_{\ell+1}$ on ${Y_1}$ takes the form
$${S'_{\ell+1}}z := b\sigma_1(z).$$

For $v\in G/G_2$, if $L_v(z) := vz$ is the rotation by $v$, then  
\begin{align*}
    {S'_{\ell+1}} L_v {S'_{\ell+1}}^{-1} = L_{\sigma_1(v)},
\end{align*}
and so 
\begin{align*}
    [{S'_{\ell+1}}\inv, L_v\inv] = {S'_{\ell+1}}L_v{S'_{\ell+1}}^{-1}L_v\inv 
    = L_{\sigma_1'(v)}.
\end{align*}
Iterating this $n$ times for $L_v = S_j'$ (for any $1\leq j\leq\ell$), we obtain 
\begin{align*}
  [{S'_{\ell+1}}\inv, [{S'_{\ell+1}}\inv, \dots, [{S'_{\ell+1}}\inv, {{S'_j}}\inv]\dots]] = L_{(\sigma_1')^{(n)}(\alpha_j)}.
\end{align*}
Since $\langle {S'_j}, {S'_{\ell+1}}\rangle$ is $k$-step nilpotent, the left-hand side equals $\Id_{Y_1}$ for any $n\geq k$.  This means that $(\sigma_1')^{(k)}\alpha_j = e_{Y_1}$.
Because $\alpha_j$ generates a dense subgroup of ${Y_1}$ and $(\sigma_1')^{(k)}$ 
is a continuous group homomorphism, we have that 
$(\sigma_1')^{(k)} = \id_{Y_1}$.
This means that $\sigma_1$ is $k$-step unipotent on ${Y_1}$. 

Suppose now that $\sigma_i$ is $k$-step unipotent on $Y_i$; we want to show that $\sigma_{i+1}$ is $k$-step unipotent on $Y_{i+1}$. Recall that the group $G_{i+1} = [G_i, G]$ is generated by elements of the form $[g_i,g]$ for some $g_i\in G_i, g\in G$ (in fact, every element of $G_{i+1}$ takes such a form). Then
\begin{align*}
    \sigma_{i+1}'([g_i, g]G_{i+2}\Gamma_{i+1}) &= \sigma([g_i,g])[g_i,g]\inv G_{i+2}\Gamma_{i+1} = [\sigma(g_i),\sigma(g)][g_i\inv, g\inv]G_{i+2}\Gamma_{i+1}\\
    &= [\sigma(g_i)g_i\inv,\sigma(g)g\inv]G_{i+2}\Gamma_{i+1} = [\sigma'(g_i),\sigma'(g)]G_{i+2}\Gamma_{i+1}.
\end{align*}
Iterating, we get that for any $n$,
\begin{align*}
    (\sigma_{i+1}')^{(n)}([g_i, g]G_{i+2}\Gamma_{i+1}) = [(\sigma')^{(n)}(g_i),(\sigma')^{(n)}(g)]G_{i+2}\Gamma_{i+1}.
\end{align*}
In particular, it follows from the induction hypothesis that for any $n\geq k$, we have $(\sigma')^{(n)}(g_i)\in G_{i+1}\Gamma_{i}$ and $(\sigma')^{(n)}(g)\in G_2\Gamma$, implying that
$$[(\sigma')^{(n)}(g_i),(\sigma')^{(n)}(g)]\in G_{i+2}\Gamma_{i+1}.$$
The $k$-step unipotence of $\sigma_{i+1}$ follows. 

We have thus shown that $\sigma_1, \ldots, \sigma_s$ are $k$-step unipotent, and hence $\sigma$ is $ks$-step unipotent. 
Now we have to build the nilmanifold on which $S_1, \ldots, S_{\ell+1}$ act by nilrotations.

By \cite[Chapter 2, Lemma~6]{HK18}, the semidirect product\footnote{I.e. the group operation on $\tilde{G}$ is given by $(g,m)*(g',m') = (g\sigma^m(g'),m+m').$} $\tilde{G} = G \rtimes_\sigma \Z$ is a $ks^2$-step nilpotent Lie group (that is where we use the $ks$-step unipotence of $\sigma$) and $\tilde{\Gamma} = \Gamma \rtimes_\sigma \Z$ is discrete and cocompact in $\tilde{G}$.  
Define $\Phi\colon G/\Gamma \to \tilde{G}/\tilde{\Gamma}$ by setting $\Phi(g\Gamma) = (g,0)\tilde{\Gamma}$.  
It is immediate to check that this is a homeomorphism. 

Set $\tilde{a}_j :=(a_j,0)$, $\tilde b := (b,1)\in\tilde{G}$, where $a_j\in G$ is the element defining the nilrotation $S_j$ for $1\leq j\leq \ell$. Then 
\begin{align*}
    \Phi(S_j(g\Gamma)) = \begin{cases} (a_j g,0)\tilde{\Gamma} = \tilde L_{\tilde a_j}\Phi(g\Gamma),\; &1\leq j\leq\ell\\
    (b\sigma(g), 0)\tilde{\Gamma} = \tilde L_{\tilde{b}}\Phi(g\Gamma),\; &j=\ell+1,
    \end{cases} 
\end{align*}
where $\tilde L_{\tilde a_j}$,  $\tilde L_{\tilde b}$ are corresponding left multiplication maps on $\tilde G/\tilde\Gamma$. Set $\tilde S_j:= \tilde L_{\tilde a_j}$ for $1\leq j\leq \ell$ and $\tilde S_{\ell+1}:=\tilde L_{\tilde b}$.
It follows that $(G/\Gamma, S_1, \ldots, S_{\ell+1})$ is (topologically) conjugate to the $ks^2$-step nilsystem $(\tilde{G}/\tilde{\Gamma}, \tilde S_1, \ldots, \tilde S_{\ell+1})$. The measure $\Phi_*m_{G/\Gamma}$ is $\tilde{G}$-invariant, and so it is the Haar measure $m_{\tilde{G}/\tilde{\Gamma}}$. This gives the claimed isomorphism of the (measurable) nilsystems. 
\end{proof}

\begin{corollary}\label{C: Bryna}
Let $(X, \CX, \mu, T_1, \ldots, T_{\ell+1})$ be a $k$-step nilpotent system. Suppose that $(X, \CX, \mu, T_1, \ldots, T_{\ell})$ is isomorphic to an $s$-step nilsystem. Then $(X, \CX, \mu, T_1, \ldots, T_{\ell+1})$ is isomorphic to a $ks^2$-step nilsystem.
\end{corollary}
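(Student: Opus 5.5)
The plan is to transport the whole action to the nilmanifold and then invoke Proposition~\ref{P: Bryna}. Let $\pi\colon (X,\CX,\mu)\to (Y,\CB_Y,m_Y)$, with $Y=G/\Gamma$ an $s$-step nilmanifold, be a measure-preserving isomorphism such that $\pi\circ T_j = S_j\circ \pi$ $\mu$-a.e. for $1\leq j\leq \ell$, where each $S_j$ is a nilrotation of $Y$. We then define $S_{\ell+1}:=\pi\circ T_{\ell+1}\circ \pi\inv$. Since $\pi$ is only defined up to null sets, this is an invertible measure-preserving transformation of $(Y,\CB_Y,m_Y)$ defined $m_Y$-a.e. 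After modifying it on a null set (as in the proof of Proposition~\ref{P: Bryna}), we may regard it as a genuine measure-preserving map. By construction, $\pi$ intertwines the full action of $\langle T_1,\ldots,T_{\ell+1}\rangle$ with that of $\langle S_1,\ldots,S_{\ell+1}\rangle$.

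Next we check the hypotheses of Proposition~\ref{P: Bryna}.

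\emph{Step-$k$ nilpotence.} Conjugation by $\pi$ defines a group homomorphism from $\langle T_1,\ldots,T_{\ell+1}\rangle$ onto $\langle S_1,\ldots,S_{\ell+1}\rangle$ (viewed as measure-preserving transformations modulo null sets). Every iterated commutator of length $k+1$ in the $T_j$ is the identity a.e., so the same holds for the $S_j$. Hence $\langle S_1,\ldots,S_{\ell+1}\rangle$ is $k$-step nilpotent, and $S_1,\ldots,S_\ell$ act via nilrotations by assumption.

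\emph{Ergodicity.} Ergodicity of each $S_j$, $1\leq j\leq \ell$, is an isomorphism invariant, so it is inherited from the ergodicity of $T_j$. I expect this to be the main point needing care. The corollary as stated does not repeat the ergodicity assumption, whereas Proposition~\ref{P: Bryna} (through Parry's theorem) needs each $S_j$ to be ergodic. In the intended application (Proposition~\ref{P: comparing HK factors in nilpotent systems}) the $T_j$ are assumed ergodic, so I would include this as a standing hypothesis. Alternatively, one could argue that the notion of ``nilsystem'' in Appendix~\ref{A: nilsystems} already includes ergodicity of the generating rotations.

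Proposition~\ref{P: Bryna} then gives an isomorphism $\Phi$ from $(Y,m_Y,S_1,\ldots,S_{\ell+1})$ to a $ks^2$-step nilsystem $\tilde G/\tilde\Gamma$ on which all $\tilde S_j$ act by nilrotations. Composing $\Phi\circ\pi$ yields the required isomorphism of $(X,\CX,\mu,T_1,\ldots,T_{\ell+1})$ with a $ks^2$-step nilsystem. The only routine verification left is that a composition of measure-preserving conjugacies is again one and respects all $\ell+1$ transformations simultaneously.
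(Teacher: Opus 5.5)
This is correct and is essentially the paper's own proof: set $S_{\ell+1}:=\pi\circ T_{\ell+1}\circ\pi\inv$, observe that $\pi$ then intertwines all $\ell+1$ transformations, and invoke Proposition~\ref{P: Bryna}. You are right that ergodicity of $T_1,\ldots,T_\ell$ must be added as a hypothesis. The paper uses it tacitly through Proposition~\ref{P: Bryna}, and it does hold in the application in Proposition~\ref{P: comparing HK factors in nilpotent systems}. However, drop your alternative justification. Definition~\ref{D: nilsystems} imposes no ergodicity, and without it the statement is false: take $T_1=\id$ and $T_2$ a Bernoulli shift, which has positive entropy and so cannot be isomorphic to any nilrotation.
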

\begin{proof}
    Let 
\begin{align*}
    \pi: (X, \CX, \mu, T_1, \ldots, T_\ell)\to (Y, \CB_Y, m_{Y}, S_1, \ldots, S_\ell)
\end{align*}
be the assumed isomorphism onto an $s$-step nilsystem. Define $S_{\ell+1}:=\pi\circ T_{\ell+1}\circ \pi\inv$ on $(Y, \CB_Y, m_{Y})$. Then
\begin{align*}
    \pi: (X, \CX, \mu, T_1, \ldots, T_{\ell+1})\to (Y, \CB_Y, m_{Y}, S_1, \ldots, S_{\ell+1})
\end{align*}
is also an isomorphism. By Proposition~\ref{P: Bryna}, the latter is isomorphic to a $ks^2$-step nilsystem.
\end{proof}

\subsection{Invariance of Host-Kra factors in 2-step nilpotent systems}
In this section, we prove the following result on the invariance of Host-Kra factors in the 2-step nilpotent setting. Our proof uses the 2-step nilpotence, and it is unclear whether the result holds in the higher-step nilpotent universe.
\begin{proposition}\label{P: Z_s(T_1) is T_2-invariant}
    Let $(X, \CX, \mu, T_1, \ldots, T_\ell)$ be 2-step nilpotent. Let $H:=\langle T_1, \ldots, T_{\ell-1}\rangle$. If $T_1, \ldots, T_{\ell-1}$ are ergodic, then $\CZ_s(H)$ is $T_\ell$-invariant for  every $s\in\N_0$.
\end{proposition}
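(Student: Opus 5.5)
The plan is to reduce the $T_\ell$-invariance of $\CZ_s(H)$ to showing that $\CZ_s(H)$ is unchanged when $H$ is replaced by its conjugate $H':=T_\ell H T_\ell\inv$. Transporting dual functions by the Koopman operator gives
\[
T_\ell\bigl(Z_{s,H}\bigr)=Z_{s,H'}, \qquad\text{so}\qquad T_\ell^{\pm1}\CZ_s(H)=\CZ_s(H'),
\]
up to the convention \eqref{E: composition}. It therefore suffices to prove $\CZ_s(H)=\CZ_s(H')$. Two consequences of 2-step nilpotence drive the argument:
\begin{enumerate}
    \item each commutator $R_j:=[T_\ell,T_j]$ is central in $\langle T_1,\ldots,T_\ell\rangle$;
    \item $h\mapsto [T_\ell,h]$ is a homomorphism from $H$ into the center.
\end{enumerate}
Consequently $H'\subseteq K:=H\cdot\langle R_1,\ldots,R_{\ell-1}\rangle$ and $H\subseteq H'\cdot\langle R_j\rangle=K$. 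Moreover, every element of $H'$ has the form $h\,c(h)$ with $c(h)$ central.

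\emph{Step 1: $\CZ_s(H)$ is invariant under each $R_j$.} Since $R_j$ commutes with every $U_h$, $h\in H$, applying $R_j$ to an $H$-dual function $\CD_{s+1,H}\mathfrak f$ gives the $H$-dual function of $(R_j f_\eps)_\eps$. Hence $R_j Z_{s,H}= Z_{s,H}$, so $\CZ_s(H)$ is $R_j$-invariant. Thus $K$, and in particular $H'$, acts on the factor $(X,\CZ_s(H),\mu)$.

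\emph{Step 2: identify the action of $K$ on the factor.} The group $H$ acts ergodically because each $T_j$, $j<\ell$, is ergodic. By Theorem~\ref{T: Candela-Szegedy}, $(X,\CZ_s(H),\mu,H)$ is an $s$-step $H$-pronilsystem $Y=\varprojlim G^{(i)}/\Gamma^{(i)}$ on which $H$ acts by nilrotations. By Step 1, each $R_j$ induces a measure-preserving map of $Y$ commuting with the ergodic nilrotations $T_1,\ldots,T_{\ell-1}$. I would pass to a finite-dimensional nilsystem quotient and apply Proposition~\ref{P: Bryna}, or Corollary~\ref{C: Bryna}, with $S_{\ell+1}=R_j$. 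This yields a (higher-step) nilsystem on which all of $K$ acts by nilrotations; one must check that these quotients can be chosen $R_j$-invariantly. Commuting with an ergodic nilrotation should in fact force $\sigma=\id$ in the affine representation, so that $R_j$ is itself a translation and the step need not even increase. Either way, the $H'$-action on $Y$ is by nilrotations of an $s$-step nilmanifold.

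\emph{Step 3: deduce $\CZ_s(H)\subseteq\CZ_s(H')$, then conclude by symmetry.} On an $s$-step pronilsystem on which a group $H'$ acts by nilrotations, the standard cube computation shows that the $U^{s+1}(H')$ seminorm of any nonzero function measurable with respect to that factor is positive. The computation is that the cubic measure $\mu_{s+1,H'}$ is supported on a Host--Kra cube space whose last coordinate is determined by the others; this should remain valid even if $H'$ is not ergodic, since one can pass to ergodic components, which are again nilsystems. Combined with \eqref{E: seminorms vs. factors} for the nilpotent group $H'$, this gives $\CZ_s(H)\subseteq\CZ_s(H')$. For the reverse inclusion, the hypotheses are symmetric under $T_\ell\leftrightarrow T_\ell\inv$: ergodicity of $H'$ follows from ergodicity of $H$ via conjugation, and $H=T_\ell\inv H'T_\ell\subseteq H'\cdot\langle R_j\rangle$. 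So the same argument gives $\CZ_s(H')\subseteq\CZ_s(H)$, hence equality, and $T_\ell\CZ_s(H)=\CZ_s(H)$.

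I expect the main obstacle to be Step 2: turning a measurable automorphism commuting with an ergodic nilrotation action into a nilrotation, compatibly across the inverse limit, via Parry's rigidity. That is where I would spend most of the effort. A reasonable fallback is to argue directly with cubic measures: show $\mu_{s+1,H}=\mu_{s+1,H'}$ on $\CZ_s(H)^{\otimes}$ using that the central elements $c(h)$ act trivially on the invariant $\sigma$-algebras of the diagonal actions appearing in Definition~\ref{D: cubic measures}.
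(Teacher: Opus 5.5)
Your reduction to $\CZ_s(H)=\CZ_s(H')$ via $T_\ell^{\pm1}\CZ_s(H)=\CZ_s(H')$ is exactly the paper's Lemma~\ref{L: conjugating factors}, and Step~1 is fine. The main route breaks at Step~2, and not only for the reason you flag (the nilsystem quotients in the inverse limit would have to be chosen $R_j$-invariantly before any nilsystem result applies).

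Proposition~\ref{P: Bryna} and Corollary~\ref{C: Bryna} only produce a nilsystem of step $ks^2$. So Step~3 would give at best $\CZ_s(H)\subseteq\CZ_{s'}(H')$ for some $s'>s$. Since $\CZ_s(H')$ is the image of $\CZ_s(H)$ under $T_\ell^{\pm1}$, an inclusion into a larger factor of $H'$ says nothing about $T_\ell$-invariance of $\CZ_s(H)$. That weaker kind of inclusion is what the paper proves in Proposition~\ref{P: comparing HK factors in nilpotent systems}, and that proof uses the present proposition as input, not the other way round.

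Your sentence ``either way, $H'$ acts by nilrotations of an $s$-step nilmanifold'' is therefore unsupported. The shortcut you offer for it is also false: an affine map commuting with an ergodic nilrotation need not have $\sigma=\id$. Take $G=H_3(\R)\times\R$, $\Gamma=H_3(\Z)\times\Z$, $\sigma(h,w)=(h\cdot(0,0,w),w)$, $a=((\alpha,\beta,0),\gamma)$ with $1,\alpha,\beta,\gamma$ rationally independent, and $b=((0,\gamma/\alpha,0),0)$. Then $g\Gamma\mapsto b\sigma(g)\Gamma$ commutes with $L_a$, yet it is not a left translation of $G/\Gamma$. In this example $\sigma$ is trivial on $G/G_2$, so $G\rtimes_\sigma\Z$ is still $2$-step. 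Showing that this always happens, compatibly with the inverse limit, would be an additional argument you have not supplied.

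Your fallback is the route that works, and it is essentially the paper's proof, which uses no structure theory at all. Write $V_h:=T_\ell\inv U_hT_\ell$. The paper proves by induction on $s$ that $\mu_{s,U}=\mu_{s,V}$ and $\CI_{\mu_s}(U^{\cube{s}})=\CI_{\mu_s}(V^{\cube{s}})$ on the whole of $X^{\cube{s}}$ (Proposition~\ref{P: invariance of Host-Kra factors}). The base case is $\CI(U)=\CI(V)$, both trivial by ergodicity.

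The inductive step is Corollary~\ref{C: equal invariant factors 3} applied to $U^{\cube{s}},T_\ell^{\cube{s}},V^{\cube{s}},\mu_s$, and runs as follows:
\begin{enumerate}
\item Disintegrate $\mu_s$ over the common invariant factor.
\item Invariant functions of the doubled action on $\mu_s\times_{\CI}\mu_s$ lie in the product of the Kronecker factors of the ergodic components (Lemma~\ref{L: product of Kroneckers}).
\item $2$-step nilpotence shows that a $U$-eigenfunction $f$ on a component is also a $V$-eigenfunction. Indeed, $U_h$ commutes with the central element $[U_{h'},T_\ell]$, and $U$ acts commutatively on the Kronecker factor, so $V_{h'}f\cdot\bar f$ is $U$-invariant and hence constant.
\end{enumerate}

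Equal cubic measures give $\nnorm{\cdot}_{s+1,U}=\nnorm{\cdot}_{s+1,V}$, hence $Z_s(H)=Z_s(H')$ by Proposition~\ref{P: seminorms vs. algebras}. Your phrase ``the central elements act trivially on the invariant $\sigma$-algebras'' is precisely the output of this inductive step, so that is where the real content lies. There is no need to restrict to $\CZ_s(H)^{\otimes}$.
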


We start with the following simple lemma that does not require nilpotence.
\begin{lemma}\label{L: conjugating factors}
    Let $H$ be a countable amenable group, $U:=(U_h)_{h\in H}$ be an $H$-action, and $T$ be a $\Z$-action on a standard probability space $(X, \CX, \mu)$.
    Let $V := (T\inv U_h T)_{h\in H}$. Then 
    {$$T Z_s(U) = Z_s(V)$$}
    for every $s\in\N_0$.
\end{lemma}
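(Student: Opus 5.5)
The plan is to prove the statement at the level of dual functions, by a direct change of variables in the averages that define them. Everything is transported by the Koopman operator $T$ (which is unitary on $L^2(\mu)$ and multiplicative on $L^\infty(\mu)$), and then we take closed linear spans.

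Fix $s$ and put $s+1$ copies of $H$ in the definition. Since $(Tf)(x)=f(Tx)$ and $V_h=T\inv U_hT$, the convention \eqref{E: composition} gives, for every $f$,
\[
(V_hf)(x)=f(T\inv U_hTx)=(T(U_h(T\inv f)))(x),
\]
so that $V_h=T\,U_h\,T\inv$ as operators on $L^2(\mu)$. Two checks are needed before this can be fed into the dual functions:
\begin{itemize}
\item the assignment $h\mapsto V_h$ is an $H$-action, because $T\inv U_{h}TT\inv U_{h'}T=T\inv U_{hh'}T$;
\item the identity is compatible with products, namely $V_{h_{s+1}\cdots h_1}=T\,U_{h_{s+1}\cdots h_1}T\inv$ and likewise for inverses.
\end{itemize}

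Now take $\mathfrak f=(f_\eps)_{\eps\in\{0,1\}^{s+1}_*}$ and form $T\mathfrak f:=(Tf_\eps)_\eps$. For each fixed tuple of group elements, the summand of $\CD_{s+1,V}(T\mathfrak f)$ is
\[
V^{-1}_{h_{s+1}\cdots h_1}\Big(\prod_\eps V_{h_{s+1,\eps_{s+1}}\cdots h_{1,\eps_1}}Tf_\eps\Big)
= T\,U^{-1}_{h_{s+1}\cdots h_1}T\inv\Big(\prod_\eps T\,U_{\cdots}f_\eps\Big)
= T\Big(U^{-1}_{h_{s+1}\cdots h_1}\prod_\eps U_{\cdots}f_\eps\Big).
\]
The second equality uses that $T$ is multiplicative. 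The same F{\o}lner sequences are used on both sides, and $T$ is continuous on $L^2(\mu)$, so $T$ passes through the iterated $L^2$ limits. This yields
\[
\CD_{s+1,V}(T\mathfrak f)=T\,\CD_{s+1,U}\mathfrak f .
\]

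As $\mathfrak f$ ranges over all bounded families, so does $T\mathfrak f$, because $T$ is invertible and preserves $L^\infty(\mu)$. Hence $T$ maps the generating set of $Z_s(U)$ onto the generating set of $Z_s(V)$. Since $T$ is a unitary bijection, it also maps closed linear spans onto closed linear spans, which gives $TZ_s(U)=Z_s(V)$.

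The only place where care is required is the order-of-composition bookkeeping under the convention \eqref{E: composition}, in particular checking that $V$ is an action rather than an anti-action and that the conjugation produces $TUT\inv$ and not $T\inv UT$. Once this is verified, the argument is routine.
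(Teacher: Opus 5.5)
Your proposal is correct and follows the same route as the paper: the paper also uses the intertwining relation $T(U_hf)=V_h(Tf)$ to get $T\,\CD_{s,U}\mathfrak{f}=\CD_{s,V}(T\mathfrak{f})$, and then passes to closed linear spans. Your write-up is more careful with the bookkeeping than the paper's short display, which omits the prefactor $U_{h_{s+1}\cdots h_1}^{-1}$ and is off by one in the number of copies of $H$; in particular, you correctly derive $V_h=T\,U_h\,T\inv$ at the operator level and check that $T$ passes through the iterated $L^2$ limits.
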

\begin{proof}
    Recall that $Z_s(U)$ is generated by dual functions $\CD_{s,U}\mathfrak{f}$. Given $h,h'\in H^s$, write $h_\eps:=h_{1,\eps_1}+\cdots + h_{s,\eps_s}$, where $h_{i,0} =h_i$ and $h_{i,1}=h_i'$ as before. Using the relation $U_h T = T(T\inv U_hT) = T V_h$, we deduce that
    \begin{align*}
        T \CD_{s, U}\mathfrak{f}:= \E_{h,h'\in H^s}\prod_{\eps\in\{0,1\}^s_*}T (U_{h_\eps}f_\eps)= \E_{h,h'\in H^s}\prod_{\eps\in\{0,1\}^s_*}V_{h_\eps}(Tf_\eps) = \CD_{s, V}(T\mathfrak{f}).
    \end{align*}
    Hence $T Z_s(U) = Z_s(V)$, from which the result follows.
\end{proof}

Below is the second ingredient for the proof of Proposition~\ref{P: Z_s(T_1) is T_2-invariant}. It relies on a result from Appendix \ref{A: Kronecker} that uses 2-step nilpotence. It would be interesting to see if the result holds or fails for higher-step nilpotent systems.

\begin{proposition}\label{P: invariance of Host-Kra factors}
    Let $H$ be a countable amenable group, $U:=(U_h)_{h\in H}$ be an $H$-action, $T$ be a $\Z$-action, and $V := (T\inv U_h T)_{h\in H}$ be an $H$-action on a standard probability space $(X, \CX, \mu)$. Suppose that the joint action of $U,T$ is 2-step nilpotent, and that {$I(U)=I(V)$}. Then
    \begin{align}\label{E: degree-s property}
      \mu_{s,U} = \mu_{s,V},\quad \CI_{\mu_{s,U}}(U^\cube{s}) = \CI_{\mu_{s,V}}(V^\cube{s}),\quad \textrm{and}\quad {Z_{s}(U) = Z_{s}(V)}
    \end{align}
    for every $s\in\N_0$.
\end{proposition}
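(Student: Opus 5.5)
Since $Z_s$ is defined as the span of dual functions, which are built from the cubic measure $\mu_{s+1,\cdot}$, the plan is to prove the three identities in \eqref{E: degree-s property} simultaneously by induction on $s$. The cubic measure identity at level $s$ should imply the invariant-factor identity at level $s$. That in turn yields the measure identity at level $s+1$, and hence the factor identity. Concretely, by Definition \ref{D: cubic measures}, $\mu_{s+1,U} = \mu_{s,U}\times_{\CI_{\mu_{s,U}}(U^{\cube{s}})}\mu_{s,U}$ and similarly for $V$. So once $\mu_{s,U}=\mu_{s,V}$ and the invariant $\sigma$-algebras of the diagonal actions agree, the relative products coincide and $\mu_{s+1,U}=\mu_{s+1,V}$. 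The base case $s=0$ is $\mu_{0}=\mu$ for both, and $\CI(U)=\CI(V)$ by hypothesis.

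For $Z_s$, I would use Proposition \ref{P: seminorms vs. algebras} together with \eqref{E: algebra vs. factor} (valid here: $Z_s$ is an algebra for nilpotent $H$ by \cite{CS23}). Then $f\perp Z_s(U)$ iff $\nnorm{f}_{s+1,U}=0$. The seminorm is an integral of $\bigotimes \CC^{|\eps|}f$ against $\mu_{s+1,U}$. So equality of the cubic measures gives equality of the seminorms, hence of orthogonal complements, hence $Z_s(U)=Z_s(V)$.

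The real work is the inductive step for invariant $\sigma$-algebras: from $\mu_{s,U}=\mu_{s,V}=:\nu$, deduce $\CI_\nu(U^{\cube s})=\CI_\nu(V^{\cube s})$. Here I would use 2-step nilpotence. Since $V_h = T^{-1}U_hT = U_h[U_h,T]$ up to ordering conventions, $V_h$ differs from $U_h$ by a commutator $c_h$ that is central in the joint group. Moreover $h\mapsto c_h$ is a homomorphism into the center (bilinearity of commutators in 2-step groups). Thus $V^{\cube s}_h = U^{\cube s}_h C^{\cube s}_h$ with $C$ central and commuting with everything. My plan is to show that $C^{\cube s}_h$ acts trivially on $\CI_\nu(U^{\cube s})$ and on $\CI_\nu(V^{\cube s})$. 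It would then follow that a function is $U^{\cube s}$-invariant iff it is $V^{\cube s}$-invariant. Since $C_h$ commutes with $U$, $C_h^{\cube s}$ preserves $\nu$ and preserves $\CI_\nu(U^{\cube s})$. This gives an action of an abelian group on the invariant factor, and I expect to invoke the result from Appendix \ref{A: Kronecker} (that central commutators act trivially on the relevant Kronecker/invariant factor in 2-step systems). For $s=0$ this is exactly the hypothesis $I(U)=I(V)$: on $\CI(U)=\CI(V)$, both $U_h$ and $V_h$ act trivially, so $C_h$ does.

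The main obstacle is passing this triviality to higher $s$. I would try a spectral argument: for $F$ that is $U^{\cube s}$-invariant, $C_h^{\cube s}F$ is again invariant. Using $\nu$-ergodic decomposition and the fact that $C$ arises as commutators $[U_h,T]$, I would write $C_h^{\cube s}F = (T^{\cube s})^{-1}U_h^{\cube s}T^{\cube s}U_h^{-\cube s}F$. The aim is to show that the eigen-decomposition of $F$ under the abelian group $\{C_h\}$ forces all eigenvalues to be $1$, with $\{C_h\}$ acting ergodically-compatibly via the appendix lemma. If direct triviality fails, a fallback is to show only that $T^{\cube s}$ preserves $\nu$ and maps $\CI_\nu(U^{\cube s})$ onto $\CI_\nu(V^{\cube s})$ (the cubic analogue of Lemma \ref{L: conjugating factors}). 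One would then combine this with the centrality of $C$ to get equality.
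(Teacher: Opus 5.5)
Your outer structure is sound. The induction is the right one, $\mu_{s+1,U}=\mu_{s+1,V}$ does follow from the relative-product definition, and $Z_s(U)=Z_s(V)$ follows from equality of seminorms via Proposition~\ref{P: seminorms vs. algebras} alone. (\eqref{E: algebra vs. factor} is not needed, and \cite{CS23} would not justify it for an arbitrary countable amenable $H$.) Recasting the key step as ``the central elements $C_h=[U_h,T]$ act trivially on $\CI(U^{\cube s})$'' is also equivalent to what must be proved.

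But that step is left unproved, and the route you sketch would not work, for two reasons.
\begin{enumerate}
\item As you frame it, the step uses only $\mu_{s,U}=\mu_{s,V}$, which cannot suffice: at $s=0$ this equality is vacuous and the conclusion is the hypothesis. The step must use that $\mu_{s+1}=\int\mu_{s,x}\times\mu_{s,x}\,d\mu_s(x)$, where $\mu_{s,x}$ are the ergodic components over $\CI_{\mu_s}(U^{\cube s})=\CI_{\mu_s}(V^{\cube s})$. In other words, it needs the invariant-factor identity at the previous level.
\item The fact you expect from Appendix~\ref{A: Kronecker}, that central commutators act trivially on the Kronecker factor, is false. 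On $\T^2$, let $U$ be the rotation by $(\alpha_1,\alpha_2)$ with $1,\alpha_1,\alpha_2$ rationally independent, and let $T(x,y)=(x,x+y)$. Then $[U,T]$ is the rotation by $(0,\pm\alpha_1)$, which is central, and $V$ is the rotation by $(\alpha_1,\alpha_2-\alpha_1)$. Both $U$ and $V$ are ergodic, so $I(U)=I(V)$. Yet $[U,T]$ multiplies the $U$-eigenfunction $e^{2\pi i y}$ by $e^{\pm 2\pi i\alpha_1}\neq 1$. What is true is only that $C_h$, since it commutes with $U$, acts on each one-dimensional eigenspace of an ergodic component by a unimodular scalar. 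That scalar is invisible on $f\otimes\bar f$ but not on $f$, which is exactly why the statement concerns the relative square.
\end{enumerate}

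The missing argument is the one in Proposition~\ref{P: equal invariant factors} and Corollary~\ref{C: equal invariant factors 3}, applied to $U^{\cube s},T^{\cube s},V^{\cube s},\mu_s$. Let $F$ be $U^{\cube{s+1}}$-invariant. Then for a.e.\ $x$ it is $U^{\cube s}\times U^{\cube s}$-invariant with respect to $\mu_{s,x}\times\mu_{s,x}$. By Lemma~\ref{L: product of Kroneckers} it lies in $K_{\mu_{s,x}}(U^{\cube s})\otimes K_{\mu_{s,x}}(U^{\cube s})$. Hence it is an $\ell^2$-sum of terms $f\otimes\bar g$ with $f,g$ eigenfunctions of equal eigenvalue, and ergodicity forces $g\in\C f$. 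Two-step nilpotence then finishes the step: $[U_{h'},T]$ is central and $U$ acts commutatively on its Kronecker factor. So $V_{h'}f\cdot\bar f$ is $U$-invariant, hence constant on the component. Thus $f$ is also a $V$-eigenfunction, and $f\otimes\bar f$ is $V^{\cube{s+1}}$-invariant; symmetry gives the reverse inclusion.

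Your ``eigen-decomposition under $\{C_h\}$'' never reaches this reduction to eigenfunctions of ergodic components. Your fallback via $T^{\cube s}$ only gives $T^{\cube s}\CI(U^{\cube s})=\CI(V^{\cube s})$. That holds whether or not the two factors coincide: compare Lemma~\ref{L: conjugating factors} at level $0$, where equality is an extra hypothesis. So the fallback cannot close the step either.
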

\begin{proof}
    For $s=0$, the first claim is trivial while the other two are equivalent to the assumption $\CI(U)=\CI(V)$.
    
    Now, suppose that \eqref{E: degree-s property} holds for some $s\in\N_0$, and let $\mu_s  := \mu_{s,U} = \mu_{s,V}$. 
    The first claim follows since
    \begin{align*}
        \mu_{s+1,U} = \mu_s\times_{\CI_{\mu_{s}}(U^\cube{s})}\mu_s = \mu_s\times_{\CI_{\mu_{s}}(V^\cube{s})}\mu_s = \mu_{s+1,V}
    \end{align*}
    by induction.
    The second claim follows from Corollary \ref{C: equal invariant factors 3} with $U^\cube{s}, T^\cube{s}, V^\cube{s}, \mu_s$ in place of $U, T, V, \mu$. {For the last claim, we first infer from the first claim and the definition of Host-Kra seminorms that $\nnorm{\cdot}_{s+1, U}=\nnorm{\cdot}_{s+1, V}$, and then we derive the identity between the vector spaces from Proposition~\ref{P: seminorms vs. algebras}.}
\end{proof}

With these lemmas, we are in the position to prove Proposition~\ref{P: Z_s(T_1) is T_2-invariant}.

\begin{proof}[Proof of Proposition~\ref{P: Z_s(T_1) is T_2-invariant}]
    If $T_j$ is ergodic, then so is $T_\ell\inv T_j T_\ell$. Hence it is easy to see that the actions of $H$ and $$H' := \langle T_\ell\inv T_j T_\ell\colon\; 1\leq j\leq \ell-1\rangle$$ 
    has trivial invariant factors. 
    {By Proposition~\ref{P: invariance of Host-Kra factors} and Lemma~\ref{L: conjugating factors}, $$Z_s(H) = Z_s(H')=T_\ell Z_s(H)$$ for every $s\in\N$. It follows that $Z_s(H)$ is $T_\ell$-invariant, and hence the same is true for the corresponding factor $\CZ_s(H)$.}
\end{proof}

\subsection{Proof of Proposition~\ref{P: comparing HK factors in nilpotent systems}}
For $1\leq m\leq \ell$, let $H_m := \langle T_1, \ldots, T_m\rangle$. We prove by induction on $m$ that there exists $s_m\in\N$ such that $\CZ_{s,T_j}(\CX)\subseteq \CZ_{s_m, H_m}(\CX)$ for all $1\leq j\leq m$. (We emphasize that these Host-Kra factors are defined with respect to $\CX$ since in the proof, we will see Host-Kra factors defined with respect to factors of $\CX$.) The case $m=1$ is trivial with $s_m = s$. 

Suppose that $m>1$. By Theorem~\ref{T: Candela-Szegedy}, the structure theorem for nilpotent actions, the system
$(X,\CZ_{s_{m-1},H_{m-1}}(\CX),\mu,H_{m-1})$ is an inverse limit of systems $(X,\mathcal{X}_{i},\mu,H_{m-1})$, each of which is isomorphic to an $s_{m-1}$-step $H_{m-1}$-nilsystem. Here, the factor maps $$\pi_{ i,i'}\colon (X,\mathcal{X}_{i},\mu,H_{m-1})\to (X,\mathcal{X}_{i'},\mu,H_{m-1})$$ for $i'<i$ are pointwise identities. 

Our first claim is that  $\CZ_{s_{m-1},H_{m-1}}(\CX)$ is $T_m$-invariant, and hence $(X,\mathcal{X}_{i},\mu,H_m)$ is a well-defined system (recall from the definition that $H_m = \langle H_{m-1},T_m\rangle$). Indeed, $$T_{m}^{-1}\mathcal{X}_{i}=T_{m}^{-1}\CZ_{s_{m-1}, H_{m-1}}(\mathcal{X}_{i})=\CZ_{s_{m-1}, H_{m-1}}(\CX_i)=\mathcal{X}_{i}$$
by Propositions \ref{P: Z_s(T_1) is T_2-invariant} and \ref{P: nilsystems are systems of order k}, and the assumption that $(X,\mathcal{X}_{i},\mu,H_{m-1})$ is isomorphic to an $s_{m-1}$-step $H_{m-1}$-nilsystem.

By Corollary \ref{C: Bryna}, $(X,\mathcal{X}_{i},\mu,H_m)$ is isomorphic to an $H_m$-nilsystem of step $s_m:=2s_{m-1}^2$. 
We claim that $\mathcal{X}_{i}\subseteq \CZ_{s_m,H_m}(\CX)$ for every $i$. This is because
\begin{align*}
    \CZ_{s_m,H_m}(\mathcal{X})\supseteq \CZ_{s_m,H_m}(\mathcal{X}_{i}){=\mathcal{X}_{i}}.
\end{align*}
The first inclusion follows from the fact that the $\sigma$-algebras $\CZ_{s_m,H_m}(\mathcal{X})$ and $\CZ_{s_m,H_m}(\mathcal{X}_i)$ are generated by degree-$s$ dual functions along $H_m$ coming from $\CX$ and $\CX_i$-measurable functions respectively, and so the first is clearly at least as large as the second. The second equality follows from Proposition~\ref{P: nilsystems are systems of order k}. Hence  $(X,\mathcal{X}_{i},\mu,H_{m})$ is a factor of $(X,\CZ_{s_m,H_m}(\CX),\mu,H_{m})$ for every $i$, and so $(X,\CZ_{s_{m-1},H_{m-1}}(\CX),\mu,H_{m})$ is a factor of $(X,\CZ_{s_m,H_m}(\CX),\mu,H_{m})$ by the universal property of inverse limits. Therefore
\begin{align*}
    \CZ_{s,T_j}(\CX)\subseteq \CZ_{s_{m-1}, H_{m-1}}(\CX)\subseteq \CZ_{s_m, H_m}(\CX)
\end{align*}
for every $1\leq j\leq m-1$ by induction. To obtain the missing inclusion $\CZ_{s,T_m}(\CX)\subseteq \CZ_{s_m,H_m}(\CX)$, we repeat the entire argument with the roles of $T_{m-1}$ (say) and $T_m$ swapped.

    \section{Equidistribution on nilsystems}\label{S: equidistribution}
In this section, we collect positive and negative results for equidistribution problems on nilsystems.

\subsection{Equidistribution for independent polynomials}

Our first goal is to prove Theorem~\ref{T: equidistribution}, the last missing ingredient in the proof of Theorem~\ref{T: nilpotent joint ergodicity}. That is, given totally ergodic nilrotations $S_1, \ldots, S_\ell$ on a nilmanifold $Y$  as well as independent polynomials $p_1, \ldots, p_\ell\in\Z[n]$, we show that the orbit 
\begin{align}\label{E: orbit}
         (S_1^{p_1(n)}y, \ldots, S_\ell^{p_\ell(n)}y)_{n\in\Z}
\end{align}
is equidistributed in $Y^\ell$ for $m_Y$-a.e. $y\in Y$.
Our proof follows the strategy laid out by Frantzikinakis and Kra \cite{FrKr05}, who established Theorem~\ref{T: equidistribution} in the single-transformation case. That is, it can be split into three steps:
\begin{enumerate}
    \item pass to a nilmanifold with abelian connected component using  Leibman's equidistribution theorem \cite{L05c, L05b};
    \item pass to unipotent affine transformations on a torus following \cite[Proposition~3.1]{FrKr05};
    \item establish an equidistribution result on a torus using Weyl's equidistribution theorem.
\end{enumerate}

One difficulty present in our setup that did not arise in \cite{FrKr05} is that the unipotent affine transformations on the torus to which we reduce in step (ii) need to be compatible with each other in that, for instance, they still generate a nilpotent group. This does not a priori follow from \cite{FrKr05}. This motivates our first result: a strengthening of \cite[Proposition~3.1]{FrKr05} that provides additional information on the unipotent affine transformation.

\begin{proposition}\label{P: isom onto unipotent affine}
    Let $Y=G/\Gamma$ be a connected $s$-step nilmanifold and $(G_i)_{i\in\N_0}$ be the lower central series of $G$.
    Suppose that the connected component $G^o\subseteq G$ of the identity is abelian, and 
    let $G_\bullet = (G_i^o)_{i\in\N_0}$ and $\Gamma_\bullet = (\Gamma_i^o)_{i\in\N_0}$ be the filtrations on $G^o$ and $\Gamma^o:=\Gamma\cap G^o$ given by $G^o_i :=G^o\cap G_i$ and $\Gamma^o_i :=\Gamma^o\cap G_i$. 
    Let $m_i :=\dim G_i^o/G_{i+1}^o$ for every $1\leq i\leq s$ as well as $m:=m_1 + \cdots + m_s$.
    Then there exists a map $\phi\colon Y\to\T^m$ such that:
    \begin{enumerate}
        \item $\phi$ induces a group isomorphism from $G_i^o/\Gamma_i^o$ to $\T^{m_i + \cdots + m_s}$ for every $1\leq i\leq s$;
        \item for any $g\in G$, there exist $\alpha\in\T^m$ and a continuous group homomorphism $A:\T^m\to\T^m$ satisfying
    \begin{align}\label{E: group inclusion}
        A(\phi(G_i^o/\Gamma_i^o))\subseteq \phi(G_{i+1}^o/\Gamma_{i+1}^o)
    \end{align}
    for all $1\leq i\leq s$, and such that
    \begin{align*}
        \phi(gy) = (I+A + \cdots + A^{s-1})\phi(y) + \alpha
    \end{align*}
    for all $y\in Y$.
    \end{enumerate}
\end{proposition}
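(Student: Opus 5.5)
The plan is to follow the argument of \cite[Proposition~3.1]{FrKr05} and record the extra filtration information along the way. Since $Y$ is connected, $G = G^o\Gamma$, so the map $G^o/\Gamma^o\to Y$, $g\Gamma^o\mapsto g\Gamma$, is a homeomorphism. Because $G^o$ is abelian and connected, $G^o/\Gamma^o$ is a compact abelian Lie group; by connectedness it is a torus, which will be $\T^m$. The first step is to build coordinates adapted to the filtration. Each $G_i^o$ is a closed connected subgroup of $G^o\cong\R^m$ (up to taking the identity component), so it is a linear subspace. By the standard rationality of the lower central series with respect to $\Gamma$ (Malcev), each $\Gamma_i^o$ is a cocompact lattice in $G_i^o$. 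This lets us choose a Malcev-type basis $v_1,\dots,v_m$ of $G^o$ with the following properties: the last $m_i+\cdots+m_s$ vectors span $G_i^o$, and the integer combinations of the basis vectors are exactly $\Gamma^o$, with the tail combinations giving $\Gamma^o_i$. Writing $y=\exp(\sum t_jv_j)\Gamma$ and setting $\phi(y)=(t_j \bmod 1)_j$ gives an isomorphism $G^o/\Gamma^o\to\T^m$ that sends $G_i^o/\Gamma_i^o$ onto the corresponding coordinate subtorus $\T^{m_i+\cdots+m_s}$. This proves (i).

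For (ii), I would fix $g\in G$ and analyse left translation by $g$. Using $G=G^o\Gamma$, write $g = g^o\gamma$ with $g^o\in G^o$ and $\gamma\in\Gamma$. For $y = u\Gamma$ with $u\in G^o$, we get $gy = g^o(\gamma u\gamma^{-1})\Gamma$. Conjugation by $\gamma$ preserves the normal subgroup $G^o$, and it also preserves $\Gamma^o$ and each $G_i^o$. Hence it induces a continuous automorphism $B$ of $\T^m$, and
\[
\phi(gy)=B\phi(y)+\phi(g^o\Gamma).
\]
Next I set $A:=B-I$. The key commutator identity is $\gamma u\gamma^{-1}u^{-1}=[\gamma,u]\in G_{i+1}$ for $u\in G_i$. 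Together with connectedness this gives $A(\phi(G_i^o/\Gamma_i^o))\subseteq\phi(G_{i+1}^o/\Gamma_{i+1}^o)$, which is \eqref{E: group inclusion}. Since $G_{s+1}$ is trivial, iterating shows $A^s=0$, so $A$ is nilpotent.

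The main obstacle is matching the stated form $(I+A+\cdots+A^{s-1})\phi(y)+\alpha$ rather than $(I+A)\phi(y)+\alpha$. As I read it, $B=I+A$ is a unipotent affine map, and the statement as written amounts to a reparametrization. I would handle it by replacing $A$ with $A'$ defined through $I+A'+\cdots+A'^{s-1}=I+A$. Because $A$ is nilpotent, this equation can be solved formally as a polynomial in $A$ with integer coefficients; for example, for $s=2$ take $A'=A$, and in general solve degree by degree. Any polynomial in $A$ without constant term still satisfies \eqref{E: group inclusion}, so the inclusion carries over to $A'$. If a solution with integer coefficients fails to exist, I would instead reinterpret the claim as $\phi(gy)=(I+A)\phi(y)+\alpha$ with $A$ strictly lowering the filtration, since that is the property actually used downstream. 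Finally, I set $\alpha:=\phi(g^o\Gamma)$.
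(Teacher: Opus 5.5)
Your proof is correct and is essentially the paper's argument: you write $g=g_o\gamma$, use Mal'cev coordinates on $G^o$ adapted to the filtration, and use that commuting with $\gamma$ lowers the filtration. The paper differs only in taking $A$ to be induced by $u\mapsto[\gamma,u]=\gamma^{-1}u^{-1}\gamma u$, so that $I+A+\cdots+A^{s-1}$ comes out directly from moving $\gamma$ past $u$, whereas you start from the conjugation $C=I+A$ and reparametrize via $A'=I-(I+A)^{-1}=\sum_{k=1}^{s-1}(-1)^{k-1}A^k$, which is exactly the paper's $A$, so the fallback at the end of your proposal is unnecessary.
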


\begin{proof}
    Since $Y$ is connected, we have $G=G^o\Gamma$, and hence $Y=G^{o}\Gamma/\Gamma$ by \cite[Chapter 10, Lemma~11]{HK18}. 
Consequently, we can write $g = g_o \gamma$ for some $g_o\in G^o$ and $\gamma\in \Gamma$. 

Define $B(u) := [\gamma,u]$ for any $u\in G^o$; in particular,
\begin{align}\label{E: group inclusion 0}
 B(G_{j}^o)\subseteq G_{j+1}^o   
\end{align}
for every $1\leq j\leq s$ from the definition of the filtration. An important point is that $B(u)\in G^o$ for every $u\in G^o$ due to the normality of $G^o$, and hence all $B(u)$'s commute with each other and with elements of $G^o$ owing to the abelianness of $G^o$. We claim that $B$ is a continuous group homomorphism. The continuity follows from the continuity of the group operation. To see that $B$ is a group homomorphism, take any $u,v\in G^o$. Then
    \begin{align*}
        B(u v) = B(v)B(u)[B(u),v] = B(u)B(v),
    \end{align*}
    where the first identity is a consequence of the commutator identity $$[a,bc] = [a,c][a,b][[a,b],c]$$
    while the second one follows from the fact that $B(u)$, $v$, and $B(v)$ all commute as elements of the abelian group $G^o$. 
    Finally, from applying \eqref{E: group inclusion 0} iteratively we deduce that the homomorphism $B^{(s)}$ is trivial. 

Using these considerations, we can recast the nilrotation by $g$ as follows. Set any $y\in Y$ as $y=u\Gamma$ for some $u\in G^o$. Moving $\gamma$ to the right and generating commutators on the way, we get
    \begin{align*}
        gy &= g_o\gamma u\Gamma = g_o u\gamma B(u)\Gamma = g_o u B(u)\gamma B^{(2)}(u)\Gamma = \cdots
        = g_o u B(u)\cdots B^{(s-1)}(u) \Gamma,    
    \end{align*}
    using the fact that $B^{(s)}$ vanishes and $\gamma\in\Gamma$. In particular, the elements $g_o, u, B(u), \ldots,$ $B^{(s-1)}(u)$ in the rightmost expression all lie in $G^o$, and so they commute with each other. 

We proceed to construct the map $\phi:Y\to\T^m$. Since $G^{o}$ is connected and simply connected, by \cite[Chapter 10, Theorem~18]{HK18} it admits a Mal'cev coordinate map $\psi\colon\R^{m}\to G^{o}$ with
\begin{align*}
    \psi(\{0\}^{m_{1}+\dots+m_{i-1}}\times\R^{m_{i}+\dots+m_{s}})&=G_i^o\\ \textrm{and}\quad \psi(\{0\}^{m_{1}+\dots+m_{i-1}}\times\Z^{m_{i}+\dots+m_{s}}) &= \Gamma^o_i.
\end{align*}
Since $G^o$ is abelian, $\psi$ is in fact a (continuous) group isomorphism.
We then construct the map $\phi: Y\to \T^m$ by mapping $y = u\Gamma$ (for $u\in G^o$) to $\psi\inv(u)+\Z^m$. All the possible representations of $y$ in this form differ by right-multiplying $u$ by elements of $\Gamma^o=\psi(\Z^m)$, and hence the map $\phi$ is well-defined. In fact, it defines a group isomorphism between $Y$ endowed with the group operation $u\Gamma * v\Gamma = (uv)\Gamma$ (for $u,v\in G^o)$ and $\T^m$.
We then take
\begin{align*}
    \alpha:=\phi(g_o \Gamma)\quad \textrm{and}\quad A(t + \Z^m):=\phi \circ B\circ\psi(t),
\end{align*}
noting that $A$ does not depend on the representation $t$, and it is a continuous group homomorphism because $\phi, B, \psi$ are. 
The property \eqref{E: group inclusion} follows from \eqref{E: group inclusion 0}, and the fact that $\phi$ is a group isomorphism implies that
\begin{align*}
    \phi(gy) &= \phi(g_o\Gamma) + \phi(u\Gamma)+\phi(B(u)\Gamma) + \cdots +\phi(B^{s-1}(u) \Gamma)\\
    &= (I+ A + \cdots + A^{s-1})\phi(y) + \alpha.
\end{align*}
 \end{proof}

 Next, we establish a version of Theorem~\ref{T: equidistribution} for unipotent affine transformations that arise from nilsystems via Proposition~\ref{P: isom onto unipotent affine}.

\begin{proposition}\label{P: equidistribution for unipotent affine}
    For $1\leq i\leq s$, let $m_i\in\T$ and $Y_i := \{0\}^{m_{i-1}}\times \T^{m_{i}+ \cdots + m_s}$ (with $m_0:=0$). Set also $m:=m_1 + \cdots + m_s$.
    Let $p_1, \ldots, p_\ell\in\Z[n]$ be independent polynomials, and let $S_1, \ldots, S_\ell:\T^m\to\T^m$ be ergodic unipotent affine transformations of the form
    \begin{align*}
        S_jy = \alpha_j + (I+A_j + \cdots + A_j^{s-1})y
    \end{align*}
    for   $\alpha_{j}\in\T^{m}$ and some continuous group homomorphism $A_j:\T^m\to\T^m$ satisfying
    \begin{align}\label{E: shifting property of A}
     A_j(Y_i)\subseteq Y_{i+1}   
    \end{align}
    for every $1\leq i\leq s$.    
    Then for almost every $y\in\T^m$, the orbit
    \begin{align}
        (S_1^{p_1(n)}y, \ldots, S_\ell^{p_\ell(n)}y)_{n\in\Z}
    \end{align}
    is dense in $\T^{m\ell}$.
\end{proposition}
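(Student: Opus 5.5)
We will prove the statement via Weyl's criterion together with Fourier analysis on $\T^{m\ell}$, after first writing the iterates explicitly. Since each $A_j$ shifts the filtration $Y_1\supseteq Y_2\supseteq\cdots$ by \eqref{E: shifting property of A}, it is nilpotent ($A_j^s=0$). Hence $B_j:=I+A_j+\cdots+A_j^{s-1}$ is unipotent, and $S_j^n y = B_j^n y + \sum_{k=0}^{n-1}B_j^k\alpha_j$. Expanding $B_j^n=(I+N_j)^n=\sum_{i<s}\binom{n}{i}N_j^i$ with $N_j:=B_j-I$ nilpotent, we get that $S_j^{n}y$ is a polynomial in $n$ with coefficients that are affine in $y$. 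Substituting $n=p_j(n)$, the sequence
\[
(S_1^{p_1(n)}y,\ldots,S_\ell^{p_\ell(n)}y)
\]
is a polynomial sequence in $\T^{m\ell}$. By Weyl's theorem, for a.e. $y$ it suffices to show the following: for every nonzero character $\chi=(\chi_1,\ldots,\chi_\ell)\in(\Z^m)^\ell$, the polynomial $n\mapsto\sum_j\chi_j\cdot S_j^{p_j(n)}y$ has some nonconstant coefficient that is irrational for a.e. $y$. Countability of characters then gives a full-measure set of $y$.

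Fix $\chi\neq0$. The coefficient of each power of $n$ is an affine function of $y$, say $c_k(y)=\langle v_k,y\rangle+\beta_k$ with $v_k\in\Z^m$. If some nonconstant-degree coefficient has $v_k\neq0$, that coefficient is irrational for a.e. $y$, and we are done. So the key case is when all linear parts vanish, and we must then show some $\beta_k$ is irrational. For this we filter each $\chi_j$ by the "depth" $r_j$ of its nonzero component with respect to $N_j^{\top}$: let $r_j$ be maximal with $\chi_j\circ N_j^{r_j}\neq0$. Since $N_j^{r_j}$ kills the $\binom{n}{i}$-terms for $i>r_j$, the contribution of $j$ to the $y$-dependence is $\binom{p_j(n)}{r_j}\,\chi_j N_j^{r_j}y+(\text{lower})$. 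The polynomials $\binom{p_j(n)}{r}$ for distinct pairs $(j,r)$ with $r\geq1$ remain linearly independent modulo constants, as they have distinct leading monomials up to ties. This is where independence of the $p_j$ enters: a tie $\deg(p_j^r)=\deg(p_{j'}^{r'})$ is handled by noting that the top-degree monomials $p_j^{r}/r!$ form an independent family once we pass to the maximal total degree and use independence of $p_1,\ldots,p_\ell$. Hence if any $r_j\geq1$, a nonconstant coefficient carries a nonzero linear form in $y$, and we are done.

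It remains to treat the case $r_j=0$ for all $j$, i.e. $\chi_j\circ N_j=0$, so that $\chi_j$ is $B_j$-invariant. Then
\[
\chi_j(S_j^n y)=\chi_j(y)+n\,\chi_j(\alpha_j)+\sum_{k}\binom{n}{k+1}\chi_j(N_j^k\alpha_j),
\]
and the last terms vanish for $k\geq1$. The phase is therefore $\sum_j p_j(n)\chi_j(\alpha_j)+\text{const}$. Ergodicity of the unipotent affine map $S_j$ implies $\chi_j(\alpha_j)\notin\Q$ for every nonzero $B_j$-invariant character $\chi_j$; this is the standard ergodicity criterion for affine maps (a rational value would make $e(\chi_j(y))$ a nonconstant $S_j^q$-invariant function, and since the maps are totally ergodic in our application, rational values are excluded as well). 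Independence of the $p_j$ then gives some nonconstant coefficient of $\sum_j\chi_j(\alpha_j)p_j(n)$ that is irrational, because a nontrivial combination with irrational weights cannot have all nonconstant coefficients rational. That last step needs care: we use that the values $\chi_j(\alpha_j)$ are irrational and argue coefficientwise via a rational-dependence argument. I expect the main obstacle to be the mixed case, namely proving that the linear-in-$y$ parts cannot all cancel when some $r_j\geq1$. If the degree-counting argument fails under ties, I would fall back on an induction on $s$, projecting to $\T^m/Y_s$ and treating the top-layer fibers.
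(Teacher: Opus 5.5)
\textbf{There is a genuine gap, in the mixed case.} That case is the heart of the proposition.

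Your justification rests on the claim that the polynomials $\binom{p_j(n)}{r}$, over distinct pairs $(j,r)$ with $r\geq 1$, are linearly independent modulo constants. This is false. The polynomials $p_1(n)=n$ and $p_2(n)=n^2$ are independent, yet
\[
2\tbinom{p_1(n)}{2}+\tbinom{p_1(n)}{1}-\tbinom{p_2(n)}{1}=0 .
\]
Likewise the top-degree terms $p_1^2/2$ and $p_2$ are proportional, so ties are not resolved by the independence of the $p_j$. You have therefore not excluded the possibility that, while some $r_j\geq 1$, the $y$-linear parts coming from different indices $j$ cancel in every nonconstant coefficient.

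Note also that you never use \eqref{E: shifting property of A} as a \emph{common} filtration for all the $A_j$; you only use the nilpotence of each $N_j$ separately. The common filtration is exactly what rules out cross-$j$ cancellation in the paper. In the example above you can still exclude cancellation by inspecting the coefficient of $n$, which only $j=1$ feeds, but that uses the nilpotence of $N_1$, not independence of binomials. You give no general argument of that kind, and your fallback induction on $s$ is not carried out.

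\textbf{The missing idea is a descending induction over the layers of $y$, in which only the first-order terms $p_j(n)$ ever appear.}

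Write $y=y_1+\cdots+y_s$, where $y_i$ is the part of $y$ in the $i$-th block of $m_i$ coordinates, so $y_i\in Y_i$. Use
\[
S_j^{n}y=\sum_{i<s}\tbinom{n+i-1}{i}A_j^iy+(\text{terms independent of }y).
\]
Then proceed layer by layer:
\begin{enumerate}
\item Top layer. Since $A_j^2y_{s-1}=0$, the $y_{s-1}$-dependence of the nonconstant part of $\sum_j\chi_j(S_j^{p_j(n)}y)$ is $\sum_j p_j(n)\,(\chi_j\circ A_j)(y_{s-1})$. Genericity of $y$ and independence of $p_1,\ldots,p_\ell$ then force $\chi_j\circ A_j=0$ on $Y_{s-1}$.
\item Next layer. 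The second-order term $(\chi_j\circ A_j)(A_jy_{s-2})$ now vanishes, because $A_jy_{s-2}\in Y_{s-1}$; this is where the common filtration enters. So again only $\sum_j p_j(n)\,(\chi_j\circ A_j)(y_{s-2})$ survives, and independence gives $\chi_j\circ A_j=0$ on $Y_{s-2}$.
\item Continuing down to $y_1$ gives $\chi_j\circ A_j=0$ for all $j$, i.e.\ all $r_j=0$.
\end{enumerate}

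Your treatment of that remaining case is fine, with one correction. You appeal to total ergodicity, which is not assumed. Ergodicity of $S_j$ already forces $\chi_j(\alpha_j)\notin\Q$: if $\chi_j(\alpha_j)=a/q$, then $e(q\chi_j(y))$ is a nonconstant $S_j$-invariant function.
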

\begin{proof}
    Let $R_j:=I+ A_j + \cdots + A_j^{s-1}$. Then $R_j^n=\sum_{i=0}^{s-1}\binom{n+i-1}{i}A_j^i$ (for $n\geq 1)$ and 
    \begin{align*}
        S_j^n y &= R_j^n y + \sum_{i=0}^{n-1}R^i\alpha_j 
        = \sum_{i=0}^{s-1}\binom{n+i-1}{i}A_j^iy + n\alpha_j + \sum_{i=1}^{s-1}\binom{n+i-1}{i+1}A_j^i\alpha_j.
    \end{align*}

    Choose $y=(y_1, \ldots, y_s)$ (with $y_{j}\in\T^{m_{j}}$ for $1\leq j\leq s$) so that the coordinates of $y$ are all $\Q$-independent from $\alpha_1, \ldots, \alpha_s, 1$, in that
    \begin{align*}
        k\cdot y + l_1 \alpha_1 + \cdots + l_s \alpha_s + l_{s+1} = 0
    \end{align*}
    for some $k\in\Z^m$ and $l_1, \ldots, l_{s+1}\in\Z$ only if $k = 0$.
    Such points form a full measure set. Suppose that
    \begin{align*}
        k_1\cdot S_1^{p_1(n)}y + \cdots + k_\ell \cdot S_\ell^{p_\ell(n)}y = 0
    \end{align*}
    for some $k_1, \ldots, k_\ell\in\Z^m$. Given the formula for $S_j^n y$'s above, this becomes
    \begin{multline}\label{E: linear combo 2}
        \sum_{j=1}^\ell \left(\sum_{i=0}^{s-1}\binom{p_j(n)+i-1}{i}(k_j\cdot A_j^i)y\right.\\
        \left.+ p_j(n)(k_j\cdot\alpha_j) + \sum_{i=1}^{s-1}\binom{p_j(n)+i-1}{i+1}(k_j\cdot A_j^i)\alpha_j\right) = 0.
    \end{multline}
    Since all the coordinates are polynomial in $n$, by Weyl's equidistribution theorem it suffices to show that $k_1 = \cdots = k_\ell = 0$.

    Because of the $\Q$-independence assumption on the coordinates, the contribution of each coordinate of $y$ to \eqref{E: linear combo 2} is 0. We analyze one-by-one the contribution of coordinates $y_{s-1}, \ldots, y_1$. Since $A_j^2(Y_{s-1})\subseteq A_j(Y_s) = \{0\}$, the coordinate $y_{s-1}$ does not see the contribution of the $A_j^2, A_j^3,\ldots$
    Therefore, this coordinate contributes exactly
    \begin{align}\label{E: contribution of s-1}
        (p_1(n) k_{1}\cdot A_{1} + \cdots + p_\ell(n) k_{\ell}\cdot A_{\ell})(0,\ldots, 0, y_{s-1}, 0)
    \end{align}
    plus terms independent of $n$ that are of no interest to us. Because of the assumption on the rational independence of coordinates, \eqref{E: linear combo 2} and \eqref{E: contribution of s-1}, and the fact that $A_{j}(Y_{s})=\{0\}$, we have
    \begin{align*}
        (p_1(n) k_{1}\cdot A_{1} + \cdots + p_\ell(n) k_{\ell}\cdot A_{\ell})({Y_{s-1}}) = \{0\}
    \end{align*}
     for all $n$. By the independence of polynomials, it follows that $(k_j\cdot A_j)({Y_{s-1}}) = 0$ for every $1\leq j\leq \ell$. Hence also
    \begin{align*}
        (k_j\cdot A_j^{s-1})(Y) \subseteq (k_j\cdot A_j)(Y_{s-1}) = \{0\}
    \end{align*}
    due to the property \eqref{E: shifting property of A} of the maps $A_j$.
    The consequence of this is that we can lower the upper indices in both sums in \eqref{E: linear combo 2} from $s-1$ to $s-2$. 

    We now repeat the same procedure, this time analyzing the contribution of $y_{s-2}$ to \eqref{E: linear combo 2}. Because of the previous case, the contribution of $y_{s-2}$ is the same as \eqref{E: contribution of s-1}, with $(0, \ldots, 0, y_{s-1},0)$ replaced by $(0,\ldots, 0, y_{s-2},0,0)$, plus terms constant in $n$. Arguing exactly as in the previous case, we conclude that
        \begin{align*}
        (k_j\cdot A_j^{s-2})(Y) \subseteq (k_j\cdot A_j)(Y_{s-2}) = \{0\}.
    \end{align*}
    
    Iterating this procedure, we eventually deduce that $k_j\cdot A_j = 0$ for every $1\leq j\leq \ell$. With that, \eqref{E: contribution of s-1} reduces to
    \begin{align*}
        \sum_{j=1}^\ell p_j(n)(k_j\cdot\alpha_j) = 0.
    \end{align*}
    The linear independence of $p_j$'s implies that $k_j\cdot \alpha_j = 0$ for every $1\leq j\leq \ell$.

    We have thus established that $k_j \cdot A_j = k_j \cdot \alpha_j = 0$ for every $1\leq j\leq \ell$. In particular, $k_j\cdot (S_j^n y) = k_j\cdot y$ for every $n\in\Z$. The unique ergodicity of $S_j$ implies that $k_j = 0$, and so the equidistribution of the orbit follows from Weyl's equidistribution theorem. 
 \end{proof}

    \begin{proof}[Proof of Theorem~\ref{T: equidistribution}]
        Let $Y = G/\Gamma$ and $G^o\subseteq G$ be the connected component of the identity. Since $Y$ admits a totally ergodic nilrotation, it is connected \cite[Chapter 11, Corollary 7]{HK18}. By Leibman's equidistribution theorem \cite{L05b}, it suffices to show that the orbit \eqref{E: orbit} is equidistributed when projected down onto the torus $G/([G^o,G^o]\Gamma)$. Hence on replacing $G^o$ with $G^o/[G^o,G^o]$ and $G/\Gamma$ with $G/([G^o,G^o]\Gamma)$, we can assume that $G^o$ is abelian. We then apply Proposition~\ref{P: isom onto unipotent affine} to obtain an isomorphism
        \begin{align*}
            \phi:(Y, \CB_Y, m_Y, S_1, \ldots, S_\ell)\to (\T^m, \CB_{\T^m}, m_{\T^m}, S_1', \ldots, S_\ell')
        \end{align*}
        for some unipotent affine transformations $S_j'$ satisfying the conditions of Proposition~\ref{P: equidistribution for unipotent affine}. By this result, the orbit
        \begin{align*}
            ({S_1'}^{p_1(n)}y, \ldots, {S_\ell'}^{p_1(n)}y)_{n\in\Z}
        \end{align*}
        is equidistributed on $\T^m$ for $m_{\T^m}$-a.e. $y\in \T^m$, implying the equidistribution of the original orbit on $Y^m$.
    \end{proof}
    
    The next result is an ingredient in the proof of Theorem~\ref{T: joint ergodicity conjecture2}.

    \begin{proposition}\label{P:dcc}
     Let $(Y, \CB_Y, m_Y, S_1, \ldots, S_\ell)$ be a nilsystem and $p_1, \ldots, p_\ell\in\Z[n]$ be polynomials. If $Y$ is not connected and $\ell\geq 2$, then  
  \begin{align}\label{E: orbit0}
        (S_1^{p_1(n)}y_{1}, \ldots, S_\ell^{p_\ell(n)}y_{\ell})_{n\in\Z}
    \end{align}
    is  not equidistributed in $Y^{\ell}$ for any $y_{1},\dots,y_{\ell}\in Y$.
\end{proposition}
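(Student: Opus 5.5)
We will use the finite quotient of $Y$ by its connected components and exhibit a test function on $Y^\ell$ whose averages along \eqref{E: orbit0} do not converge to its integral. Write $Y=G/\Gamma$ and let $Y^o$ be the connected component of $Y$ containing the base point. The components of $Y$ are permuted by every nilrotation, and there are finitely many of them, say $M\geq 2$. Let $\pi\colon Y\to F$ be the projection onto the finite set $F$ of components. Then each $S_j$ induces a permutation $\bar S_j$ of $F$.

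The key observation is that the sequence $n\mapsto \bar S_j^{p_j(n)}\pi(y_j)$ is periodic in $n$. This holds because $\bar S_j$ has finite order $r_j$, and $p_j(n) \bmod r_j$ is periodic in $n$. Let $Q$ be a common period for all $1\leq j\leq \ell$.

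\textbf{Plan.} The aim is to show that the projected orbit $(\bar S_1^{p_1(n)}\pi y_1,\ldots,\bar S_\ell^{p_\ell(n)}\pi y_\ell)_{n\in\Z}$ cannot be equidistributed on $F^\ell$ with respect to the uniform measure, which is the pushforward of $m_{Y^\ell}$. The projected orbit is $Q$-periodic, so it takes at most $Q$ values. These values must all be attained with density $1/M^\ell$. I do not expect the count alone to give a contradiction, since $Q$ could exceed $M^\ell$. A better idea is to exploit structure: the components form a cyclic group, and the maps $\bar S_j$ are rotations. Indeed, $G/G^o\Gamma$ is a finite group quotient on which $S_j$ acts by translation. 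If it is not abelian, pass first to its abelianization. If that abelianization is trivial, then instead use a nontrivial quotient coming from $G/G^o$ being a finitely generated group acted on by $\Gamma$. Reduce then to the case where $F$ is a finite abelian group $Z$ and $\bar S_j(x)=x+a_j$.

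\textbf{Step in the abelian case.} The orbit is then $(x_1+p_1(n)a_1,\ldots,x_\ell+p_\ell(n)a_\ell)$. This sequence lies in a coset of the subgroup $\{(p_1(n)a_1,\ldots,p_\ell(n)a_\ell)\}$, and it can fill all of $Z^\ell$ only if the map $n\mapsto (p_j(n)a_j)_j$ hits all of $Z^\ell$. I would look for a character $\chi$ of $Z$ and then combine the coordinates: with $\ell\geq2$, choose characters $\chi_1,\chi_2$ so that $\chi_1(p_1(n)a_1)\overline{\chi_2(p_2(n)a_2)}$ has nonzero mean, for instance by taking $n\equiv 0$ mod the orders, where both values equal $\chi_j(p_j(0)a_j)$. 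The cleaner route is to use the polynomial values mod $|Z|$. The residues $(p_1(n),p_2(n)) \bmod |Z|$ are determined by $n \bmod |Z|$, so the pair takes at most $|Z|$ values. The pair ranges over $Z^2$, which has $|Z|^2>|Z|$ points, so the orbit misses some point of $F^2$ entirely. Lifting, the indicator of the corresponding product of components has integral $1/M^2>0$ but average $0$, so equidistribution fails.

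\textbf{Main obstacle.} The delicate step is justifying that the component structure really is a finite \emph{abelian} group rotated by the $S_j$, with $n\mapsto p_j(n)a_j$ depending only on $n$ mod $|Z|$. In general $G/G^o\Gamma$ need not be a group, since $G^o\Gamma$ need not be normal. In that case I would instead argue directly: each $\bar S_j^{p_j(n)}$ depends only on $n \bmod Q'$, where $Q'$ is the lcm of the orders of the permutations $\bar S_j$, each of which divides $M!$. The counting then requires $Q'<M^\ell$, which may fail. So I expect to need the group structure, which I would get from the fact that $G^o\Gamma$ contains $[G,G]$-enough, or else by passing to the maximal torus-like factor. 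Handling this reduction is the main difficulty.
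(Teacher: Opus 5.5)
Your strategy matches the paper's: project the orbit onto a finite set of (unions of) components, observe that the projection depends only on $n$ modulo a period, and count ($W$ values against $W^\ell$ cells). However, the proposal stops at exactly the step that makes the count work, as you acknowledge. You need either a nontrivial finite \emph{abelian} quotient of $Y$ on which every $S_j$ acts by a translation, or a bound on the period $Q'$ strictly below $M^\ell$, and you supply neither. Passing to the abelianization is the right instinct, but you leave open the case where it is trivial; the alternative ``quotient of $G/G^o$ acted on by $\Gamma$'' is not an argument; and the $M!$ bound is useless, as you note. As written, nothing shows that the projected orbit misses a cell. (The paper gets the structure by fixing $g_*$ with a dense orbit and labelling the components $g_*^iY_0$ through a map $\Omega\colon G\to\Z/W\Z$ that it asserts is a homomorphism. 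Since the kernel would be $G^o\Gamma$, that step implicitly needs $G^o\Gamma$ to be normal, so your worry is well founded.)

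The missing idea is that the abelianized quotient is never trivial when $Y$ is disconnected. Put $Z:=G/(G^oG_2\Gamma)$ with $G_2=[G,G]$. The subgroup $G^oG_2\Gamma$ contains $[G,G]$, so it is normal with abelian quotient. It also has finite index, because $G^o\Gamma/\Gamma$ is open in the compact space $Y$. Hence $Z$ is a finite abelian group and $Y\to Z$ is continuous and $G$-equivariant. Each $S_j$ acts on $Z$ by translation by the image $a_j$ of $g_j$, and $m_Y$ pushes forward to the uniform measure on $Z$.

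Now suppose $Z$ were trivial. Then in the discrete nilpotent group $\Delta:=G/G^o$, the image $\bar\Gamma$ of $\Gamma$ would satisfy $\bar\Gamma[\Delta,\Delta]=\Delta$. Left-normed $i$-fold commutators modulo $\gamma_{i+1}(\Delta)$ are multilinear and depend only on their entries modulo $[\Delta,\Delta]$, so induction on $i$ gives $\Delta=\bar\Gamma\gamma_i(\Delta)$ for all $i$. Hence $\bar\Gamma=\Delta$, i.e.\ $G=G^o\Gamma$, and $Y$ would be connected.

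With $|Z|\geq 2$ established, your counting argument finishes the proof. The tuple $(p_1(n)a_1,\ldots,p_\ell(n)a_\ell)$ depends only on $n \bmod |Z|$, so the projected orbit takes at most $|Z|<|Z|^\ell$ values. It therefore misses some point of $Z^\ell$, whose clopen preimage in $Y^\ell$ has measure $|Z|^{-\ell}>0$ but is never visited.

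Alternatively, your fallback can be rescued directly. The image of $G$ in the permutation group of the $M$ components is a finite nilpotent group acting faithfully and transitively. Splitting it into Sylow subgroups shows its exponent is at most $M$, so $Q'\leq M<M^\ell$.
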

 \begin{proof}
   Let $Y = G/\Gamma$, and suppose that it is transitive (if not, the statement follows immediately). 
   Let $Y_0$ be the connected component of $Y$ containing the identity coset $\Gamma$. By transitivity, we can find $g_{\ast}\in G$ for which the orbit $(g_{\ast}^{n}\Gamma)_{n\in\Z}$ is dense in $Y$. On letting $W\in\N$ be the smallest natural number for which $g_{\ast}^{W}\in Y_{0}$, we may write $$Y=\bigcup\limits_{i=0}^{W-1}g^{i}_{\ast}Y_{0}=\bigcup_{i=0}^{W-1}Y_{i},$$
    where $Y_i:= g^i_\ast Y_0$, and the union is disjoint. Since $Y$ is not connected, we have $W\geq 2$. Our assumptions thus say that $W, \ell\geq 2$, and we will show that this forces \eqref{E: orbit0} to lie in a proper subset of $Y^\ell$.

 For any $g\in G$, let $\Omega(g)$ be the unique element in $[W]$ such that $g\Gamma\in Y_{\Omega(g)}$ (i.e. $g^{-\Omega(g)}_{\ast}g\Gamma\in Y_{0}$). Since $G^o$ is a normal subgroup of $G$, it is clear that $\Omega(gg')=\Omega(g)+\Omega(g')$ and $\Omega(g^W) = 1$ for all $g,g'\in W$. Thus, $\Omega\colon G\to \Z/W\Z$ is a group homomorphism.

   Assume that the nilrotations $S_1, \ldots, S_\ell$ take the form $S_{j}y=g_{j}y$ for some $g_j\in G$, and let the elements $y_1, \ldots, y_\ell$ be given by $y_{j}=h_{j}\Gamma$ for some $h_j\in G$. Then $$\Omega(S_{j}^{p_{j}(n)}y_{j})=p_{j}(n)\Omega(g_j)+\Omega(h_j),$$ and thus $\Omega(S_j^{p_j(n+W)}y_{j})=\Omega(S_j^{p_j(n)}y_{j})$ for every $n\in\Z$. It follows that for all $r\in [W]$ and $n\in\Z$, 
      \begin{align*}
   \vec{\omega}(r):=(\Omega(S_{1}^{p_{1}(Wn+r)}y_{1}),\dots,\Omega(S_{\ell}^{p_{\ell}(Wn+r)}y_{\ell}))
   =(\Omega(S_{1}^{p_{1}(r)}y_{1}),\dots,\Omega(S_{\ell}^{p_{\ell}(r)}y_{\ell}))    
   \end{align*}
   lies in $(\Z/W\Z)^{\ell}$ and is independent of $n$.

   
    For $\vec{a}=(a_{1},\dots,a_{\ell})\in (\Z/W\Z)^{\ell}$, define $$\vec{Y}_{\vec{a}}:=Y_{a_{1}}\times\dots \times Y_{a_{\ell}}.$$ Then for every $r\in [W]$, the orbit 
    \begin{align*}
        (S_1^{p_1(Wn+r)}y_{1}, \ldots, S_\ell^{p_\ell(Wn+r)}y_{\ell})_{n\in\Z}
    \end{align*}
    takes values in $Y_{\vec{\omega}(r)}$.
Hence
    \begin{align}\label{E: inclusion}
        \overline{(S_1^{p_1(n)}y_{1}, \ldots, S_\ell^{p_\ell(n)}y_{\ell})_{n\in\Z}}\subseteq\bigcup\limits_{r\in [W]}\vec{Y}_{\vec{\omega}(r)} \subseteq \bigcup\limits_{\vec{a}\in (\Z/W\Z)^{\ell}}\vec{Y}_{\vec{a}}=Y^\ell.
    \end{align}
    Now, the sets $\{\vec{Y}_{\vec{a}}\colon\; \vec{a}\in (\Z/W\Z)^{\ell}\}$ are all disjoint, and there are $W^\ell$ of them. By assumption, $W, \ell\geq 2$, hence $W^\ell>W$, and hence the second inclusion in \eqref{E: inclusion} is strict. 
    \end{proof}

       Proposition~\ref{P:dcc} may be the unique result in this paper that does not extend to integer-valued polynomials. Let $Y=\Z/2\Z$ and $S_1 y = S_2 y = y + 1$ mod 1; then the orbit $(S_1^n y_1, S_2^{\binom{n}{2}}y_2)_{n\in\Z}$ is equidistributed in $Y^2$ for every $y_1,y_2\in Y$.
       However, there exists a more complicated version of Proposition~\ref{P:dcc} that delivers the equidistribution result sufficient to establish Theorem~\ref{T: joint ergodicity conjecture2} for independent integer-valued polynomials, which we omit for the sake of conciseness.

\subsection{Counterexamples to joint ergodicity}\label{SS: counterexamples}
We conclude with the proofs of Theorems \ref{T: counterexample to joint ergodicity conjecture} and \ref{T: counterexample to joint ergodicity criteria}, i.e. counterexamples to various joint ergodicity statements in the 2-step nilpotent setting. 
\begin{proof}[Proof of Theorem~\ref{T: counterexample to joint ergodicity conjecture}]
    Let
    \begin{align*}
        T_{1}(x,y)=(x+a,y+2x+a)\quad\textrm{and}\quad T_{2}(x,y)=(x+b,y+2x+b)
    \end{align*}
     with $1,a,b$ being $\mathbb{Q}$-independent, and take $p_1(n) = p_3(n) = n$, $p_2(n) = p_4(n) = n^2$. 
    Since
    $$T_{2}^{n}T_{1}^{-n}(x,y)=(x+(b-a)n,y+(b-a)n^2)$$
    is equidistributed on $\T^2$,
     the sequence $(T_{2}^{n}T_{1}^{-n})_{n\in\Z}$ is ergodic. The ergodicity of $(T_{2}^{n^{2}}T_{1}^{-n^{2}})_{n\in\Z}$ follows from the same computation upon substituting $n^2$ for $n$. 
     The sequence $(T_{2}^{n^{2}}T_{1}^{-n})_{n\in\Z}$ is ergodic because 
     \begin{align*}
         T_2^{n^2}T_1^{-n}(x,y) = (x + n^2 b - na, y + 2n(n-1)x + n^2(1-2n)a + n^4 b)
     \end{align*}
     is equidistributed on $\T^2$,
     and likewise for the other three cross-terms. Hence the difference ergodicity condition is satisfied. 

     To verify the other two claims, consider the orbit
     \begin{multline*}
         (T_1^n(x_1,y_1), T_2^n(x_2,y_2), T_3^{n^2}(x_3,y_3), T_4^{n^2}(x_4,y_4))\\ = (x_{1}+na,y_{1}+2nx_{1}+n^{2}a,
          x_{2}+nb,y_{2}+2nx_{2}+n^{2}b,\\ x_{3}+n^{2}a,y_{3}+2n^{2}x_{3}+n^{4}a, x_{4}+n^{2}b,y_{4}+2n^{2}x_{4}+n^{4}b).
     \end{multline*}
     It is equidistributed on $\T^8$ for $m_{\T^8}$-a.e. $(x_1, y_1, \ldots, x_4, y_4)\in \T^8$, hence the product ergodicity condition is also satisfied.
    On the other hand, whenever
    \begin{align*}
     (x_1,y_1) = (x_2,y_2) = (x_3,y_3) = (x_4,y_4),
    \end{align*}
    the orbit lies inside the subtorus
    \begin{align*}
        \{(t_1, \ldots, t_8)\in\T^8\colon\; t_{2}-t_{4}=t_{5}-t_{7}\}.
    \end{align*}
    Hence we do not have joint ergodicity, as this would correspond to the equidistribution of the orbit
    \begin{align}\label{E: counterexample orbit 2}
        (T_1^n(x,y), T_2^n(x,y), T_3^{n^2}(x,y), T_4^{n^2}(x,y))_{n\in\Z}
    \end{align}
    on $\T^8$ for $m_{\T^2}$-a.e. $(x,y)\in\T^2.$
\end{proof}

\begin{proof}[Proof of Theorem~\ref{T: counterexample to joint ergodicity criteria}]
    Take the same system and polynomials as in the proof of Theorem~\ref{T: counterexample to joint ergodicity conjecture}. The failure of joint ergodicity has already been established there. On the other hand, the projection of the orbit \eqref{E: counterexample orbit 2} onto the first coordinate is 
\begin{align*}
    (x+na, x+nb, x+n^{2}a, x+n^{2}b),
\end{align*}
and this is clearly equidistributed on $\T^4$ due to the rational independence assumption. For $T_1, T_2, T_3, T_4$, the projection onto the first coordinate corresponds to acting by $T_1, T_2, T_3, T_4$ on the Kronecker factor of each of these transformations.
\end{proof}

\section{Proofs of the joint ergodicity results}\label{S: proofs of main results}
We now combine all the pieces to conclude the proof of Theorem~\ref{T: nilpotent joint ergodicity}.
\begin{proof}[Proof of Theorem~\ref{T: nilpotent joint ergodicity}]
Let $T_1, \ldots, T_\ell$ be totally ergodic transformations on $(X, \CX, \mu)$ generating a 2-step nilpotent group and $p_1, \ldots, p_\ell\in\Z[n]$ be nonconstant polynomials of distinct degrees. By Theorem~\ref{T: nilpotent seminorm estimates}, there exists $s\in\N$ so that 
  \begin{align}\label{E: average ultimate}
    \E_{n\in\Z}T_1^{p_1(n)}f_1\cdots T_\ell^{p_\ell(n)}f_\ell
\end{align}
vanishes in $L^2(\mu)$
whenever $\nnorm{f_j}_{s+1,T_j} = 0$ for some $1\leq j\leq \ell$. Hence we can assume without loss of generality that every $f_j$ in \eqref{E: average ultimate} is $\CZ_s(T_j)$-measurable. Let $H := \langle T_1, \ldots, T_\ell\rangle$. By Proposition~\ref{P: comparing HK factors in nilpotent systems}, we can find $s'\in\N$ for which
\begin{align*}
    \bigvee_{j=1}^\ell \CZ_s(T_j)\subseteq \CZ_{s'}(H),
\end{align*}
and hence we can assume that all $f_j$'s are $\CZ_{s'}(H)$-measurable. By Theorem~\ref{T: Candela-Szegedy}, the system $(X, \CZ_s(H), \mu, T_1, \ldots, T_\ell)$ is an $s'$-step $H$-pronilsystem. By an $L^2(\mu)$-approximation argument, we can thus assume that our system is an $H$-nilsystem. The result then follows from Theorem~\ref{T: equidistribution}, which implies a pointwise version of Theorem~\ref{T: nilpotent joint ergodicity} for nilsystems. 
\end{proof}

We move on to derive Theorem~\ref{T: joint ergodicity conjecture} from Theorem~\ref{T: new estimates}.
\begin{proof}[Proof of Theorem~\ref{T: joint ergodicity conjecture}]
    Fix a $\Z^D$-system $(X, \CX, \mu, U)$ and polynomials $p_1, \ldots, p_\ell\in\Z^D[\bn]$ (where $\bn$ is an indeterminate in $\Z^L$). In one direction, joint ergodicity implies the difference ergodicity condition by \cite[Proposition~6.2]{DFKS22} and the product ergodicity condition by essentially the same proof as in \cite[Proposition~6.2]{DFKS22} and \cite[Lemma~12.6]{DKKST24}. Conversely, suppose that the product and difference ergodicity conditions hold. We want to show that the average
    \begin{align}\label{E: average in jec}
        \E_{\bn\in\Z^L}U_{p_1(\bn)}f_1\cdots U_{p_\ell(\bn)}f_\ell
    \end{align}
    admits Host-Kra seminorm control and is good for equidistribution (see \cite[Section 1.1]{BFM22} for the precise definitions), and then apply the joint ergodicity criteria for $\Z^D$-actions from \cite[Theorem~1.1]{BFM21}. The property of being good for equidistribution follows easily from the product ergodicity condition (by adapting the proof of \cite[Corollary 12.5]{DKKST24}), so we focus on establishing Host-Kra seminorm control, which is the only nontrivial part of this proof.
    
    To this end, we conclude from Theorem~\ref{T: new estimates} that \eqref{E: average in jec} is controlled by box seminorms along (many copies of) the subgroups $H_{j,{j'}}$ for $0\leq j<{j'}\leq \ell$. Now, by the product ergodicity condition, the subgroup $H_{0,j'}$ acts ergodically; and by the difference ergodicity condition, the subgroup $\tilde H_{j,j'}$ is ergodic for every $1\leq j<j'\leq \ell$. Using this input, the inclusions $\tilde H_{j,j'}\subseteq H_{j,j'}$, and the subgroup property of box seminorms, it follows that \eqref{E: average in jec} is controlled by a box seminorm (in terms of any of $f_1, \ldots, f_\ell$) that involves only ergodic subgroups. 
    From \eqref{E: same invariant factors} it follows that for any $1\leq j\leq \ell$, the average \eqref{E: average in jec} is controlled by $\nnorm{f_j}_{s,T_j}$ for some fixed $s\in\N$. This concludes the proof of Host-Kra seminorm estimates, and hence also of the equivalence between joint ergodicity and the product and difference ergodicity conditions.
\end{proof}

We conclude with the proof of Theorem~\ref{T: joint ergodicity conjecture2}.

\begin{proof}[Proof of Theorem~\ref{T: joint ergodicity conjecture2}]

As in the preceding proof, joint ergodicity implies the product ergodicity condition by essentially the same argument as in \cite[Proposition~6.2]{DFKS22} and \cite[Lemma~12.6]{DKKST24}.\footnote{They are stated for commuting transformations, but the proofs extend to general transformations.} Conversely, assume the product ergodicity condition and suppose that $\ell>1$ since otherwise joint ergodicity and product ergodicity condition are the same. 
Arguing as in the proof of Theorem~\ref{T: nilpotent joint ergodicity}, we may assume without loss that our system is a nilsystem $(Y, \CB_Y, m_Y, S_1, \ldots, S_\ell)$. By the product ergodicity condition, the orbit
\begin{align}\nonumber
        (S_1^{p_1(n)}y_{1}, \ldots, S_\ell^{p_\ell(n)}y_{\ell})_{n\in\Z}
    \end{align}
    is equidistributed in $Y^{\ell}$ for some (in fact, for $m_{Y^\ell}$-a.e.) $(y_{1},\dots,y_{\ell})\in Y^\ell$. By Proposition~\ref{P:dcc}, the nilsystem $Y$ is connected, and so $S_1, \ldots, S_\ell$ are all totally ergodic by \cite[Chapter 11, Corollary 7]{HK18}. Then
    \begin{align}\nonumber
        (S_1^{p_1(n)}y, \ldots, S_\ell^{p_\ell(n)}y)_{n\in\Z}
    \end{align}
    is equidistributed in $Y^\ell$ for $m_Y$-a.e. $y\in Y$ by Theorem~\ref{T: equidistribution}, which implies joint ergodicity just like in the proof of Theorem~\ref{T: nilpotent joint ergodicity}. 
\end{proof}

\section{Open questions}\label{S: open problems}

The investigation conducted in this paper leads to a wealth of open problems; we collect some of them in this section. Other open problems on nilpotent systems can be found in \cite[Section 5]{BL02} and \cite[Section 7]{Kuc26}.

\subsection{Conjecture \ref{C: main conjecture} and related problems}\label{SS: main open problems}
In our opinion, the most tantalizing open problem arising from this work is Conjecture \ref{C: main conjecture}. Since the derivation of the seminorm estimates in Theorem~\ref{T: nilpotent seminorm estimates} relies strongly on dealing with 2-step nilpotent systems and distinct-degree polynomials, the natural place to start the hunt for Conjecture \ref{C: main conjecture} are the two problems below.

\begin{problem}\label{Pr: stronger seminorm estimates 1}
    Extend Theorem~\ref{T: nilpotent seminorm estimates} to higher-step nilpotent systems.
\end{problem}
\begin{problem}\label{Pr: stronger seminorm estimates 2}
    Extend Theorem~\ref{T: nilpotent seminorm estimates} to all independent (rather than distinct-degree) polynomials.
\end{problem}
The simplest open case of Problem~\ref{Pr: stronger seminorm estimates 1} is the average
\begin{align}\label{E: n, n^2 problem}
    \E_{n\in\Z} T^n f_1\cdot S^{n^2}f_2
\end{align}
with $T, S$ generating a 3-step nilpotent group while the model case for Problem~\ref{Pr: stronger seminorm estimates 2} is
\begin{align}\label{E: n^2, n^2 + n problem}
    \E_{n\in\Z} T^{n^2} f_1\cdot S^{n^2+n}f_2
\end{align}
for a 2-step nilpotent action of $T, S$. 

We find the second of these model problems more approachable given the amount of group-theoretic intricacies inherent in the study of 3-step nilpotent groups. Even so, tackling \eqref{E: n^2, n^2 + n problem} in the 2-step nilpotent regime will likely require significant new insights, in analogy to the new ideas needed to pass from \eqref{E: n, n^2 problem} to \eqref{E: n^2, n^2 + n problem} in the commuting setting. A major stumbling block is the failure of the seminorm smoothing argument for nilpotent systems, which, like degree lowering, uses dual-difference interchange in a crucial way. This leads to the following meta-problem.
\begin{problem}\label{Pr: degree lowering}
    Find a workable version of or a substitute for the degree lowering and seminorm smoothing arguments in the (2-step) nilpotent setting. 
\end{problem}

As a preparation for the 3-step nilpotent version of Theorem~\ref{T: nilpotent seminorm estimates}, one probably needs to address the following problem first.
\begin{problem}\label{Pr: nilpotent box seminorm estimates}
    Find box seminorm estimates for polynomial actions on 2-step nilpotent systems in the spirit of Theorems \ref{T: original estimates} and \ref{T: new estimates}.
\end{problem}

A related problem is to find a quantitative version of Theorem~\ref{T: nilpotent seminorm estimates}.
\begin{problem}\label{Pr: quantitative estimates}
    Quantify Theorem~\ref{T: nilpotent seminorm estimates}. In particular, can we get a polynomial dependence between box seminorms and the $L^2(\mu)$ norms of the averages for all 1-bounded functions?
\end{problem}
In the commuting case, the quantification of box seminorm estimates from \cite{DFKS22} was carried out in \cite{DKKST24}, relying on the quantitative concatenation argument from \cite{KKL24a, Kuc23}. However, the use of relative concatenation in the proof of Theorem~\ref{T: nilpotent seminorm estimates} makes the end result fully qualitative. Thus, Problem~\ref{Pr: quantitative estimates} reduces in essence to finding a quantitative version of Theorem~\ref{T: relative concatenation}, the relative concatenation result from \cite{DKKST25}, a problem previously mentioned in the second author's survey \cite[Problem~23]{Kuc26}.

The resolution of Problems \ref{Pr: degree lowering} and \ref{Pr: quantitative estimates} would be of great help for the study of pointwise almost everywhere convergence of nilpotent ergodic averages. The most successful approach to pointwise convergence employs quantitative methods of harmonic analysis and additive combinatorics. Indeed, most recent breakthroughs (e.g. \cite{KMPWW24, KMT20, KMTT24, MWW25, Ter24}) relied in an essential way on degree lowering and quantitative concatenation coupled with a healthy amount of hardcore harmonic machinery. It is therefore hard to imagine advances on the pointwise convergence of nilpotent averages without finding appropriate substitutes for these arguments.

A natural problem that does not require new seminorm estimates would be to remove the total ergodicity assumption from Theorem~\ref{T: nilpotent joint ergodicity}. 
\begin{problem}\label{Pr: Krat}
    Show that for all 2-step nilpotent systems $(X, \CX, \mu, T_1, \ldots, T_\ell)$ and nonconstant distinct-degree polynomials $p_1, \ldots, p_\ell\in\Z[n]$, the rational Kronecker factor\footnote{The \textit{rational Kronecker factor} of $(X, \CX, \mu, T)$ is the $\sigma$-algebra $\Krat(T) = \bigvee_{n=1}^\infty \CI(T^n)$, i.e. the largest periodic factor of the system.} is characteristic for the average
    \begin{align}\label{E: Krat control}
        \E_{n\in\Z}T_1^{p_1(n)}f_1\cdots T_\ell^{p_\ell(n)}f_\ell;
    \end{align}
    that is, \eqref{E: Krat control} is 0 whenever $f_j$ is orthogonal to $\Krat(T_j)$ for some $1\leq j\leq \ell$.
\end{problem}
The same conclusion is expected to hold for all nilpotent systems and all independent polynomials. 

Even in the commuting case, the rational Kronecker factor for averages \eqref{E: Krat control} was not established in the original work of Chu, Frantzikinakis, and Host \cite{CFH11}; the issue was the absence of a good equidistribution-on-nilsystems result. This extension was only proved by Frantzikinakis and the second author \cite{FrKu22a}, and it required the whole degree-lowering machinery. The failure of degree lowering in the nilpotent world makes it less clear how to approach Problem~\ref{Pr: Krat}. 

{A strong structural description like the one in Problem~\ref{Pr: Krat} would likely be needed to obtain the following special case of Conjecture \ref{Conj: popular common differences}.
\begin{problem}
    Show that the conclusions of Theorem~\ref{T: Khintchine} and Corollary \ref{Conj: popular common differences} hold for all nonconstant distinct-degree polynomials $p_1, \ldots, p_\ell\in\Z[n]$.
\end{problem}
}

We conclude this section with a topological analog of Conjecture \ref{C: main conjecture}.
\begin{problem}\label{Pr: joint transitivity}
    Let $(X, T_1, \ldots, T_\ell)$ be a topological system. Suppose that the group $\langle T_1, \ldots, T_\ell\rangle$ is nilpotent, acts minimally on $X$, and that each $T_1, \ldots, T_\ell$ is totally transitive.\footnote{The \textit{minimality} of $\langle T_1, \ldots, T_\ell\rangle$ means that the orbit of every point under this action is dense in $X$ whereas the \textit{total transitivity} of $T_j$ means that for every $r\in\N$ we can find $x_{j,r}\in X$ such that the orbit $\rem{T_j^{rn}x_{j,r}\colon\; n\in\Z}$ is dense in $X$.} If $p_1, \ldots, p_\ell\in\Z[n]$ are independent polynomials, show that {$(T_1^{p_1(n)}, \ldots, T_\ell^{p_\ell(n)})_{n\in\Z}$ is \emph{jointly transitive}, meaning that} the orbit
    \begin{align}\label{E: top orbit}
        (T_1^{p_1(n)}x, \ldots, T_\ell^{p_\ell(n)}x)_{n\in\Z}
    \end{align}
    is dense in $X^\ell$ for a $G_\delta$-set of $x\in X$.
\end{problem}

The problem remains open even under the stronger assumption that each $T_1, \ldots, T_\ell$ is totally minimal. When $T_1 = \cdots = T_\ell$ is totally minimal, the density of \eqref{E: top orbit} in $X^\ell$ for a $G_\delta$-set of points was established by Qiu \cite{Qiu23}. In the nilpotent setting, the same conclusion was reached  by Huang, Shao, and Ye \cite{HSY19} for arbitrary pairwise independent polynomials under the stronger assumption that each nontrivial element of $\langle T_1, \ldots, T_\ell\rangle$ is (topologically) weakly mixing and minimal.

\subsection{Hardy sequences}\label{SS: Hardy}

A result on polynomial ergodic averages can often be extended to other suitable sequences of polynomial growths, such as fractional powers $\sfloor{n^c}$ (for $c\in\R_+\backslash\Z$) and other suitable Hardy sequences. These extensions are never trivial; in fact, they often require a more subtle (and quantitative) PET induction scheme as well as more robust equidistribution results on nilsystems. This leads to the following problem.
\begin{problem}\label{Pr: Hardy}
    Let $0< c_1 < \cdots < c_\ell$ be non-integers. Show that for any 2-step nilpotent system $(X, \CX, \mu, T_1, \ldots, T_\ell)$ and 1-bounded functions $f_1, \ldots, f_\ell\in L^\infty(\mu)$, the following extensions of Theorems \ref{T: nilpotent joint ergodicity} and \ref{T: nilpotent seminorm estimates} hold:
    \begin{enumerate}
        \item (Norm convergence) the average
        \begin{align}\label{E: Hardy}
            \E_{n\in [N]}T_1^{\sfloor{n^{c_1}}}f_1\cdots T_\ell^{\sfloor{n^{c_1}}} f_\ell
        \end{align}
        converges in $L^2(\mu)$ as $N\to\infty$;
        \item (Seminorm estimates) there exists $s = O_{c_1, \ldots, c_\ell}(1)$ such that \eqref{E: Hardy} goes to 0 in $L^2(\mu)$ whenever $\nnorm{f_j}_{s,T_j}= 0$ for some $1\leq j\leq \ell$;
        \item (Joint ergodicity) \eqref{E: Hardy} converges to $\prod_{j=1}^\ell \int f_j\; d\mu$ in $L^2(\mu)$ whenever $T_1, \ldots, T_\ell$ are ergodic;
        \item (Invariant factor control) \eqref{E: Hardy} converges to $\prod_{j=1}^\ell \E(f_j|\CI(T_j))$ in $L^2(\mu)$ (without any ergodicity assumptions).
    \end{enumerate}
\end{problem}
All the questions above can also be asked for the average
\begin{align*}
    \E_{n\in[N]}T_1^n f_1\cdot T_2^{\sfloor{n^{3/2}}}f_2,
\end{align*}
 which does not satisfy the assumptions of Problem~\ref{Pr: Hardy} (because $n$ is not a non-integer power) but is an easier and a more natural place to start.

Of course, the sequences $n^{c_1}, \ldots, n^{c_\ell}$ can be replaced by other logarithmico-exponential Hardy sequences that have sufficiently distinct {polynomial} growth or satisfy more general independence conditions.  In the commuting case, subcases of Problem~\ref{Pr: Hardy} were addressed by Frantzikinakis \cite{Fr12}, while the full problem was resolved by Donoso, Tsinas, and the authors \cite{DKKST24}. 

Compared to the polynomial setting, we expect two major complications in Problem~\ref{Pr: Hardy}. Both the seminorm estimates and the equidistribution result needed for part (iii) are likely be far more challenging given that one will often need to work with finite averages, coming from splitting the averaging range into short intervals. The second of these challenges can be isolated as a separate problem (which we state in a slightly higher generality, analogous to the statement of Theorem~\ref{T: equidistribution}).
\begin{problem}\label{Pr: Hardy equidistribution}
    Let $(Y, \CB_Y, m_Y, S_1, \ldots, S_\ell)$ be a nilsystem with $S_1, \ldots, S_\ell$ ergodic. Show that for any non-integers $0< c_1 < \cdots <c_d$ and  independent fractional polynomials $a_j(n) = \sum_{i=1}^d b_{j,i}n^{c_i}$, the orbit
    \begin{align*}
        (S_1^{\sfloor{a_1(n)}}y, \ldots, S_\ell^{\sfloor{a_\ell(n)}}y)_{n\in\N}
    \end{align*}
    is equidistributed on $Y^\ell$ for $m_Y$-a.e. $y\in Y$.
\end{problem}
More generally, we can replace fractional polynomials by other Hardy sequences whose all nontrivial linear combinations are equidistributed. A useful inspiration for Problem~\ref{Pr: Hardy equidistribution} can be found in works on the single-transformation version of Problem~\ref{Pr: Hardy equidistribution}, e.g. \cite{Fr09, Fr10, R22, Ts24}.

\subsection{Quantitative nilpotent Szemer\'edi theorems}

The next question inquires about quantitative Szemer\'edi theorems in nilpotent groups.
\begin{problem}\label{Pr: quantitative nilpotent Szemeredi}
    Let $(\Z^3, *)$ be the discrete Heisenberg group, i.e. the group $\Z^3$ with the 2-step nilpotent operation
    \begin{align*}
        (a,b,c)*(x,y,z) := (a+x, b+y, c + z + ay).
    \end{align*}
    Consider the F{\o}lner sequence $\Phi=(\Phi_N)_{N\in\N}$ given by $\Phi_N:= [N]\times[N]\times[N^2]$.
    Fix $g,h\in \Z^3$. How big can $A\subseteq \Phi_N$ be if it does not contain a nontrivial pattern
    \begin{enumerate}
        \item $u,\; g^nu,\; h^{n}u$ for any $u\in \Z^3$ and $n\in\N$;
        \item $u,\; g^nu,\; h^{n^2}u$ for any $u\in \Z^3$ and $n\in\N$?
    \end{enumerate}
\end{problem}
Problem~\ref{Pr: quantitative nilpotent Szemeredi} can be stated without reference to the nilpotent structure. For instance, if $g = (1,0,0)$ and $h=(0,1,0)$, the first pattern corresponds to 
\begin{align}\label{E: nilpotent config 1}
(x,y,z),\; (x + n, y, z+ y\binom{n}{2}),\; (x, y + n, z)    
\end{align}
while the second one takes the form
\begin{align}\label{E: nilpotent config 2}
(x,y,z),\; (x + n, y, z +y\binom{n}{2}),\; (x, y + n^2, z).  
\end{align}
Such patterns differ from the configurations appearing in the polynomial Szemer\'edi theorem in that the coefficients of the polynomials depend on the point $(x,y,z)$ - albeit in a rather special way.

Although quantitative Szemer\'edi theorems are known to hold in some nonabelian groups, the existing results generally assume high level of \textit{quasirandomness} of the group (see \cite{Go08} for the definition and properties of quasirandom groups and \cite{A16, Pel18b, Tao13} for samples of such results). This is a very different setting from ours; while nilpotent groups are just barely nonabelian, quasirandom groups are quite far from being abelian. A useful analogy for dynamicists would be that nilpotent groups behave like nilsystems whereas quasirandom groups enjoy mixing-style properties. 

{In certain cases, one can also derive results for nonabelian groups from corresponding results for abelian groups - see e.g. \cite[Corollaries 1.4-1.6]{JLLOS25}. If $h$ is a power of $g$, then both parts of Problem \ref{Pr: quantitative nilpotent Szemeredi} are essentially equivalent to analogous configurations over $\Z$ by considering left translates of $\{g^n\colon n\in[N]\}$.\footnote{To see this, partition $\Phi_N = \bigcup\limits_{u\in U_N}(\{ug^n\colon n\in[N]\}\cap\Phi_N)$. If $h=g^2$, then any pattern $u, ug^n, ug^{2n}$ lies in one cell, and so we can translate the problem to $[N]$. Indeed, if $B\subseteq [N]$ is a set without three term arithmetic progressions, then $A = \bigcup\limits_{u\in U_N}\{ug^n\colon n\in B\}$ contains no nontrivial pattern $u, ug^n, ug^{2n}$. Hence upper bounds in Roth's theorem give upper bounds for the pattern $u, ug^n, ug^{2n}$, and taking $B$ to be the Behrend set gives corresponding lower bounds. A similar reasoning can be applied to $u, ug^n, ug^{n^2}$. We thank James Leng for pointing this out.} Likewise, if $g,h$ commute, then one can likely reduce the question to the corresponding problem over $\Z^2$. We do not know of a similar reduction whenever $g,h$ fail to commute.}

\subsection{Pointwise convergence} Many problems stated in Sections \ref{SS: main open problems} and \ref{SS: Hardy} admit natural pointwise analogs. The examination of pointwise convergence gained considerable momentum in recent years, driven by the breakthroughs of Krause-Mirek-Tao \cite{KMT20} and Kosz-Mirek-Peluse-Wan-Wright \cite{KMPWW24}. Pointwise problems differ considerably from the study of $L^2(\mu)$ limits in that they heavily rely on the quantitative methods of modern harmonic analysis and additive combinatorics. Among many possible open problems of this flavor, the following seems the most approachable.
\begin{problem}\label{Pr: n, n^2 pointwise}
    Let $(X, \CX, \mu, T, S)$ be a 2-step nilpotent system. Show that for all $f_1, f_2\in L^\infty(\mu)$ and $\mu$-a.e. $x\in X$, the average
    \begin{align*}
        \E_{n\in[N]}f_1(T^n x)f_2(S^{n^2}x)
    \end{align*}
    converges as $N\to\infty$. 
\end{problem}
Problem \ref{Pr: n, n^2 pointwise} 
admits the following generalization, which can be thought of as a 2-step nilpotent generalization of the main result of \cite{KMPWW24}.
\begin{problem}\label{Pr: distinct-degree pointwise}
     Let $(X, \CX, \mu, T_1, \ldots, T_\ell)$ be a nilpotent system and  $p_1, \ldots, p_\ell\in\Z[n]$ be nonconstant distinct-degree polynomials. Show that for all functions $f_1, \ldots, f_\ell\in L^\infty(\mu)$ and $\mu$-a.e. $x\in X$, the average  
     \[\E_{n\in[N]}f_1(T_1^{p_1(n)}x)\cdots f_\ell(T_\ell^{p_\ell(n)}x)\]
     converges as $N\to\infty$. 
\end{problem}
The resolution of Problems \ref{Pr: n, n^2 pointwise} and \ref{Pr: distinct-degree pointwise} would require several far-reaching innovations. To emulate the strategy of \cite{KMPWW24, KMT20}, one first needs a quantitative, finitary version of the seminorm estimates from Theorem \ref{T: nilpotent seminorm estimates}; see Problem \ref{Pr: quantitative estimates} and the subsequent discussion. The second step would be an optimal inverse theorem for the corresponding finitary averages. The failure of degree lowering in the nilpotent setting is a major drawback here, which motivates Problem \ref{Pr: degree lowering} above. The third ingredient would be a nilpotent variant of the multilinear circle method from \cite{KMPWW24, KMT20}.

Problems \ref{Pr: n, n^2 pointwise} and \ref{Pr: distinct-degree pointwise} are special cases of the far more general Furstenberg-Bergelson-Leibman conjecture whose version is presented below.
\begin{problem}\label{Pr: FBL}
Let $(X, \mathcal{X}, \mu, T_1,\ldots, T_\ell)$ be a nilpotent system and $p_{j,i} \in \Z[n]$ for $1\leq j\leq\ell$ and $1\leq i\leq k$. 
Show that for all $f_1, \ldots, f_\ell \in L^\infty(\mu)$ and $\mu$-a.e. $x\in X$, the average
\[ \E_{n\in [N]} f_1\Bigbrac{\prod_{i=1}^k T_i^{p_{1,i}(n)}x}\cdots f_\ell\Bigbrac{\prod_{i=1}^k T_i^{p_{\ell,i}(n)}x}\]
converges as $N\to\infty$.
\end{problem}
Problem \ref{Pr: FBL} inquires about the pointwise version of Walsh's convergence theorem (Theorem~\ref{T: Walsh}). It is currently the most important open problem in pointwise ergodic theory and a major challenge in modern harmonic analysis. It is an immensely difficult problem, well beyond reach of the current machinery.
In the nilpotent setting, the only known case is the result of Ionescu, Magyar, Mirek, and Szarek  \cite{IMMS23} for single averages (i.e. $\ell=1$) and 2-step nilpotent systems. 
\subsection{Nilpotent joint ergodicity criteria}

The failure of the joint ergodicity criteria from \cite{BFM22, Fr21, FrKu22a} in the nilpotent universe prompts the search for an alternative. The following question naturally comes to mind.
\begin{problem}\label{Pr: nilpotent joint ergodicity criteria}
    Let $(X, \CX, \mu, T_1, \ldots, T_\ell)$ be an $s$-step nilpotent system, $H=\langle T_1, \ldots, T_\ell\rangle$, and {$a_1, \ldots, a_\ell:\N\to\Z$ be sequences.}
    Prove or disprove the following. 
    The sequences are jointly ergodic for the system if and only if the following holds:
\begin{enumerate}
    \item (Host-Kra seminorm estimates) There exists a natural number $s'\in\N$ such that for all 1-bounded functions $f_1, \ldots, f_\ell\in L^\infty(\mu)$, we have 
    \begin{align*}
        \E_{n\in\Z}T_1^{a_1(n)}f_1\cdots T_\ell^{a_\ell(n)}f_\ell = 0
    \end{align*}
    whenever $\nnorm{f_j}_{s', T_j} = 0$ for some $1\leq j\leq \ell$.
    \item (Joint ergodicity on a simpler factor) $a_1, \ldots, a_\ell$ are jointly ergodic for the factor $(X, \CZ_{s}(H), \mu, T_1, \ldots, T_\ell)$.
\end{enumerate}
\end{problem}

We emphasize that Problem~\ref{Pr: nilpotent joint ergodicity criteria} provides just one possible formulation of nilpotent joint ergodicity criteria. For instance, we could replace the factor $\CZ_s(H)$ by a factor $\CZ_{s''}(H)$ for some other value $s''=O_s(1)$, {or by the affine factor $\CA_s(H)$.\footnote{We define $\CA_s(H)$ to be the inverse limit of all unipotent affine transformations that arise as factors of $(X, \CX, \mu, T_1, \ldots, T_\ell)$.}} While the proposed statement is a natural one to make, we have little evidence in its favor, other than the fact that it holds for $s=1$.

Problem~\ref{Pr: nilpotent joint ergodicity criteria} remains open even for nilsystems. In this case, its polynomial version is (almost\footnote{If, say, $p_1(n) = n$, then the ergodicity of $(S_1^n)_{n\in\Z}$ is tantamount to the ergodicity of $S_1$, not {total ergodicity}. So Problem~\ref{Pr: nilpotent joint ergodicity criteria on nilsystems} imposes natural but slightly stronger assumptions than Problem~\ref{Pr: nilpotent joint ergodicity criteria}.}) equivalent to the following.
\begin{problem}\label{Pr: nilpotent joint ergodicity criteria on nilsystems}
    Take  totally ergodic nilrotations $S_1, \ldots, S_\ell$ on a $k$-step nilsystem $(Y, \CB_Y, m_Y)$ generating an $s$-step nilpotent group. Let $Y=G/\Gamma$, $(G_i)_{i\in\N_0}$ be the lower central series filtration, and $Y_i :=G/(G_{i+1}\Gamma)$ for every $n\in\N_0$. Show that the polynomials $p_1,\ldots, p_\ell\in\Z[n]$ are jointly ergodic on $Y$ if and only if they are jointly ergodic on $Y_{s}$.
\end{problem}
For polynomial orbits on nilsystems, joint ergodicity amounts to the equidistribution of almost every orbit. Hence Problem~\ref{Pr: nilpotent joint ergodicity criteria on nilsystems} really asks whether the equidistribution of     
\begin{align*}
        (S_1^{p_1(n)}y, \ldots, S_\ell^{p_\ell(n)}y)_{n\in\Z}
    \end{align*}
    on $Y^\ell$ for $m_Y$-a.e. $y\in Y$ can be deduced from an analogous property for the factor nilsystem $Y_s$. By invoking Leibman's equidistribution theorem \cite{L05b}, it suffices to consider unipotent affine transformations on a torus, in which case the problem reduces to a linear algebraic one. Interestingly, although Problem~\ref{Pr: nilpotent joint ergodicity criteria on nilsystems} admits an affirmative answer for $s=1$ \cite{FrKu22a} thanks to the degree lowering argument, we do not even know a direct proof in the simplest nontrivial case of $k=2$.

\subsection{Structure theory}

One of the surprising discoveries in this project is that many elementary properties of box seminorms and factors in the commutative universe seem unlikely to extend to the nilpotent world. We therefore conclude this survey with a bunch of questions on structure theory that have a rather different flavor from the hitherto discussed problems. The first one asks about the higher-step extension of Proposition~\ref{P: comparing HK factors in nilpotent systems}.
\begin{problem}\label{Pr: comparing factors}
    Let $(X, \CX, \mu, T_1, \ldots, T_\ell)$ be a $k$-step nilpotent system with $T_1, \ldots, T_\ell$ ergodic, and let $H := \langle T_1, \ldots, T_\ell\rangle$. Given $s\in\N_0$, does there exist a natural number $s' = O_{k,\ell,s}(1)$ such that $\CZ_s(T_j)\subseteq \CZ_{s'}(H)$? What is the smallest $s'$ we can take? Can we take $s'=s$?
\end{problem}

The next problem inquires about the analog of permutation invariance for nilpotent systems.
\begin{problem}
    Let $(X, \CX, \mu, T, S)$ be a nilpotent system. Can we compare in any way the seminorms $\nnorm{\cdot}_{T,S}$ and $\nnorm{\cdot}_{S,T}$ or the corresponding factors? 
\end{problem}

In the proof of Theorem~\ref{T: nilpotent joint ergodicity}, we relied on Theorem~\ref{T: Candela-Szegedy}, the Candela-Szegedy structure theorem. Given that most of the theory discussed in this paper is known to fail for solvable groups of exponential growth, it is natural to inquire whether this failure extends to structure theory.
\begin{problem}\label{Pr: solvable}
Show the failure of Theorem~\ref{T: Candela-Szegedy} for solvable groups of exponential growth. That is:
\begin{enumerate}
    \item Find a solvable group of exponential growth $H=\langle T,S\rangle$ acting ergodically on $(X, \CX, \mu)$  such that the Host-Kra factors $\CZ_s(H)$ are not $H$-pronilsystems.
    \item Show that for \textit{any} finitely generated solvable group of exponential growth such a system can be found.
\end{enumerate}
\end{problem}
The strategy for Problem~\ref{Pr: solvable} is pretty clear: one should try to adapt the counterexamples of Bergelson and Leibman \cite{BL02, BL04}.

{The last question concerns the missing direction in \eqref{E: seminorms vs. factors nonabelian}.
\begin{problem}\label{Pr: nonobvious direction}
    Does the converse of \eqref{E: seminorms vs. factors nonabelian} hold for arbitrary subgroups of an arbitrary countable amenable group? How about subgroups of a finitely generated nilpotent group.
\end{problem}
Problem~\ref{Pr: nonobvious direction} admits an affirmative answer whenever the ambient group is abelian \cite[Theorem~2.4]{TZ16} or when $H_1 = \cdots = H_s$ is a finitely generated nilpotent group \cite[Theorem~3.21]{CS23}. But the answer remains unclear even for the seminorm $\nnorm{\cdot}_{T,S}$ when $T,S$ generate a 2-step nilpotent group.
}
\appendix

\section{Invariant and Kronecker factors}\label{A: Kronecker}
Let $H$ be a group and $(X, \CX, \mu, U)$ be an $H$-system. 
\begin{definition}[Invariant factor and algebra]\label{D: invariant}
    The \emph{invariant factor} of $U$ is $$\CI(U) := \{A\in \CX:\; U_h\inv A = A\; \textrm{for all}\; h\in H\}.$$
    We also define the corresponding \textit{invariant algebra} to be
    \begin{align*}
        I(U) := \{f\in L^2(\mu)\colon\; U_hf = f\; \textrm{for all}\; h\in H\}.
    \end{align*}
    Whenever we want to emphasize the dependence on the measure, we use the labels $\CI_\mu(U)$ and $I_\mu(U)$.\footnote{A priori, the factor $\CI(U)$ has nothing to do with the measure on the underlying space. However, since we think of all sets and functions up to measure 0, we consider $\CI_\mu(U)$ to be collection of sets that agree with $U$-invariant sets up to $\mu$-null sets.} If 
    we consider sets invariant under the restricted action $U'=(U_h)_{h\in H'}$ for some subgroup $H'\subseteq H$, we use the labels $\CI(H')=\CI(U')$ and $I(H')=I(U')$. 
\end{definition}

It follows easily from the definitions that $I(U) = Z_0(U)$ and $\CI(U) = \CZ_0(U)$.

Likewise, we define the Kronecker factor of $U$ as follows.
\begin{definition}[Eigenfunctions, eigenvalues, and Kronecker factor]
      An \textit{eigenfunction} of $U$ is a function $f\in L^2(\mu)$ satisfying $U_h f = \lambda(h)f$ for some group homomorphism\footnote{Since we do not endow $H$ with any topology, the question of continuity of $\lambda$ does not arise here.} $\lambda\colon H\to\S^1$ (which we call \textit{eigenvalue}). Let $K(U)\subseteq L^2(\mu)$ be the algebra generated by eigenfunctions of $U$ and $\CK(U)$ be the corresponding $\sigma$-algebra. Whenever needed, we record the dependence on $\mu$ with the labels $K_\mu(U)$ and $\CK_\mu(U)$.
\end{definition}

If $H=\Z$, i.e. $U = (T^h)_{h\in\Z}$ for some measure-preserving transformation $T$, then we also write $I(T), \CI(T), K(T), \CK(T)$, etc.

Note that for any eigenfunction $f$ of $U$, we have
\begin{align}\label{E: commutativity on Kronecker}
    U_{hh'}f = \lambda(hh')\cdot f = \lambda(h)\lambda(h')\cdot f = \lambda(h'h)f = U_{h'h}f,
\end{align}
i.e. $U$ acts commutatively on $K(U)$. 

{Let $U,V$ be $H$-actions on $X$. We use $U\times V$ to denote the (diagonal) $H$-action on $X^{2}$ given by $U\times V:=(U_{h}\times V_{h})_{h\in H}$.}
We start with a lemma that relates the Kronecker factor of two actions to the product of their Kronecker factors. 
\begin{lemma}\label{L: product of Kroneckers}
    Let $H$ be a finitely generated group, and let $U,V$ be two $H$-actions on $(X, \CX, \mu)$. Then $K_{\mu_{1}\times\mu_{2}}(U\times V) = K_{\mu_{1}}(U)\times K_{\mu_{2}}(V)$.
\end{lemma}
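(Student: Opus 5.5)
The inclusion $K_{\mu_1}(U)\times K_{\mu_2}(V)\subseteq K_{\mu_1\times\mu_2}(U\times V)$ is immediate, and we take it first. Here $K_{\mu_1}(U)\times K_{\mu_2}(V)$ is read as the closed algebra generated by tensor products $f\otimes g$. If $U_hf=\lambda(h)f$ and $V_hg=\lambda'(h)g$, then $(U\times V)_h(f\otimes g)=\lambda(h)\lambda'(h)\,f\otimes g$. Since $\lambda\lambda'$ is again a homomorphism $H\to\S^1$, each $f\otimes g$ is an eigenfunction of $U\times V$, and taking closed algebras gives the inclusion.

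For the reverse inclusion it suffices to show that every eigenfunction $F\in L^2(\mu_1\times\mu_2)$ of $U\times V$, say with eigenvalue $\chi$, lies in $K(U)\otimes K(V)$. We plan a Hilbert–Schmidt argument. View $F$ as a Hilbert–Schmidt operator $A_F\colon L^2(\mu_2)\to L^2(\mu_1)$ given by $A_Fg(x)=\int F(x,y)\overline{g(y)}\,d\mu_2(y)$. The eigen-equation $F(U_hx,V_hy)=\chi(h)F(x,y)$ translates into the intertwining relation
\begin{align*}
U_hA_F=\chi(h)A_FV_h
\end{align*}
for all $h\in H$. We will use that $V_h$ is unitary and the measures are invariant, and we will be careful with the order of composition, using \eqref{E: composition}. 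It follows that $A_FA_F^*$ is a compact positive operator commuting with every $U_h$. Its eigenspaces for nonzero eigenvalues are therefore finite-dimensional and $U$-invariant.

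The next step is to show that a finite-dimensional $U$-invariant subspace $W$ lies in $K(U)$. This is the step we expect to be the main obstacle, since $H$ is nonabelian and the restriction of $U$ to $W$ need not decompose into characters. We would first try the following route. The representation $h\mapsto U_h|_W$ is a finite-dimensional unitary representation. By \eqref{E: commutativity on Kronecker}, eigenfunctions only detect the abelianization, so what is needed is that this representation factors through an abelian quotient. Here finite generation of $H$ enters, and possibly the 2-step nilpotent structure available in the application. If a direct argument fails, we would instead define $K(U)$ as the span of finite-dimensional invariant subspaces and check that this is consistent with how the lemma is used in Corollary \ref{C: equal invariant factors 3}. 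We would also reconsider the definition of eigenfunctions in higher-dimensional irreducible representations.

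Granting this, the range of $A_F$ lies in $K(U)$, and symmetrically, via $A_F^*A_F$, the range of $A_F^*$ lies in $K(V)$. The singular value decomposition then writes $F=\sum_k s_k\,\phi_k\otimes\overline{\psi_k}$ with $\phi_k\in K(U)$ and $\psi_k\in K(V)$, converging in $L^2(\mu_1\times\mu_2)$. Hence $F\in K(U)\otimes K(V)$. Since $K(U\times V)$ is the closed algebra generated by such $F$, and the right-hand side is a closed algebra, the two sides are equal.
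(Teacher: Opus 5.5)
The step you flag as the main obstacle is a genuine gap, and it cannot be filled. For nonabelian $H$, a finite-dimensional $U$-invariant subspace need not lie in $K(U)$, and neither finite generation nor 2-step nilpotence changes this. Worse, the example that breaks your step also breaks the lemma itself.

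Let $H$ be the discrete Heisenberg group acting by left translations, through reduction mod $p$ (call it $\pi$), on the finite group $X=\mathrm{Heis}(\Z/p\Z)$ with normalized counting measure. (Alternatively take $H=X=S_3$.) Put $V=U$ and $\mu_1=\mu_2$.

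If $U_hf=\lambda(h)f$, then $f(\pi(c)x)=\lambda(c)f(x)=f(x)$ for $c\in[H,H]$. Hence every element of $K(U)$ is invariant under left multiplication by $[X,X]$; for $S_3$ one gets $K(U)=\mathrm{span}\{1,\mathrm{sgn}\}$. Consequently every element of $K(U)\otimes K(U)$ is invariant under $(x,y)\mapsto(zx,y)$ for $z\in[X,X]$.

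On the other hand, $F=1_{\{x=y\}}$ is $U\times U$-invariant, hence lies in $K(U\times U)$, and it is not invariant under that map. So $K(U\times U)\neq K(U)\otimes K(U)$, and no argument can give the reverse inclusion.

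Your fallback of redefining $K(U)$ as the closed span of finite-dimensional invariant subspaces would prove a different, true statement. That statement does not plug into Proposition~\ref{P: equal invariant factors}, whose proof expands $F$ into tensor products of eigenfunctions with matching characters.

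The paper avoids your step. It uses \eqref{E: commutativity on Kronecker} to see that $[H,H]$ acts trivially on $K(U\times V)$, replaces $H$ by $H/[H,H]$ (then by $\Z^\ell$), and runs Furstenberg's splitting $L^2(\CY_i)=K\oplus W_i$ on the marginals, using the weak-mixing characterization of $W_i$. The same example shows this reduction is not justified either. First, $[H,H]$ acts trivially on $K(U\times V)$ but not on the marginal factors; here both marginals are the full $\sigma$-algebra. Second, for nonabelian $H$ a function orthogonal to $K(V)$ need not be weakly mixing, e.g.\ $1_{\{e\}}-\frac{1}{6}-\frac{1}{6}\mathrm{sgn}$ on $S_3$. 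So the lemma really needs the operators $U_h$ (and $V_h$) to commute, e.g.\ $H$ abelian.

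In that setting your Hilbert--Schmidt argument is correct and complete, and it is a clean alternative to the four-way splitting. The eigenspaces of $A_FA_F^*$ for nonzero eigenvalues are finite-dimensional and carry commuting unitaries. They are therefore spanned by joint eigenvectors, which gives $\overline{\mathrm{ran}}\,A_F\subseteq K(U)$ and $\overline{\mathrm{ran}}\,A_F^*\subseteq K(V)$. This version needs no averaging and not even finite generation. Define $A_Fg=\int F(\cdot,y)g(y)\,d\mu_2(y)$ without the conjugate, so that $A_F$ is linear and $A_F^*$ and $A_FA_F^*$ have their usual meaning.
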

In fact, it suffices to assume that the abelianization $H/[H,H]$ is finitely generated. However, our applications involve only finitely generated groups, so we restrict to this simpler setting.
\begin{proof}
    It is clear that $K_{\mu_{1}}(U)\times K_{\mu_{2}}(V)\subseteq K_{\mu_{1}\times\mu_{2}}(U\times V)$, so the whole point is to prove the converse direction. 
    We restrict our attention to the space $$K_{\mu_{1}\times\mu_{2}}(U\times V) = L^2(X^2,\CK(U\times V), \mu_{1}\times \mu_{2}, U\times V).$$ 
    By \eqref{E: commutativity on Kronecker}, the $[H,H]$-action $(U_h\times V_h)_{h\in[H,H]}$ on $K_{\mu_{1}\times\mu_{2}}(U\times V)$ is the identity. On replacing $H$ with $H/[H,H]$, we can assume that $H$ is a finitely generated abelian group. By the structure theorem of finitely generated abelian groups, $H$ is a product of some $\Z^\ell$ and finitely many cyclic groups. The action of each cyclic group can be expanded to the action of $\Z$ by periodicity, so without loss of generality we can assume that $H = \Z^\ell$.

    We complete the proof just like in \cite[Chapter 4.4]{Fu81}. Let $(X,\CY_1, \mu_{1}, U)$, $(X,\CY_2, \mu_{2}, V)$ be the marginals of $(X^2,\CK_{\mu_{1}\times\mu_{2}}(U\times V), \mu_{1}\times\mu_{2}, U\times V)$. We split
    \begin{align*}
        L^2(\CY_1, \mu_{1}) = K_{\mu_{1}}(U) \oplus W_1\quad \textrm{and}\quad L^2(\CY_2, \mu_{2}) = K_{\mu_{2}}(V) \oplus W_2,
    \end{align*}
    and then we split $K_{\mu_{1}\times\mu_{2}}(U\times V)$ into four parts accordingly. The goal is to show that of these four subspaces, $U\times V$ only acts nontrivially on $K_{\mu_{1}}(U)\times K_{\mu_{2}}(V)$. Let $f\in L^2(\CX, \mu_{1})$ and $g\in W_2$.
    Then
    \begin{align*}
        \E_{h\in\Z^\ell}\abs{\langle f\otimes g, (U\times V)_h(f\otimes g)\rangle_{\mu_{1}\times\mu_{2}}}^2\leq \norm{f}_{L^2(\mu)}^4\cdot\E_{h\in\Z^\ell}\abs{\langle g, V_h g\rangle_{\mu_{2}}}^2 = 0
    \end{align*}
    since $g\in W_2$.\footnote{We use here the well-known characterization: $g\perp K_{\mu_{2}}(V)$ if and only if $g$ is weakly mixing with respect to $V$, which means that $\E_{h\in\Z^\ell}\abs{\langle g, V_h g\rangle_{\mu_{2}}}^2 = 0.$ This can be proved using either the spectral theorem, as in \cite[Lemma~4.15]{Fu81}, or Hilbert-Schmidt operators, following e.g. \cite[Section 2.12]{TaoPoincare}.} Analogously, for any $f\in W_1$ and $g\in L^2(\CX, {\mu_{2}})$, we have
    \begin{align*}
        \E_{h\in\Z^\ell}\abs{\langle f\otimes g, (U\times V)_h(f\otimes g)\rangle_{\mu_{1}\times\mu_{2}}}^2 = 0.
    \end{align*}
    Hence $U\times V$ indeed vanishes outside $K_{\mu_{1}}(U)\times K_{\mu_{2}}(V)$, implying the claim. 
\end{proof}

The purpose of this appendix is to prove two lemmas that relate the invariant algebra $I_{\mu\times_{\CI(U)}\mu}(U\times U)$ to the invariant algebra of transformations $V_1\times V_2$, where $V_1, V_2$ are conjugates of $U$. These results are used to prove Propositions \ref{P: face transformations invariance} and \ref{P: Z_s(T_1) is T_2-invariant}. They both rely on a similar underlying idea that we isolate in the following proposition.
\begin{proposition}\label{P: equal invariant factors}
    Let $H$ be a finitely generated group and $(X, \CX, \mu)$ be a standard probability space endowed with the following measure-preserving actions:
    \begin{enumerate}
        \item $\Z$-actions $T_j$ for $1\leq j\leq 4$;
        \item $H$-actions $U = (U_h)_{h\in H}$ and $V_j = (T_j\inv U_h T_j)_{h\in H}$ for $1\leq j\leq 4$.
    \end{enumerate}
    Suppose that $I(U) = I(V_1) = \cdots =  I(V_4)$, and let $\mu = \int \mu_x\; d\mu(x)$ be the corresponding disintegration. Lastly, suppose that for $\mu$-a.e. $x\in X$:
    \begin{enumerate}
        \item $K_{\mu_x}(V_1) = K_{\mu_x}(V_3)$ and $K_{\mu_x}(V_2) = K_{\mu_x}(V_4)$;
        \item for every $f\in K_{\mu_x}(V_1)$ and $g\in K_{\mu_x}(V_2)$, we have
        \begin{align}\label{E: same eigenvalues}
         V_1 f\cdot \bar f = V_2 g\cdot \bar g\quad \iff\quad V_3 f\cdot \bar f = V_4 g\cdot \bar g .  
        \end{align}
    \end{enumerate}
    Then
    \begin{align*}
        I_{\nu}(V_1\times V_2) = I_\nu(V_3\times V_4),
    \end{align*}
    where $\nu:=\mu_{1,U} = \mu\times_{\CI(U)}\mu$.
\end{proposition}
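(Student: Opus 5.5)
The plan is to reduce to the ergodic components of $U$, and there to use the Furstenberg description of invariant functions of a product action. Since $I(U)=I(V_j)$ for all $j$, the disintegration $\mu=\int\mu_x\,d\mu(x)$ over $\CI(U)$ is also the ergodic decomposition for each $V_j$. Moreover,
\begin{align*}
\nu=\int \mu_x\times\mu_x\; d\mu(x).
\end{align*}
The actions $V_1\times V_2$ and $V_3\times V_4$ preserve each fiber measure $\mu_x\times\mu_x$, because $\CI(U)$-measurable functions are $V_j$-invariant. Hence a function $F\in L^2(\nu)$ is $V_1\times V_2$-invariant if and only if its restriction $F_x$ is $V_1\times V_2$-invariant in $L^2(\mu_x\times\mu_x)$ for a.e. $x$. (Some care is needed for measurability in $x$; I would handle it by working with $\E_{h\in H}(V_1\times V_2)_h F$ and the mean ergodic theorem fiberwise.) So it suffices to prove, for a.e. $x$,
\begin{align*}
I_{\mu_x\times\mu_x}(V_1\times V_2)=I_{\mu_x\times\mu_x}(V_3\times V_4).
\end{align*}

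Fix such an $x$, so $V_1,\dots,V_4$ are ergodic for $\mu_x$. Invariant functions of $V_1\times V_2$ are measurable with respect to the Kronecker factor of the product. By Lemma~\ref{L: product of Kroneckers}, this factor is $K_{\mu_x}(V_1)\times K_{\mu_x}(V_2)$. On that factor the actions are commutative, by \eqref{E: commutativity on Kronecker}. Ergodicity gives that each eigenspace is one-dimensional up to scalars and that eigenfunctions have constant modulus. Expanding an invariant $F$ in the orthonormal basis $\{f\otimes g\}$ of eigenfunction tensors, invariance forces $F$ to be supported on pairs $(f,g)$ whose eigenvalues satisfy $\lambda_f^{V_1}\lambda_g^{V_2}=1$. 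Here I would use $\bar g$ in place of $g$ to match the form of \eqref{E: same eigenvalues}.

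Thus $I_{\mu_x\times\mu_x}(V_1\times V_2)$ is the closed span of the $f\otimes\bar g$ with $f\in K_{\mu_x}(V_1)$ and $g\in K_{\mu_x}(V_2)$ eigenfunctions satisfying $V_1f\cdot\bar f=V_2g\cdot\bar g$, interpreted for each generator, hence for all of $H$. Indeed, for normalized eigenfunctions this identity says exactly that the eigenvalues agree. The same description holds for $V_3\times V_4$, with eigenfunctions taken from $K_{\mu_x}(V_3)=K_{\mu_x}(V_1)$ and $K_{\mu_x}(V_4)=K_{\mu_x}(V_2)$.

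One subtlety remains: an eigenfunction of $V_1$ need not be an eigenfunction of $V_3$, only an element of the same algebra. I would handle this by noting that $K(V_1)=K(V_3)$ as closed spaces. Then one can choose a basis of $V_1$-eigenfunctions and argue via the spectral decomposition of the commuting unitary $V_3$-action on that space, so that both descriptions reduce to the same condition through hypothesis \eqref{E: same eigenvalues}. Applying \eqref{E: same eigenvalues} to basis elements identifies the two spans.

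This step, reconciling the two eigenbases, is the main obstacle. If it proves troublesome, I would instead show that the condition $V_1 f\cdot\bar f=V_2g\cdot\bar g$ together with \eqref{E: same eigenvalues} forces any $V_1$-eigenfunction paired with a nonzero coefficient to be a $V_3$-eigenfunction as well.
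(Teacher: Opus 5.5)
Your route is the same as the paper's. You restrict to a.e.\ fiber of $\nu=\int\mu_x\times\mu_x\,d\mu(x)$ and use Lemma~\ref{L: product of Kroneckers} to place $V_1\times V_2$-invariant functions inside $K_{\mu_x}(V_1)\times K_{\mu_x}(V_2)$. You then expand in an orthonormal family $f\otimes\bar g$ of unimodular eigenfunction tensors and note that each surviving term has equal $V_1$- and $V_2$-eigenvalues.

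The gap is exactly the step you flag, and your first proposed fix does not close it. Applying \eqref{E: same eigenvalues} to a basis pair $(f,g)$ only gives the one-variable identity $(V_3)_hf\cdot\bar f=(V_4)_hg\cdot\bar g$ on $X$. By contrast, $V_3\times V_4$-invariance of $f\otimes\bar g$ is the two-variable identity $(V_3)_hf(y)\,\overline{(V_4)_hg(z)}=f(y)\,\overline{g(z)}$. For unimodular $f,g$ this forces $(V_3)_hf\cdot\bar f$ to be constant, i.e.\ $f$ must itself be a $V_3$-eigenfunction. Passing to a $V_3$-eigenbasis of $K_{\mu_x}(V_3)=K_{\mu_x}(V_1)$ does not help, since nothing you have relates it to the $V_1$-eigenbasis. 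Equality of the Kronecker algebras by itself does not relate them either: two ergodic discrete-spectrum actions on the same factor can have completely different eigenfunctions.

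What the argument needs is your second option. For a.e.\ $x$, every $V_1$-eigenfunction in $L^2(\mu_x)$ must be a $V_3$-eigenfunction and conversely, and likewise for $V_2$ and $V_4$. Granting this, \eqref{E: same eigenvalues} applied to unimodular $f,g$ with equal $V_1$- and $V_2$-eigenvalues says exactly that their $V_3$- and $V_4$-eigenvalues coincide. Hence $f\otimes\bar g\in I_{\mu_x\times\mu_x}(V_3\times V_4)$, and symmetry gives the reverse inclusion.

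The paper supplies this compatibility in a single sentence, attributing it to condition (i) together with \eqref{E: same eigenvalues}. In both places where the proposition is used, it is verified directly at the level of eigenfunctions: via \eqref{eerr} in Corollary~\ref{C: equal invariant factors 2}, and via the computation $U_h(V_{h'}f)=\lambda(h)V_{h'}f$ in Corollary~\ref{C: equal invariant factors 3}. To finish, you must either read hypothesis (i) at the level of eigenfunctions, as the paper effectively does, or actually derive this compatibility from (i) and (ii). As it stands, your proposal stops short of the one step that is not routine.
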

The assumptions in Proposition~\ref{P: equal invariant factors} look a bit artificial, but they are precisely what we use in two corollaries of Proposition~\ref{P: equal invariant factors} that we shall need.
\begin{proof}
    Let $F\in I_{\nu}(V_1\times V_2)$. By the definition of relatively independent joinings, we have $\nu = \int \mu_x \times \mu_x\; d\mu(x)$.
    Then $F\in I_{\mu_x \times \mu_x}(V_1\times V_2)$ for $\mu$-a.e. $x\in X$ since
    \begin{align*}
        0 = \norm{F-(V_1\times V_2)F}_{L^2(\nu)}^2 = \int \norm{F-(V_1\times V_2)F}_{L^2(\mu_x\times \mu_x)}^2 \,d\mu(x).
    \end{align*}
    Observe that
    \begin{align*}
        I_{\mu_x \times \mu_x}(V_1\times V_2) \subseteq K_{\mu_x\times \mu_x}(V_1\times V_2)= K_{\mu_x}(V_1)\times K_{\mu_x}(V_2),
    \end{align*}
    where the first inclusion is trivial and the second follows from Lemma~\ref{L: product of Kroneckers}.

    Therefore, for $\mu$-a.e. $x\in X$, we can decompose
    \begin{align*}
        F = \sum_i c_{i,x} f_{i,x}\otimes \bar g_{i,x}
    \end{align*}
    in {$L^2(\mu_x\times \mu_{x})$}, where $\sum_i|c_{i,x}|^2<\infty$ and $f_{i,x}\in K_{\mu_x}(V_1)$, $g_{i,x}\in K_{\mu_x}(V_2)$ are modulus-1 eigenfunctions of corresponding transformations. 
    Decomposing $f_{i,x}$ and $\bar g_{i,x}$ further if necessary, we can assume that $(f_{i,x}\otimes \bar g_{i,x})_i$ is an orthonormal subset of $L^2(\mu_x\times \mu_{x})$. Then $F\in I_{\mu_x \times \mu_x}(V_1\times V_2)$ implies $f_{i,x}\otimes \bar g_{i,x}\in I_{\mu_x \times \mu_x}(V_1\times V_2)$ for every $i$, and so the $V_1$-eigenvalue of $f_{i,x}$ and the $V_2$-eigenvalue of $g_{i,x}$ are the same. We call it $\lambda_{i,x}$. By \eqref{E: same eigenvalues} {and condition (i) before \eqref{E: same eigenvalues}}, we also get that $f_{i,x}, g_{i,x}$ are eigenfunctions of $V_3$ and $V_4$ respectively with the same eigenvalue. It follows that $F\in I_{\mu_x\times \mu_x}(V_3\times V_4)$ for $\mu$-a.e. $x\in X$, and hence also $F\in I_{\nu}(V_3\times V_4)$. The other inclusion follows by symmetry.
\end{proof}

{We now present two corollaries of Proposition~\ref{P: equal invariant factors}.}
The first corollary is needed in the proof of Proposition~\ref{P: face transformations invariance}.
\begin{corollary}\label{C: equal invariant factors 2}
        Let $H$ be a finitely generated group and $(X,\CX, \mu, U)$ be an $H$-system. Let $U'_h:=U_{aha\inv}$ and $U''_h:=U_{bhb\inv}$ for some $a,b\in H$ and $\nu:=\mu_{1,U} = \mu\times_{\CI(U)}\mu$. Then $$I_\nu(U'\times U'') = I_\nu(U\times U).$$
\end{corollary}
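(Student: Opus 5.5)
The plan is to apply Proposition~\ref{P: equal invariant factors} twice, with a chain of equalities passing through $I_\nu(U'\times U)$. We write $U'_h = U_{aha\inv}$. By the composition convention \eqref{E: composition}, $U_{aha\inv} = T_1\inv U_h T_1$ with $T_1$ being $U_a$ or $U_a\inv$; the exact orientation is irrelevant. Thus $U'$ is a conjugate of $U$ by the $\Z$-action generated by $U_{a^{\pm1}}$, and similarly $U''$ is a conjugate of $U$ by $U_{b^{\pm 1}}$.

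First we take $V_1 = U'$, $V_2 = U$, $V_3 = V_4 = U$, with $T_2=T_3=T_4=\Id$. This should give $I_\nu(U'\times U) = I_\nu(U\times U)$. A second application with $V_1 = V_3 = U$, $V_2=U''$, $V_4 = U$ then gives $I_\nu(U'\times U'') = I_\nu(U'\times U)$. Alternatively, one can do it in a single step with $(V_1,V_2,V_3,V_4) = (U',U'',U,U)$.

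Next we verify the hypotheses. The first is $I(U)=I(U')$. If $f$ is $U$-invariant, then $U'_h f = U_{aha\inv}f = f$ since $aha\inv\in H$. Conversely, $U$ is the conjugate of $U'$ by $a\inv$, so the same argument applies. Hence the disintegration $\mu=\int\mu_x\,d\mu(x)$ over $\CI(U)$ serves for all the actions. We also need $\mu_x$ to be invariant under $U'$ and $U$, which holds because $U$ preserves the ergodic decomposition and $U'_h$ is an element of the $U$-action.

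The key observation is that for each $x$, the actions $U$ and $U'$ have the same eigenfunctions and eigenvalues. If $U_h f = \lambda(h) f$ for a homomorphism $\lambda\colon H\to\S^1$, then
\[
U'_h f = U_{aha\inv}f = \lambda(a)\lambda(h)\lambda(a)\inv f = \lambda(h) f,
\]
because $\S^1$ is abelian. This is exactly the fact recorded in \eqref{E: commutativity on Kronecker}. By the symmetry $U=(U')^{a\inv}$, the converse also holds. So $K_{\mu_x}(U') = K_{\mu_x}(U)$ with identical eigenvalue maps, which settles condition (i). For condition (ii), we interpret $V_jf\cdot\bar f$ for an eigenfunction $f$ as its eigenvalue character $h\mapsto V_{j,h}f\cdot\bar f$. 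Condition (ii) then asks that eigenvalue equality between $f$ under $V_1$ and $g$ under $V_2$ be equivalent to eigenvalue equality under $V_3,V_4$. This is immediate because every eigenvalue map is unchanged by the conjugation. For general $f$ in the Kronecker algebra we reduce to eigenfunctions, as in the proof of the proposition.

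The main obstacle is the bookkeeping in this last step, namely matching the abstract condition \eqref{E: same eigenvalues} to the eigenvalue identity, which is only meaningful for eigenfunctions. The proof of Proposition~\ref{P: equal invariant factors} only uses the condition on modulus-one eigenfunctions, so we state the check in that form. We must also confirm that $U'_h$ preserves $\mu_x$ for a.e.\ $x$ simultaneously for the countably many $h\in H$, which follows because $H$ is countable.
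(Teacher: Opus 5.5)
Your proposal is correct and is essentially the paper's proof: a single application of Proposition~\ref{P: equal invariant factors} with $(V_1,V_2,V_3,V_4)=(U',U'',U,U)$. In both, the hypotheses are verified by noting that conjugation by $U_a$ and $U_b$ preserves the invariant algebra, the Kronecker algebras $K_{\mu_x}$, and the eigenvalue characters (via \eqref{E: commutativity on Kronecker}). One small slip: in your two-step variant the second application should take $V_1=V_3=U'$ rather than $V_1=V_3=U$; as written it only yields $I_\nu(U\times U'')=I_\nu(U\times U)$, which does not chain with the first step. The one-step version you also give avoids this issue.
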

\begin{proof}
    Since $U'$ and $U''$ are conjugates of $U$ by $U_a, U_b$ respectively, clearly 
    \begin{align*}
        I(U) = I(U')=I(U'')\quad \textrm{and}\quad K_{\mu_x}(U) = K_{\mu_x}(U') = K_{\mu_x}(U'')
    \end{align*}
    by (\ref{E: commutativity on Kronecker}),
    and moreover
    \begin{align}\label{eerr}
    U_h f = \lambda(h)f  \quad \iff \quad U'_h f = \lambda(h)f \quad \iff \quad U''_h f = \lambda(h)f.
    \end{align}
    The assumptions of Proposition~\ref{P: equal invariant factors} are therefore satisfied, and the result follows.
\end{proof}

The second corollary shows up in the proof of Propositions \ref{P: face transformations invariance} and \ref{P: Z_s(T_1) is T_2-invariant}.
\begin{corollary}\label{C: equal invariant factors 3}
    Let $H$ be a finitely generated group, $U:=(U_h)_{h\in H}$ be an $H$-action, $T$ be a $\Z$-action, and $V := (T\inv U_h T)_{h\in H}$ be an $H$-action on a standard probability space $(X, \CX, \mu)$. Suppose that:
    \begin{enumerate}
        \item the joint action of $U,T$ is 2-step nilpotent;
        \item $I_\mu(U)=I_\mu(V)$.
    \end{enumerate}
     Let $\nu := \mu_{1,U}=\mu\times_{\CI_{\mu}(U)}\mu.$ 
    Then $I_\nu(U\times U) = I_\nu(V\times V)$. 
\end{corollary}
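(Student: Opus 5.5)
We will deduce Corollary \ref{C: equal invariant factors 3} from Proposition~\ref{P: equal invariant factors} by taking $T_1=T_2=T_3=T_4$ suitably: set $T_1=T_2=\Id$ (so $V_1=V_2=U$) and $T_3=T_4=T$ (so $V_3=V_4=V$). The assumption $I(U)=I(V)$ gives the required equality of invariant algebras. With this choice, $I_\nu(V_1\times V_2)=I_\nu(U\times U)$ and $I_\nu(V_3\times V_4)=I_\nu(V\times V)$, which is exactly the claim. It remains to verify, for $\mu$-a.e. ergodic component $\mu_x$ of $U$ (equivalently of $V$), the two conditions of Proposition~\ref{P: equal invariant factors}:
\begin{enumerate}
\item[(a)] $K_{\mu_x}(U)=K_{\mu_x}(V)$;
\item[(b)] for $f,g\in K_{\mu_x}(U)$, $U f\cdot\bar f=Ug\cdot \bar g$ if and only if $Vf\cdot\bar f=Vg\cdot\bar g$.
\end{enumerate}
Here (b) is read as equality of eigenvalue characters for eigenfunctions, which is how it is used in the proof of the proposition.

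The key input is 2-step nilpotence. For $h\in H$, $V_h=T\inv U_hT$, and we write $U_h$ composed with a commutator $[T,U_h]$-type element, which lies in the centre of the group generated by $U,T$. Concretely, there is a central map $c(h)$ with $V_h=U_h C_h$, where $C_h$ is central and $h\mapsto C_h$ is a homomorphism $H\to Z(\langle U,T\rangle)$; it is multiplicative because the commutator map is bilinear in 2-step nilpotent groups. Since $C_h$ is a commutator lying in $[\langle U,T\rangle,\langle U,T\rangle]$, we will try to show that $C_h$ acts on $U$-eigenfunctions by a scalar. If $f$ is an eigenfunction of $U$ with eigenvalue $\lambda$, then $C_h$ commutes with $U$, so $C_hf$ is again a $U$-eigenfunction with eigenvalue $\lambda$. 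Within the ergodic component $\mu_x$, the $\lambda$-eigenspace is one-dimensional, so $C_hf=\chi_x(h,\lambda)f$ for a scalar. Thus $f$ is a $V$-eigenfunction with eigenvalue $\lambda\cdot\chi$. This gives $K_{\mu_x}(U)\subseteq K_{\mu_x}(V)$, and by symmetry (conjugating by $T\inv$) equality, proving (a).

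For (b), we need the map $\lambda\mapsto\lambda\chi(\cdot,\lambda)$ from $U$-eigenvalues to $V$-eigenvalues to be injective on the relevant eigenvalues, and that equal $U$-eigenvalues give equal $V$-eigenvalues. The forward direction is immediate: $\chi$ depends only on the eigenspace, which is determined by $\lambda$ on the ergodic component. The reverse direction follows by the symmetric argument with $U$ and $V$ swapped: $U_h=V_hC_h\inv$ and $C_h$ is also central.

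The main obstacle we expect is the bookkeeping of ergodic decomposition. The components $\mu_x$ need not be $T$-invariant, and $C_h$ must preserve each $\mu_x$. We plan to handle this by noting that $C_h$ commutes with $U$, hence preserves $\CI(U)$. Moreover, since $C_h$ lies in the commutator subgroup and therefore acts trivially on $I(U)=I(V)$, it fixes the ergodic decomposition pointwise.
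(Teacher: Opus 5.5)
Your proposal is correct and follows the paper's proof. Both apply Proposition~\ref{P: equal invariant factors} with $V_1=V_2=U$ and $V_3=V_4=V$, and both write $V_h=U_h[U_h,T]$ with a central commutator to show that, on each ergodic component, $U$-eigenfunctions are $V$-eigenfunctions: the paper shows $V_{h'}f\cdot\bar f$ is $U$-invariant, while you show $C_hf$ lies in the one-dimensional $\lambda$-eigenspace. Both then obtain \eqref{E: same eigenvalues} from one-dimensionality of eigenspaces; your extra check that $C_h=U_h^{-1}V_h$ fixes $I(U)=I(V)$, and hence preserves each $\mu_x$, makes explicit a point the paper leaves implicit.
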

\begin{proof}
Again, the proof amounts to verifying that the assumptions of Proposition~\ref{P: equal invariant factors} are satisfied. Let $\mu = \int \mu_x\; d\mu(x)$ be the disintegration of $\mu$ over $\CI_\mu(U) = \CI_\mu(V)$. Since $\mu_x$ is ergodic with respect to $U, V$ for $\mu$-a.e. $x\in X$, each eigenspace has dimension 1, and so \eqref{E: same eigenvalues} follows trivially from the fact that $f,g$ satisfying either side of \eqref{E: same eigenvalues} have to be constant multiples of each other. So all that remains to be shown is that $K_{\mu_x}(V) = K_{\mu_x}(U)$ for $\mu$-a.e. $x\in X$.

Suppose that $f\in K_{\mu_x}(U)$ has $U$-eigenvalue $\lambda$. 
    Then for any $h,h'\in H$,
    \begin{align*}
        U_h(V_{h'} f) &= V_{h'}U_hf= U_{h'}[U_{h'},T]U_h f = U_h U_{h'}[U_{h'},T] f\\
        &= \lambda(h)\cdot U_{h'}[U_{h'},T]f = \lambda(h)\cdot V_{h'}f
    \end{align*}
    (all the identities in this paragraph are in $L^2(\mu_x)$).
In the second equality, we use the identity $V_{h'}= U_{h'}[U_{h'},T]$. In the third equality, we commute $U_h$ and $[U_{h'},T]$ because of the assumption that $U,T$ generate a 2-step nilpotent group, and we commute $U_h$ with $U_{h'}$ because of  \eqref{E: commutativity on Kronecker}. It follows that 
\begin{align*}
    U_h(V_{h'}f\cdot \bar f) = \lambda(h)\cdot V_{h'}f\cdot \bar\lambda(h)\cdot \bar f = V_{h'}f\cdot \bar f,
\end{align*}
and so $V_{h'}f\cdot f$ is $U$-invariant for every $h'\in H$. By the ergodicity of $\mu_x$ with respect to $U$, we get that $V_{h'}f = \eta(h')f$ for some $\eta(h')\in\S^1$. The fact that $V$ is a group action forces $\eta:H\to\S^1$ to be a group homomorphism, and so $f\in K_{\mu_x}(V)$. The other direction follows by symmetry.
\end{proof}

\section{$H$-nilsystems and their basic properties}\label{A: nilsystems}
This appendix contains the definition and main properties of $H$-nilsystems and $H$-pronilsystems that show up in the Candela-Szegedy structure theorem (Theorem~\ref{T: Candela-Szegedy}).
\begin{definition}[$H$-nilsystems]\label{D: nilsystems}
    Let $H$ be a finitely generated group. Then an \textit{$s$-step topological $H$-nilsystem} is a topological $H$-system $(Y, U)$, where $Y$ is an $s$-step nilmanifold $Y$ and
    $H$ acts on $Y$ via nilrotations. That is, $Y=G/\Gamma$ for an $s$-step nilpotent Lie group $G$ and a cocompact lattice $\Gamma\subseteq G$, and for every $y\in Y$, we have $U_h y = b_h y$ for some elements $b_h\in G$ satisfying $b_hb_{h'} = b_{hh'}$ for every $h,h'\in H$.
    
    If $\CB_Y$ is the Borel $\sigma$-algebra and $m_Y$ is the Haar measure on $Y$, then $(Y, \CB_Y, m_Y, U)$ is an \textit{$s$-step $H$-nilsystem}. An inverse limit of $s$-step $H$-nilsystems is called an \textit{$s$-step $H$-pronilsystem.}
\end{definition}
As is standard, we assume without loss of generality that 
the group $G$ in the definition of $Y$ is generated by its connected component $G^o$ and the elements $(b_h)_{h\in H}$.

Note that 
we can always identify $H$ with a subgroup of $G$, and we do so whenever convenient. In particular, if $H$ acts on an $s$-step nilmanifold via nilrotations, then we can take it to be nilpotent of step at most $s$.\footnote{More precisely, if $H_\bullet = (H_i)_{i\in\N_0}$ is the lower central series, then the action of $H$ is $H_{s+1}$-invariant, and hence we can replace $H$ by $H/H_{s+1}$ and embed the latter into $G$.} 

The result below shows that ergodicity of $H$-actions on the nilmanifold is equivalent to their ergodicity on the maximal factor torus. 

\begin{proposition}[{\cite[Theorem~2.17]{L05b}}]\label{P: ergodicity of nilsystems}
    Let $Y = G/\Gamma$ be a nilmanifold endowed with the lower central series filtration $G_\bullet=(G_i)_{i\in\N_0}$. Let $Y':=G/(G_{2}\Gamma)$. 
    Then an $H$-nilsystem $(Y, \CB_Y, m_Y, U)$ is ergodic if and only if $(Y', \CB_{Y'}, m_{Y'}, U')$ is, where $U'$ is the projection of $U$ onto $Y$.
\end{proposition}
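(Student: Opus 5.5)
This is Leibman's criterion, so the plan is to reduce it to the abelian case on the maximal factor torus. The direction ``$Y$ ergodic $\Rightarrow$ $Y'$ ergodic'' is immediate: $Y'$ is a factor of $Y$ via the projection $\pi\colon G/\Gamma\to G/(G_2\Gamma)$, and the Haar measure pushes forward to Haar measure. So all the work is in the converse.

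Assume $U'$ is ergodic on $Y'$. By passing to the closed subgroup generated by $G^o$ and the $b_h$, we may assume that $G$ is generated by $G^o$ together with the $b_h$'s. I would argue by induction on the step $s$ of $G$. When $s=1$ there is nothing to prove, since then $Y=Y'$. For larger $s$, set $Z:=G/(G_s\Gamma)$, which has step $s-1$ and has the same maximal factor torus $Y'$. By the inductive hypothesis, $H$ acts ergodically on $Z$.

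The next step is to lift ergodicity from $Z$ to $Y$. The extension $Y\to Z$ is a compact group extension by the torus $K:=G_s/(G_s\cap\Gamma)$, which is central in $G$. I would decompose $L^2(m_Y)=\bigoplus_{\chi\in\widehat K}L^2_\chi$ into vertical characters; this decomposition is $H$-invariant because $G_s$ is central. Ergodicity of $Z$ takes care of the trivial character. Now suppose some $f\in L^2_\chi$ with $\chi\neq 1$ is $H$-invariant. Then $|f|$ is invariant and lives on $Z$, so it is constant. Using $f$, one builds $F(y):=f(ky)\bar f(y)$, which is $H$-invariant for each $k\in K$. Choosing $k\in G_s$ of the form $k=[g,u]$ with $g\in G$ and $u\in G_{s-1}$, I would then translate by $u$ and exploit the commutator to derive a cocycle identity. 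The conclusion should be that $\chi$ is trivial on $[G,G_{s-1}]=G_s$, which is a contradiction.

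This last commutator step is the main obstacle. To make it rigorous I would follow Leibman's argument, or equivalently the Parry/Green approach. Concretely, the plan is to show that an invariant function in a nontrivial vertical character produces a nontrivial character of the abelianization of $G$ that is trivial on $\Gamma$ and invariant under all the $b_h$. Such a character is a nonconstant $H$-invariant function on $Y'$, which contradicts the ergodicity of $Y'$.
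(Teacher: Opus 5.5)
Your skeleton (induction on the step, the central extension $Y\to Z:=G/(G_s\Gamma)$ by $K=G_s/(G_s\cap\Gamma)$, the vertical Fourier decomposition, $|f|$ constant) is the standard one. However, the step that carries the proof is missing, and the one concrete device you propose is vacuous. For $f\in L^2_\chi$ with $|f|\equiv 1$ and $k\in K$ one has $f(ky)\overline{f(y)}=\chi(k)$, a constant.

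The right object is $F_u:=f(u\,\cdot)\,\overline{f}$ with $u\in G_{s-1}$. It is $G_s$-invariant, so it lives on $Z$, and it satisfies $F_u\circ b_h=\chi([u,b_h])\,F_u$. So it is only an \emph{eigenfunction} of $(Z,H)$, and ergodicity of $Z$ alone yields no contradiction. Indeed, for $u\in\Gamma\cap G_{s-1}$ the eigenvalue $h\mapsto\chi([u,b_h])$ is typically a genuine nontrivial eigenvalue of $Z$. This already happens for $T(x,y)=(x+\alpha,y+x)$ on $\T^2=(\R^2\rtimes\Z)/(\Z^2\rtimes\Z)$, with $u$ the generator of the $\Z$-factor.

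What is missing is the following.
(a) The eigenvalue group of the ergodic system $(Z,H)$ is countable, while $u\mapsto(\chi([u,b_h]))_{h\in H}$ is continuous. Hence $\chi([u,b_h])=1$ for $u$ in the identity component $(G_{s-1})^o$.
(b) Consequently $f(u\,\cdot)=c(u)f$ for such $u$. Comparing $f(g\,\cdot)$ with $f$ for $g\in G^o$ near the identity, using continuity of translation in $L^2$, gives $\chi([g,u])=1$ for all $g\in G^o$.
(c) $\chi$ is trivial on $[b_{h_1},[\dots,b_{h_s}]]=b_{[h_1,[\dots,h_s]]}$, because $f$ is invariant under it.
(d) A Jacobi-identity argument in the graded Lie ring shows that these commutators generate $G_s$, because $G$ is generated by $G^o$ and the $b_h$. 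Hence $\chi\equiv 1$ on $G_s$, a contradiction.

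Your last paragraph supplies none of this. The sentence ``an invariant function in a nontrivial vertical character yields a nontrivial character of $G/(G_2\Gamma)$ killed by all $b_h$'' is just the contrapositive of the statement to be proved.

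Separately, the generation property is not a reduction you may make. Replacing $G$ by the subgroup generated by $G^o$ and the $b_h$ changes $G_2$, and hence changes $Y'$. It is a genuine hypothesis: the paper's standing assumption after Definition~\ref{D: nilsystems}. It is used in step (d), and without it the statement is false. Take $A(x,y)=(x,x+y)$, $G=\R^2\rtimes_A\Z$ and $\Gamma=\Z^2\rtimes\Z$. Translation by $(\alpha,0)\in\R^2$ acts on $G/\Gamma\cong\T^2$ as $(x,y)\mapsto(x+\alpha,y)$, which is not ergodic. Yet its image on $Y'=G/(G_2\Gamma)\cong\T$ is the irrational rotation by $\alpha$, which is ergodic. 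For the record, the paper does not prove this proposition; it quotes it from Leibman.
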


Furthermore, the classic equivalence of ergodicity, unique ergodicity, minimality and transitivity of $\Z$-nilsystems extends to $H$-nilsystems.
\begin{proposition}\label{P: equivalence of ergodicity on nil}
    Let $(Y, U)$ be a topological $H$-nilsystem. Then the following properties are equivalent: (i) transitivity, (ii) minimality, (iii) unique ergodicity, (iv) ergodicity of the Haar measure $m_Y$. Moreover, these properties hold if and only if they hold for $(Y', U')$ defined in Proposition~\ref{P: ergodicity of nilsystems}.
\end{proposition}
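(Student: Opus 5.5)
The plan is to reduce to the classical theory of $\Z$-nilsystems via the structure of $Y'=G/(G_2\Gamma)$, using Proposition~\ref{P: ergodicity of nilsystems} as the bridge between $Y$ and $Y'$. The implications (iii)$\Rightarrow$(iv) and (ii)$\Rightarrow$(i) are immediate. For (iii)$\Rightarrow$(ii), note that the support of the unique invariant measure $m_Y$ is all of $Y$. Hence every closed invariant set supports an invariant measure, which must be $m_Y$, so the set is $Y$. For (i)$\Rightarrow$(iv), a transitive point $y_0=g_0\Gamma$ with dense $H$-orbit projects to a point with dense orbit in $Y'$. On the compact abelian group $Y'$ (a torus times a finite group), the $H$-action is by rotations through the image $\pi(H)$. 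Density of one orbit means $\pi(H)$ is dense, so the rotation action is uniquely ergodic and Haar is ergodic. By Proposition~\ref{P: ergodicity of nilsystems}, $m_Y$ is then ergodic for $U$.

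The main step is (iv)$\Rightarrow$(iii). The plan is to imitate Parry's and Leibman's argument for $\Z$-nilsystems. Assume $m_Y$ is ergodic, and let $\nu$ be any $H$-invariant Borel probability measure on $Y$. I would first reduce to showing that $\nu$ is invariant under the closed subgroup $\overline{\langle b_h\rangle}G^o$, or at least under enough of $G$. The cleanest route is induction on the step $s$. Project to $Y_{s-1}=G/(G_s\Gamma)$, where the induction hypothesis gives that the projection of $\nu$ is Haar. Then use the standard Furstenberg/Parry compact-group-extension argument: $Y\to Y_{s-1}$ is a principal bundle with structure group the torus $K=G_s/(G_s\cap\Gamma)$, central in $G$, and the $H$-action commutes with the $K$-action. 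Decompose $L^2(\nu)$ along characters $\chi$ of $K$. Ergodicity of $m_Y$ implies that no nontrivial $\chi$-equivariant function is $H$-invariant modulo coboundaries. From this, one shows that $\int f\,d\nu=\int f\,dm_Y$ for $\chi$-isotypic $f$ with $\chi\ne1$ (both sides vanish), via the usual mean-ergodic/van der Corput argument along a F{\o}lner sequence of $H$. Since $H$ is amenable and finitely generated nilpotent, averages along F{\o}lner sequences replace the $\Z$-averages; this step is where I expect the most care.

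A simpler alternative is to cite Leibman's theorem in its $H$-action form \cite{L05b}. It states that for a nilpotent group acting by nilrotations, the orbit closure of every point is a finite union of sub-nilmanifolds and the orbit is equidistributed there. Combined with ergodicity of $m_Y$ (equivalently, by Proposition~\ref{P: ergodicity of nilsystems}, density of $\pi(H)$ in $Y'$), every orbit is then equidistributed with respect to $m_Y$. That gives unique ergodicity and minimality at once.

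Finally, the clause about $(Y',U')$ follows from Proposition~\ref{P: ergodicity of nilsystems} applied to (iv), together with the equivalence of all four properties for rotations on the compact abelian group $Y'$. For such rotations, each property is equivalent to density of $\pi(H)$, which is elementary via characters.
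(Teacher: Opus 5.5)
Your proof is correct in outline, but it is organized differently from the paper's. The paper cites \cite{L05b} for everything substantive: Theorem~2.19 for (iv)$\Leftrightarrow$(iii), Corollary~2.20 for (iv)$\Leftrightarrow$(ii), distality of $H$-nilsystems for (i)$\Leftrightarrow$(ii), and Proposition~\ref{P: ergodicity of nilsystems} for the passage to $(Y',U')$. You instead close the cycle (ii)$\Rightarrow$(i)$\Rightarrow$(iv)$\Rightarrow$(iii)$\Rightarrow$(ii). Your (i)$\Rightarrow$(iv), via density of $\pi(H)$ in the compact abelian group $Y'$ and Proposition~\ref{P: ergodicity of nilsystems}, replaces the distality argument. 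Your support argument for (iii)$\Rightarrow$(ii), which uses amenability to place an invariant measure on a closed invariant set, replaces Corollary~2.20. The real difference is (iv)$\Rightarrow$(iii). There you reprove Leibman's theorem by induction on the step, through the central extension $Y\to G/(G_s\Gamma)$ with fiber group $K=G_s/(G_s\cap\Gamma)$. This makes the proof more self-contained. The price is that you must supply the structural facts that Leibman's theorem packages: closedness of $G_s\Gamma$ and cocompactness of $G_s\cap\Gamma$ in $G_s$ for possibly disconnected $G$. You also need an amenable-group version of the lifting step.

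Two parts of your sketch need repair. First, you cannot decompose $L^2(\nu)$ along characters of $K$, because $\nu$ is not known to be $K$-invariant; decompose $C(Y)$ into $\chi$-isotypic pieces instead.
\begin{itemize}
\item With that change, the van der Corput step works along a F{\o}lner sequence almost invariant under $h\mapsto kh$. For $\chi$-isotypic $f$, the function $z\mapsto f(b_kz)\overline{f(z)}$ is $K$-invariant because $K$ is central. It therefore descends to the base, where the inductive hypothesis gives uniform convergence.
\item Meanwhile $\E_{k\in H}\langle U_kf,f\rangle_{m_Y}=|\int f\,dm_Y|^2=0$ by ergodicity. Nothing about coboundaries is needed.
\item Simpler still is Furstenberg's extreme-point argument. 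For an ergodic invariant $\nu$, the measure $\int_K k_*\nu\,dm_K(k)$ is $H$- and $K$-invariant. By induction it projects to Haar on the base, so it equals $m_Y$, since a $K$-invariant lift of that Haar measure is unique. Extremality of the ergodic measure $m_Y$ then forces $k_*\nu=m_Y$ for $m_K$-a.e.\ $k$, hence $\nu=m_Y$, with no F{\o}lner bookkeeping at all.
\end{itemize}

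Second, your ``simpler alternative'' does not close as stated. Suppose each orbit closure is a finite union of sub-nilmanifolds carrying an equidistributed orbit, and $m_Y$ is ergodic. This only shows that $m_Y$-a.e.\ orbit is dense, not that every orbit is. To finish, you need Leibman's criterion that an orbit is dense (equidistributed) in $Y$ iff its projection to $Y'$ is. Alternatively, cite Theorem~2.19 of \cite{L05b} directly, as the paper does.
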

\begin{proof}
    The property (iv) is equivalent to (iii) by \cite[Theorem~2.19]{L05b} and to (ii) by \cite[Corollary 2.20]{L05b}. The properties (i) and (ii) are then equivalent by distality of $H$-nilsystems. Lastly the equivalence of properties for $Y$ and $Y'$ follows from Proposition~\ref{P: ergodicity of nilsystems}.
\end{proof}

\section{Cubic structures}\label{A: cubes}
This section summarizes the material on cubic structures in various settings: algebraic, topological, and measure-preserving, with particular focus on the interplay of these a priori different notions for nilsystems. Most of the material is a straightforward adaptation of arguments from \cite{HK18}; the main difference is that we are interested in actions by more general groups than $\Z$. The results in this section could likely be derived from more general results in the nilspace theory (see e.g. \cite{Can17a, Can17b,CGSS20}). However, given that we work within the relatively simpler setting of nilsystems, we instead opted to present a self-contained summary of all the definitions and results that we actually use.

\subsection{Algebraic cubes}
We start by presenting cubic structures on groups. 
\begin{definition}[Faces of a cube]
    A \textit{face} of $\cube{k}:=\{0,1\}^k$ of \textit{codimension} $1\leq r\leq k$ is a set
    \begin{align*}
        \alpha := \{\eps\in\cube{k}\colon \eps_{i_1} = a_{i_1}, \ldots \eps_{i_r}=a_{i_r}\}
    \end{align*}
    for some $r\in\N_0$, $1\leq i_1 < \cdots < i_r\leq k$ and $a_{i_1}, \ldots, a_{i_r}\in\{0,1\}$. Note that $\alpha=\cube{k}$ is the unique face of codimension 0. {We call a face \textit{upper} if it contains $(1,\ldots, 1)$, or equivalently if the elements $a_{i_1}, \ldots, a_{i_r}$ defining it are all 1.}
\end{definition}

\begin{definition}[Cubes of groups]
    Let $G$ be a group. For $k\in\N_0$, denote $G^{\cube{k}} := G^{\{0,1\}^k}$. Given a face $\alpha$ of $\cube{k}$, let $g^{(\alpha)} \in G^{\cube{k}}$ be defined by
    \begin{align*}
        (g^{(\alpha)})_\eps = \begin{cases}
            g,\; &\eps\in \alpha,\\
            e_G,\; &\eps\notin\alpha.
        \end{cases}
    \end{align*}
\end{definition}

\begin{definition}[Cubic groups]
    Let $k\in\N_0$, and let $G$ be a group. We then define
    \begin{align*}
        Q^\cube{k}(G) := &\langle g^{(\alpha)}\in G^\cube{k}\colon\; g\in G,\; \codim \alpha = 1\rangle
    \end{align*}
    to be the \textit{$k$-dimensional cube of $G$}.
\end{definition}
The next two lemmas provide an alternative description of the cubic group.
\begin{lemma}[{\cite[Chapter 6, Lemma~14]{HK18}}]\label{L: alternative cube description}
Let $k\in\N_0$, and let $G$ be a group endowed with the lower central series filtration $G_\bullet = (G_i)_{i\in\N_0}$. Then
    \begin{align*}
        Q^\cube{k}(G)=\langle g^{(\alpha)}\in G^\cube{k}\colon\; g\in G_{i},\; \codim \alpha = i,\; 0\leq i\leq k\rangle.
    \end{align*}
\end{lemma}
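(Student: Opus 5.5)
We have to show that the group generated by codimension-1 face elements $g^{(\alpha)}$ with $g\in G$ coincides with the group generated by the elements $g^{(\alpha)}$ where $g\in G_i$ and $\codim\alpha=i$ (for $0\leq i\leq k$). Write $Q'$ for the latter group. The inclusion $Q^\cube{k}(G)\subseteq Q'$ is immediate, since the generators of $Q^\cube{k}(G)$ are exactly the generators of $Q'$ with $i=1$ (recall $G_1=G$). So the plan is to prove $Q'\subseteq Q^\cube{k}(G)$, i.e. that each $g^{(\alpha)}$ with $g\in G_i$ and $\codim\alpha=i$ lies in $Q^\cube{k}(G)$. The proof goes by induction on $i$.

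The case $i=0$ must be handled separately, because it concerns $\alpha=\cube{k}$ and $g\in G_0=G$. Here $g^{(\cube{k})}=g^{(\beta)}g^{(\beta')}$ for any pair of complementary codimension-1 faces $\beta=\{\eps_1=0\}$ and $\beta'=\{\eps_1=1\}$, because the two factors are supported on disjoint coordinates. (This needs $k\geq1$; for $k=0$ both groups equal $G$ under the convention that the codimension-0 face appears.) The case $i=1$ holds by definition.

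For the inductive step, fix a face $\alpha$ of codimension $i\geq2$, say $\alpha=\beta\cap\gamma$, where $\beta$ has codimension $i-1$ and $\gamma$ has codimension $1$, with $\gamma$ fixing a coordinate not fixed by $\beta$. Take $a\in G_{i-1}$ and $b\in G$. Computing coordinatewise, the commutator $[a^{(\beta)},b^{(\gamma)}]$ equals $[a,b]$ on $\beta\cap\gamma$ and $e_G$ elsewhere, since on every other coordinate one of the two entries is trivial. Hence $[a,b]^{(\alpha)}=[a^{(\beta)},b^{(\gamma)}]$. By the induction hypothesis $a^{(\beta)}\in Q^\cube{k}(G)$, and $b^{(\gamma)}\in Q^\cube{k}(G)$ by definition, so the commutator lies in $Q^\cube{k}(G)$.

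It remains to pass from commutators to arbitrary elements. Because $G_i=[G_{i-1},G]$ is generated by the commutators $[a,b]$, and the map $g\mapsto g^{(\alpha)}$ is a group homomorphism $G\to G^\cube{k}$, the image of $G_i$ under this map is generated by the elements $[a,b]^{(\alpha)}$. It is therefore contained in $Q^\cube{k}(G)$, which completes the induction. The only step needing care is the coordinatewise commutator identity together with the choice of the decomposition $\alpha=\beta\cap\gamma$; neither presents a real obstacle.
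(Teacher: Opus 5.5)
Your proof is correct, and it follows the standard argument behind the cited result \cite[Chapter 6, Lemma~14]{HK18}, which the paper cites without reproducing: write the codimension-$0$ generator as a product over two complementary codimension-$1$ faces, then induct on the codimension using $[a^{(\beta)},b^{(\gamma)}]=[a,b]^{(\beta\cap\gamma)}$ and the fact that the homomorphism $g\mapsto g^{(\alpha)}$ sends the commutator generators of $G_i=[G_{i-1},G]$ to generators of its image. Your caveat about $k=0$ is also accurate: $\cube{0}$ has no codimension-$1$ faces, so the statement needs the convention $Q^{\cube{0}}(G)=G$.
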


In fact, the cubic group can be generated using only upper faces. 
\begin{lemma}[Upper face decomposition {\cite[Chapter 6, Proposition~17]{HK18}}]\label{L: upper face}
    Let $k\in\N_0$, and let $G$ be a group endowed with the lower central series filtration $G_\bullet = (G_i)_{i\in\N_0}$. Then there exists an ordering $\alpha_1, \ldots,\alpha_\ell$ of the upper faces of $\cube{k}$ satisfying
    \begin{align*}
        {\codim \alpha_1 \leq \cdots \leq \codim \alpha_\ell}
    \end{align*}
    such that every element of $Q^{\cube{k}}(G)$ can be uniquely written as $g_1^{(\alpha_1)}\cdots g_\ell^{(\alpha_\ell)}$
    with $g_i\in G_{\codim \alpha_i}$ for all $1\leq i\leq \ell$.
\end{lemma}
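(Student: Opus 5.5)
This is the upper face decomposition for cubic groups, cited from \cite[Chapter 6, Proposition~17]{HK18}. The plan is to follow the Host--Kra argument: induct on $k$, using the recursive structure of $Q^{\cube{k}}(G)$. We split $\cube{k}=\cube{k-1}\times\{0,1\}$ and write an element of $G^{\cube{k}}$ as a pair $(\mathbf g',\mathbf g'')$ of elements of $G^{\cube{k-1}}$.

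The first step is the characterization
\begin{align*}
Q^{\cube{k}}(G)=\{(\mathbf g',\mathbf g'')\colon \mathbf g'\in Q^{\cube{k-1}}(G),\ \mathbf g''(\mathbf g')^{-1}\in Q^{\cube{k-1}}(G_\bullet^{+1})\},
\end{align*}
where $G_\bullet^{+1}=(G_{i+1})_i$ is the shifted filtration. To prove it, we check it on the generators from Lemma~\ref{L: alternative cube description}. A face of codimension $i$ of $\cube{k}$ either has the form $\beta\times\{0,1\}$, and then contributes $(g^{(\beta)},g^{(\beta)})$, or the form $\beta\times\{a\}$, and then contributes $(g^{(\beta)},e)$ or $(e,g^{(\beta)})$ with $\codim\beta=i-1$ and $g\in G_i$. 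The right-hand side is a group, and both inclusions are verified on generators; closure under products uses that $Q^{\cube{k-1}}(G_\bullet^{+1})$ is normalized by $Q^{\cube{k-1}}(G)$, which follows from commutator relations $[G_i,G_j]\subseteq G_{i+j}$.

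The second step is to define the ordering and prove existence. Upper faces of $\cube{k}$ are of two types: $\beta\times\{0,1\}$ with $\beta$ upper in $\cube{k-1}$ (same codimension), and $\beta\times\{1\}$ with $\beta$ upper in $\cube{k-1}$ (codimension one more). Given $(\mathbf g',\mathbf g'')\in Q^{\cube{k}}(G)$, we apply the induction hypothesis to $\mathbf g'$ to write $\mathbf g'=\prod g_i^{(\beta_i)}$. Lifting, $\prod g_i^{(\beta_i\times\{0,1\})}$ has both halves equal to $\mathbf g'$. Multiplying by its inverse reduces to an element $(e,\mathbf h)$ with $\mathbf h\in Q^{\cube{k-1}}(G_\bullet^{+1})$. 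Applying the induction hypothesis for the shifted filtration to $\mathbf h$ gives $\mathbf h=\prod h_j^{(\beta_j)}$ with $h_j\in G_{\codim\beta_j+1}$, which lifts to $\prod h_j^{(\beta_j\times\{1\})}$.

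The main obstacle is the ordering requirement $\codim\alpha_1\le\cdots\le\codim\alpha_\ell$, since concatenating the two lists gives codimensions that interleave. The fix is to reorder factors using commutators. Swapping $g^{(\alpha)}$ and $h^{(\alpha')}$ produces $[g,h]^{(\alpha\cap\alpha')}$ with $[g,h]\in G_{\codim\alpha+\codim\alpha'}$, and $\codim(\alpha\cap\alpha')\le\codim\alpha+\codim\alpha'$. So each correction term is again an upper-face element with a deeper coefficient. We then absorb these corrections by a downward induction on filtration depth, which terminates by nilpotence; for general (non-nilpotent) $G$ we restrict to the relevant finite stage. I expect this bookkeeping to be the delicate part. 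It may be cleaner to instead order faces by codimension from the start and prove existence directly by an induction on codimension, peeling off the codimension-$i$ factors and using that the quotient by cube elements of depth greater than $i$ is abelian at level $i$.

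Finally, uniqueness. Evaluate coordinates along a chain: the coordinate at $(0,\dots,0)$ forces the codimension-$0$ factor, and inductively the coordinate at the vertex $\eps$ whose support determines the minimal upper face containing it determines $g_i$ for that face, given all factors of smaller codimension. Equivalently, if two decompositions agree, compare them at the minimal vertices of each upper face in increasing order of codimension.
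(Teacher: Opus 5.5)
Your Step 1, including the normality of $Q^{\cube{k-1}}(G_\bullet^{+1})$ in $Q^{\cube{k-1}}(G)$, is correct. So are the existence of a decomposition in the concatenated order in Step 2 and your uniqueness argument. In Step 2 the induction must be run over arbitrary decreasing sequences $(G_i)$ with $[G_i,G_j]\subseteq G_{i+j}$, since the shifted filtration is not a lower central series, but nothing breaks.

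The gap is the sorting step. That is where the content of the lemma lies, and you leave it as ``delicate bookkeeping''. The mechanism you propose is to move factors past each other and induct downward on the filtration depth of the commutator corrections. This terminates only if $G$ is nilpotent, whereas the lemma is stated for an arbitrary group. Even for nilpotent $G$ you would need a nested collection process that is not carried out: each correction must itself be moved into place, producing further corrections, and factors landing on the same face must be merged. ``Restrict to the relevant finite stage'' is only an assertion. It can be justified: in a fixed ordering the coefficients are read off recursively from the lower-corner coordinates, this commutes with $G\to G/G_{k+1}$, and $G_{k+1}\subseteq G_{|J|}$ for $|J|\le k$. But you do not give that argument.

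The right induction parameter is codimension rather than filtration depth, as your final alternative hints. Write $\beta_J:=\{\eps\in\cube{k}\colon \eps_j=1 \text{ for } j\in J\}$. Let $Q_r$ be the subgroup generated by all $g^{(\beta_J)}$ with $|J|\ge r$ and $g\in G_{|J|}$. Your Steps 1--2, together with Lemma~\ref{L: alternative cube description}, give $Q_0=Q^{\cube{k}}(G)$, and clearly $Q_{k+1}=\{e\}$. With $[g,h]:=g^{-1}h^{-1}gh$ one has
\[
(h^{(\beta_{J'})})^{-1}g^{(\beta_J)}h^{(\beta_{J'})}=g^{(\beta_J)}\,[g,h]^{(\beta_{J\cup J'})},\qquad [g,h]\in G_{|J|+|J'|}\subseteq G_{|J\cup J'|}.
\]
Since $|J\cup J'|\ge |J|$, this gives $Q_r\trianglelefteq Q_0$. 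Since $|J\cup J'|>r$ when $J\neq J'$ and $|J|=|J'|=r$, factors on distinct faces of codimension $r$ commute modulo $Q_{r+1}$. Hence every $x\in Q_r$ can be written as $\prod_{|J|=r}g_J^{(\beta_J)}\cdot y$ with $g_J\in G_r$ and $y\in Q_{r+1}$, in any fixed order of the codimension-$r$ faces. Iterating from $r=0$ to $r=k$ gives the sorted decomposition with no nilpotence assumption.

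Contrary to your sketch, $Q_r/Q_{r+1}$ need not be abelian: each codimension-$r$ face carries a copy of $G_r$, and only factors on distinct faces commute. Your lower-corner argument then gives uniqueness, and by cancellation it works for any ordering of the upper faces.
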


We shall use the following corollary.
\begin{corollary}\label{C: face}
    Let $k\in\N_0$, and let $G$ be a group endowed with the lower central series filtration $G_\bullet = (G_i)_{i\in\N_0}$. If $g^{(\alpha)}\in Q^{\cube{k}}(G)$ for some codimension-$i$ face $\alpha$, then $g\in G_i.$
\end{corollary}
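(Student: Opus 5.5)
The plan is to read Corollary~\ref{C: face} off the uniqueness in the upper face decomposition (Lemma~\ref{L: upper face}), after first moving $\alpha$ to an upper face. Fix $\alpha = \{\eps\colon \eps_{i_1}=a_{i_1},\ldots,\eps_{i_r}=a_{i_r}\}$ with $r=i$.

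\textbf{Step 1: reduce to an upper face.} Let $\theta$ be the coordinate reflection of $\cube{k}$ that flips exactly the coordinates $i_j$ with $a_{i_j}=0$. It induces the automorphism $\Theta\colon (x_\eps)_\eps\mapsto (x_{\theta(\eps)})_\eps$ of $G^{\cube{k}}$. Since $\theta$ maps faces to faces of the same codimension, $\Theta$ permutes the generators $h^{(\beta)}$ with $\codim\beta=1$, so it preserves $Q^{\cube{k}}(G)$. Moreover $\Theta(g^{(\alpha)})=g^{(\theta(\alpha))}$, where $\theta(\alpha)$ is upper of codimension $i$. So we may assume that $\alpha$ is upper.

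\textbf{Step 2: apply Lemma~\ref{L: upper face}.} This gives a unique expression
\[
g^{(\alpha)} = g_1^{(\alpha_1)}\cdots g_\ell^{(\alpha_\ell)}, \qquad g_m\in G_{\codim\alpha_m}.
\]
We want to show $g_m=e_G$ whenever $\alpha_m\neq\alpha$, and $g_m=g$ for $\alpha_m=\alpha$; since $\codim\alpha=i$, that yields $g\in G_i$. Note we cannot simply append $g^{(\alpha)}$ as a single factor to get a second decomposition, because $g\in G_i$ is exactly what is unknown.

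\textbf{Step 3: evaluate coordinatewise, by induction from the top vertex.}
\begin{itemize}
\item At $\eps=(1,\ldots,1)$, every upper face contains $\eps$, so $\prod_m g_m = (g^{(\alpha)})_\eps$.
\item In general, the coordinate at $\eps$ is the ordered product of those $g_m$ with $\alpha_m\ni\eps$. The upper faces containing $\eps$ are indexed by subsets of the support of $\eps$.
\item Induct on the number of ones of $\eps$, downward from $(1,\ldots,1)$, i.e. a Möbius inversion over upper faces. Let $\eps$ be the minimal vertex of an upper face $\beta$. Comparing $(g^{(\alpha)})_\eps$, which is $g$ or $e_G$ according as $\eps\in\alpha$, with the product over faces containing $\eps$ should isolate $g_\beta$.
\end{itemize}

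\textbf{Main obstacle.} The product is noncommutative and ordered by codimension, so isolating $g_\beta$ needs care. I would use that the ordering is nondecreasing in codimension, and argue by induction on codimension that all faces of smaller codimension than $\beta$ have trivial component. Granting this, the minimal vertex of $\alpha$ gives $g_\alpha=g$, and hence $g\in G_{\codim\alpha}=G_i$.
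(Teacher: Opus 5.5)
Your argument is the paper's argument: uniqueness in the upper face decomposition, with coordinates read off at the lower corner $\eps$ (where $\supp(\eps)=J$) of each upper face $\beta_J$, and induction on $|J|$. You precede it with a reduction the paper does not make.

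Your Step 1 is valid: $\Theta$ permutes the codimension-$1$ generators, so it preserves $Q^{\cube{k}}(G)$, and $\Theta(g^{(\alpha)})=g^{(\theta(\alpha))}$. It also simplifies the bookkeeping. The paper keeps a general face $\alpha=\{\eps_j=1,\ j\in I;\ \eps_{j'}=0,\ j'\in I'\}$ and must prove inductively the alternating pattern $g_J=g^{(-1)^{|J\setminus I|}}$ for $I\subseteq J\subseteq I\cup I'$, using binomial cancellations. It then concludes from $g_{I\cup I'}=g^{\pm1}\in G_i$. After your reflection, $I'=\emptyset$ and the only nontrivial component is $g_I=g$.

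Two statements in your Step 3 are wrong as written, though both are easily repaired.

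\emph{Direction of the induction.} It cannot run downward from $(1,\ldots,1)$: that coordinate is the product of all the factors and isolates nothing. It must run upward from $(0,\ldots,0)$. The vertex $\eps$ with $\supp(\eps)=J$ lies in $\beta_{J'}$ iff $J'\subseteq J$. Among these faces, $\beta_J$ is the only one of codimension $|J|$, so it comes last in the codimension ordering. Hence $(g^{(\alpha)})_\eps=P_J\,g_J$, where $P_J$ is the ordered product of the $g_{J'}$ over $J'\subsetneq J$. Your ``main obstacle'' paragraph in fact describes this upward induction, so this is an inconsistency rather than a wrong idea.

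\emph{The inductive invariant.} ``All faces of smaller codimension than $\beta$ have trivial component'' is false when $J\supsetneq I$, i.e. when $\beta_J\subsetneq\alpha$. In that case $\alpha$ itself has smaller codimension and component $g$. The correct invariant is $g_J=g$ if $J=I$ and $g_J=e_G$ otherwise. With it:
\begin{itemize}
\item $P_J=g$ if $I\subsetneq J$, and $P_J=e_G$ otherwise;
\item $(g^{(\alpha)})_\eps=g$ iff $I\subseteq J$;
\item the three cases $I\not\subseteq J$, $J=I$, $I\subsetneq J$ therefore give $g_J=e_G$, $g$, $e_G$ respectively.
\end{itemize}
Since $P_J$ has at most one nontrivial factor, the noncommutativity you worried about never arises, and $g=g_I\in G_{|I|}=G_i$.
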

\begin{proof}
The proof amounts to showing that in the upper case decomposition of $g^{(\alpha)}$, all the coefficients corresponding to faces of codimension greater than $i$ are trivial while all the coefficients of codimension up to $i$ are $g, g\inv$, or $e_G$. The claim then follows from the uniqueness part of Lemma~\ref{L: upper face}. For instance, if $k=2$ and $\alpha = \{(0,0)\}$, then
  Lemma~\ref{L: upper face} allows us to write 
\begin{align}\label{E: lower-upper face2}
    (g,e_{G},e_{G},e_{G})=(g_{0},g_{0},g_{0},g_{0})(e_{G},g_{1},e_{G},g_{1})(e_{G},e_{G},g_{2},g_{2})(e_{G},e_{G},e_{G},g_{3})
\end{align}
for some $g_{0},g_{1},g_{2}\in G$ and $g_{3}\in G_{2}$.
Then 
$$g=g_{0} \quad \textrm{and}\quad g_{0}g_{1}=g_{0}g_{2}=g_{0}g_{1}g_{2}g_{3}=e_{G},$$
which forces $g_{0}=g$, $g_{1}=g_{2}=g^{-1}$ and $g_{3}=g$. In particular, the latter gives $g_3\in G_2$.

We move on to the general case. Given $J\subseteq[k]$, define the upper face
\begin{align*}
    \beta_J:= \rem{\eps\in\cube{k}\colon\; \eps_{j}=1\; \textrm{for}\; j\in J}.
\end{align*}
Note that $\codim \beta_J = |J|$.
Let
\begin{align*}
g^{(\alpha)}=\prod_{J\subseteq [k]}g_{J}^{(\beta_{J})}    
\end{align*}
 be the upper face decomposition provided by Lemma~\ref{L: upper face}; then $g_{J}\in G_{|J|}$ for every $J\subseteq[k]$. In particular, observe that
\begin{align}\label{E: ufd}
    (g^{(\alpha)})_\eps=\prod_{\substack{J\subseteq[k]\colon\;\eps\in \beta_J}}(g_{J}^{(\beta_{J})})_\eps = \prod_{\substack{J\subseteq\supp(\eps)}}(g_{J}^{(\beta_{J})})_\eps
\end{align}
for all $\e\in\cube{k}$,
where $\supp(\eps):=\{j\in[k]\colon\; \eps_j = 1\}$.

Let $I,I'\subseteq [k]$ be the sets of coordinates such that 
    \begin{align*}
    \alpha = \rem{\eps\in\cube{k}\colon\; \eps_{j} =1\; \textrm{for}\; j\in I\; \textrm{and}\; \eps_{j'}=0\; \textrm{for}\; j'\in I'}.
\end{align*}
We claim that
\begin{align*}
    g_J =\begin{cases}
        g,\; &I\subseteq J\subseteq I\cup I',\; |J\backslash I|\; \textrm{even},\\
        g\inv,\; &I\subseteq J\subseteq I\cup I',\; |J\backslash I|\; \textrm{odd},\\
        e_G,\; &\textrm{otherwise}.
    \end{cases}
\end{align*}
In particular $g_{I\cup I'}$ is equal to $g$ or $g^{-1}$. Since $g_{I\cup I'}\in G_{\vert I\cup I'\vert}=G_{i}$, this forces $g\in G_{i}$ and we are done.

We shall prove the claim on the structure of $g_J$'s by inducting on $|J|$ and comparing the $\eps$-coordinates of both sides of \eqref{E: ufd} for $\supp(\eps) = J$. One should think of this coordinate as the ``lower corner'' of the face $\beta_J$, and its key property is that it sees nontrivial contributions only from $J$ and upper faces of codimension strictly less than $|J|$. 

We start with $|J| = 0$, i.e. $J =\emptyset$. By analyzing the coordinates of \eqref{E: ufd} corresponding to $\eps = (0, \ldots, 0)$, we deduce that $(g^{(\alpha)})_\eps = g_\emptyset$, and this is $g$ if $\alpha = \cube{k}$ (i.e. $I=I'=\emptyset$) and $e_G$ otherwise. Hence $g_\emptyset$ takes the claimed form.

Suppose now that the claim holds for all $|J|< l$ for some $l\in[k]$, and consider the $\eps$-coordinate of \eqref{E: ufd} for $J = \supp(\eps)$. If $I\not\subseteq J$, then $\eps\notin \alpha$, and so $(g^{(\alpha)})_\eps = e_G$. In this case, we also have $I\not\subseteq J'$ for all $J'\subseteq J$. By induction, we get that $g_{J'} = e_G$ for all $J'\subsetneq J$, and hence \eqref{E: ufd} forces $g_J = e_G$ as well. So, the only nontrivial $g_J$'s are those for which $I\subseteq J$.

Assume therefore that $I\subseteq J$. If $\eps\in \alpha$, then  $(g^{(\alpha)})_\eps = g$. At the same time, this assumption also implies that $I'\cap J = \emptyset$. Now, two things may happen. Either $I=J$, and then it follows from \eqref{E: ufd} that $g_J = g_I={g}$  {since $g_J^{(\beta_J)}$ is the only term in \eqref{E: ufd} with a possibly nontrivial $\eps$-coordinate.} This satisfies the claim as $|J\backslash I| = 0$.
Or $I\subsetneq J$; then consider a subset $I\subseteq J'\subsetneq J$. {Since $I'\cap J' = \emptyset$,} by the induction hypothesis, for such $J'$ we have $g_{J'} = g$ if $J' = I$ and $g_{J'} = e_G$ otherwise. Hence it follows from \eqref{E: ufd} that $g = (g^{(\alpha)})_\eps = g_I g_J = g g_J$, implying that $g_J = e_G$, which is consistent with the claim as then $J\subsetneq I\cup I'$.

The remaining case to analyse is when $\eps\notin\alpha$; then $(g^{(\alpha)})_\eps = e_G$ and $I'\cap J \neq \emptyset$. Let $J^*:=J\cap(I\cup I')$, and set $m:=|J^*\backslash I|$. If $J^*\subsetneq J$, i.e. $J\subsetneq I\cup I'$, then \eqref{E: ufd}, the inductive hypothesis, and the binomial theorem give
\begin{align*}
    e_G = \brac{\prod_{I\subseteq J'\subseteq J^*} g_{J'}}g_J = g^{\sum\limits_{j=0}^m (-1)^j \binom{m}{j}}g_{J} = g_J;
\end{align*}
above, $\binom{m}{j}$ is the number of subsets $I\subseteq J'\subseteq J^*$ with $|J'\backslash I| = j$. If $J = J^*$, i.e. $J\subseteq I\cup I'$, then \eqref{E: ufd} and the inductive hypothesis give
\begin{align*}
    e_G = \prod_{I\subseteq J'\subseteq J} g_{J'} = g^{\sum\limits_{j=0}^{m-1} (-1)^j \binom{m}{j}}g_{J} = g^{-(-1)^m} g_J,
\end{align*}
and so $g_J = g^{(-1)^{m}}$. In either case, we get the claim.
\end{proof}

The cubic groups can be described inductively as fiber products. We recall that a \textit{fiber product} of a group $G$ over its subgroup $H$ is the group
\begin{align*}
    G\times_H G :=\langle(g,g), (h, e_G)\colon\; g\in G,\; h\in H\rangle.
\end{align*}
\begin{lemma}[Fiber product structure]\label{L: fiber products}
    Let $k\in\N_0$, and let $G$ be a group endowed with the lower central series filtration $G_\bullet = (G_i)_{i\in\N_0}$. Then
    \begin{align*}
        Q^\cube{k+1}(G) = Q^\cube{k}(G)\times_{L^\cube{k}(G)}Q^\cube{k}(G)
    \end{align*}
    for
    \begin{align*}
        L^\cube{k}(G) &:=\langle g^{(\alpha)}\in G^\cube{k}\colon\; g\in G_{i+1},\; \codim \alpha = i,\; 0\leq i\leq k\rangle.
    \end{align*}
\end{lemma}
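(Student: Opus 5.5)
We will prove the two inclusions separately. Write $F := Q^\cube{k}(G)\times_{L^\cube{k}(G)}Q^\cube{k}(G)$ and identify $G^{\cube{k+1}}$ with $G^\cube{k}\times G^\cube{k}$, the first factor corresponding to $\eps_{k+1}=0$ and the second to $\eps_{k+1}=1$. In this notation, $F$ is generated by the diagonal elements $(x,x)$ with $x\in Q^\cube{k}(G)$ and the elements $(y,e)$ with $y\in L^\cube{k}(G)$. A preliminary remark is needed here: the fiber product only makes sense once we know $L^\cube{k}(G)\subseteq Q^\cube{k}(G)$. This follows from Lemma~\ref{L: alternative cube description}, since $g\in G_{i+1}\subseteq G_i$ for a codimension-$i$ face.

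For the inclusion $Q^\cube{k+1}(G)\subseteq F$, we check the generators of $Q^\cube{k+1}(G)$ given by Lemma~\ref{L: alternative cube description}, namely $g^{(\beta)}$ with $\codim\beta=i$ and $g\in G_i$. Let $\alpha$ be the projection of $\beta$ to the first $k$ coordinates. There are two cases.
\begin{enumerate}
\item If $\beta$ does not constrain $\eps_{k+1}$, then $\codim\alpha=i$ and $g^{(\beta)}=(g^{(\alpha)},g^{(\alpha)})$. This is a diagonal element of $Q^\cube{k}(G)$, so it lies in $F$.
\item If $\beta$ constrains $\eps_{k+1}=0$, then $\codim\alpha=i-1$ and $g\in G_i=G_{(i-1)+1}$. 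Hence $g^{(\beta)}=(g^{(\alpha)},e)$ is a generator of $L^\cube{k}(G)$ in the first slot, so it lies in $F$.
\item If $\beta$ constrains $\eps_{k+1}=1$, write $g^{(\beta)}=(g^{(\alpha)},g^{(\alpha)})\cdot((g^{(\alpha)})^{-1},e)$. The first factor is diagonal. The second factor is $(g^{-1})^{(\alpha)}$ in the first slot with $g^{-1}\in G_{(i-1)+1}$, so it lies in $L^\cube{k}(G)\times\{e\}$. Hence $g^{(\beta)}\in F$.
\end{enumerate}
The $i=0$ face is the full cube, which falls under case (i).

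For the reverse inclusion $F\subseteq Q^\cube{k+1}(G)$, it suffices to handle the generators of $F$. A generator of $Q^\cube{k}(G)$ has the form $g^{(\alpha)}$ with $\codim\alpha=1$. Its diagonal $(g^{(\alpha)},g^{(\alpha)})$ equals $g^{(\alpha\times\{0,1\})}$, which is again a codimension-$1$ face element. Since $x\mapsto(x,x)$ is a homomorphism, every diagonal element $(x,x)$ with $x\in Q^\cube{k}(G)$ lies in $Q^\cube{k+1}(G)$.

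It remains to treat $(y,e)$ with $y\in L^\cube{k}(G)$. Take a generator $y=g^{(\alpha)}$ with $g\in G_{i+1}$ and $\codim\alpha=i$. Then $(y,e)=g^{(\alpha\times\{0\})}$, where $\alpha\times\{0\}$ is a face of codimension $i+1$ in $\cube{k+1}$. By Lemma~\ref{L: alternative cube description}, this element lies in $Q^\cube{k+1}(G)$. Because $y\mapsto(y,e)$ is a homomorphism, every $(y,e)$ with $y\in L^\cube{k}(G)$ lies in $Q^\cube{k+1}(G)$, which completes the reverse inclusion.

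The only delicate point is that every step relies on Lemma~\ref{L: alternative cube description} (from \cite{HK18}), which generates $Q^\cube{k}(G)$ using faces of all codimensions rather than codimension one only. Given that lemma, the argument above is a direct matching of generators.
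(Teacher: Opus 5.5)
Your proof is correct and follows essentially the paper's route: split the faces of $\cube{k+1}$ according to the last coordinate and match the generators of both sides using Lemma~\ref{L: alternative cube description}. The one difference is that the paper phrases each step as an ``iff'' and invokes Corollary~\ref{C: face} for the forward direction. Your two-inclusion generator check shows that this corollary is not needed. You also treat faces with $\eps_{k+1}=1$ explicitly, via $(e,g^{(\alpha)})=(g^{(\alpha)},g^{(\alpha)})\cdot((g^{(\alpha)})^{-1},e)$, which fills in a case the paper's sketch glosses over.
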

\begin{proof}
    We show that both groups have the same generators. Observe first that every face in $\cube{k+1}$ is of the form\footnote{If $\alpha= \{\eps\in\cube{k}\colon \eps_{i_1} = a_{i_1}, \ldots \eps_{i_r}=a_{i_r}\}$, then $\alpha1:= \{\eps\in\cube{k+1}\colon \eps_{i_1} = a_{i_1}, \ldots \eps_{i_r}=a_{i_r}, \eps_{k+1} = 1\}$, likewise for $\alpha 0$.} $\alpha 1$ or $\alpha 0$ for some face $\alpha\in\cube{k}$. Suppose that $\codim \alpha = i$. Then $g^{(\alpha 1)} = (g^{(\alpha)}, g^{(\alpha)})$, and it lies in $Q^\cube{k+1}(G)$ iff $g\in G_{i}$. Indeed, {the forward direction follows from Corollary \ref{C: face} while the converse is a consequence of Lemma~\ref{L: alternative cube description}}. From this and the definition of fiber products it is easy to conclude that $g^{(\alpha 1)}\in Q^\cube{k+1}(G)$ iff it lies in $Q^\cube{k}(G)\times_{L^\cube{k}(G)}Q^\cube{k}(G)$. 
    Likewise, $g^{(\alpha 0)} = (g^{(\alpha)},e_G)$ is in $Q^\cube{k+1}(G)$ iff $g\in G_{i+1}$ (again by Lemma~\ref{L: alternative cube description} and Corollary \ref{C: face} since $\codim \alpha 0=i+1$) iff it lies in $Q^\cube{k}(G)\times_{L^\cube{k}(G)}Q^\cube{k}(G)$.
\end{proof}
 
\subsection{Dynamical cubes} We move on to define cubes in two dynamical settings: topological and measure-preserving.
The definition below holds for group actions in any category; in particular, it applies equally well to topological and measure-preserving actions.
 \begin{definition}[Face transformations]
     Let $H$ be a group acting on a set $X$ via the action $U=(U_h)_{h\in H}$. For every face $\alpha\in\cube{k}$, we define the \textit{face transformation} $U^{(\alpha)}$ on $X^\cube{k}:=X^{\{0,1\}^k}$ via
    \begin{align*}
        (U^{(\alpha)})_\eps x := \begin{cases} Ux,\; &\eps\in\alpha\\
        x,\; &\eps\notin\alpha.
        \end{cases}
    \end{align*}
    Then 
    $$Q^\cube{k}(U):=\langle U^{(\alpha)}\colon\; \alpha\in\cube{k}\rangle$$
    defines a $Q^\cube{k}(H)$-action on $X^\cube{k}$.
 \end{definition}

In the topological setting, we define the cubic structure as follows. 
\begin{definition}[Topological cubes]
    Let $H$ be a group and $(X, U)$ be a minimal topological $H$-system. 
    Then its \textit{$k$-dimensional topological cube} is the topological $Q^\cube{k}(H)$-system $Q^\cube{k}(X, U)$ given by the action of $Q^\cube{k}(U)$ on the closure of the orbit $$\rem{\widetilde Ux^\cube{k}\colon\; \widetilde U\in Q^\cube{k}(U)},$$ where $x^\cube{k}:=(x)^{\{0,1\}^k}$ for any point $x\in X$.
\end{definition}
Because of minimality, the definition of $Q^\cube{k}(X, U)$ does not depend on the choice of the point $x\in X$.

\begin{proposition}[Invariance under the group of face transformations]\label{P: face transformations invariance}
    Let $H$ be a  finitely generated amenable group and $(X, \CX, \mu, U)$ be an $H$-system. Then the measure $\mu_{k,H}$ is $Q^\cube{k}(U)$-invariant for any $k\in\N_0$.
\end{proposition}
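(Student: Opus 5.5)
We argue by induction on $k$, using the fiber product structure of the cubic groups (Lemma~\ref{L: fiber products}) together with the recursive definition $\mu_{k+1,H} = \mu_{k,H}\times_{\CI(U^\cube{k})}\mu_{k,H}$. For $k=0$ there is nothing to prove, since $Q^\cube{0}(U)=\langle U\rangle$ and $\mu$ is $U$-invariant. Suppose now that $\mu_k:=\mu_{k,H}$ is $Q^\cube{k}(U)$-invariant. By Lemma~\ref{L: fiber products} (transported to the action via the homomorphism $Q^\cube{k}(H)\to Q^\cube{k}(U)$), $Q^\cube{k+1}(U)$ is generated by two kinds of elements. The first are the diagonal elements $(\widetilde U,\widetilde U)$ with $\widetilde U\in Q^\cube{k}(U)$. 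The second are the elements $(\widetilde V,\Id)$, where $\widetilde V$ ranges over the image of $L^\cube{k}(H)$, i.e. over the face transformations $U_g^{(\alpha)}$ with $\codim\alpha=i$ and $g\in H_{i+1}$. It therefore suffices to check invariance of $\mu_{k+1}$ under each kind of generator.

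For a diagonal generator, the relative product $\mu_k\times_{\CI}\mu_k$ over $\CI:=\CI_{\mu_k}(U^\cube{k})$ is invariant under $\widetilde U\times\widetilde U$ as soon as two things hold. First, $\widetilde U$ preserves $\mu_k$, which is the inductive hypothesis. Second, $\widetilde U$ maps the $\sigma$-algebra $\CI$ onto itself, so that conditional expectations commute with $\widetilde U$. For $\widetilde U=U_g^{(\alpha)}$ with $\alpha=\cube{k}$ (the diagonal action), this is immediate. For general faces, I would show $\widetilde U\inv U^\cube{k}_h\widetilde U$ is a conjugated action whose invariant algebra coincides with that of $U^\cube{k}$. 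This is where Corollary~\ref{C: equal invariant factors 2} enters. Writing $\widetilde U$ as a product of face transformations, the conjugate of the diagonal action $U^\cube{k}$ by $\widetilde U$ acts on each coordinate $\eps$ by a conjugate $U_{a_\eps h a_\eps\inv}$. One then applies Corollary~\ref{C: equal invariant factors 2} (and its iterated form on $X^\cube{k}$ with the measure $\mu_k$, splitting the cube as two halves $X^\cube{k-1}\times X^\cube{k-1}$ with $\mu_k=\mu_{k-1}\times_{\CI}\mu_{k-1}$) to conclude $I_{\mu_k}(\widetilde U\inv U^\cube{k}\widetilde U)=I_{\mu_k}(U^\cube{k})$. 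Hence $\widetilde U$ preserves $\CI$.

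For a generator $(\widetilde V,\Id)$, we need $\widetilde V$ to act trivially on $\CI_{\mu_k}(U^\cube{k})$-measurable functions. Then $(\widetilde V\times\Id)$ preserves $\mu_k\times_\CI\mu_k$: both marginals are preserved, and the conditional expectation of $\widetilde V F$ equals that of $F$ since $\widetilde V$ fixes $\CI$ pointwise. The plan is to prove by the same induction the auxiliary statement that elements of $L^\cube{k}(U)$ fix $I_{\mu_k}(U^\cube{k})$ pointwise. In the base case $k=0$, this says that $U_g$ with $g\in[H,H]$ fixes $\CI(U)$, which is trivial since $U$-invariant functions are invariant under all of $H$. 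For the step, decompose $L^\cube{k+1}$ via faces $\alpha1$, $\alpha0$. Invariant functions of $U^\cube{k+1}$ for $\mu_{k+1}$ should, as in the abelian theory, be approximated by products $F_0\otimes F_1$ of $\CI$-conditional pieces that are eigenfunction-like on ergodic components. The commutator structure of $g\in H_{i+1}$ then forces the corresponding eigenvalue characters, which factor through abelianizations as in \eqref{E: commutativity on Kronecker}, to be trivial.

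I expect this last step to be the main obstacle: showing that the $L^\cube{k}$-part fixes the invariant $\sigma$-algebra of the diagonal action in the nonabelian setting. The tool I would reach for first is the disintegration and Kronecker decomposition used in the proof of Proposition~\ref{P: equal invariant factors}. There, invariant functions of a product action on ergodic fibers lie in the product of Kronecker algebras (Lemma~\ref{L: product of Kroneckers}). On Kronecker algebras the action factors through $H/[H,H]$, and elements of $H_{i+1}$ with $i\ge1$ lie in $[H,H]$. This is what makes the relevant face transformations act trivially there. The delicate point is carrying this through the iterated fiber-product structure for higher $k$, tracking codimensions against the lower central series as in Corollary~\ref{C: face}.
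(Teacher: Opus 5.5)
Your reduction is set up correctly: the relative product $\mu_{k,H}\times_{\CI}\mu_{k,H}$ is preserved by $\widetilde U\times\widetilde U$ once $\widetilde U$ preserves $\mu_{k,H}$ and the $\sigma$-algebra $\CI=\CI_{\mu_{k,H}}(U^{\cube{k}})$, and by $\widetilde V\times\Id$ once $\widetilde V$ fixes $\CI$ pointwise. But the proof is not complete. Its load-bearing step is the auxiliary claim that every element of $L^{\cube{k}}(U)$ fixes $I_{\mu_{k,H}}(U^{\cube{k}})$ pointwise. This is only sketched, and you flag it yourself as the main obstacle.

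That claim does not follow directly from Lemma~\ref{L: product of Kroneckers} and \eqref{E: commutativity on Kronecker}. Those results describe invariant functions on ergodic components through eigenfunctions of the \emph{diagonal} action. For a face $\alpha$ of codimension $i\geq 1$ and $g\in H_{i+1}$, the map $U^{(\alpha)}_g$ is not a diagonal element. You would still have to show that it acts by the constant $1$, and not merely by some unimodular constant, on the relevant products of eigenfunctions. That requires a separate induction through the lower central series, which your sketch leaves open. (Also, $L^{\cube{0}}(H)=H$ rather than $[H,H]$, since $H_1=H$.)

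The missing idea is that this claim is never needed. It is an artifact of generating $Q^{\cube{k+1}}(U)$ via Lemma~\ref{L: fiber products}. Generate it instead by codimension-one face transformations, as the paper does:
\begin{itemize}
\item $U^{\cube{k}}_h\times\Id$ and $\Id\times U^{\cube{k}}_h$, coming from the faces $\{\varepsilon_{k+1}=0\}$ and $\{\varepsilon_{k+1}=1\}$;
\item $U^{(\beta)}_h\times U^{(\beta)}_h$ for codimension-one faces $\beta\subseteq\cube{k}$.
\end{itemize}
The first two fix $\CI$ pointwise by definition, so $\E(U^{\cube{k}}_h f_1|\CI)=\E(f_1|\CI)$ and invariance of $\mu_{k+1,H}$ is immediate. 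The elements $g^{(\alpha)}\times\Id$ with $\codim\alpha\geq 1$ are products of these generators and need no separate treatment.

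For the diagonal generators your idea is the paper's. Apply it to a single $U^{(\beta)}_g$, not to a general $\widetilde U$ with coordinate-dependent conjugates $a_\varepsilon$; Corollary~\ref{C: equal invariant factors 2} says nothing directly about the latter. Conjugating $U^{\cube{k}}_h$ by $U^{(\beta)}_g$ gives $U_{ghg\inv}$ on $\beta$ and $U_h$ on the complementary face $\gamma$. This is exactly the two-block situation of Corollary~\ref{C: equal invariant factors 2}, with $a=g$, $b=e_H$, applied to $X^{\beta}\times X^{\gamma}$. Hence $U^{(\beta)}_g$ preserves $\CI$, and together with the inductive invariance of $\mu_{k,H}$ this completes the step.
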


\begin{proof}
    For $k=0$, the statement reduces to the $U$-invariance of $\mu$, and we assume the statement to hold up to some fixed $k\in\N$. Then $Q^\cube{k+1}(U)$ is generated by $U^{(\alpha)}$ for codimension-1 faces $\alpha\in\cube{k+1}$. Suppose first that $\alpha = \beta0$ for the unique codimension-0 face $\beta\in\cube{k}$. Then $U^{(\alpha)} = U^\cube{k}\times \Id_{X^\cube{k}}$, and the definition of $\mu_{k+1,H}$ as a relatively independent product implies that 
    \begin{align*}
        \int U_h^{(\alpha)}(f_1 \otimes f_2)\; d\mu_{k+1,H} &= \int \E(U_h^\cube{k}f_1|\CI(U^\cube{k}))\E(f_2|\CI(U^\cube{k}))\;d\mu_{k,H}\\
        &= \int \E(f_1|\CI(U^\cube{k}))\E(f_2|\CI(U^\cube{k}))\;d\mu_{k,H} = \int f_1\otimes f_2\; d\mu_{k+1,H}
    \end{align*}
    for any $f_1,f_2\in L^\infty(\mu_{k,H})$ and $h\in H$, giving the $U^{(\alpha)}$-invariance.

    Suppose now that $\alpha = \beta1$ for some codimension-1 face $\beta\in\cube{k}$. Then $U^{(\alpha)} = U^{(\beta)}\times U^{(\beta)}$, and so
    \begin{align*}
        \int U_h^{(\alpha)}(f_1 \otimes f_2)\; d\mu_{k+1,H} &= \int \E(U_h^{(\beta)}f_1|\CI(U^\cube{k}))\E(U_h^{(\beta)}f_2|\CI(U^\cube{k}))\;d\mu_{k,H}.
    \end{align*}
    To show $U^{(\alpha)}$-invariance, it thus suffices to establish that 
    \begin{align}\label{E: invariance identity}
        \E(U_h^{(\beta)}f|\CI(U^\cube{k})) =\E(f|\CI(U^\cube{k}))
    \end{align}
    for any $f\in L^\infty(\mu_{k,H}))$ and $h\in H$. From the mean ergodic theorem for amenable groups, we get that
    \begin{align*}
        \E(U_h^{(\beta)}f|\CI(U^\cube{k})) = \E_{h'\in H}U^\cube{k}_{h'}(U^{(\beta)}_h f) = U_h^{(\beta)}\brac{\E_{h'\in H} U_h^{(\beta)}U_{h'}^\cube{k}U_{h\inv}^{(\beta)}f}.
    \end{align*}
    Since $\beta$ has codimension 1, there exists a unique codimension-1 face $\gamma\in \cube{k}$ such that $\cube{k}$ is a disjoint union of $\beta$ and $\gamma$. By an $L^2(\mu_{k,H})$-approximation argument, we can assume that $f = f_\beta \otimes f_\gamma$ with $(f_\beta)_\eps = (f_\gamma)_{\eps'} = 1$ if $\eps\notin\beta$ and $\eps'\notin\gamma$. Then
    \begin{align*}
        \E_{h'\in H} U_h^{(\beta)}U_{h'}^\cube{k}U_{h\inv}^{(\beta)}f = \E_{h'\in H}(U_{hh'h\inv}^{(\beta)}f_\beta \otimes U_{h'}^{(\gamma)}f_\gamma) = \E(f_\beta\otimes f_\gamma|\CI(U'^{(\beta)} \times U^{(\gamma)})),
    \end{align*}
    where $U'_h := U_{hh'h\inv}$, {and the invariant factor $\CI(U'^{(\beta)} \times U^{(\gamma)})$ is taken with respect to $\mu_{1,H} = \mu_{1,U} =\mu_{1,U'}$.}
    By Corollary \ref{C: equal invariant factors 2}, this is the same as 
    \begin{align*}
        \E(f_\beta\otimes f_\gamma|\CI(U^{(\beta)} \times U^{(\gamma)})) = \E_{h'\in H} U_{h'}^\cube{k}f =  \E(f|\CI(U^\cube{k})).
    \end{align*}
    This establishes \eqref{E: invariance identity}, from which the $U^{(\alpha)}$-invariance of $\mu_{k+1,H}$ follows.
\end{proof}

Proposition~\ref{P: face transformations invariance} allows us to define the measure-preserving system of $k$-dimensional cubes. 
\begin{definition}[Measurable cubes]
    Let $H$ be a  finitely generated amenable group and $(X, \CX, \mu, U)$ be an $H$-system. Then its \textit{$k$-dimensional (measurable) cube} is the $Q^\cube{k}(H)$-system $(X^\cube{k}, \CX^\cube{k}, \mu_{k,H}, Q^\cube{k}(U))$.
\end{definition}

\subsection{Cubic structure on nilmanifolds}
We conclude Appendix \ref{A: cubes} by examining the interplay of various cubic structures for $H$-nilsystems, defined in Appendix \ref{A: nilsystems}. Recall that the definition of $H$-nilsystems presumes that $H$ is finitely generated. We begin with defining cubes on nilmanifolds (without yet any dynamics).
\begin{definition}[Cubes of nilmanifolds]\label{D: cubes of nilmanifolds}
    Let $Y=G/\Gamma$ be a nilmanifold. 
    We then let
     \begin{align*}
    Q^{\cube{k}}(Y):=Q^{\cube{k}}(G)/Q^{\cube{k}}(\Gamma)    
    \end{align*}
     be the \textit{$k$-dimensional cube of $Y$}. We denote the Borel $\sigma$-algebra on $Q^\cube{k}(Y)$ via $Q^\cube{k}(\CB_Y)$
     and the Haar measure via $m_Y^{\cube{k}}$.  
\end{definition}

Following a remark in \cite{HK18}, we note that $Q^\cube{k}(Y)$ depends not only on $Y$ but also on the representation $Y=G/\Gamma$.

By \cite[Chapter 6, Proposition~19]{HK18}, $Q^{\cube{k}}(\Gamma) = Q^{\cube{k}}(G)\cap \Gamma^\cube{k}$, and  by \cite[Chapter 12, Theorem~2]{HK18}, the quotient $Q^{\cube{k}}(Y)$ is a subnilmanifold of $Y^\cube{k}$.
Moreover, it is invariant under the $Q^\cube{k}(H)$-action of $Q^{\cube{k}}(U)$, turning $(Q^\cube{k}(Y),Q^\cube{k}(U))$ into a topological $Q^\cube{k}(H)$-nilsystem. The following result shows that $(Q^\cube{k}(Y), Q^\cube{k}(\CB_Y), m_Y^\cube{k}, Q^\cube{k}(U))$ inherits ergodicity (and hence also transitivity, minimality, and unique ergodicity via Proposition~\ref{P: equivalence of ergodicity on nil}) from $(Y, \CB_Y, m_Y, U)$.

\begin{proposition}\label{P: ergodicity on cubes}
    Let $(Y, \CB_Y, m_Y, U)$ be an ergodic $H$-nilsystem. Then the $Q^\cube{k}(H)$-nilsystem  
    $(Q^\cube{k}(Y), Q^\cube{k}(\CB_Y), m_Y^\cube{k}, Q^\cube{k}(U))$    is also ergodic.
\end{proposition}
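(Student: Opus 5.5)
By Proposition~\ref{P: equivalence of ergodicity on nil}, it suffices to prove that the topological $Q^\cube{k}(H)$-nilsystem $(Q^\cube{k}(Y),Q^\cube{k}(U))$ is minimal, or equivalently that its projection onto the maximal factor torus is ergodic. I will argue by induction on $k$, using the fiber product structure of Lemma~\ref{L: fiber products}. The case $k=0$ is the hypothesis. As usual, we may assume that $G$ is generated by $G^o$ and the elements $b_h$, $h\in H$.

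The plan follows \cite[Chapter 12]{HK18}, with $\Z$ replaced by $H$. Let $G_\bullet$ be the lower central series of $G$. Ergodicity of $U$, combined with Proposition~\ref{P: ergodicity of nilsystems}, says that the subgroup generated by $G_2$ and $\{b_h\colon h\in H\}$ acts transitively on $Y'=G/(G_2\Gamma)$. More precisely, the closed subgroup generated by $\{b_h\}$ and $G_2$, together with $\Gamma$, is dense enough that $\overline{\langle b_h\rangle G_2}\,\Gamma=G$.

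For $Q^\cube{k}(G)$, I will use Lemma~\ref{L: alternative cube description}: this group is generated by the diagonal $\{g^{\cube{k}}\}$ and by the elements $g^{(\alpha)}$ with $g\in G_{\codim\alpha}$. The acting group $Q^\cube{k}(H)$ contains the elements $b_h^{(\alpha)}$ for codimension-$1$ faces $\alpha$. Taking commutators of these, we obtain $[b_h,b_{h'}]^{(\alpha\cap\beta)}$ and, more generally, higher commutators on faces of higher codimension.

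The key step is to show that the closed subgroup $\tilde Q$ generated by the face elements $b_h^{(\alpha)}$ satisfies
\[
\tilde Q\cdot [Q^\cube{k}(G),Q^\cube{k}(G)]\, Q^\cube{k}(\Gamma) = Q^\cube{k}(G).
\]
Then applying Proposition~\ref{P: ergodicity of nilsystems} to the nilmanifold $Q^\cube{k}(Y)$ gives ergodicity.

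I will establish this claim by working modulo the commutator subgroup of $Q^\cube{k}(G)$. That quotient is generated by the images of $g^{(\alpha)}$ with $\codim\alpha=1$ and $g\in G$. Since $\overline{\langle b_h\rangle}G_2\Gamma=G$, each such $g$ can be written as a limit of products $b\,g_2\,\gamma$. Here $g_2^{(\alpha)}$ with $g_2\in G_2$ on a codimension-$1$ face is, by Lemma~\ref{L: fiber products} and the upper face decomposition (Lemma~\ref{L: upper face}), a product of commutators in $Q^\cube{k}(G)$ and of elements supported on faces of higher codimension. In addition, $\gamma^{(\alpha)}$ lies in $Q^\cube{k}(\Gamma)$.

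The main obstacle is exactly this bookkeeping: checking that $g_2^{(\alpha)}$ really lies in $[Q^\cube{k}(G),Q^\cube{k}(G)]$, i.e. that $G_2$ on a codimension-$1$ face is generated by commutators of codimension-$1$ face elements. I expect to resolve it via the identity $[a^{(\alpha)},b^{(\alpha)}]=[a,b]^{(\alpha)}$, since both factors are supported on the same face. I also need to handle carefully that the closure and the subgroup generation commute with passing to cubes. Since the group structure is only used through the filtration, the general finitely generated group $H$ causes no new difficulty beyond replacing single rotations by the family $\{b_h\}$.
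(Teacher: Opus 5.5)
Your route differs from the paper's and is sound in substance, but your key claim is misstated. As written, both $\overline{\langle b_h\rangle G_2}\,\Gamma=G$ and $\tilde Q\cdot[Q^{\cube{k}}(G),Q^{\cube{k}}(G)]\,Q^{\cube{k}}(\Gamma)=Q^{\cube{k}}(G)$ are false.

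Take an irrational rotation $b_n=n\alpha$ on $\R/\Z$. The closed subgroup generated by the $b_h$ is the discrete group $\alpha\Z$, and $\alpha\Z+\Z\neq\R$. For $k=1$ the same example gives $(\alpha\Z+\Z)^2\neq\R^2$.

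What ergodicity actually gives is \emph{density}: $\langle b_h\rangle G_2\Gamma$ is dense in $G$. What Proposition~\ref{P: ergodicity of nilsystems} needs is also density. You must show that $\tilde Q\,[Q^{\cube{k}}(G),Q^{\cube{k}}(G)]\,Q^{\cube{k}}(\Gamma)$ is dense in $Q^{\cube{k}}(G)$; this set is a subgroup, since the quotient by the commutator subgroup is abelian. Equivalently, the image of the acting group is dense in the compact abelian group $Q^{\cube{k}}(G)/((Q^{\cube{k}}(G))_2\,Q^{\cube{k}}(\Gamma))$, which is the same as ergodicity of the induced rotation action there. Your limit argument proves exactly this density, so the fix is to state the key step that way.

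The step you call the main obstacle is immediate. For a codimension-$1$ face $\alpha$, the map $g\mapsto g^{(\alpha)}$ is a homomorphism $G\to Q^{\cube{k}}(G)$. Hence $[a,b]^{(\alpha)}=[a^{(\alpha)},b^{(\alpha)}]$, which puts $G_2^{(\alpha)}$ inside the commutator subgroup. Equivalently, $g\mapsto g^{(\alpha)}$ induces a continuous homomorphism from $Y'=G/(G_2\Gamma)$ into the abelian quotient. This homomorphism carries the dense set $\{b_hG_2\Gamma\}$ to a dense subset of the image of $G^{(\alpha)}$. Since these images generate the quotient, density follows. The announced induction on $k$, the fiber products, and the upper face decomposition are not needed.

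The paper's route is different. It invokes the identification from \cite{HK18} of that abelian quotient with $Q^{\cube{k}}(Y')$. It then parametrizes $Q^{\cube{k}}(Y')$ by $(Y')^{k+1}$ via $(y_0,\ldots,y_k)\mapsto(y_0+\eps_1y_1+\cdots+\eps_ky_k)_{\eps}$. Finally it observes that $Q^{\cube{k}}(H')\cong {H'}^{k+1}$ acts there as a product of independent ergodic rotations.

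Your argument avoids both the identification and the coordinates. It uses only that codimension-$1$ face elements generate $Q^{\cube{k}}(G)$, together with functoriality of $g\mapsto g^{(\alpha)}$. The paper's argument in exchange gives an explicit description of the maximal torus factor of the cube system.
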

\begin{proof}
    The proof is essentially the same as that of \cite[Chapter 12, Theorem~3]{HK18}. Let $Y=G/\Gamma$ be the representation with respect to which we define $Q^\cube{k}(Y)$. By Proposition~\ref{P: ergodicity of nilsystems}, it suffices to verify the ergodicity of the induced action of $Q^\cube{k}(U)$ on
    \begin{align*}
        \frac{Q^\cube{k}(G)}{(Q^\cube{k}(G))_2\cdot Q^\cube{k}(\Gamma)}.
    \end{align*}
    As shown in the proof of \cite[Chapter 12, Theorem~3]{HK18}, this nilmanifold can in fact be identified with the compact abelian group $Q^\cube{k}(Y')$, where $Y':=G/(G_2\Gamma)$.  The group $Q^\cube{k}(Y')$ can then be parametrized by $(Y')^{k+1}$ via the continuous group isomorphism $\iota: (Y')^{k+1}\to Q^\cube{k}(Y')$ given by $$\iota(y_0, \ldots, y_k)=(y_0 + \eps_1y_1 + \cdots + \eps_k y_k)_{\eps\in\cube{k}}.$$

    Let $H':=H/H_2$ and $U'$ be the induced action of $H'$ on $Y'$. Then $Q^\cube{k}(U')$ is the induced  $Q^\cube{k}(H')$-action on $Y'$.
    Note that $Q^\cube{k}(H')\cong {H'}^{k+1}$, as there are $k+1$ faces of codimension $\leq 1$. We then define the ${H'}^{k+1}$-action $V\colon(Y')^{k+1}\to (Y')^{k+1}$ via $$V_{h_0, \ldots, h_k}(y_0, \ldots, y_k) := (U'_{h_0}y_0 , \ldots, U'_{h_k}y_k).$$ Since $Y'$ is abelian, $U'$ acts on $Y'$ via ergodic rotation, and so does $V$ on $(Y')^{k+1}$. The conclusion that $Q^\cube{k}(U')$ is ergodic on $Q^\cube{k}(Y')$ follows since $\iota$ intertwines $V$ with $Q^\cube{k}(U')$.
\end{proof}

As a consequence of Propositions \ref{P: equivalence of ergodicity on nil} and \ref{P: ergodicity on cubes}, we get the following immediate corollary, which is a direct extension of \cite[Chapter 12, Theorem~3]{HK18}.
\begin{corollary}\label{C: unique ergodicity of cubes}
    Let $(Y, \CB_Y, m_Y, U)$ be an ergodic $H$-nilsystem. Then the topological $Q^\cube{k}(H)$-nilsystem  
    $(Q^\cube{k}(Y), Q^\cube{k}(U))$ is uniquely ergodic, with $m_Y^\cube{k}$ being the unique invariant measure.
\end{corollary}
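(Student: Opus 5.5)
The statement to prove is Corollary~\ref{C: unique ergodicity of cubes}: for an ergodic $H$-nilsystem $(Y, \CB_Y, m_Y, U)$, the topological $Q^\cube{k}(H)$-nilsystem $(Q^\cube{k}(Y), Q^\cube{k}(U))$ is uniquely ergodic with invariant measure $m_Y^\cube{k}$. The plan is to combine the two results that precede it directly.

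First, I would record that $(Q^\cube{k}(Y), Q^\cube{k}(U))$ is a genuine topological $Q^\cube{k}(H)$-nilsystem in the sense of Definition~\ref{D: nilsystems}. By \cite[Chapter 12, Theorem~2]{HK18}, $Q^\cube{k}(Y)=Q^\cube{k}(G)/Q^\cube{k}(\Gamma)$ is a nilmanifold. Moreover, each face transformation $U_h^{(\alpha)}$ is left multiplication by $b_h^{(\alpha)}\in Q^\cube{k}(G)$. Since $h\mapsto b_h$ is a homomorphism, the assignment $U^{(\alpha)}_h\mapsto b_h^{(\alpha)}$ extends to a homomorphism from $Q^\cube{k}(H)$ into $Q^\cube{k}(G)$. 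Hence the action is by nilrotations.

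Second, Proposition~\ref{P: ergodicity on cubes} says that this nilsystem is ergodic with respect to the Haar measure $m_Y^\cube{k}$. This is property (iv) of Proposition~\ref{P: equivalence of ergodicity on nil}, applied with $Q^\cube{k}(H)$ in place of $H$. That group is finitely generated, being generated by the finitely many $U^{(\alpha)}_{h}$ with $h$ ranging over a generating set of $H$ and $\codim\alpha\le 1$. Proposition~\ref{P: equivalence of ergodicity on nil} then yields (iii), unique ergodicity. Since $m_Y^\cube{k}$ is invariant under left multiplication by $Q^\cube{k}(G)$, it is an invariant measure, so it must be the unique one.

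The only step needing any care is checking that the hypotheses of Proposition~\ref{P: equivalence of ergodicity on nil} apply to the cube action. Concretely, this means the action is well defined by nilrotations and the acting group is finitely generated; both points are settled above. Everything else is a direct citation.
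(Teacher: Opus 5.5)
Your proposal is correct and is exactly the paper's argument: the paper states the corollary as an immediate consequence of Proposition~\ref{P: ergodicity on cubes}, which gives ergodicity of $m_Y^{\cube{k}}$, combined with Proposition~\ref{P: equivalence of ergodicity on nil}, which says ergodicity of the Haar measure is equivalent to unique ergodicity. Your additional checks---that $Q^{\cube{k}}(H)$ is finitely generated, that it acts by nilrotations via the coordinatewise map $H^{\cube{k}}\to G^{\cube{k}}$, and that it preserves $m_Y^{\cube{k}}$---make explicit what the paper leaves implicit.
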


Emulating \cite[Chapter 12, Proposition~7]{HK18}, we show that for ergodic systems, the Haar measure $m^\cube{k}_Y$ that depends only on the inherent structure of the nilmanifold agrees with the dynamical measure $\mu_{k,H}$ coming from the action of $U$ on $Y$.
\begin{proposition}\label{P: identification of measures}
    Let $(Y, \CB_Y, m_Y, U)$ be an ergodic $H$-nilsystem. Then for every $k\in\N_0$, the measure $\mu_{k,H}$ on $Y^{\cube{k}}$ constructed from $\mu = m_Y$ in Definition \ref{D: cubic measures} agrees with the measure $m_Y^\cube{k}$ from Definition \ref{D: cubes of nilmanifolds}.
\end{proposition}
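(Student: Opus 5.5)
Following \cite[Chapter 12, Proposition~7]{HK18}, I would argue by induction on $k$, using the unique ergodicity from Corollary \ref{C: unique ergodicity of cubes}. For $k=0$ both measures equal $m_Y$. Suppose $\mu_{k,H}=m_Y^{\cube{k}}$. The aim is to show that $\mu_{k+1,H}$ is supported on $Q^{\cube{k+1}}(Y)$ and is invariant under the group of face transformations $Q^{\cube{k+1}}(U)$. Unique ergodicity of $(Q^{\cube{k+1}}(Y),Q^{\cube{k+1}}(U))$ then forces $\mu_{k+1,H}=m_Y^{\cube{k+1}}$.

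Invariance is already available. By Proposition~\ref{P: face transformations invariance}, $\mu_{k+1,H}$ is $Q^{\cube{k+1}}(U)$-invariant as a measure on $Y^{\cube{k+1}}$, and that result needs no nilsystem structure. So the work is entirely in the support statement. Since $\mu_{k+1,H}=\mu_{k,H}\times_{\CI(U^{\cube{k}})}\mu_{k,H}$ and $\mu_{k,H}=m_Y^{\cube{k}}$ lives on $Q^{\cube{k}}(Y)$, it suffices to describe the invariant factor of the $H$-action $U^{\cube{k}}$ (diagonal action) on $(Q^{\cube{k}}(Y),m_Y^{\cube{k}})$.

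The plan is to show that its ergodic components are the translates of the subnilmanifold $L^{\cube{k}}(G)\cdot x\,Q^{\cube{k}}(\Gamma)$, equivalently the orbit closures of the diagonal action. Then the relatively independent square over this factor is carried by pairs $(x,x')$ in the same component. By Lemma~\ref{L: fiber products}, $Q^{\cube{k+1}}(G)=Q^{\cube{k}}(G)\times_{L^{\cube{k}}(G)}Q^{\cube{k}}(G)$, and these pairs are exactly the points of $Q^{\cube{k+1}}(Y)$. For the identification of the components I would mimic Host--Kra. First, the diagonal subgroup together with $L^{\cube{k}}(G)$ generates $Q^{\cube{k}}(G)$; in the abelianization $Q^{\cube{k}}(Y')\cong (Y')^{k+1}$ used in Proposition~\ref{P: ergodicity on cubes}, the diagonal $H$-action moves only the $y_0$-coordinate. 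Second, by Leibman's criterion (Proposition~\ref{P: ergodicity of nilsystems}) the diagonal action is ergodic on each fiber $Q^{\cube{k}}(G)$-translate of $L^{\cube{k}}(G)$-cosets, because it is an ergodic rotation on the horizontal torus of that subnilmanifold. Third, the $\sigma$-algebra of $L^{\cube{k}}(G)$-orbits is invariant, so it coincides with $\CI(U^{\cube{k}})$ modulo null sets.

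I expect this support identification to be the main obstacle. The invariance and uniqueness steps are formal, but showing that the diagonal action on $Q^{\cube{k}}(Y)$ has invariant $\sigma$-algebra exactly the $L^{\cube{k}}$-orbit factor requires care with the general group $H$ instead of $\Z$. Here the reduction to abelianizations via Proposition~\ref{P: ergodicity of nilsystems} and Proposition~\ref{P: equivalence of ergodicity on nil} should replace the $\Z$-specific arguments in \cite{HK18}.
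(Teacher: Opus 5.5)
Your architecture matches the paper's proof: induction on $k$, $Q^{\cube{k+1}}(U)$-invariance of $\mu_{k+1,H}$ from Proposition~\ref{P: face transformations invariance}, support on $Q^{\cube{k+1}}(Y)$ via the fiber-product structure of Lemma~\ref{L: fiber products}, and unique ergodicity from Corollary~\ref{C: unique ergodicity of cubes}. However, you have misplaced the difficulty in the support step, because only the easy inclusion is needed.

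Let $W_k:=Q^{\cube{k}}(G)/(L^{\cube{k}}(G)Q^{\cube{k}}(\Gamma))$, with quotient map $\pi_k\colon Q^{\cube{k}}(Y)\to W_k$. The diagonal elements $h^{\cube{k}}$ already lie in $L^{\cube{k}}(G)$: they sit on the codimension-$0$ face with $h\in G_1$. Moreover, $L^{\cube{k}}(G)$ is normal in $Q^{\cube{k}}(G)$. Hence $\pi_k\circ U^{\cube{k}}_h=\pi_k$, so $\sigma(\pi_k)\subseteq\CI(U^{\cube{k}})$. In a relatively independent square over $\CI(U^{\cube{k}})$, every $\CI(U^{\cube{k}})$-measurable function takes the same value on both coordinates almost surely. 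Applied to $F\circ\pi_k$ with $F\in C(W_k)$, this gives $\pi_k(y)=\pi_k(y')$ for $\mu_{k+1,H}$-a.e.\ $(y,y')$. The fiber-product description $Q^{\cube{k+1}}(Y)=\{(y,y')\in Q^{\cube{k}}(Y)^2\colon \pi_k(y)=\pi_k(y')\}$ then yields the support claim. This is exactly the first half of your ``third step'', and it is all the paper uses.

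The reverse inclusion you set out to prove is unnecessary, and your sketch of it fails as stated. First, since the diagonal lies inside $L^{\cube{k}}(G)$, it cannot help generate $Q^{\cube{k}}(G)$ beyond $L^{\cube{k}}(G)$. More seriously, the diagonal action is \emph{not} ergodic on every $L^{\cube{k}}(G)$-fiber. Take $k=1$ and $Y=G/\Gamma$ the Heisenberg nilmanifold with an ergodic nilrotation. Then $L^{\cube{1}}(G)=\{(g,gu)\colon g\in G,\ u\in G_2\}$. On the fiber through $(\Gamma,\Gamma)$, the map $(g\Gamma,gu\Gamma)\mapsto u(G_2\cap\Gamma)$ is well defined (because $G_2$ is central) and invariant under the diagonal action. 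Equivalently, the horizontal torus of that fiber has a $G_2$-direction on which the diagonal acts trivially, so Leibman's criterion gives non-ergodicity there. Thus $\CI(U^{\cube{k}})=\sigma(\pi_k)$ can hold only modulo null sets, which would need a genericity argument over fibers that you do not supply. In fact this equality follows a posteriori from the proposition, by comparing $\int f\otimes\bar f$ under the two relatively independent squares, so it is strictly harder than what is required. Drop it, and your proof coincides with the paper's.
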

The ergodicity of $U$ is needed since it gives $m_Y^\cube{1} = m_Y\times m_Y$ via Corollary \ref{C: unique ergodicity of cubes}. On the other hand, we have $\mu_{1,H} = m_Y \times_{\CI(H)}m_Y$, and this is different from $m_Y^\cube{1}$ for a nonergodic action of $H$.
\begin{proof}
We follow closely the proof of \cite[Chapter 12, Proposition~7]{HK18}.
Let $Y=G/\Gamma$, and assume that $H\subseteq G$ without loss of generality.

    For $k=0$, both measures equal $\mu_Y$, and so suppose that $m_Y^\cube{k} = \mu_{k,H}$ for some $k\in\N_0$.
    Let $L^\cube{k}(G)$ be defined as in Lemma~\ref{L: fiber products}, and define
    \begin{align*}
        W_k := \frac{Q^\cube{k}(G)}{L^\cube{k}(G)\cdot Q^\cube{k}(\Gamma)}
    \end{align*}
        Let $\pi_k: Q^\cube{k}(Y)\to W_k$ be the natural quotient map. Arguing as in the proof of \cite[Chapter 12, Proposition~7]{HK18} in a way that explores the fiber product structure of $Q^\cube{k+1}(G)$ given by Lemma~\ref{L: fiber products}, we conclude that 
        \begin{align}\label{E: nil fiber product}
                    Q^\cube{k+1}(Y) &= Q^\cube{k}(Y)\times_{W_k}Q^\cube{k}(Y)\\
                    \nonumber
            &:=\{(y,y')\in Q^\cube{k}(Y)\times Q^\cube{k}(Y)\colon\; \pi_k(y)=\pi_k(y')\}
        \end{align}
        and
     \begin{align}\label{E: Haar as relatively independent joining}
        m^\cube{k+1}_Y = m^\cube{k}_Y \times_{W_k}m^\cube{k}_Y= \mu_{k,H} \times_{W_k}\mu_{k,H}.
    \end{align}

    Since $\langle h^{\cube{k}}\colon h\in H\rangle\subseteq L^\cube{k}(G)$, we have $\pi_k\circ U_h^{\cube{k}}=\pi_k$ for all $h\in H$. By the properties of relatively independent joining,
any $U^\cube{k}$-invariant $F\in L^\infty(Y^\cube{k})$ satisfies $F(y)=F(y')$ for $\mu_{k+1,H}$-a.e. $(y,y')\in Y^\cube{k+1}$. Hence any $F'\in C(W_k)$ satisfies $F'\circ \pi_k(y)=F'\circ\pi_k(y')$, implying that $\pi_k(y) = \pi_k(y')$. This and the fiber product structure \eqref{E: nil fiber product}, \eqref{E: Haar as relatively independent joining} imply that the measure
\begin{align*}
    \mu_{k+1,H} = \mu_{k,H}\times_{\CI(U^\cube{k})}\mu_{k,H}
\end{align*}
is supported on $Q^\cube{k+1}(Y)$. By Proposition~\ref{P: face transformations invariance}, the measure $\mu_{k+1,H}$ is invariant under the action $Q^\cube{k+1}(U)$. The unique ergodicity of $(Q^\cube{k+1}(Y),Q^\cube{k+1}(U))$ established in Corollary \ref{C: unique ergodicity of cubes} delivers the claim $\mu_{k+1,H} =  m_Y^\cube{k+1}$.
\end{proof}

The previous result implies that Host-Kra seminorms of step $k+1$ define norms on $k$-step $H$-nilsystems.
\begin{proposition}\label{P: nilsystems are systems of order k}
Let $(Y, \CB_Y, m_Y, U)$ be an $H$-nilsystem of step $k$. Then $\nnorm{\cdot}_{k+1, H}$ defines a norm on $L^\infty(m_Y)$. Equivalently, $\CB_Y = \CZ_{k,H}$.
\end{proposition}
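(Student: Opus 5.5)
We have to show that for a $k$-step $H$-nilsystem, $\nnorm{f}_{k+1,H}=0$ forces $f=0$. The plan is to imitate the classical argument of \cite[Chapter 12]{HK18}, working in the ergodic case first. For non-ergodic actions we pass to ergodic components; each component is again a nilsystem on a sub-nilmanifold, so it suffices to treat ergodic $U$. By Proposition~\ref{P: identification of measures}, $\mu_{k+1,H}=m_Y^{\cube{k+1}}$, the Haar measure on $Q^{\cube{k+1}}(Y)=Q^{\cube{k+1}}(G)/Q^{\cube{k+1}}(\Gamma)$. Hence
\[
\nnorm{f}_{k+1,H}^{2^{k+1}}=\int \bigotimes_{\eps}\CC^{|\eps|}f\; dm_Y^{\cube{k+1}}.
\]

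The key structural input is a \emph{uniqueness of the last coordinate} property of $Q^{\cube{k+1}}(Y)$ for a $k$-step nilmanifold. Concretely, if $\mathbf{y},\mathbf{y}'\in Q^{\cube{k+1}}(Y)$ agree at all vertices except $\mathbf{1}=(1,\ldots,1)$, then they agree at $\mathbf{1}$ too. Algebraically, $\mathbf{g}'\mathbf{g}\inv$ (corrected by elements of $Q^{\cube{k+1}}(\Gamma)$) is an element of $Q^{\cube{k+1}}(G)$ of the form $g^{(\{\mathbf 1\})}$. This corner is a face of codimension $k+1$, so Corollary~\ref{C: face} gives $g\in G_{k+1}=\{e_G\}$. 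The step needing care is handling the $\Gamma$-cosets, i.e. showing that the relevant element of $Q^{\cube{k+1}}(G)$ can indeed be taken to be supported on $\{\mathbf 1\}$. I would use $Q^{\cube{k+1}}(\Gamma)=Q^{\cube{k+1}}(G)\cap\Gamma^{\cube{k+1}}$, and I expect this bookkeeping to be the main obstacle. Consequently, the projection $Q^{\cube{k+1}}(Y)\to Y^{\cube{k+1}\setminus\{\mathbf 1\}}$ is injective, and the last coordinate is a continuous (indeed measurable) function $\Phi$ of the other $2^{k+1}-1$ coordinates.

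With this, the dual function is computed explicitly. Using Lemma~\ref{L: cubic measures using averages} and the disintegration of $m_Y^{\cube{k+1}}$ over the first $2^{k+1}-1$ coordinates, the conditional measure on the last coordinate is the Dirac mass at $\Phi(\cdot)$. Suppose $\nnorm{f}_{k+1,H}=0$. By Proposition~\ref{P: seminorms vs. algebras}, $f$ is orthogonal to all dual functions $\CD_{k+1,H}\mathfrak f$, and hence
\[
\int \prod_{\eps\neq\mathbf 1} g_\eps(y_\eps)\, f(y_{\mathbf 1})\, dm_Y^{\cube{k+1}}=0 \quad\text{for all bounded } g_\eps .
\]
The pushforward of $m_Y^{\cube{k+1}}$ to the last coordinate is $m_Y$, and the coordinate maps generate the Borel $\sigma$-algebra of $Q^{\cube{k+1}}(Y)$ up to null sets. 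Taking products of the $g_\eps$ that approximate functions of the last coordinate, namely $\bar f\circ\Phi$ approximated in $L^2$ by finite sums of tensor products, we obtain $\int |f(y_{\mathbf 1})|^2\,dm_Y^{\cube{k+1}}=\norm{f}_{L^2(m_Y)}^2=0$.

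Finally, the equivalence with $\CB_Y=\CZ_{k,H}$ follows from the correspondence \eqref{E: seminorms vs. factors}, which holds here by \cite[Lemma~5.8]{CS23} since $H_1=\cdots=H_{k+1}=H$ is finitely generated nilpotent. Indeed, $\nnorm{\cdot}_{k+1,H}$ is a norm if and only if no nonzero $f$ has $\E(f|\CZ_{k,H})=0$, that is, if and only if $\CZ_{k,H}=\CB_Y$.
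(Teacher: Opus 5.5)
Your argument is essentially the paper's: identify $\mu_{k+1,H}$ with the Haar measure of $Q^{\cube{k+1}}(Y)$ via Proposition~\ref{P: identification of measures}, write one vertex as a function $\Phi$ of the others, and pair $f$ with $\bar f\circ\Phi$. The paper differs only in three small ways. It cites \cite[Chapter 12, Proposition 14]{HK18} for $\Phi$. It uses Gowers--Cauchy--Schwarz, whereas orthogonality to $\CD_{k+1,H}\mathfrak f$ as defined here places $f$ at the vertex $\mathbf 0$, not $\mathbf 1$, so you would need to argue at $\mathbf 0$ or use Gowers--Cauchy--Schwarz. And it tacitly assumes ergodicity, so your reduction to ergodic components is an addition.

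For the step you flagged, the identity $Q^{\cube{k+1}}(\Gamma)=Q^{\cube{k+1}}(G)\cap\Gamma^{\cube{k+1}}$ is not what does the work. Instead, take representatives $\mathbf g,\mathbf g'\in Q^{\cube{k+1}}(G)$ of the two points and apply the triangular structure of Lemma~\ref{L: upper face} to $\mathbf x=\mathbf g\inv\mathbf g'$. If $x_\eps\in\Gamma$ for all $\eps\neq\mathbf 1$, the coefficients $g_J$ for $J\subsetneq[k+1]$ are solved successively from the $x_\eps$ with $\supp(\eps)=J$, so they lie in $\Gamma$. Since $g_{[k+1]}\in G_{k+1}=\{e_G\}$, it follows that $x_{\mathbf 1}\in\Gamma$.
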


\begin{proof}
We follow the proof of \cite[Chapter 12, Proposition~16]{HK18}. To show that $\nnorm{\cdot}_{k+1, H}$ is a norm, it suffices to show that $f=0$ whenever $\nnorm{f}_{k+1, H} = 0$. Assume the latter. By the Gowers-Cauchy-Schwarz inequality, 
\begin{align*}
\int \bigotimes_{\e\in\{0,1\}^{k+1}} f_{\epsilon}\,d\mu_{k+1, H}=0     
\end{align*}
 for all $f_{\e}\in L^\infty(\mu)$, where $f_{\emptyset}:=f$. By density and linearity, we have
 \begin{align*}
 \int f(y) F(y_{\ast})\,d\mu_{k+1, H}(y,y_{\ast})=0    
 \end{align*}
  for all $F$ in $2^{k+1}-1$ variables. Proposition~\ref{P: identification of measures} implies that the dynamical measure $\mu_{k+1,H}$ agrees with the Haar measure $m_Y^\cube{k+1}$ supported on the nilsystem $Q^\cube{k+1}(Y)$. By the ``only if'' part of \cite[Proposition~14, Chapter 12]{HK18}, there exists 
    a map $\Phi\colon Y^{2^{k+1}-1}\to Y$ such that 
    $y=\Phi(y_{\ast})$ for $m_Y^\cube{k+1}$-a.e. $(y,y_{\ast})\in Y^{2^{k+1}}$. 
    Hence
    \begin{align*}
            0&=\int f(y) \overline{f}\circ \Phi(y_{\ast})\,d\mu_{k+1,H}(y,y_{\ast}) = \int f(y) \overline{f}\circ \Phi(y_{\ast})\,dm^\cube{k+1}_Y(y,y_{\ast})\\
            &=\int f(y) \overline{f}(y)\,dm^\cube{k+1}_Y(y,y_{\ast})=\int |f(y)|^2\,dm_{Y}(y),        
    \end{align*}
which implies that $f=0$.
\end{proof}

\bibliography{library}
\bibliographystyle{plain}
\end{document}